\documentclass[envcountsect, envcountsame]{llncs}

\usepackage{graphicx}

\usepackage[title]{appendix}

\usepackage{amsmath}
\usepackage{amsfonts}
\usepackage{amssymb}

\usepackage[fleqn]{nccmath}

\usepackage{varioref}

\usepackage{hyperref}

\usepackage[usenames,dvipsnames]{xcolor}
\usepackage{graphicx}
\usepackage{tikz-cd}
\usepackage[all]{xy}

\spnewtheorem*{parametersel}{Parameter selection procedure}{\itshape}{\rmfamily}
\spnewtheorem*{parametercon}{Parameter constraints}{\itshape}{\rmfamily}
\spnewtheorem*{actionwithm}{Acting on the starting surface}{\itshape}{\rmfamily}
\spnewtheorem*{parameterex}{Example parameters}{\itshape}{\rmfamily}
\spnewtheorem*{generators}{Generators}{\itshape}{\rmfamily}
\spnewtheorem*{startingsurface}{Starting surface}{\itshape}{\rmfamily}
\spnewtheorem*{orientationrep}{Representing the orientation}{\itshape}{\rmfamily}
\spnewtheorem*{orientrepresent}{Representing the orientation}{\itshape}{\rmfamily}

\usepackage{thm-restate}

\def\defi{\textsf}
\def\epsilon{\varepsilon}
\def\rho{\varrho}
\def\theta{\vartheta}
\def\phi{\varphi}
\def\tilde{\widetilde}

\def\char{\text{char}}
\DeclareMathOperator{\Cl}{Cl}
\DeclareMathOperator{\coker}{coker}
\DeclareMathOperator{\degrd}{degrd}
\DeclareMathOperator{\disc}{disc}
\DeclareMathOperator{\End}{End}
\DeclareMathOperator{\Frac}{Frac}
\DeclareMathOperator{\Gal}{Gal}
\DeclareMathOperator{\GL}{GL}
\DeclareMathOperator{\Hom}{Hom}
\DeclareMathOperator{\Cent}{Cent}
\DeclareMathOperator{\im}{im}
\DeclareMathOperator{\id}{id}
\DeclareMathOperator{\Jac}{Jac}
\DeclareMathOperator{\Pic}{Pic}
\DeclareMathOperator{\sgn}{sgn}
\DeclareMathOperator{\tr}{tr}
\DeclareMathOperator{\Tr}{Tr}
\DeclareMathOperator{\diag}{diag}
\DeclareMathOperator{\nrm}{n}

\newcommand{\Bpinf}{B_{p, \infty}}

\ExplSyntaxOn
\NewDocumentCommand{\definealphabet}{mmmm}
 {%
  \int_step_inline:nnn { `#3 } { `#4 }
   {
    \cs_new_protected:cpx { #1 \char_generate:nn { ##1 }{ 11 } }
     {
      \exp_not:N #2 { \char_generate:nn { ##1 } { 11 } }
     }
   }
 }
\ExplSyntaxOff

\def\C{\mathbb{C}}
\def\F{\mathbb{F}}
\def\i{\mathbf{i}}
\def\M{\mathcal{M}}
\def\O{\mathcal{O}}
\def\Q{\mathbb{Q}}
\def\Z{\mathbb{Z}}

\def\ZZ{\mathbb{Z}}
\definealphabet{bb}{\mathbb}{A}{Z}

\definealphabet{bf}{\mathbf}{A}{Z}

\def\Ac{\mathcal{A}}
\def\Bc{\mathcal{B}}
\definealphabet{cal}{\mathcal}{A}{Z}

\definealphabet{frak}{\mathfrak}{A}{Z}
\definealphabet{frak}{\mathfrak}{a}{z}

\newcommand{\dg}{\dagger}
\newcommand{\Id}{\mathrm{Id}}

\title{Post-quantum commutative group actions from oriented superspecial abelian varieties}

\author{Wouter Castryck\inst{1} \and Bogdan Adrian Dina\inst{2,3} \and\\ P\'eter Kutas \inst{2,4} \and Gioella Lorenzon\inst{1}}

\institute{KU Leuven, Belgium \and Eötvös Loránd University, Hungary \and  University of Haifa, Israel \and University of Birmingham, United Kingdom}

\authorrunning{Castryck, Dina, Kutas, Lorenzon}

\begin{document}

\pagestyle{plain}

\maketitle

\begin{abstract}
  We study the cryptographic use of Shimura class group actions on superspecial principally polarized abelian varieties  equipped with a CM orientation, within the context of post-quantum cryptography. Besides giving an accessible introduction to the topic, illustrated with many examples, we provide tools for representing CM orientations through higher-dimensional isogenies and for computing the Shimura class group action in the restricted effective group action model.
  Here, key auxiliary results are a version of Kani's lemma ``with CM" and a trick that often allows to halve the ambient dimension. We also give a method for finding superspecial principally polarized abelian surfaces that are oriented by a given CM order, which is extracted from an algebraic framework for studying CM orientations using the Ibukiyama--Katsura--Oort correspondence. We also analyze
  obstructions to transitivity of the Shimura class group action, related to the characteristic-zero notion of CM type. 
  
\end{abstract}

\section{Introduction}

Commutative cryptographic group actions generalize  exponentiation in finite cyclic groups and  inherit some of its versatility in cryptographic design, such as non-interactive key exchange protocols \`a la Diffie--Hellman and threshold signatures, see~\cite{egarega} for an overview.  At the same time, the concept  is general enough to allow for (conjecturally) quantum-resistant instantiations, unlike exponentiation which falls prey to Shor's algorithm.
This generality is, however, relative: the only known fruitful source of post-quantum commutative group actions comes from the theory of complex multiplication (CM). 
Moreover, nearly all known instantiations work in dimension $g =1$, i.e., using elliptic curves, where the groups are ideal-class groups of imaginary quadratic orders, acting on sets of oriented elliptic curves via isogenies. Their cryptographic use was proposed independently by Couveignes~\cite{couveignes} and Rostovtsev--Stolbunov~\cite{rostovtsev-stolbunov}, using ordinary elliptic curves over finite fields, but the focus has now shifted to supersingular elliptic curves for the sake of flexibility and efficiency.
This is implemented, e.g., in~\cite{csidh,scallop,coral}. 

CM theory in arbitrary dimension $g \geq 1$ was developed by Shimura and Taniyama mid-$20$th century~\cite{Shimura}. Unlike the elliptic curve case, the acting group is not simply the ideal-class group of an imaginary quadratic order, but 
a slightly more complicated class group attached to a CM order of rank $2g$. The complication comes from the fact that it acts on principally polarized abelian varieties (PPAVs); the action needs to be compatible with the polarization data.

Given the growing role of higher-dimensional abelian varieties in isogeny-based cryptography, and given the central place of CM theory in the theory of PPAVs, it is a natural goal to
study how Shimura class group actions can be put to cryptographic use. Our purpose is to initiate this study for superspecial PPAVs, which are the appropriate generalizations to higher dimension of supersingular elliptic curves. Sanon's PhD thesis~\cite{SanonThesis} already gave a high-level description of a key exchange protocol from Shimura class groups acting on absolutely simple, ordinary abelian surfaces over a finite field $\F_q$ oriented by their full endomorphism ring.
We are unaware of other literature exploring such constructive uses, let alone in the superspecial setting,  although~\cite{robert_todo} mentions the possibility of extending the module-action framework to modules over CM orders, which would generalize our discussion (which corresponds to rank-$1$ modules). This being said, the Shimura class group action has been on cryptographers' radars for a long time, in tangential studies such as \cite{martindale2018isogeny,bisson2015computing,bisson2017polarised,lauter_robert,brooks-jetchev-wesolowski,IONICA2020385}.

When compared to elliptic curves, the cases $g > 1$ offer two potential advantages: \emph{(i)} one can reach larger class groups for a given field size, especially when working over a prime field $\F_p$ as in CSIDH~\cite{csidh}, which is not possible in large-discriminant frameworks such as~\cite{orientexpress,Houben}, and \emph{(ii)} there potentially exist more prime ideals of a given small norm $\ell$, allowing for a key space that is more concentrated on small primes. But we mainly view this work as a didactic exploration of CM orientations on superspecial PPAVs and the corresponding Shimura class group actions, with a focus on their cryptographic use and spiced up with many examples. We also note that, already  in dimension $g = 1$, orientations and CM actions have proven to be important tools beyond the constructive use of cryptographic group actions \cite{DBLP:conf/asiacrypt/FeoGFKLPSW21,DBLP:journals/cic/MerdyW25,love2020supersingular}.  

\subsubsection{Contributions and paper organization.} Our contributions are organized as follows. 
First, after reviewing CM orientations and Shimura class groups in Section~\ref{sec:background}, with further background to be found in the appendices, we introduce some recurring examples of superspecial CM orientations in Section~\ref{sec:superspecial}. Next, in Section~\ref{sec:evaluating_the_action} we discuss how to set up and evaluate Shimura class group actions within the $\mathbf{g}$-REGA model from~\cite{egarega}. This includes finding the generating set $\mathbf{g}$ and representing the orientation; two auxiliary tools here are an enhanced version of Kani's lemma  incorporating the CM orientation, and a dimension-halving trick often allowing the user to keep working in dimension $g$ (which seems a new observation even for $g = 1$). Along the way we describe the structure of Shimura class groups of CM orders in biquadratic CM fields (i.e., $C_2 \times C_2$ fields), and we break a security measure proposed by Sanon in Remark~\ref{rmk:sanon_break}.
Then in Section~\ref{sec:algebra}, we give a purely algebraic treatment of CM orientations on superspecial principally polarized abelian surfaces (PPASs), within the framework of the Ibukiyama--Katsura--Oort (IKO) correspondence from~\cite{ibukiyama1986supersingular}. We show how to evaluate the Shimura class group action on the IKO-side, and how to produce superspecial PPASs that are oriented by a prescribed CM order; this has applications to the $2$-dimensional version of the ``one endomorphism" problem (often called \textsf{OneEnd} in the literature~\cite{oneend}), on which we elaborate in Appendix~\ref{app:oneend}.  Next, in Section~\ref{sec:orbits}, we delve into the orbits of the Shimura class group action, which is a rich and much more intricate discussion than in the case $g = 1$, led by the characteristic-zero notion of CM type. An interesting by-product of our discussion is a very efficient method for deciding whether two given oriented supersingular elliptic curves lie in the same orbit under the usual ideal-class group action, see Example~\ref{ex:decidingorbits}.   We conclude in Section~\ref{sec:security} with some security considerations.

\subsubsection{Appendices.} To keep the main body contained, a considerable amount of material was deferred to the appendices. Besides additional background in Appendices~\ref{app:background},~\ref{app:bass} and proofs in Appendices~\ref{app:proofevalsection},~\ref{app:almostiso},~\ref{app:algebraproofs}, they contain several results  worth highlighting. 
Most notably, building on Section~\ref{sec:evaluating_the_action}, we present a key exchange protocol built from superspecial CM-oriented PPASs, all defined over $\F_p$, where the real multiplication part is generated by the $p$-th power Frobenius endomorphism. 
The starting orientation is called R2-D4 and described in Appendix~\ref{app:R2families}, while concrete parameters tailored for efficient key exchange are discussed in Appendix~\ref{app:key_exch}. 
Some compression techniques for CM-oriented superspecial PPASs over $\F_p$ can be found in Appendix~\ref{app:compress}.
Along the way, for use in R2-D4 and similar orientations,   we give explicit descriptions of the endomorphism rings of certain well-known supersingular elliptic curves in Appendix~\ref{sec:endrings}, thereby fixing and completing corresponding statements in the existing literature. Away from key exchange, we recall that 
Appendix~\ref{app:oneend} contains results on the $2$-dimensional counterpart of \textsf{OneEnd}. 
In Appendix~\ref{app:reduction} we slightly extend a result due to Goren~\cite{Goren1997}, which describes modulo which unramified primes $p$ a given CM-oriented PPAS in characteristic zero has a superspecial reduction; concretely, unlike Goren, we also cover the $C_2 \times C_2$ case, i.e., the case of CM by a biquadratic field.

\subsubsection{Acknowledgements.}
The authors would like to thank Eyal Goren, the reviewers of a prior version of this paper, and the authors of~\cite{supersingularitytesting,C2C2attack} for helpful input and feedback. For the statement and proof of Lemma~\ref{lem:conductor_lemma} we strongly benefited from interactions with Claude Opus 5.5 (Anthropic PBC). All authors are supported by  the European Union under the Horizon Europe programme with grant agreement No.\ 101309092 (PQC-SCAN), as well as by the CELSA alliance through the MaCro project. Castryck and Lorenzon are also supported by the European Research Council (ERC) under the Horizon 2020 programme, grant agreement No.\ 101020788 (ISOCRYPT), by the Research Council KU Leuven grant C14/24/099, as well as by Cybersecurity Research Flanders with reference number VR20192203. Kutas is partly supported by EPSRC through grant number EP/V011324/1, and together with Dina, he is also supported by the Hungarian Ministry of Innovation and Technology NRDI Office within the framework of the Quantum Information National Laboratory Program.  Kutas is also supported  by NRDI grant ``EXCELLENCE-151343". Lorenzon is funded by Research Foundation – Flanders (FWO) under a PhD Fellowship fundamental research (project number 1139225N). 

\section{Background} \label{sec:background}

We review the notions of principally polarized abelian varieties (PPAVs) equipped with a CM orientation and the Shimura class group action. More elementary background 
can be found in Appendix~\ref{app:background}, to which we have deferred all proofs.

\subsection{CM orientations} \label{subsec:CMorientations}

Henceforth $K$ denotes a \defi{CM field} of degree $2g$, i.e., a totally imaginary quadratic extension of a totally real degree-$g$ number field $K_0 \supset \Q$. Observe that $K$ comes equipped with an automorphism $x \mapsto \overline{x}$ with fixed field $K_0$ that we can think of as complex conjugation. We write $K_0^+$ for the set of totally positive elements in $K_0$, i.e., whose minimal polynomial over $\Q$ has positive real roots only.

\begin{definition}\label{def:orient}
    A \defi{$K$-orientation} on a $g$-dimensional PPAV $(A, \lambda)$ is a homomorphism $\iota : K \hookrightarrow \End^0(A)$ such that $\iota(\overline{\alpha}) = \iota(\alpha)^\dagger$ for all $\alpha \in K$, where ${}^\dagger$ denotes the Rosati involution. We say that $\iota$ is moreover a \defi{primitive $R$-orientation}, for some order $R \subset K$, if $\iota^{-1} \End(A) = R$.
\end{definition}
\noindent Since $\End(A)$ is closed under the Rosati involution, primitive $R$-orientations can only occur for orders that are self-conjugate, i.e., $\overline{R} = R$. \footnote{E.g., the order $\Z[\sqrt{2} + \sqrt{-1}] \subset \Q(\sqrt{2}, \sqrt{-1})$ does not satisfy $\overline{R} = R$.} We call such an order a \defi{CM order}. A sufficient condition is that $R$ has maximal real multiplication, i.e., $R \cap K = \Z_{K_0}$.
We remark that
  we use the terminology of ``orientations" for the sake of analogy with cryptographic customs in the case $g = 1$, but we note that many mathematical texts just talk about PPAVs ``with CM". 

Consider two $K$-oriented PPAVs $(A_1, \lambda_1, \iota_1), (A_2, \lambda_2, \iota_2)$. Let 
$\mu  \in  K_0^+ \, \cap \, \iota_1^{-1}\End(A_1)$, 
i.e., $\mu \in K$ is such that $\iota_1(\mu)$ is a totally positive endomorphism on $A_1$ (e.g., we can choose $\mu = N \in \Z_{>0}$). Consider an $\iota_1(\mu)$-isogeny $\varphi : (A_1, \lambda_1) \to (A_2, \lambda_2)$. Then we say that this polarized isogeny is an \defi{oriented isogeny} if for all 
$\alpha \, \in \, \iota_1^{-1} \End(A_1) \, \cap \, \iota_2^{-1} \End(A_2)$
we have $\varphi \circ \iota_1(\alpha) = \iota_2(\alpha) \circ \varphi$. We mark this by writing $\varphi : (A_1, \lambda_1, \iota_1) \to (A_2, \lambda_2, \iota_2)$. If both orientations are primitive for the same order $R \subset K$ then we say that it concerns a \defi{horizontal} oriented isogeny. An \defi{oriented isomorphism} is an oriented $1$-isogeny.

\begin{definition}
  Let $K$ be a CM field of degree $2g$. We write $\M_k(K)$ for the set of $K$-oriented PPAVs $(A, \lambda, \iota)$ over $k$, up to oriented isomorphism. For a CM order $R \subset K$, we write $\M_k(R)$ for the subset of primitively $R$-oriented PPAVs. 
\end{definition}

\begin{remark}
  If $g= 1$ then we suppress the canonical principal polarization from the notation and denote the elements of $\M_k(K)$ as $(E, \iota)$. The set $\M_k(R)$ may be empty. E.g., this is the case if $g = 1$, $p > 0$ and $p \mid [\Z_K : R]$, see Section~\ref{sec:orbits}.
\end{remark}

\subsection{The Shimura class group} \label{sec:shimura_cl_group}

To any CM order $R$ in a CM field $K$ we attach its \defi{Shimura class group}, denoted $\frakC(R)$. The  nomenclature in the existing literature is heterogeneous: the Shimura class group also goes under the name \defi{polarized class group}, while our main reference~\cite{pila} even just talks about ``a Picard group", denoting it as $\Pic_\ast R$.  Explicitly, consider pairs $(\fraka, \mu)$ where $\fraka$ is an invertible fractional $R$-ideal such that $\fraka \bar{\fraka} = \mu R$, 
and $\mu \in K_0^+$ (i.e., $\mu$ is a totally positive element of $K_0$). Endowed with component-wise multiplication, such pairs form a group we denote by $I_R$. 
Pairs $(\mu R, \mu \bar{\mu})$ with $\mu \in K^\times$ form a subgroup we denote by $P_R$.
The Shimura class group of $R$ is the quotient 
\begin{equation}\label{def:Shimura-Cl}
    \frakC(R) := I_R / P_R.
\end{equation} 
We denote by $\sim$ the equivalence induced by $P_R$ on pairs $(\fraka, \mu) \in I_R$. 
If $g = 1$ then the existence of $\mu \in K_0^+ = \Q_{>0}$ such that $\fraka \bar{\fraka} = \mu R$
is an automatic condition and it determines $\mu$ uniquely (namely, it must be the norm of $\fraka$). Thus, in this case, it is immediate that $\frakC(R) \cong \Cl(R)$.

\begin{lemma} \label{lem:different_mus}
  For a given invertible fractional ideal $\fraka \subset R$ there are at most $2^{g-1}$ Shimura classes $[(\fraka, \mu)] \in \frakC(R)$.
\end{lemma}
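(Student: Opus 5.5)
Fix an invertible fractional $R$-ideal $\fraka$ and suppose $(\fraka,\mu)$ and $(\fraka,\mu')$ both lie in $I_R$. Then $\mu R = \fraka\bar\fraka = \mu' R$, so $u := \mu'/\mu$ is a unit of $R$. It lies in $K_0$ because $\mu,\mu'$ do, and it is totally positive because a quotient of totally positive elements is totally positive. Hence $u \in (R\cap K_0)^{\times,+}$, the group of totally positive units of the real suborder $R_0 := R \cap K_0$. Conversely, $(\fraka, u\mu)\in I_R$ for every such $u$. The set of Shimura classes with first component $\fraka$ is therefore the image of the map $R_0^{\times,+} \to \frakC(R)$, $u \mapsto [(\fraka, u\mu)]$.

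Next I determine when two such pairs are equivalent. Multiplying by $(\fraka,\mu)^{-1}$, the pairs $(\fraka,u\mu)$ and $(\fraka,\mu)$ are equivalent exactly when $(R,u)\in P_R$. This means there is $\nu\in K^\times$ with $\nu R = R$ and $\nu\bar\nu = u$, i.e.\ $u \in N(R^\times) := \{\nu\bar\nu : \nu \in R^\times\}$. So the number of classes with first component $\fraka$ equals the cardinality of the image of $R_0^{\times,+}$ in $R_0^{\times,+}/N(R^\times)$. It is at most $[R_0^{\times,+} : N(R^\times)]$.

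For any $\epsilon \in R_0^\times \subset R^\times$ we have $N(\epsilon) = \epsilon^2$, so $(R_0^\times)^2 \subseteq N(R^\times)$. It therefore suffices to show $[R_0^{\times,+} : (R_0^\times)^2] \le 2^{g-1}$. By Dirichlet's unit theorem for the order $R_0$ in the totally real field $K_0$ of degree $g$, we have $R_0^\times \cong \{\pm1\}\times \Z^{g-1}$. Hence $R_0^\times/(R_0^\times)^2 \cong (\Z/2)^{g}$, generated by $-1$ together with $g-1$ fundamental units.

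Since $-1$ is not totally positive, $R_0^{\times,+}$ is a proper subgroup of $R_0^\times$ containing $(R_0^\times)^2$. Its image in the $\mathbb{F}_2$-vector space $(\Z/2)^g$ is therefore a proper subspace, of dimension at most $g-1$. This gives the bound $2^{g-1}$.

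The only point needing care is the application of Dirichlet's theorem to a non-maximal order $R_0$. This is standard: $R_0^\times$ has finite index in $\Z_{K_0}^\times$, so it has the same rank $g-1$ and torsion $\{\pm 1\}$.
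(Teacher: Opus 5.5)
Your proof is correct and follows essentially the same route as the paper. The ratio of two admissible $\mu$'s is a totally positive unit of $R_0$, squares of units act trivially because $(R,\epsilon^2)=(\epsilon R,\epsilon\bar\epsilon)\in P_R$, and totally positive units modulo squares number at most $2^{g-1}$. The only difference is cosmetic: the paper excludes the sign by normalizing the fundamental units to be positive at a fixed real embedding, whereas you note that the class of $-1$ lies outside the image of $R_0^{\times,+}$ in $R_0^\times/(R_0^\times)^2\cong(\Z/2\Z)^g$.
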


\begin{example}
If $K_0 = \Q(\sqrt{2})$ then $\varepsilon_1 = 1 + \sqrt{2}$ is not totally positive, so there is one class only. If $K_0 = \Q(\sqrt{3})$ then $\varepsilon_1 = 2 + \sqrt{3}$ is totally positive, so here there will be two classes, typically, but there are exceptions: e.g., if $K = K_0(\alpha)$ with $\alpha = \sqrt{-\varepsilon_1}$ then clearly $\varepsilon_1 = \alpha \bar \alpha$ so, here too, there is one class only. 
\end{example}

As explained in the proof of Theorem~\ref{thm:action} below, if $\fraka \subset R$ then we can act with the Shimura ideal $(\fraka, \mu)$ on any primitively $R$-oriented PPAV $(A, \lambda, \iota)$ by means of an $\iota(\mu)$-isogeny. Since all existing formulae for isogeny evaluation in higher dimension restrict to $N$-isogenies for some positive integer $N$, in practice we will restrict to Shimura ideals for which $\mu = N \in \Z_{> 0}$, even though in Corollary~\ref{cor:evaluate} we will explain how the KLPT${}^2$ algorithm from~\cite{DBLP:conf/crypto/CastryckDKLPT25} can be used, in principle, to by-pass this restriction in the case of an oriented superspecial PPAS with ``known" endomorphism ring. The Shimura classes of the form $[(\fraka, N)]$ with $\fraka \subset R$ and $N \in \Z_{>0}$ form a subgroup $\frakC(R)_{\Z} \subset \frakC(R)$ whose index is bounded by the following statement, for which we provide a simple proof; this also follows from~\cite[Lem.\,6]{bisson2017polarised}. 

\begin{lemma} \label{lem:Shimura_integer_norm_subgroup}
We have $\frakC(R)^2 \subset \frakC(R)_\Z$, therefore $[\frakC(R) : \frakC(R)_\Z] \mid \#\frakC(R)[2]$.
\end{lemma}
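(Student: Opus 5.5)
The plan is to show that for any class $[(\fraka,\mu)] \in \frakC(R)$, its square $[(\fraka^2,\mu^2)]$ is equivalent modulo $P_R$ to a pair $(\frakb, N)$ with $\frakb \subset R$ an integral invertible ideal and $N \in \Z_{>0}$. The index statement then follows formally. Since $\frakC(R)$ is a finite abelian group (assumed here) and $\frakC(R)^2 \subset \frakC(R)_\Z$, the index $[\frakC(R):\frakC(R)_\Z]$ divides $[\frakC(R):\frakC(R)^2]$. For a finite abelian group $G$, the squaring map $G\to G$ has kernel $G[2]$ and image $G^2$, so $[G:G^2] = \#G[2]$.

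The key trick is to use the element $\bar\fraka$ itself rather than squaring naively. Start from $(\fraka,\mu)$ with $\fraka\bar\fraka = \mu R$. Then
\[
(\fraka,\mu)^2 = (\fraka^2, \mu^2) = (\fraka, \mu)\cdot(\fraka,\mu),
\]
and we want to change this by an element of $P_R$. Note that $\fraka\bar\fraka=\mu R$ gives $\fraka^{-1} = \mu^{-1}\bar\fraka$. Rather than inverting, we look for a principal modification. Concretely, pick a nonzero $\beta \in \fraka^{-1}$ (or in $\bar\fraka^{-1}$), so that $\beta\fraka \subset R$. Multiplying by $(\beta R, \beta\bar\beta)\in P_R$ gives $(\beta\fraka, \beta\bar\beta\mu)$, which makes the ideal integral but leaves a totally positive $\beta\bar\beta\mu \in K_0^+$ that is generally not rational. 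Squaring helps here. We use $(\fraka,\mu)^2 \sim (\fraka,\mu)\cdot(\bar\fraka,\mu)^{-1}\cdot(\bar\fraka,\mu)(\fraka,\mu)$, and observe that $(\fraka,\mu)(\bar\fraka,\mu) = (\mu R, \mu^2) \in P_R$, since $\mu\bar\mu=\mu^2$ for $\mu \in K_0$. Hence $[(\bar\fraka,\mu)] = [(\fraka,\mu)]^{-1}$, and therefore
\[
[(\fraka,\mu)]^2 = [(\fraka,\mu)]\cdot[(\bar\fraka,\mu)]^{-1} = [(\fraka\bar\fraka^{-1}, 1)] = [(\mu^{-1}\fraka^2, 1)].
\]
So every square is represented by a pair whose second entry is $1$. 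The norm part is then already rational, and it remains to make the ideal integral without spoiling rationality.

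To make $\frakc := \mu^{-1}\fraka^2$ integral, multiply by $(mR, m^2)\in P_R$ with $m\in\Z_{>0}$ a common denominator, so that $m\frakc\subset R$. This yields $(m\frakc, m^2)$ with $m^2\in\Z_{>0}$, which lies in the generating set of $\frakC(R)_\Z$. This is a cleaner route than choosing $\beta$ as above, which I would abandon. The main obstacle, such as it is, is making sure that all the objects stay legitimate. First, $\frakc$ must be an invertible fractional $R$-ideal with $\frakc\bar\frakc = R$; this holds because $\bar\fraka^{-1}$ is invertible and $\fraka\bar\fraka^{-1}\cdot\bar\fraka\fraka^{-1}=R$. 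Second, we need $\frakC(R)_\Z$ to be a subgroup, i.e.\ closed under inverses. The paper asserts this, and it can be checked the same way, since $[(\fraka,N)]^{-1} = [(\bar\fraka,N)]$ with $\bar\fraka\subset R$ integral because $\bar R=R$.
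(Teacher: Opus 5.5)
Your argument is correct and is essentially the paper's proof: the detour through $[(\bar\fraka,\mu)] = [(\fraka,\mu)]^{-1}$ amounts to multiplying $(\fraka^2,\mu^2)$ by the principal pair $(\mu^{-1}R,\mu^{-2}) \in P_R$. After clearing denominators with $(mR,m^2)$ you land on exactly the paper's representative $(\tfrac{m}{\mu}\fraka^2, m^2)$, and the index statement then follows from $[G:G^2] = \#G[2]$ for the finite abelian group $G = \frakC(R)$, as you say.
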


The Shimura class group fits into the following exact sequence:
\begin{equation} \label{eq:shimura_exact_sequence}
      1 \xrightarrow{~} \frac{(R^\times_{0})^+}{N_{K/K_0}(R^\times)} \xrightarrow{u~\mapsto~[(R, u)]} \frakC(R) \xrightarrow{[(\fraka, \mu)]~\mapsto~[\fraka]} \Cl(R) \xrightarrow{N_{K/K_0}} \Cl^+(R_0),
\end{equation}
where $R_0 = R \cap K_0$, $(R^\times_{0})^+$ is the subgroup of totally positive units, and $\Cl^+(R_0)$ is the narrow class group (which, as we recall, is the group of invertible fractional ideals modulo the subgroup of principal fractional ideals admitting a totally positive generator). For instance, this implies that $\#\frakC(R)[2] \leq \#\Cl(R)[2] \cdot U$ with $U = [ (R^\times_{0})^+ : N_{K/K_0}(R^\times) ] \leq 2^{g-1}$, where the last bound arises as in the proof of Lemma~\ref{lem:different_mus}.
Under mild conditions, the last map is surjective.
For example, this is true if $R = \Z_K$ and at least one prime ideal of $R_0$ ramifies in $K$~\cite[Proof of Thm.\,3.1]{BrokerGruenewaldLauter}
(that theorem also states that if $g = 2$ then this can only fail if $\Gal(K, \Q) \cong C_2 \times C_2$).
So the \defi{Shimura class number} $\frakh(R)= \# \frakC(R)$ can often be computed as $h(R) U / h^+(R_0)$. An algorithm for computing Shimura class groups was described in~\cite[\S4]{EngeThome2014} (implemented in PARI/GP).

We are now ready to state our main theorem of interest: 
\begin{theorem} \label{thm:action}
  Assume that the set $\M_k(R)$ is non-empty. There is a free action 
  \[ \frakC(R) \times \M_k(R) \to \M_k(R) : \left( \, [(\fraka, \mu)],  (A, \lambda, \iota) \, \right) \mapsto [(\fraka, \mu)] \star  (A, \lambda, \iota). \]
  It can be assumed w.l.o.g.\ that $\fraka \subset R$ and has norm coprime to $p$, in which case
  the image arises as the codomain of an $\iota(\mu)$-isogeny with kernel $\cap_{x \in \fraka} \ker \iota(x)$.
\end{theorem}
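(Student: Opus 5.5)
We construct the action via kernel ideals, following the classical Shimura--Taniyama construction; the precise reference is the Picard-group action of~\cite{pila}.

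\textbf{Step 1: reduction to integral ideals of norm coprime to $p$.} Given a class $[(\fraka,\mu)]$, we first find a representative with $\fraka \subset R$ and $[R:\fraka]$ coprime to $p$. Since $\fraka$ is invertible, it is locally principal. Choose $\beta \in \fraka^{-1}$ such that $\beta\fraka \subset R$ is integral and coprime to $p$; this is a CRT/approximation argument at the finitely many primes above $p$ and those dividing the conductor. Then replace $(\fraka,\mu)$ by $(\beta\fraka, \beta\bar\beta\mu)$, which is equivalent modulo $P_R$. Note that $\beta\bar\beta\mu$ is again totally positive and lies in $R_0$, since $\beta\fraka\overline{\beta\fraka}=\beta\bar\beta\mu R \subset R$.

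\textbf{Step 2: constructing the image.} Given $(A,\lambda,\iota)$ primitively $R$-oriented, put $A[\fraka] = \cap_{x\in\fraka}\ker\iota(x)$. This is a finite étale subgroup because the norm is prime to $p$. Set $A' = A/A[\fraka]$ with quotient map $\varphi$.

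\emph{The orientation.} Since $A[\fraka]$ is $\iota(R)$-stable, $\iota$ descends to $\iota' : R \to \End(A')$.

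\emph{Order of $\deg\varphi$.} Working in $\ell$-adic Tate modules, which are locally free rank-one $R\otimes\Z_\ell$-modules for $\ell \neq p$ by Bass/Gorenstein-type arguments in the spirit of Appendix~\ref{app:bass}, one has $T_\ell A' \cong \fraka^{-1}T_\ell A$. Hence $\deg\varphi = \#(R/\fraka) = N(\fraka)$.

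\emph{The polarization.} Use $\fraka\bar\fraka=\mu R$. The kernel of $\iota(\mu)$ equals $A[\mu R] = A[\fraka\bar\fraka]$, which contains $A[\fraka]$. So the polarization $\lambda\circ\iota(\mu)$, which is a polarization because $\mu$ is totally positive and the Rosati involution acts as complex conjugation, factors as $\hat\varphi\lambda'\varphi$ for a unique $\lambda'$ on $A'$. A degree count then gives $\deg\lambda' = \deg\iota(\mu)/N(\fraka)^2 = 1$, so $\lambda'$ is principal. Compatibility of Rosati with $\iota'$ follows from that of $\iota$.

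\emph{Primitivity.} The lattice description $T_\ell A' \cong \fraka^{-1}T_\ell A$, together with invertibility of $\fraka$, gives $\iota'^{-1}\End(A') = R$ via $\End \otimes\Z_\ell = \End_R$ locally. At $p$ one uses that $\varphi$ is separable of degree prime to $p$. So the image $(A',\lambda',\iota')$ is again primitively $R$-oriented.

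\textbf{Step 3: well-definedness, action axioms and freeness.} For well-definedness, principal pairs $(\beta R,\beta\bar\beta)$ with $\beta \in R$ act through the endomorphism $\iota(\beta)$ itself, giving an oriented isomorphism. A general equivalence is reduced to this by the Step 1 trick. Compatibility $(\fraka\frakb)\star = \fraka\star\frakb\star$ follows from $A[\fraka\frakb]$ being the preimage of $A'[\frakb]$ under $\varphi$, and the polarizations multiply accordingly.

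For freeness, suppose $[(\fraka,\mu)]\star X \cong X$ via an oriented isomorphism. Composing with $\varphi$ gives an element of $\End(A)$ commuting with $\iota(K)$. Since $\iota(K)$ is its own centralizer in $\End^0(A)$ (degree $2g$, maximal commutative), this element is $\iota(\beta)$ for some $\beta$. From $\ker\iota(\beta) = A[\fraka]$ we get $\fraka = \beta R$. From the polarization identity, $\beta\bar\beta = \mu$ (up to the identity $\iota(\beta)^\dagger\iota(\beta) = \iota(\mu)$). Hence the class is trivial.

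\textbf{Expected main obstacle.} The hardest point is the local statement in Step 2 that $T_\ell A$ is free of rank one over $R\otimes\Z_\ell$, which is needed both for $\deg\varphi=N(\fraka)$ and for primitivity of $\iota'$. This can fail for non-Gorenstein orders. In the superspecial case the $p$-part also needs care. The first approach is to use that $\fraka$ is invertible, hence locally principal, and argue locally: locally $\fraka = \alpha R_\ell$, so $A[\fraka]_\ell = \ker\iota(\alpha)$ on $\ell$-power torsion, which avoids needing freeness of $T_\ell A$ altogether.
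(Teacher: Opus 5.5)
The gap is in the polarization part of Step~2. From $A[\fraka] \subseteq A[\fraka\overline{\fraka}] = \ker \iota(\mu)$ you conclude that $\lambda \circ \iota(\mu)$ factors as $\hat\varphi \circ \lambda' \circ \varphi$, and you then get principality from the degree count. Kernel containment only gives $\lambda \circ \iota(\mu) = \psi \circ \varphi$ for some homomorphism $\psi : A' \to A^\vee$. For $\psi$ to have the form $\hat\varphi \circ \lambda'$ with $\lambda'$ a polarization, $\ker \varphi$ must be isotropic for the pairing $e_{\iota(\mu)}(\cdot, \lambda(\cdot))$; this is exactly the criterion in Proposition~\ref{prop:max_iso}.

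The only consequences of the Shimura condition your argument uses are $\mu \in \fraka$ (for the containment) and $N(\fraka)^2 = N_{K/\Q}(\mu)$ (for the degree count). Both also hold for pairs that are not Shimura ideals. Suppose $\ell R = \frakp_1 \overline{\frakp}_1 \frakp_2 \overline{\frakp}_2$ and set $\frakb = \frakp_1 \overline{\frakp}_1$. Then $\ell \in \frakb$ and $N(\frakb)^2 = \ell^4 = \deg[\ell]$. Yet $A[\frakb] = A[\frakp_1] \oplus A[\overline{\frakp}_1]$ is not isotropic for $e_\ell(\cdot,\lambda(\cdot))$: the relation $e_\ell(\iota(\alpha)P, \lambda(Q)) = e_\ell(P, \lambda(\iota(\bar\alpha)Q))$, together with non-degeneracy, forces $A[\frakp_1]$ to pair non-trivially with $A[\overline{\frakp}_1]$. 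So $\lambda \circ [\ell]$ does not descend to $A/A[\frakb]$ at all, consistent with $(\frakb,\ell) \notin I_R$ (Lemma~\ref{lem:possible_splittings}). Your reasoning would nevertheless produce a principal polarization there.

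The missing idea is isotropy of $A[\fraka]$, and this is where the full condition $\fraka\overline{\fraka} \subseteq \mu R$ is needed. Since $\mu \in \fraka\overline{\fraka}$, you can write $\mu = \sum_i x_i \bar y_i$ with $x_i, y_i \in \fraka$. Then $z_i = x_i \bar y_i/\mu \in R$ and $\sum_i z_i = 1$. For $P, Q \in A[\fraka]$ this gives $e_{\iota(\mu)}(P,\lambda(Q)) = \prod_i e_{\iota(\mu)}(\iota(z_i)P,\lambda(Q)) = \prod_i e_{\iota(y_i)^\dagger}(\iota(x_i)P,\lambda(Q)) = 1$. Together with $\#A[\fraka]^2 = N(\fraka)^2 = N_{K/\Q}(\mu) = \deg\iota(\mu)$, this makes $A[\fraka]$ maximal isotropic, and Proposition~\ref{prop:max_iso} then yields the unique principal $\lambda'$. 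This is the paper's argument.

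The rest of your proposal is sound:
\begin{itemize}
\item Your fallback via local principality of $\fraka$, used instead of freeness of $T_\ell A$ (which can indeed fail for non-Gorenstein $R$), correctly gives $\#A[\fraka] = N(\fraka)$ and $T_\ell A' = \fraka^{-1}T_\ell A$.
\item Your local-lattice proof of primitivity is a valid alternative to the paper's use of two equivalent representatives of coprime norms.
\item Your freeness argument, via $\iota(K)$ being self-centralizing in $\End^0(A)$, correctly spells out what the paper calls easy.
\end{itemize}
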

\noindent At this point we make no claims on the orbits of the action. This is discussed in Section~\ref{sec:orbits}.

\section{First examples of superspecial CM orientations} \label{sec:superspecial}

The focus of our paper lies on the case of superspecial abelian varieties (and surfaces in particular).
The easiest way to generate a $g$-dimensional PPAV that is oriented by a CM field $K$ of degree $2g$ is by considering $A = E^g$, for some supersingular elliptic curve $E$, equipped with the product polarization $\lambda$. 
The curve $E$ admits an isomorphism $\theta : \O \to \End(E)$ from a maximal order $\O$ in the unique definite quaternion algebra $B_{p, \infty}$ over $\Q$ that is ramified at $p$. By component-wise application we obtain an isomorphism
$\theta : M_g(\O) \to \End(A)$
that we denote by the same letter. Therefore giving a $K$-orientation on $(A, \lambda)$ is the same as giving an embedding $\iota : K \hookrightarrow M_g(B_{p, \infty})$
such that $\iota(\bar \alpha) = \iota(\alpha)^*$ for all $\alpha \in K$; here ${}^*$ denotes conjugate-transpose, which under $\theta$ indeed corresponds to the Rosati involution on $A$ according to Example~\ref{ex:conjugate_transpose}. For a CM order $R \subset K$, a primitive $R$-orientation is then a $K$-orientation mapping $R$ into $M_g(\O)$, in such a way that $\iota(\alpha) \notin M_g(\O)$ for any $\alpha \in K \setminus R$.

\begin{example} \label{ex:example_orientation}
  Let $p \equiv 3 \bmod 4$ and consider the elliptic curve $E : y^2 = x^3 - x$ over $\F_{p^2}$, which is supersingular. Fix a square root $\i \in \F_{p^2} \setminus \F_p$ of $-1$. Consider $A = E^2$ equipped with the product polarization $\lambda$. It is well-known that
  $\theta \ : \ i \mapsto ( \, (x, y) \mapsto (-x, \i y) \, ), \ j \mapsto \pi_p$
  establishes an isomorphism between the maximal order
  $\O = \Z\langle 1, i, (1+j)/2, (i + k)/2 \rangle \subset \Bpinf$ and $\End(E)$.
  Assume that $m = p + 4$ and $d = (p + 1)/4$ are coprime, square-free integers. 
  We claim that
  \[ \iota \ : \ \frac{1 + \sqrt{-m}}{2} \mapsto \begin{pmatrix} \frac{1 + j}{2} & i \\ i & \frac{1 + j}{2} \end{pmatrix}, \ \sqrt{d} \mapsto \begin{pmatrix} 0 & \frac{1+j}{2} \\ \frac{1-j}{2} & 0 \end{pmatrix}  \]
  is a primitive  $R$-orientation on $(A, \lambda)$, where $R = \Z[(1 + \sqrt{-m})/2, \sqrt{d}]$; as above we identify elements of $M_2(\O)$ with their images in $\End(A)$ under $\theta$. If we denote the two matrices by $M$ and $D$, then in order to verify that this indeed concerns a CM orientation, the key properties to check are that
  \begin{itemize}
      \item $M$ and $D$ commute,
      \item $M^2 - M + (m+1)/4 \Id_2 = 0$ and $D^2 - d \Id_2 = 0$,
      \item $M^\ast = 1 - M$ and $D^\ast = D$,
  \end{itemize}
  which is easily done.
  To verify primitivity, note that if $d \not \equiv 1 \bmod 4$ then $R = \Z_K$ (this follows, e.g., from~\cite[p.\,68]{lang_ANT}), in which case primitivity is automatic. To cover the case $d \equiv 1 \bmod 4$ it is enough to observe that $\iota((1 + \sqrt{d})/2) \notin M_2(\O)$.

  As a toy example, letting $p = 103$ we find $m = 107$ and $d = 26$. Using that the fundamental unit $\sqrt{26} + 5$ of  $\Q(\sqrt{26})$ is not totally positive and that $107 R_0$ ramifies in $K$,\footnote{It can in fact be argued that the existence of a ramifying prime follows from the fact that the fundamental unit is not totally positive.} from~\eqref{eq:shimura_exact_sequence} we see that the sequence
  \[       1 \xrightarrow{~}  \frakC(R) \xrightarrow{[(\fraka, \mu)]~\mapsto~[\fraka]} \Cl(R) \xrightarrow{N_{K/K_0}} \Cl^+(R_0) \xrightarrow{~} 1  \]
  is exact. With the use of Magma or SageMath one then verifies that $\frakC(R)$ has order $42$ and is generated by the classes of
    \[ \mathfrak{t} = \left( (3, \sqrt{-107} - 1), 3 \right), \qquad  \mathfrak{s} = \left( (7, \sqrt{-107} \sqrt{26} - 2), 7 \right) \]
  (and cyclically generated by the class of their product $\mathfrak{s} \mathfrak{t}$). Then, for instance, using the $(3, 3)$-isogeny formulae from~\cite{BFT} one  calculates that
  $ [\frakt] \star (A, \lambda, \iota) = (\Jac(H), \lambda_1, \iota_1 ) $
    where $H : y^2 = 102x^6 + 25x^5 + 57x^4 + 4x^3 + 41x^2
    + 46x + 14$ and $\lambda_1$ is the canonical principal polarization on $\Jac(H)$, and where $\iota_1$ denotes the push-forward orientation (which we do not spell out here).
    See Section~\ref{subsec:representation} for more details on how to evaluate the Shimura class group action in practice.
\end{example}

\begin{example} \label{ex:secondorientation}
  Now assume that moreover $d = a^2 + b^2$ for certain integers $a, b \in \Z$ (e.g., $26 = 5^2 + 1^2$ in the toy example). Since $d$ is square-free we can assume that $b$ is odd. Then a similar reasoning shows that
  \[ \iota' \ : \ \frac{1 + \sqrt{-m}}{2} \mapsto \begin{pmatrix} i + \frac{1 + j}{2} & 0 \\ 0 & i + \frac{1 + j}{2} \end{pmatrix}, \ \sqrt{d} \mapsto \begin{pmatrix} a & b \\ b & -a \end{pmatrix}  \]
  is another $R$-orientation on $(A, \lambda)$. It is always primitive because using that $b$ is odd one sees that $\iota'((1 + \sqrt{d})/2) \notin M_2(\O)$.  In Example~\ref{ex:example_orientation_revisited} we will argue that $(A, \lambda, \iota)$ and $(A, \lambda, \iota')$ cannot lie in the same orbit under the action of $\frakC(R)$.
\end{example}

\begin{example} \label{ex:jacexample}
Away from products, let $p \neq 2, 5$ and consider the genus-$2$ curve $C : y^2 = x^5 - 1$ together with its Jacobian $A = \Jac(C)$, equipped with the canonical polarization. This PPAS is superspecial if and only if $p \equiv 4 \bmod 5$. For the CM order $R = \Z[\zeta_5]$ we have a primitive orientation $\iota : R \hookrightarrow \End(A)$ sending $\zeta_5$ to the automorphism of $A$ induced by $ C \to C : (x, y) \mapsto (\xi_5 x, y)$; here $\zeta_5$, $\xi_5$ denote primitive $5$-th roots of unity in $\C$ and $k$, respectively. 
\end{example}

\begin{example}[R2-D2] \label{ex:R2D2}
We fix a supersingular elliptic curve $E / \F_{p^2}$ for which $\pi_{p^2} = -p$ and let $(A, \lambda)$ be its Weil restriction, i.e.,  
\[ A = W_{\F_{p^2} / \F_p}(E) \cong_{\bar \F_p} E \times E^{(p)} \]
equipped with the product polarization.
We choose $E$ such that it comes equipped with an explicit isogeny 
$\tau : E \to E^{(p)}$, satisfying $\hat \tau \circ \pi_p = \pi_p \circ \tau$,  
of square-free degree $d$ not a multiple of $p$. Such curves were studied in~\cite{ChenuSmith} and a  method for finding large examples can be found in~\cite[\S5]{orientexpress}.
Then, an orientation is given by
\[ \iota : \Q(\sqrt{d}, \sqrt{-p}) \to \End^0(A) : \sqrt{d} \mapsto \begin{pmatrix} 0 & \hat \tau \\ \tau & 0 \end{pmatrix}, \sqrt{-p} \mapsto \Pi_p = \begin{pmatrix} 0 & \pi_p \\ \pi_p & 0 \end{pmatrix}, \]
where we note that $\Pi_p$ is the $p$-th power Frobenius endomorphism on $A$.
These matrices are fixed, resp.\ negated, under the Rosati involution (= conjugate-transpose), they square to $d$, resp.\ $-p$, and they commute, so this is a well-defined field embedding.
It typically concerns a primitive $R$-orientation with $R = \Z[\sqrt{d}, \sqrt{-p}]$. Indeed, clearly $(1 + \sqrt{d})/2, (1 + \sqrt{-p})/2$ do not map to endomorphisms, so the only concern is whether $(1 + \sqrt{-dp})/2$ maps to an endomorphism. This concern is void if $-dp \not \equiv 1 \bmod 4$ (true for the parameters from~\cite{orientexpress}) because then $(1 + \sqrt{-dp})/2$ is not an algebraic integer. In general, one can verify that the orientation is primitive if and only if
 $(1 + \pi_p \circ \tau)/2 \notin \End(E)$.
\end{example}

In the above examples, the primitivity of the orientation was easily established. In general, however,   finding the primitive order $R = \iota^{-1} \End(A)$ from a description of $\iota$ on a suborder $R' \subsetneq R$ may be a very hard problem. Indeed, this is similar to computing the endomorphism ring of an ordinary abelian surface $A$ over a finite field $\F_q$ from its Frobenius order $\Z[\pi_q, \hat{\pi}_q]$, for which no efficient algorithms are available~\cite{bisson2015computing}.

\begin{remark}
    R2-D2 is part of a larger family of orientations called R2, discussed in Appendix~\ref{app:R2families}. The suffix D2 refers to the Galois group $D_2 \cong C_2 \times C_2$ of $K = \Frac(R)$ over $\Q$.
\end{remark}

\begin{remark}
    In Section~\ref{sec:algebra} we will develop the theory of CM orientations on the algebraic side of the IKO correspondence. Within that framework, we will discuss an easy technique for producing superspecial PPASs together with a prescribed CM orientation, see Section~\ref{sec:constructing_orientations}. 
    In Appendix~\ref{app:R2families} we will put Example~\ref{ex:R2D2} inside the larger family of explicit $\F_p$-rational orientations that we call R2.
\end{remark}

\section{Evaluating the Shimura class group action} \label{sec:evaluating_the_action}

In this section we describe the main auxiliary tools needed for cryptographic applications of the Shimura ideal class group action on oriented superspecial PPAVs in the $\mathbf{g}$-REGA model (e.g., non-interactive key exchange).

\subsection{On the generating set $\mathbf{g}$} \label{subsec:genset}

 As mentioned in Section~\ref{sec:shimura_cl_group}, we will act with $\frakC(R)_{\Z}$ in practice. Recall that this may be a strict subgroup of the full Shimura class group (but when choosing parameters carefully, equality may be enforced).
 We refer the reader to Section~\ref{sec:actingIKO} for how to act with elements of $\frakC(R) \setminus \frakC(R)_{\Z}$ in dimension $g = 2$, but this method requires knowledge that is not available to Bob when extracting the shared secret from Alice's public key, and vice versa.

 Similar to class group actions like CSIDH~\cite{csidh} and SCALLOP~\cite{scallop}, we choose the elements of our generating set $\mathbf{g}$ to be  representable by Shimura ideals $(\fraka, \ell)$ for small odd primes $\ell \nmid \disc R$,\footnote{This is a convenient assumption but it can be relaxed, to allow for horizontal isogenies coming from ramified primes; since this adds technicality while not adding much to $\frakC(R)$ (the corresponding ideal classes have very small order), we omit this.} and we collect enough (yet polylogarithmically many) such ideals for these to generate $\frakC(R)_{\Z}$ with overwhelming probability (heuristically). Note: the inverse of $[(\fraka, \ell)]$ is represented by $(\overline{\fraka}, \ell)$. In Appendix~\ref{app:proofevalsection} we prove that, in dimension $g = 2$, the options for $\fraka$ are the following.
 
 \begin{lemma} \label{lem:possible_splittings}
   For a given CM order R of degree $4$ and a prime number $\ell \nmid \disc R$
   there exists an ideal $\fraka \subset R$ such that $(\fraka, \ell) \in I_R$ if and only if $\ell R$ splits as either
   \begin{equation} \label{eq:goodpatterns} \ell R = \frakp_1 \overline{\frakp}_1 \frakp_2 \overline{\frakp}_2  \qquad \text{or} \qquad \ell R = \frakp \overline{\frakp} 
   \end{equation}
   for degree-$1$ prime ideals $\frakp_1, \frakp_2$, resp., a degree-$2$ prime ideal $\frakp$. The corresponding options for $\fraka$ are $\frakp_1 \frakp_2, \frakp_1 \overline{\frakp}_2, \overline{\frakp}_1 \frakp_2, \overline{\frakp}_1 \overline{\frakp}_2$, resp., $\frakp, \overline{\frakp}$.
 \end{lemma}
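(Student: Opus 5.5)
Since $\ell \nmid \disc R$, the prime $\ell$ is unramified in $R$ and coprime to the conductor, so $R_\ell = R \otimes \Z_\ell$ is a product of unramified extensions of $\Z_\ell$. Ideals of $R$ of $\ell$-power norm are therefore invertible and factor uniquely into the primes above $\ell$. Complex conjugation permutes these primes, and every prime $\frakq$ of $R_0$ above $\ell$ either splits in $K$ (as $\frakq R = \frakP\overline{\frakP}$ with $\frakP \neq \overline{\frakP}$) or is inert (as $\frakq R = \frakP = \overline{\frakP}$). I would reduce the whole statement to this local bookkeeping and then run through the finitely many decomposition patterns for $g=2$.

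\emph{Necessity.} Assume $\fraka \subset R$ and $\fraka\overline{\fraka} = \ell R$. Comparing norms gives $N(\fraka)^2 = \ell^4$, so $N(\fraka) = \ell^2$ and $\fraka$ is a product of primes above $\ell$. Write $\fraka = \prod \frakP^{e_\frakP}$. The identity $\fraka\overline{\fraka} = \ell R = \prod \frakP$ (using unramifiedness) forces $e_\frakP + e_{\overline{\frakP}} = 1$ for every $\frakP$. If $\frakP = \overline{\frakP}$ this reads $2e_\frakP = 1$, which is impossible. Hence no prime of $R_0$ above $\ell$ is inert in $K$, and every $\frakq \mid \ell R_0$ splits. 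Now enumerate by the splitting of $\ell$ in $R_0$, a quadratic order in which $\ell$ is unramified:
\begin{itemize}
\item If $\ell R_0 = \frakq_1\frakq_2$ with both $\frakq_i$ split, we get the first pattern with degree-$1$ primes.
\item If $\ell R_0 = \frakq$ is inert (residue degree $2$) and $\frakq$ splits in $K$, we get $\ell R = \frakp\overline{\frakp}$ with $\frakp$ of degree $2$.
\end{itemize}
In either case $\fraka$ must contain exactly one prime from each conjugate pair, which produces exactly the listed options.

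\emph{Sufficiency.} Each listed $\fraka$ satisfies $\fraka\overline{\fraka} = \ell R$ by construction, and $\ell$ lies in $K_0^+$. Hence $(\fraka,\ell) \in I_R$.

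The main point to handle with care is the non-maximal order $R$. I would either pass to $\Z_K$ and use that localisation at $\ell$ commutes with the inclusion $R \subset \Z_K$ because $\ell \nmid [\Z_K:R]$ (as $\ell \nmid \disc R$), or work directly with $R \otimes \Z_\ell \cong \Z_K \otimes \Z_\ell$. In both approaches, ideals of $\ell$-power norm correspond bijectively to ideals of $\Z_K$ via $\fraka \mapsto \fraka\Z_K$ and $\frakA \mapsto \frakA \cap R$, so the factorization argument transfers verbatim.
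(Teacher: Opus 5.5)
Your proof is correct, but it is organized differently from the paper's.

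The paper argues by the Galois type of $K$. In the Galois cases ($C_2\times C_2$, $C_4$) all primes above $\ell$ have the same degree, and the few resulting shapes are checked by hand (``the analysis is easy''). In the $D_4$ case, the one extra shape $\frakp_1\frakp_2\frakp$ is discarded because the degree-$2$ prime $\frakp$ is forced to be self-conjugate.

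You never look at $\Gal(L,\Q)$. The exponent identity $e_{\frakP}+e_{\overline{\frakP}}=1$ shows in one stroke that a suitable $\fraka$ exists if and only if no prime above $\ell$ is self-conjugate. Equivalently, every prime of $R_0$ above $\ell$ splits in $K/K_0$. The two admissible patterns then drop out of the splitting of $\ell$ in the quadratic order $R_0$, and the list of options for $\fraka$ (one prime from each conjugate pair) comes for free.

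What your route buys:
\begin{itemize}
\item It is uniform and self-contained, and it fills in the step the paper leaves implicit.
\item It makes the reduction from $R$ to $\Z_K$ at $\ell$ explicit, which the paper does not spell out.
\item It extends verbatim to arbitrary $g$, recovering the $2^{g-1}$ pairs in the totally split case mentioned after the lemma.
\end{itemize}
The paper's casework instead keeps things phrased in terms of the Galois type, which is how the Chebotarev proportions in the subsequent remark are stated.

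As a small simplification, you can skip the norm computation. The identity $\fraka\overline{\fraka}=\ell R$ together with $\overline{\fraka}\subseteq R$ already gives $\ell R\subseteq\fraka$. Since $R/\ell R$ is \'etale over $\F_\ell$, this makes $\fraka$ a squarefree product of primes above $\ell$.
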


 \noindent One sees that if $\ell$ splits completely then we obtain two pairs $\frakl, \frakl^{-1}, \frakl', \frakl'^{-1} \in \frakC(R)_\Z$, while if $\ell$ splits into two complex-conjugate primes, we end up with one pair $\frakl, \frakl^{-1} \in \frakC(R)_{\Z}$.  E.g., this means that if we consider a set of primes $\ell_1, \ldots, \ell_r$ that split completely along with the corresponding set $\mathbf{g} = \{ \frakl_1, \frakl_1', \ldots, \frakl_r, \frakl_r' \}$, then the number of expressions of the form
 \begin{equation} \label{eq:keys}
   \frakl_1^{e_1} \frakl_1'^{e_1'} \cdots \frakl_r^{e_r} \frakl_r'^{e_r'}, \qquad |e_i|, |e_i'| \leq B
 \end{equation}
 for some bound $B \in \Z_{\geq 0}$ equals $(2B+1)^{2r}$, potentially yielding the same amount of elements of $\frakC(R)_\Z$. 
 More generally, for arbitrary $g$, we end up with $2^{g-1}$ pairs in the completely split case, hence 
 \[ (2B+1)^{r2^{g-1}} \] many expressions.
This growing ``key space" for fixed $r$ and $B$ is one of the features of working in higher dimension.

\begin{remark} 
The asymptotic proportions of primes factoring as in~\eqref{eq:goodpatterns} 
depend on $\Gal(L, \Q)$ where $L$ denotes the Galois closure of $K$. Using Chebotarev's density theorem, one checks that these proportions
are given by $\frac{1}{4}$, resp.\ $\frac{1}{2}$, in the case $C_2 \times C_2$, by $\frac{1}{4}$, resp.\  $0$, in the case $C_4$, and by $\frac{1}{8}$, resp.\ $\frac{1}{4}$, in the case $D_4$.
In practice, see e.g.\ Appendix~\ref{app:key_exch}, we can bias this in our favor and choose parameters such that as many small primes $\ell$ as possible split completely. 
\end{remark}

In order to find the factorization of $\ell R$, one can look for an order $\Z[\gamma] \subset R$ (i.e., a monogenic suborder) such that every element of $R$ can be written as a $\Q$-linear combination of $1, \gamma, \ldots, \gamma^{2g - 1}$ without denominators that are $0$ mod $\ell$ (e.g., this is true as soon as $\ell \nmid [R : \Z[\gamma]]$). 
Writing $f(x) \in \Z[x]$ for the minimal polynomial of $\gamma$, the factorization of $\ell R$ can be obtained from that of $f(x) \bmod \ell$. For $g = 2$ the details are as follows; it is convenient to precompute\footnote{An elegant way of computing $r$ directly from $f(x)$ is through its cubic resolvent: in cases $C_4, D_4$ it concerns the unique integer root of the cubic resolvent, while in the $C_2 \times C_2$ case it can be characterized as the largest root; see~\cite{conrad_cubicquartic}.} $r := \gamma_1 \bar{\gamma}_1 + \gamma_2 \bar{\gamma}_2 \in \Z$, where
      $\gamma_1 = \gamma, \bar{\gamma}_1, \gamma_2, \bar{\gamma}_2 \in \C$ are the roots of $f(x)$.
    \begin{itemize}
      \item If $f(x) \equiv f_1(x) f_2(x) \bmod \ell$ for irreducible quadratic factors $f_i(x)$ such that $f_1(0) + f_2(0) \equiv r \bmod \ell$ then $\ell R = \frakp \overline{\frakp}$ with $\frakp = (\ell, f_1(\gamma))$. 
      \item If $f(x)$ factors completely mod $\ell$ then we label the roots as $a_1, b_1, a_2, b_2 $ of $f(x) \bmod \ell$ in  such a way  that $r \equiv a_1b_1 + a_2b_2 \bmod \ell$. Then we can conclude that $\ell R = \frakp_1 \overline{\frakp}_1 \frakp_2 \overline{\frakp}_2$ with $\frakp_i = (\ell, \gamma - a_i)$ for $i = 1, 2$.
\end{itemize}

\subsubsection{The $C_2 \times C_2$ case.}
There is an alternative  
way of describing an ``easy" generating set $\mathbf{g}$ in the case of a CM field $K$ with $\Gal(K, \Q) \cong C_2 \times C_2$, even though this time the group generated by $\mathbf{g}$ may be a strict subgroup of $\frakC(R)_{\Z}$ (but of very small index). 
The starting point is that $K$  has three quadratic subfields: the real field $K_0$ and two imaginary fields $K_1, K_2$. We let $R_i = R \cap K_i$ for $i = 1,2$.
It is then natural to consider the following map:
\[ \kappa_R : \Cl(R_1) \times \Cl(R_2) \to \frakC(R) : ([\fraka_1], [\fraka_2]) \mapsto [((\fraka_1 R)( \fraka_2 R), N_{K_1/\Q}(\fraka_1) N_{K_2/\Q}(\fraka_2))], \]
seen as taking values in $\frakC(R)_{\Z}$.
In Lemma~\ref{lem:almost_iso} below we argue that this group homomorphism is close to being an isomorphism (at least in the case $R = \Z_K$).
We can then build $\mathbf{g}$ from a set of ``easy" ideal classes 
\[ [\frakm_1 ], [\frakm_2], \ldots, [\frakm_s] \in \Cl(R_1), \qquad [\frakl_1 ], [\frakl_2], \ldots, [\frakl_t] \in \Cl(R_2),  \]
e.g., as chosen in CSIDH or SCALLOP,
by letting it  be the collection of images of $([\frakm_i], 1)$, $i = 1, \ldots, s$ and $(1, [\frakl_i])$, $i = 1, \ldots, t$. 

\begin{restatable}{lemma}{AlmostIso} \label{lem:almost_iso}
    Assume that $K$ is a $C_2 \times C_2$ CM field, and let $K_0$ be the real quadratic subfield and $K_1, K_2$ the imaginary quadratic subfields. 
    If $\Z_K^\times$ contains no roots of unity other than $\pm 1$,  then 
    \begin{align*} |\ker{\kappa_{\Z_K}}| \leq & \frac{1}{2q(K)}\prod_{\ell \text{ prime}}{e(\ell)}, \\ |\coker{\kappa_{\Z_K}}| \leq & \frac{1}{4q(K)} |\coker{N_{K/K_0}}| \prod_{\ell \text{ prime}}{e(\ell)},
    \end{align*}
    where $q(K) = [\Z_K^\times : \Z_{K_0}^\times] \in \{1,2\}$,  $e(\ell)$ is the ramification index of a prime $\ell$ in $K$, and $N_{K/K_0}: \Cl(\Z_K) \to \Cl^+(\Z_{K_0})$ denotes the ideal class norm map.
\end{restatable}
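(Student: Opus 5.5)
The plan is to get both bounds from one explicit description of $\ker\kappa_{\Z_K}$, combined with a class number count. For the kernel I would make explicit what it means for $\kappa_{\Z_K}([\fraka_1],[\fraka_2])$ to be trivial in $\frakC(\Z_K)$. It means there is $\mu\in K^\times$ with
\[ (\fraka_1\Z_K)(\fraka_2\Z_K)=\mu\Z_K, \qquad \mu\bar\mu = N_{K_1/\Q}(\fraka_1)\,N_{K_2/\Q}(\fraka_2). \]
The key observation is that the ideal $\fraka_1\Z_K = \mu\,(\fraka_2\Z_K)^{-1}$ is extended from $K_1$, so it is fixed by $\Gal(K/K_1)$. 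Up to the principal factor $\mu\Z_K$, it is also extended from $K_2$, so it is fixed by $\Gal(K/K_2)$. I will exploit this by applying the nontrivial automorphism $\sigma$ of $K/K_1$, which fixes $\fraka_1\Z_K$. Comparing gives the equality
\[ \frac{\mu}{\sigma(\mu)}\,\Z_K=\frac{\fraka_2\Z_K}{\sigma(\fraka_2\Z_K)}. \]
Since $\sigma$ restricts to complex conjugation on $K_2$, the right-hand side is $\fraka_2/\bar\fraka_2$ extended to $\Z_K$.

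From this I would deduce that, after adjusting $\fraka_1$ and $\fraka_2$ by principal ideals, $\fraka_1\Z_K$ is a $\Gal(K/\Q)$-invariant (ambiguous) ideal. Modulo ideals extended from $\Q$, the ambiguous ideals of $K$ form an elementary abelian group generated by the ramified primes. Its order is $\prod_\ell e(\ell)$, counting a factor $e(\ell)$ for the possible ramified prime powers above each $\ell$. This gives a map from $\ker\kappa_{\Z_K}$ into that group, with injectivity up to units. The factor $1/(2q(K))$ should then come from two sources. First, rational ideals and $\sqrt{-d_i}$-type generators give relations automatically (a factor $2$). Second, when $q(K)=2$ there is a unit $\eta\in\Z_K^\times$ with $\eta/\bar\eta$ not a square of a unit. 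Such a unit produces an extra ambiguous principal ideal, so these cases are not counted twice. The hypothesis that the only roots of unity are $\pm1$ is used here, since it pins down $\Z_K^\times/\Z_{K_0}^\times$.

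For the cokernel I would count instead of constructing. From the exact sequence \eqref{eq:shimura_exact_sequence} we get
\[ \#\frakC(\Z_K)=U\cdot h(\Z_K)/\#\operatorname{im}N_{K/K_0} = U\,h(\Z_K)\,|\coker N_{K/K_0}|/h^+(\Z_{K_0}). \]
I would then invoke the classical class number formula for biquadratic CM fields (Herglotz/Kubota):
\[ h(\Z_K)=\tfrac{Q}{2}\,h(K_0)h(K_1)h(K_2), \]
where $Q=q(K)$ is the Hasse unit index. The next step is to relate $U\,h(\Z_{K_0})/h^+(\Z_{K_0})$ to the norm of the fundamental unit. I expect the terms to cancel down to a factor that is at most $1$, or at most a constant absorbed into the factor $1/2$.

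Finally, I combine these with $|\coker\kappa| = \#\frakC(\Z_K)\,|\ker\kappa|/(h_1h_2)$ and insert the kernel bound. This should give exactly $\frac{1}{4q(K)}|\coker N_{K/K_0}|\prod_\ell e(\ell)$, possibly after using the identity $Q\cdot(\ldots)=1$-type relations between $q(K)$ and $U$.

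The main obstacle is the kernel step, specifically getting the exact constant $1/(2q(K))$ rather than a cruder bound like $\prod_\ell e(\ell)$. This requires a careful bookkeeping of which ambiguous ideals are already principal with a generator of the right norm $\mu\bar\mu$. The unit index bookkeeping between $q(K)$ and $U$ in the cokernel count is the secondary risk. I would first work it out separately in the two cases $q(K)=1$ and $q(K)=2$, and check it on a small example such as $\Q(\sqrt{-1},\sqrt{2})$-type fields.
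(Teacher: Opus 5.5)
The gap is the first inequality: you never actually bound $|\ker\kappa_{\Z_K}|$, and the count you sketch would not get there as stated.

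First, the assignment $([\fraka_1],[\fraka_2])\mapsto\fraka_1\Z_K$ into ambiguous ideals modulo rational ideals is not well defined. Writing $K_1=\Q(\sqrt{-m_1})$, an ambiguous representative of $[\fraka_1]$ is only determined up to rational ideals and $\sqrt{-m_1}\Z_{K_1}$, and the extension of the latter to $\Z_K$ is a nontrivial element of that group. Second, the map only remembers $[\fraka_2]$ up to classes capitulating in $K/K_2$, so ``injective up to units'' has no content. The saving of the factor $2q(K)$ is then only hoped for. A rigorous direct count would require determining the principal ambiguous ideals of $K$ (essentially $H^1(\Gal(K/\Q),\Z_K^\times)$) and the capitulation kernels. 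That is exactly the bookkeeping you flag as the obstacle.

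Two smaller points. The displayed $\sigma$-identity does not by itself produce an ambiguous representative; what does is $[\fraka_1]^2=1$, obtained by applying $N_{K/K_1}$, together with genus theory in $K_1$. Also, ambiguous ideals modulo rational ones form $\prod_\ell\Z/e(\ell)\Z$, which is not elementary abelian when $e(2)=4$, as happens e.g.\ for $\Q(\sqrt2,\sqrt{-5})$.

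The missing idea is to bypass the direct count, using only that $\fraka_1\fraka_2\Z_K$ is principal:
\begin{itemize}
\item The map $([\fraka_1],[\fraka_2])\mapsto([1],[\fraka_1],[\fraka_2])$ injects $\ker\kappa_{\Z_K}$ into the kernel of the extension map $j:\Cl(\Z_{K_0})\times\Cl(\Z_{K_1})\times\Cl(\Z_{K_2})\to\Cl(\Z_K)$.
\item By the same Herglotz--Kubota formula you invoke later, $|\ker j| = h(\Z_{K_0})h(\Z_{K_1})h(\Z_{K_2})\,|\coker j|/h(\Z_K) = \frac{2}{q(K)}|\coker j|$.
\item Lemmermeyer's formula for the cokernel of the restriction $\hat j$, which surjects onto $\coker j$, applies here with one ramified infinite place and $-1\notin N_{K/\Q}(K^\times)$. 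It gives $|\coker j|\le\frac14\prod_\ell e(\ell)$, hence the claimed bound $\frac{1}{2q(K)}\prod_\ell e(\ell)$.
\end{itemize}
So $q(K)$ enters through $h(\Z_K)$, not through unit bookkeeping on ambiguous ideals.

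Your cokernel step is the paper's route and closes exactly. Under the roots-of-unity hypothesis, $q(K)=2$ means there is $\eta\in\Z_K^\times$ with $\bar\eta=-\eta$, and then $\eta\bar\eta$ is a totally positive non-square unit of $K_0$. Hence $[\Z_{K_0}^\times:N_{K/K_0}(\Z_K^\times)]=4/q(K)$ in both cases. Combined with $h^+(\Z_{K_0})/h(\Z_{K_0})=4/[\Z_{K_0}^\times:(\Z_{K_0}^\times)^+]$, this gives $U\,h(\Z_{K_0})/h^+(\Z_{K_0})=1/q(K)$, which cancels the $q(K)$ in the class number formula. Therefore $\#\frakC(\Z_K)=\frac12 h(\Z_{K_1})h(\Z_{K_2})|\coker N_{K/K_0}|$ and $|\coker\kappa_{\Z_K}|=\frac12|\coker N_{K/K_0}|\,|\ker\kappa_{\Z_K}|$. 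This yields the second inequality only once the first has been proved.
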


\begin{proof}
    The proof follows from results by Lemmermeyer \cite{lemmermeyer_kuroda} and 
    the exact sequence~\eqref{eq:shimura_exact_sequence}. We refer to Appendix \ref{app:almostiso}  for all the details. \qed
\end{proof}

As mentioned in Section \ref{sec:shimura_cl_group}, a sufficient condition for $|\coker N_{K/K_0}| = 1$ is that the extension $K/K_0$ ramifies at a prime of $K_0$. 
A theorem characterizing such $C_2 \times C_2$ fields can be found in \cite[Thm.\,2]{SpearmanWilliams}.
We note that when $R$ is not a maximal order, a similar bound on $\ker \kappa_R$ can also be proven (see Corollary \ref{cor:kerbound} in Appendix \ref{app:almostiso}). We expect that a similar bound on the cokernel also holds for $\kappa_R$, but the proof seems less straightforward.  

\subsection{Kani's lemma with CM} \label{sec:Kani_with_CM}

In  Section~\ref{subsec:representation} we will use the machinery of higher-dimensional isogeny representations for the purpose of representing orientations. Let us quickly revisit the case of a polarized isogeny $\varphi: A_1\to A_2$ between PPAVs of dimension $g$, of reduced degree $d \in \Z_{>0}$. The
higher-dimensional representation of $\varphi$ exploits Kani's lemma \cite{Kani1997} to encode $\varphi$ into a higher-dimensional isogeny $\Phi$
of smooth reduced degree $N$, so that $\Phi$, and hence $\varphi$, can be
evaluated using a chain of isogenies of small degree.
The classical method makes use of an auxiliary matrix $\alpha$ with entries in $\Z$, coming from an expression of $n := N - d$ as a sum of $u = 1, 2, 4$ squares (see \cite[Prop.~5.15]{robert-survey}); by Lagrange's theorem, $u=4$ always succeeds.

For the purpose of representing orientations, we wish to generalize this to the setting where $A_1$ and $A_2$ have an explicit $R$-orientation for some CM order $R$ in a CM field $K$ of degree $2g$.
Two things change. 
First, the auxiliary matrix may be taken with entries in $R$ rather than in $\mathbb{Z}$, 
and so the condition on $n$ becomes about relative norms of elements of $R$ rather than about sums of integer squares. 
Second, the $d$-isogeny $\varphi$ does not necessarily have $d \in \Z_{>0}$, as $d$ can be any element of $R_0^+ = R_0 \cap K_0^+$, i.e., any totally positive element of $R_0$. 

\begin{theorem}\label{thm:embedding}
Let $(A_1,\lambda_1,\iota_1)$ and $(A_2,\lambda_2,\iota_2)$ be $R$-oriented PPAVs of dimension $g$,
and let $\varphi\colon (A_1, \lambda_1, \iota_1)\to (A_2, \lambda_2, \iota_2)$ be an $R$-oriented $d$-isogeny with $d\in R_0^{+}$. Let
$n\in R_0^{+}$ be such that
$ N:=d+n\in\Z$,
and let $\alpha = (\alpha_{j,k})_{1 \leq j,k \leq u}\in M_u(R)$ be such that $\alpha^{*}\alpha=n\,\Id_u$, where $\alpha^*$ denotes the conjugate-transpose of $\alpha$. 
Write $\alpha_{A_i} = \iota_{A_i}(\alpha) := (\iota_{i} (\alpha_{j,k}))_{1 \leq j, k \leq u} \in\End(A_i^u)$. Then
\[
  \Phi=\begin{pmatrix}\varphi\,\Id_u&\alpha_{A_2}^{\dg}\\[2pt]-\alpha_{A_1}&\varphi^{\dg}\Id_u\end{pmatrix}
  \;\colon\;
  \bigl(A_1^u\times A_2^u,\ \lambda_1^{\times u}\times\lambda_2^{\times u}\bigr)
  \to
  \bigl(A_2^u\times A_1^u,\ \lambda_2^{\times u}\times\lambda_1^{\times u}\bigr)
\]
is an $N$-isogeny of PPAVs of dimension $2ug$; in particular
$\Phi^{\dg}\Phi=[N]$ and $\deg\Phi=N^{2ug}$. Moreover:
\begin{enumerate}
\item[(i)] $\varphi=\rho\circ\Phi\circ\jmath$, where $\jmath\colon A_1\to A_1^u\times A_2^u$ is the
inclusion of the first factor and $\rho\colon A_2^u\times A_1^u\to A_2$ is the projection onto the
first factor;
\item[(ii)] if the ideals $dR_0$ and $nR_0$ are coprime in $R_0$ and $N$ is coprime to the base field characteristic, then
\[
\begin{aligned}
  \ker\Phi
  &=\bigl\{\bigl(-\alpha_{A_1}^{\dg}(P),\,\varphi\,\Id_u(P)\bigr) \mid P\in A_1^u[N]\bigr\}\\
  &=\bigl\{\bigl(\iota_1(d) \Id_u (P),\,\alpha_{A_2}\varphi\,\Id_u(P)\bigr) \mid P\in A_1^u[N]\bigr\}.
\end{aligned}
\]
\end{enumerate}
\end{theorem}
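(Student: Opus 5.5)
We follow the classical proof of Kani's lemma, as in \cite[Prop.~5.15]{robert-survey}: compute $\Phi^{\dg}\Phi$ blockwise, using the orientation to move entries of $\alpha$ past $\varphi$.

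\textbf{Step 1: Rosati adjoints.} The Rosati involution on $A_1^u\times A_2^u$ (product polarization) is conjugate-transpose of blocks composed with the Rosati involution on each entry, as in Example~\ref{ex:conjugate_transpose}. Hence
\[ \Phi^{\dg}=\begin{pmatrix}\varphi^{\dg}\Id_u & -\alpha_{A_1}^{\dg}\\ \alpha_{A_2} & \varphi\,\Id_u\end{pmatrix}. \]
Because $\iota_i(\bar x)=\iota_i(x)^{\dg}$, we have $\alpha_{A_i}^{\dg}=\iota_i(\alpha^*)$. The condition $\alpha^*\alpha=n\Id_u$ then gives $\alpha_{A_i}^{\dg}\alpha_{A_i}=\iota_i(n)\Id_u$.

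\textbf{Step 2: orientation identities.} Since $\varphi$ is oriented, $\varphi\,\iota_1(x)=\iota_2(x)\varphi$ for all $x\in R$. Applying the Rosati involution (with $\bar R=R$) gives $\iota_1(x)\varphi^{\dg}=\varphi^{\dg}\iota_2(x)$. By definition of a $d$-isogeny, $\varphi^{\dg}\varphi=\iota_1(d)$, and $\varphi\varphi^{\dg}=\iota_2(d)$ also holds. I expect the main subtlety here: this second relation must be deduced. I would show it by checking that $\varphi\varphi^{\dg}\varphi=\varphi\,\iota_1(d)=\iota_2(d)\varphi$, then cancelling $\varphi$ on the right, which is allowed since $\varphi$ is an isogeny and hence an epimorphism.

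\textbf{Step 3: compute $\Phi^{\dg}\Phi$.} Entrywise:
\begin{itemize}
\item the $(1,1)$ block is $\varphi^{\dg}\varphi\Id_u+\alpha_{A_1}^{\dg}\alpha_{A_1}=\iota_1(d+n)\Id_u=[N]$;
\item the $(1,2)$ block is $\varphi^{\dg}\alpha_{A_2}^{\dg}-\alpha_{A_1}^{\dg}\varphi^{\dg}$, which vanishes by Step 2 applied entrywise to $\alpha^*$;
\item the $(2,1)$ block is $\alpha_{A_2}\varphi-\varphi\alpha_{A_1}=0$;
\item the $(2,2)$ block is $\iota_2(n)+\iota_2(d)=[N]$.
\end{itemize}
Thus $\Phi^{\dg}\Phi=[N]$. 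Therefore $\Phi$ pulls back the product polarization to $N$ times the product polarization, so it is an $N$-isogeny, and $\deg\Phi=N^{2ug}$. Item (i) is immediate from the $(1,1)$ entry of $\Phi$.

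\textbf{Step 4: the kernel in (ii).} Since $\Phi^{\dg}\Phi=[N]$ and $\deg\Phi=N^{2ug}$ while $\#A^{2u}[N]=N^{4ug}$ (as $N$ is prime to the characteristic), $\ker\Phi$ is a subgroup of $A_1^u[N]\times A_2^u[N]$ of order $N^{2ug}$. Moreover $\Phi\Phi^{\dg}=[N]$ as well, by the same block computation, so $\im(\Phi^{\dg}|_{A_2^u\times A_1^u[N]})\subseteq\ker\Phi$. The first description of $\ker\Phi$ is this image restricted to the $A_1^u[N]$ component. Its elements lie in $\ker\Phi$ by the displayed relations: applying $\Phi$ gives $-\varphi\alpha_{A_1}^{\dg}P+\alpha_{A_2}^{\dg}\varphi P=0$ and $\alpha_{A_1}\alpha_{A_1}^{\dg}P+\varphi^{\dg}\varphi P=[N]P=0$. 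To get equality, I would count: the map $P\mapsto(-\alpha_{A_1}^{\dg}P,\varphi P)$ must be injective on $A_1^u[N]$, which has order $N^{2ug}$. If $P$ lies in the kernel, then $\iota_1(n)P=\alpha_{A_1}\alpha_{A_1}^{\dg}P=0$ and $\iota_1(d)P=\varphi^{\dg}\varphi P=0$. Coprimality of $dR_0$ and $nR_0$ gives $1=ad+bn$ with $a,b\in R_0$, so $P=0$.

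For the second description, substitute $P\mapsto-\alpha_{A_1}P$. We have $\alpha_{A_1}^{\dg}\alpha_{A_1}=\iota_1(n)$, and $\iota_1(n)\equiv-\iota_1(d)$ on $N$-torsion, so the first coordinate becomes $\iota_1(d)P$. The second coordinate becomes $-\varphi\alpha_{A_1}P=-\alpha_{A_2}\varphi P$, and the sign is absorbed by replacing $P$ with $-P$. This substitution is a bijection on $A_1^u[N]$, because $\alpha_{A_1}$ is injective on $N$-torsion: $\alpha_{A_1}P=0$ gives $\iota_1(n)P=0$, and on $A_1^u[N]$ also $\iota_1(d)P=-\iota_1(n)P=0$, so $P=0$ by coprimality again.
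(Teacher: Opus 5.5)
Your proof is correct and follows the paper's argument essentially step for step: you compute $\Phi^{\dg}\Phi$ blockwise, using the orientation to move $\alpha$ past $\varphi$ and $\varphi^{\dg}$; you prove injectivity of $P\mapsto(-\alpha_{A_1}^{\dg}(P),\varphi(P))$ on $A_1^u[N]$ via $dR_0+nR_0=R_0$ and then count; and you reparametrize by $\alpha_{A_1}$ for the second description, while being slightly more careful than the paper in deducing $\varphi\varphi^{\dg}=\iota_2(d)$. Two small fixes: after $P\mapsto-\alpha_{A_1}P$ the first coordinate is $\iota_1(n)P=-\iota_1(d)P$, not $\iota_1(d)P$, so $P\mapsto-P$ absorbs both signs at once (or substitute $P\mapsto\alpha_{A_1}P$ directly, as the paper does); and your $(2,2)$ block and Step~4 use $\alpha\alpha^*=n\Id_u$, which follows from $\alpha^*\alpha=n\Id_u$ because $\alpha$ is then invertible in $M_u(K)$.
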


\begin{remark}\label{rmk:meet-in-the-middle}
Point \emph{(ii)} of Theorem~\ref{thm:embedding} describes $\ker\Phi$ in terms of the action of $\varphi$ on $A_1[N]$, so the images of the $2g$ points in a
basis of $A_1[N]$ are enough to determine $\Phi$. Assuming that
$N$ is a square, less than that actually suffices: the action of $\varphi$ on $A_1[\sqrt N]$ already
determines $\Phi$, by the meet-in-the-middle technique of \cite[\S6.4]{sidhRobert}, which we detail further in Remark~\ref{rmk:app_meet-in-the-middle}.
\end{remark}

To apply Theorem~\ref{thm:embedding} one has to produce, for a prescribed $n\in R_0^{+}$, a matrix $\alpha \in M_u(R)$ such that $\alpha^* \alpha = n \Id_u$,
and one wants $u$ as small as possible since $\Phi$ is an isogeny between abelian varieties of dimension $2ug$. 

As mentioned, if $n \in \Z_{>0}$ then one can 
construct $\alpha \in M_u(\Z)$ with $u = 1, 2, 4$ in the cases where $n$ is a sum of $1, 2, 4$ integer squares.
A similar technique applies if $n \in R_0^+$ is a sum of squares of elements in $R_0$: in \cite[Lem.\,4.2]{robert-note}, Robert gives solutions for $\alpha \in M_{2^k}(R_0)$ when $n$ is a sum of $m$ squares, for some $k \geq \lceil \log_2m \rceil$ where equality can be assumed if $m \leq 8$. 
E.g., for $n=\sum_{i=0}^{m-1}\theta_i^{2}$ with $\theta_i\in R_0$ the solutions for $m = 1, 2,4$  are
\begin{equation} \label{eq:zarhin_matrices}
  \alpha=(\theta_0),\qquad
  \alpha=\begin{pmatrix}\theta_0&\theta_1\\-\theta_1&\theta_0\end{pmatrix}, \qquad
  \alpha=\begin{pmatrix}
    \theta_0&\theta_1&\theta_2&-\theta_3\\
    -\theta_1&\theta_0&\theta_3&\theta_2\\
    -\theta_2&-\theta_3&\theta_0&-\theta_1\\
    \theta_3&-\theta_2&\theta_1&\theta_0
  \end{pmatrix}.
\end{equation}
There is no guarantee that $n \in R_0^+$ can always be written as a sum of squares in $R_0$, and if it can, more than $4$ squares might be needed. A theorem by Siegel \cite[Thm.\,1]{siegel} states that all totally positive integral elements of a totally real number field $K_0$ are a sum of integral squares only when $K_0 = \Q$ or $K_0 = \Q(\sqrt{5})$. 
Peters showed that if a totally positive integral element $n$ of a real quadratic field is a sum of integral squares, then it is a sum of at most $5$ integral squares, and gave an explicit criterion to detect such $n$ \cite[Sz.\,2]{peters}. 

In the following, we give solutions for $\alpha \in M_u(R)$ in the cases where $n$ is a sum of $k+1$ relative norms of elements of $R$, in which case one can always take $u = 2^k$, and in the cases where $n$ is a sum of  $t$ relative norms of elements of $R$ and $m = 2^d$ squares in $R_0$, in which case one ends up with $u = 2^{k + t}$ with $k \geq d$ as above.
These solutions are more general than what is required for our applications in Section \ref{subsec:representation}, but they are of independent interest, and do specialize to meet our needs.

\begin{proposition}\label{prop:auxiliary-alpha}
Let $X,Y\in M_m(R)$ and set
\[
  \alpha=\begin{pmatrix}X&Y^{*}\\-Y&X^{*}\end{pmatrix}\in M_{2m}(R).
\]
\begin{enumerate}
\item[(a)] If $X^{*} X = X X^{*} =x\,\Id_m$ and $Y^{*} Y= Y Y ^{*} = y\,\Id_m$ for some $x, y \in R_0$, and if $XY=YX$, then
\[
  \alpha^{*}\alpha = \alpha\alpha^{*} = (x+y)\,\Id_{2m}.
\]
\item[(b)] Conversely, for arbitrary $X,Y\in M_m(R)$ and $n\in R_0$ one has
$\alpha^{*}\alpha=n\,\Id_{2m}$ if and only if
\[
  XY=YX,\qquad X^{*}X+Y^{*}Y=XX^{*}+YY^{*}=n\,\Id_m .
\]
\item[(c)] Let $k\in \Z_{\geq 0}$ and $\theta_0, \dots, \theta_k \in R$. Define $\alpha_0:=(\theta_0)\in M_1(R)$ and, for
$1\le i\le k$,
\[
  \alpha_i:=\begin{pmatrix}\alpha_{i-1} & \bar \theta_i\,\Id_{2^{i-1}} \\[2pt]
  -\theta_i\,\Id_{2^{i-1}} & \alpha_{i-1}^{*}\end{pmatrix}\in M_{2^{i}}(R).
\]
Then $\alpha_k^{*}\alpha_k=n\,\Id_{2^{k}}$ with $n=\sum_{i=0}^{k} N_{K/K_0}(\theta_i)$. In particular, if $n$ is a sum
of $k+1$ relative norms of elements of $R$ there is a matrix $\alpha \in M_{2^k}(R)$ such that $\alpha^*\alpha = n \Id_{2^k}$.

\item[(d)] Let $d, t \in 
\Z_{\geq 0}$ and let $m = 2^d$. Consider $\theta_0, \ldots, \theta_{m-1} \in R_0$. 
Let $\alpha_0 \in M_{2^k}(R_0)$ such that $\alpha_0^* \alpha_0 = \alpha_0 \alpha_0^* = n_1 \Id_{2^k}$ with $n_1 = \sum_{i=0}^{m-1}\theta_i^2$, for $k \geq d$ as above. 
Consider $\theta_1', \ldots, \theta_t' \in R$ and let $n_2 = \sum_{i=1}^t N_{K/K_0}(\theta_i')$. 
Define, for $1 \leq i \leq t$,
\[
  \alpha_i:=\begin{pmatrix}\alpha_{i-1} & \bar \theta_i'\,\Id_{2^{k+i-1}} \\[2pt]
  -\theta_i'\,\Id_{2^{k+i-1}} & \alpha_{i-1}^{*}\end{pmatrix}\in M_{2^{k+i}}(R).
\]
Then $\alpha_t^* \alpha_t = (n_1 + n_2)\Id_{2^{k+t}}$. In particular, if $n$ is a sum of $t$ relative norms of elements of $R$ and $m = 2^d$ squares of elements in $R_0$, there exists a matrix $\alpha \in M_{2^{k+t}}(R)$ such that $\alpha^*\alpha = n \Id_{2^{k+t}}$.
\end{enumerate}
\end{proposition}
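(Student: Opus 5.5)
The whole proof is block-matrix arithmetic over the commutative ring $R$ with the involution $x \mapsto \bar x$. We use only that $(AB)^* = B^* A^*$, that scalars $c \in R_0$ satisfy $\bar c = c$, and that scalar matrices $c\,\Id$ with $c \in R$ commute with every matrix in $M_m(R)$. The parts will be proved in the order (b), (a), (c), (d), since each later part reduces to an earlier one.

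\textbf{Step 1: part (b).} First write
\[
\alpha^* = \begin{pmatrix} X^* & -Y^* \\ Y & X \end{pmatrix}
\]
and multiply out:
\[
\alpha^*\alpha = \begin{pmatrix} X^*X + Y^*Y & X^*Y^* - Y^*X^* \\ YX - XY & YY^* + XX^* \end{pmatrix}.
\]
The top-right block is $-(YX - XY)^*$, so it vanishes exactly when $XY = YX$. Hence $\alpha^*\alpha = n\,\Id_{2m}$ if and only if $XY = YX$ and $X^*X + Y^*Y = XX^* + YY^* = n\,\Id_m$, which is (b). The same computation for $\alpha\alpha^*$ gives the diagonal blocks $XX^* + Y^*Y$ and $YY^* + X^*X$, with off-diagonal block $-XY^* + Y^*X$. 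Under the hypotheses of (a) this also vanishes: from $XY = YX$ and $X^*X = x\,\Id$ we get $X^*YX^* \cdot$ (multiply by $X^*$ on both sides), hence $xYX^* = xX^*Y$. I expect this to be the main obstacle: dividing by $x$ to conclude $YX^* = X^*Y$ is problematic when $x$ is a zero divisor or zero. So I would argue separately in the case $x = 0$ (then $X^*X = 0$, and positivity of the Rosati-type form forces $X = 0$, since $\operatorname{tr}(X^*X)$ is a sum of relative norms, which are totally positive), and otherwise work in $M_m(K)$, where $x \neq 0$ is invertible.

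\textbf{Step 2: part (a).} This now follows directly from Step 1, since the diagonal blocks become $(x+y)\,\Id_m$ in both products.

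\textbf{Step 3: part (c), by induction on $i$.} I will prove the stronger statement $\alpha_i^*\alpha_i = \alpha_i\alpha_i^* = n_i\,\Id$ with $n_i = \sum_{j \le i} N_{K/K_0}(\theta_j)$. The base case $\alpha_0 = (\theta_0)$ holds because $\theta_0\bar\theta_0 = N_{K/K_0}(\theta_0)$. For the inductive step, apply (a) with $X = \alpha_{i-1}$ and $Y = \theta_i\,\Id$, so that $Y^* = \bar\theta_i\,\Id$. Then $XY = YX$ because $Y$ is scalar, $Y^*Y = YY^* = N_{K/K_0}(\theta_i)\,\Id$, and the hypothesis on $X$ is the inductive hypothesis. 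Here the issue from Step 1 does not arise, since $Y$ is scalar and so $YX^* = X^*Y$ trivially.

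\textbf{Step 4: part (d).} The same induction works, now starting from the given $\alpha_0 \in M_{2^k}(R_0)$, which by assumption satisfies both $\alpha_0^*\alpha_0 = n_1\,\Id$ and $\alpha_0\alpha_0^* = n_1\,\Id$. Each step adds $N_{K/K_0}(\theta_i')$ to the scalar, again using that $Y = \theta_i'\,\Id$ is scalar and therefore commutes with everything. The existence claims in (c) and (d) follow immediately from these constructions.
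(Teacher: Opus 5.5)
Your proof is correct and follows the paper's route: you multiply out $\alpha^*\alpha$ and $\alpha\alpha^*$ blockwise, then run the inductions for (c) and (d) through (a) with $X=\alpha_{i-1}$ and the scalar $Y=\theta_i\Id$ (resp.\ $\theta_i'\Id$); for the existence claim in (d), say explicitly that $\alpha_0$ is supplied by Robert's lemma. Your check that the off-diagonal block of $\alpha\alpha^*$ vanishes fills in what the paper only calls ``similar'': you derive $x(YX^*-X^*Y)=0$ from both $X^*X=x\Id$ and $XX^*=x\Id$, use that $R$ is a domain, and note that $X^*X=0$ forces $X=0$ by total positivity of relative norms. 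The only slip is cosmetic: the top-right block of $\alpha^*\alpha$ is $(YX-XY)^*$, not its negative.
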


Proposition~\ref{prop:auxiliary-alpha}(d) reduces to \cite[Lem.\,4.2]{robert-note} when $n_2 = 0$, which a priori only offers a solution $\alpha \in M_{2^k}(R_0)$ but it is clear that if $n_1$ is a sum of squares in $\Z$ then also $\alpha \in M_{2^k}(\Z)$ (as is apparent from~\eqref{eq:zarhin_matrices} for $k = d = 0, 1, 2$).

\subsection{Representing the orientation} \label{subsec:representation}

\subsubsection*{Usage.} Let $(A, \lambda)$ be a $g$-dimensional PPAV together with a primitive orientation $\iota : R \to \End(A)$ by a CM order $R$ of rank $2g$. 
Say we want to act on $(A, \lambda, \iota)$ with a Shimura ideal of the form $((\ell, f_1(\gamma)), \ell)$, for a suitable $\gamma \in R$ generating $K / \Q$ and a suitable degree-$g$ polynomial $f_1(x) \in \Z[x]$; that is, a Shimura ideal of the kind described in Section~\ref{subsec:genset}. Following the proof of Theorem~\ref{thm:action}, this amounts to computing the subgroup
\[ H := A[\ell] \cap A[f_1(\iota(\gamma))] \cong \left( \frac{\Z}{\ell \Z} \right)^g, \]
which is maximal isotropic with respect to the $\ell$-Weil pairing.
Generators for $H$ can be obtained from the kernel
of a matrix of $f_1(\iota(\gamma))$ acting on $A[\ell]$.
\begin{remark}
  This can be sped up by using that $H$ equals the image of $f_2(\iota(\gamma))$ on $A[\ell]$, where $f_2(x)$ denotes the cofactor of $f_1(x)$ in the factorization of the minimal polynomial $f(x)$ of $\gamma$. This generalizes an optimization that is well-known in the elliptic curve setting, see e.g.~\cite[\S3.2]{orientexpress}.
\end{remark} 
The result of the action is then the codomain of an $\ell$-isogeny with kernel $H$, equipped with the induced principal polarization and the push-forward orientation. For this approach, it is crucial to be able to evaluate $\iota(\gamma)$ on arbitrary $\ell$-torsion points. Thus, we need to represent the orientation in a way that is both compact and allows the user to evaluate it efficiently.

\begin{remark} \label{rmk:sanon_break}
  As a side remark, observe that we can act with 
  $((\ell, f_1(\gamma)), \ell)$ without knowing what $R$ is, as long as $\gamma \in R$. One could hope to thwart Kuperberg's attack~\cite{childs_jao_soukharev,kuperberg} on the vectorization problem on $\frakC(R)$ by keeping $R$ secret and just providing $\gamma$: as mentioned at the end of Section~\ref{sec:superspecial}, extracting the primitive order $R$ from a non-primitive orientation (by $\Z[\gamma]$ in this case) might indeed be an intractable problem.
  Interestingly, this countermeasure was proposed by Sanon~\cite[pp.\,107--108]{SanonThesis}, albeit in the context of ordinary PPAVs oriented by Frobenius. But it does not provide the hoped-for security:
  determining the stabilizer $S$ of the (non-free) action by $\Z[\gamma]$ is an abelian hidden subgroup problem, which is solved in polynomial time using Kitaev's algorithm~\cite{kitaev}. One then has a free action by $\frakC(\Z[\gamma])/S$ which can be targeted using Kuperberg's algorithm.
\end{remark}

\subsubsection{Higher-dimensional representation.} Let us first assume that we can efficiently evaluate endomorphisms coming from $R_0$. Then, in order to represent $\iota(\gamma)$, the routine approach, whose elliptic curve counterpart was first described in SCALLOP-HD~\cite{scallophd}, would be as follows: write $d = \overline{\gamma} \gamma \in R_0^+$ and suppose that we can find a smooth integer $N$, ideally a power of $2$, such that $N - d$ can be effectively written as a sum of $s$ squares of elements of $R_0$.\footnote{If $N-d$ is not a sum of squares then one can retry with a different $N$ or $\gamma$; see our paragraph on representing $R_0$ at the end of this section for a workaround.} Then using the auxiliary matrix 
\[ \alpha \in M_u(R_0), \qquad u = 2^{\lceil \log_2 s \rceil} \]
from Proposition~\ref{prop:auxiliary-alpha}(d), we can embed $\iota(\gamma)$ in an $N$-isogeny between PPAVs of dimension $2ug$ as outlined in Theorem~\ref{thm:embedding}. This is an endomorphism of $A^{2u}$, in fact. This endomorphism can be computed from interpolation data for $\iota(\gamma)$, i.e., from the point pairs $P, \iota(\gamma)(P)$ with $P$ running over a basis of $A[N]$. Recall from Remark~\ref{rmk:meet-in-the-middle} 
that if $N$ is a square then using Robert's meet-in-the-middle technique it  suffices to provide interpolation data on $A[\sqrt{N}]$. When evaluating the action, this interpolation data can be pushed through: if $\varphi : A \to A / H$ is a horizontal isogeny of degree coprime to $N$, then the point pairs $\varphi(P), \varphi(\iota(\gamma)(P))$ form a valid set of interpolation points for $(\varphi_\ast \iota)(\gamma)$.\footnote{In applications, this interpolation data must be masked using a change of basis for security reasons. In practice, one changes to a deterministic basis of $A[N]$ (or of $A[\sqrt{N}]$ in the case of meet-in-the-middle) for reasons of data compression, and this handles the security issue at once.}

\subsubsection*{Halving the ambient dimension.}
In the common scenario where $\gamma = \sqrt{-d}$ and $N - d$ is a sum of at most $3$ squares of elements of $R_0$,  we can reduce the ambient dimension to $ug$. This observation seems new even for $g = 1$. To see this, let $\alpha$ be as in~\eqref{eq:zarhin_matrices}, where in the case of $3$ squares we let $\theta_2 = 0$, i.e., $N - d = \theta_0^2 + \theta_1^2 + \theta_3^2$. Define
\begin{itemize}
   \item $\xi := \sqrt{-d} + \alpha$ if $u = 1$ (i.e., $N - d$ is the square of an element of $R_0$),
   \item $\xi := \sqrt{-d} \cdot  \diag(1, -1) + \alpha$ if $u = 2$ (sum of two squares),
   \item $\xi := \sqrt{-d} \cdot \diag(1, -1, 1, -1) + \alpha$ if $u = 4$ (sum of $3$ squares and $\theta_2 = 0$).
\end{itemize}
One then checks that $\iota_A(\xi)$
is a polarized endomorphism 
of $A^u$ of reduced degree $N$. If we can evaluate this endomorphism at any point of $A^u$, then we can also evaluate $\iota(\gamma)$ at any point of $A$, assuming we know how to evaluate $\iota_A(\alpha)$. It is easy to see that the interpolation points $\varphi(P), \varphi(\iota(\sqrt{-d})(P))$ can be used to determine the kernel of $\iota(\xi)$. The method remains compatible with meet-in-the-middle: if the interpolation data is restricted to $A[\sqrt{N}]$ then this remains good enough to compute ``half the kernel", i.e., $\ker(\iota(\xi)) \cap A^u[\sqrt{N}]$. But the data can also be used to compute half the kernel of the Rosati dual, i.e., $\ker(\iota(\xi^\ast)) \cap A^u[\sqrt{N}]$, allowing one to complete the path as wanted.
Observe that the case $u = 1$ avoids the need for working in higher dimension altogether; for $g = 1$ this essentially specializes to the non-HD versions of SCALLOP~\cite{scallop,pearlscallop}.

\subsubsection{Representing $R_0$.}
We now return to the problem of representing endomorphisms coming from $R_0$.
In dimension $g = 2$ where $K_0 = \Q(\sqrt{c})$ for some square-free $c \in \Z_{>0}$, one can proceed by finding a smooth-degree representation of the endomorphism $\iota(\sqrt{c})$, which is a $c$-isogeny, using the methods from the previous paragraph but where now $\alpha \in M_u(\Z)$. In the special case where $c = p$, this is actually not needed: here the representation comes for free, see the next Section~\ref{sec:key_exch_over_Fp}.
In arbitrary dimension, the most robust solution seems to be circumventing the problem and looking for some $\gamma \in R$ generating $K / \Q$ such that $\overline{\gamma}{\gamma} \in \Z$, so that the methods from the previous paragraph can be run over $\Z$ right away. The existence of such an element $\gamma$ is not a strong condition on $R$.\footnote{Note that on PPAVs over a finite field $\F_q$ one always has $\pi_q \hat{\pi}_q = q$, so in the simple, ordinary case (where $\pi_q$ generates the endomorphism algebra) one can simply work with Frobenius.} E.g., if $R = \Z_K$ then
any element of the form $(\beta / \bar{\beta}) N_{K / \Q}(\beta)$ does the job, as long as $\beta/\bar{\beta}$ generates $K / \Q$. Moreover, by ensuring that $N_{K / \Q}(\beta)$ is smooth, one gets for free that $\overline{\gamma} \gamma = N_{K / \Q}(\beta)^2$ is smooth as well, again avoiding all need to go to higher dimension.
However, this typically leads to generators $\gamma$ having a large norm (corresponding to $\iota(\gamma)$ having a large degree), and it gives little control over $[R : \Z[\gamma]]$, meaning we potentially lose many small primes $\ell$ that could have been used to compute the Shimura class group action.

\subsection{Instantiating key exchange over $\F_p$} \label{sec:key_exch_over_Fp}

In Appendix~\ref{app:key_exch} we describe concrete key exchange protocols built from Shimura class group actions on superspecial PPASs. These are built from the R2 family of orientations: R2-D2 was described in Example~\ref{ex:R2D2}, while R2-C4 and R2-D4 will be discussed in Appendix~\ref{app:R2families}. A common feature of these orientations is that all PPASs and isogenies involved when computing the action are defined over $\F_p$ and the $p$-th power Frobenius endomorphism $\Pi_p$ is in the image of $\iota$. This choice is motivated by CSIDH~\cite{csidh}, where working with the Frobenius orientation on supersingular elliptic curves comes with three distinguishing features: 
\emph{(i)} the fact that every curve $E$ in the orbit is defined over $\F_p$ makes for a more compact object; \emph{(ii)} the orientation does not need representation, because the Frobenius endomorphism is always known explicitly; \emph{(iii)} it is easy to choose parameters ensuring the splitting of many small primes $\ell$ (as well as control the field of definition of the points in a corresponding kernel).

In higher dimension, these features remain partially present: 
\begin{itemize} 
\item One can no longer avoid the need for representing the orientation altogether, but at least the Frobenius part of the orientation still comes for free. E.g., in the R2-D4 and R2-C4 orientations described in Appendix~\ref{app:R2families}, where $K_0 = \Q(\sqrt{p})$ and $\iota(\sqrt{p}) = \Pi_p$, this removes the need for representing an endomorphism of the form $\iota(\sqrt{c})$, $c \in \Z_{>0}$, overall making for a more compact orientation representation. Moreover, the $\F_p$-rationality can be exploited to compress this data further: this is explained in Appendix~\ref{app:compress}.
\item In all R2 orientations we have $\Q(\sqrt{\pm p}) \subset K$, so by choosing $\pm p$ to be a quadratic residue mod $\ell_i$ for small primes $\ell_1, \ldots, \ell_r$ 
we roughly double the chance that $\ell_i$ splits completely
(from $\frac{1}{4}$ to $\frac{1}{2}$ in cases $C_2 \times C_2, C_4$ and $\frac{1}{8}$ to $\frac{1}{4}$ in case $D_4$). Moreover, parameter generation is flexible enough to boost those chances further; e.g., see our approach to R2-D4 taken in Appendix~\ref{app:R2D4_key_exch}.
\end{itemize}

\begin{remark} \label{rmk:why_to_avoid_Fp} There may be reasons to stay away from Frobenius orientations, e.g., for more control over $\#\frakC(R)$ or more flexibility in choosing parameters, as in the SCALLOP family~\cite{scallop,scallophd,pearlscallop} (a general technique for constructing such orientations in dimension $g = 2$ will be outlined in Section~\ref{sec:constructing_orientations}).
In the $C_2 \times C_2$ case, there is another very good reason for avoiding Frobenius orientations: based on a preliminary version of the current paper, the authors of~\cite{C2C2attack} observed that the vectorization problem for R2-D2, which we recall is an orientation by a CM order of the form $\Z[\sqrt{d}, \sqrt{-p}]$, reduces to vectorization problems in $\Cl(\Z[\sqrt{-p}])$ and $\Cl(\Z[\sqrt{-dp}])$.
This relies on a  twisting trick akin to~\cite{rational_from_irrational,wesolowski_oriented} and, as far as we are aware,
does not obviously generalize to arbitrary $C_2 \times C_2$ orientations, because it depends on a property that is specific to cases where $p$ ramifies in $K$, namely that twists coming from ``inequivalent CM types" can nevertheless end up in the same orbit; see Section~\ref{sec:orbits}, and Example~\ref{ex:R2D2orbits} in particular. For the sake of reference, we have still included (a shortened version of) our R2-D2-based key exchange protocol in  Appendix~\ref{app:R2D2_keyexchange}.
\end{remark}

\section{CM orientations under the IKO correspondence}\label{sec:algebra}

The IKO framework, introduced in~\cite{ibukiyama1986supersingular},
gives a convenient Deuring-like dictionary between the geometry of superspecial PPASs over $\overline{\F}_p$ and their connecting isogenies on one hand, and the algebra of positive definite invertible matrices in $M_2(B_{p, \infty})$ and their ``connecting matrices" on the other hand.
The goal of this section 
is to incorporate CM orientations and Shimura class group actions in this framework and provide  applications. Omitted proofs can be found in Appendix~\ref{app:algebraproofs}.
    Note: in this section the letter $g$ is used to denote an IKO matrix, rather than the dimension of an abelian variety, which is assumed $2$ throughout.

\subsection{The IKO correspondence: a slight extension} \label{subsec:IKO_intro_and_extend}
Fix a supersingular elliptic curve $E$ in characteristic $p > 0$ along with an isomorphism $\theta : \O \to \End(E)$ from some maximal order $\O \subset B_{p, \infty}$, as at the beginning of Section~\ref{sec:superspecial}. By Deligne's theorem~\cite{goldstein_schacher} any superspecial PPAS is isomorphic to $(A, \lambda)$ for some principal polarization $\lambda: A \to A^\vee$ on $A = E^2$. Let $\lambda_0$ denote the product polarization. Then the automorphism $\lambda_0^{-1} \circ \lambda$ corresponds under $\vartheta$ (when applied component-wise) to some totally positive Hermitian norm-$1$ matrix, i.e., 
\[ g = \begin{pmatrix} s & r \\ \bar{r} & t \end{pmatrix}, \qquad s, t \in \Z_{>0}, \ r \in \O, \ st - r \bar r = 1; \]
see also Example~\ref{ex:IKO}. We call $g$ the  \defi{$g$-matrix} or IKO-matrix associated with $(A, \lambda)$.
To identify equivalent principal polarizations $\lambda, \lambda'$, i.e., for which $(A, \lambda) \cong (A, \lambda')$, one should work with $g$-matrices up to equivalence, where we say $g' \sim g$ if and only if $g'=u^*gu$ for some $u\in \GL_2(\mathcal{O})$. 

Since we are just considering $A$ with different principal polarizations, we always have $\End(A) \cong M_2(\mathcal{O})$ and the piece of data that varies with $\lambda$ is the Rosati involution. 
We know that the Rosati involution for the product polarization $\lambda_0$ is just conjugate-transpose $b \mapsto b^\ast$. In general, for a superspecial PPAS corresponding to a matrix $g$, the Rosati involution is $b\mapsto b^{\dagger} = g^{-1}b^*g$ which follows from the definition of the $g$-matrix and the definition of the Rosati involution. In that respect, superspecial surfaces can also be characterized by positive involutions on $M_2(\mathcal{O})$.

Isogenies  are incorporated in the IKO correspondence as follows. Let $\lambda_1, \lambda_2$ be principal polarizations on $A$ with $g$-matrices $g_1, g_2$ and let $\mu \in M_2(\O)$ be a matrix fixed by the Rosati involution with respect to $g_1$ that is totally positive, i.e., the (necessarily real) roots of its minimal polynomial over $\Q$ are positive.
Then $\gamma \in M_2(\O)$ corresponds (under $\theta$) to a $\theta(\mu)$-isogeny $(A, \lambda_1) \to (A, \lambda_2)$ if and only if
\begin{align*}
    \lambda_1 \circ \theta(\mu) = \widehat{\theta(\gamma)} \circ \lambda_2 \circ \theta(\gamma) & \Leftrightarrow \lambda_0^{-1} \circ \lambda_1 \circ \theta(\mu) = \lambda_0^{-1} \circ \widehat{\theta(\gamma)} \circ \lambda_0 \circ \lambda_0^{-1} \circ \lambda_2 \circ \theta(\gamma)  \\
    & \Leftrightarrow g_1 \mu = \gamma^\ast g_2 \gamma.
\end{align*} 
This (slightly) extends the corresponding calculation from~\cite[\S2.3]{DBLP:conf/crypto/CastryckDKLPT25}, which was restricted to $\mu = N \Id_2$ for positive integers $N$.

\begin{remark} \label{rmk:secondcomponentmatters}
    For $\gamma = \Id_2$ we have the following identity as a special case: $g_1 \Id_2 = \Id_2^\ast g_1 \Id_2$.
    An interesting aspect of this trivial equation is that it says both that $(A, \lambda_1)$ and $(A, \lambda_1)$ are $1$-isogenous (trivially), and that $(A, \lambda_0)$ and $(A, \lambda_1)$ are $\vartheta(g_1)$-isogenous.
    Since this reasoning applies to arbitrary $g_1$, this shows the great impact of the choice of $\mu$ (e.g., when specified as the second component of a Shimura ideal) on the resulting codomain as a PPAV.
\end{remark}

So this means that if $\lambda_1$ is a principal polarization on $A$ with $g$-matrix $g_1$ and if $H \subset A$ is a finite subgroup that is maximal with the property that
\[ e_{\vartheta(\mu)}(P, \lambda_1(Q)) = 1, \quad \forall P, Q \in H, \]
then a $g$-matrix for the induced principal polarization on $A \cong A / H$
is 
\[ g_2 := \gamma^{-\ast} g_1 \mu \gamma^{-1}, \]
with $\gamma \in M_2(\O)$ any matrix such that $\ker \theta(\gamma) = H$. This follows, e.g., from Proposition~\ref{prop:max_iso}. In particular, $g_2$ is necessarily a valid $g$-matrix, i.e., a positive-definite Hermitian matrix of norm $1$ contained in $M_2(\O)$.

\begin{remark} \label{rmk:verifygmatrix}
Most of these properties are easily confirmed:
\begin{itemize}
  \item Using $\mu^\dagger = g_1^{-1} \mu^\ast g_1$ one checks that $g_2^\ast = g_2$.
  \item By definition of a $\vartheta(\mu)$-isogeny, the degree of $\vartheta(\mu)$ equals the squared degree of $\vartheta(\gamma)$, hence in view of~\cite[Lem.\,2.10]{PIP} we see that $g_2$ has norm $1$.
  \item It concerns a positive-definite matrix: for this it suffices to see that $\gamma^\ast g_2 \gamma = g_1 \mu$ is positive-definite, and this in turn follows from the total positivity of $\mu$ and the fact that after base-changing to $B_{p, \infty} \otimes_{\Q} \C \cong M_2(\C)$ if needed one can rewrite $g_1 \mu = h(h^\ast \mu h) h^{-1}$ with $h h^\ast = g_1$. The existence of such an $h$ follows from Cholesky decomposition, which we have included as Lemma~\ref{lem:quaternion_cholesky} below.
\end{itemize} 
Verifying that $g_2 \in M_2(\O)$ in an elementary way is more intricate. For instances of $H$ that arise as the kernel of a Shimura ideal, such a verification can be found in Appendix~\ref{app:direct_action}; see also Remark~\ref{rmk:direct_action}.
\end{remark}

\begin{lemma}[Cholesky decomposition in $M_2(B_{p, \infty})$]
\label{lem:quaternion_cholesky}
Let $g \in M_2(B_{p, \infty})$ be a positive-definite Hermitian matrix, i.e., $g = g^\ast$ and its (necessarily real) eigenvalues are positive. Then there exists $h \in M_2(B_{p, \infty})$ such that $h h^\ast = g$.
\end{lemma}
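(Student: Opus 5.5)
We prove the claim by an explicit $2\times 2$ block Cholesky (LDL$^\ast$) decomposition, imitating the real/complex case. Write
\[ g = \begin{pmatrix} s & r \\ \bar r & t \end{pmatrix}. \]
Since $g = g^\ast$, the diagonal entries $s,t$ are fixed by the canonical involution of $B_{p,\infty}$, hence lie in $\Q$. The off-diagonal entry $r$ is an arbitrary element of $B_{p,\infty}$.

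First we check that $s>0$, using positive-definiteness. Base-change to $B_{p,\infty}\otimes_\Q \R \cong \mathbb{H}$ (Hamilton quaternions), or further to $M_2(\C)$, so that $g$ becomes a Hermitian $4\times 4$ complex matrix. Its eigenvalues are positive by hypothesis, so the associated form $x \mapsto x^\ast g x$ is positive definite on $\mathbb{H}^2$. Evaluating at $x=(1,0)^T$ gives $s>0$.

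Next we perform block elimination, which is valid because $s \neq 0$ is central and invertible:
\[ g = \begin{pmatrix} 1 & 0 \\ \bar r s^{-1} & 1 \end{pmatrix} \begin{pmatrix} s & 0 \\ 0 & t - s^{-1} r\bar r \end{pmatrix} \begin{pmatrix} 1 & s^{-1} r \\ 0 & 1 \end{pmatrix} = L D L^\ast. \]
This is a routine multiplication. It uses that $r\bar r = \nrm(r) \in \Q$ is central and that the conjugate-transpose of the right factor is the left factor. Put $t' := t - s^{-1}\nrm(r) = \det(g)/s \in \Q$, where $\det$ denotes the $2\times 2$ "norm" $st - r\bar r$. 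Then $t'>0$, since $D = L^{-1} g L^{-\ast}$ is again positive definite: the form is congruent, so evaluating at $(0,1)^T$ gives $t'>0$.

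The main obstacle is taking square roots of $s$ and $t'$ inside $B_{p,\infty}$. These are positive rationals, so $\sqrt{s}$ need not lie in $\Q$. The way out is that $h$ need not be diagonal: it suffices to find $a\in B_{p,\infty}$ with $a\bar a = s$, i.e.\ an element of reduced norm $s$, because then $\diag(a,b)\diag(a,b)^\ast = \diag(s,t')$. The reduced norm form of $B_{p,\infty}$ is a positive definite quaternary rational quadratic form. By Hasse--Minkowski, a rational quaternary form represents every positive rational it represents locally, and this holds at all places: at $\infty$ by positivity; at $\ell \neq p$ because $B$ is split, so the norm form is hyperbolic and universal; at $p$ because the norm from the local division algebra is surjective onto $\Q_p^\times$. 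So every positive rational is a reduced norm, and we get $a,b \in B_{p,\infty}$ with $\nrm(a)=s$ and $\nrm(b)=t'$.

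Finally set $h := L\,\diag(a,b)$. Then $hh^\ast = L\,\diag(a\bar a, b\bar b)\,L^\ast = LDL^\ast = g$, as required.

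If one wants to avoid Hasse--Minkowski, an alternative is to note that $s$ is itself a sum of four rational squares and embed via a quaternion of the form $x_0 + x_1 i + \dots$. This would require the standard-basis norm form to be $\sum x_i^2$, which fails in general, so I would rely on the local–global argument above.
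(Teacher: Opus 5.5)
Your proof is correct and matches the paper's argument: the same $LDL^\ast$ factorization with $L = \begin{pmatrix} 1 & 0 \\ s^{-1}\bar r & 1 \end{pmatrix}$ and $t' = t - s^{-1}\nrm(r)$, then writing $s$ and $t'$ as reduced norms $a\bar a$, $b\bar b$ and taking $h = L\,\diag(a,b)$. The only difference is in how you show that every positive rational is a reduced norm: the paper cites the Hasse--Schilling norm theorem, whereas you derive this case directly from Hasse--Minkowski applied to the quaternary norm form, and you also spell out why $s, t' > 0$, which the paper only asserts.
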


\subsection{Orientations of $g$-matrices and the Shimura class group action} \label{sec:actingIKO}

Let $K$ be a quartic CM field and let $R \subset K$ be a CM order. We now describe $K$-oriented superspecial PPASs $(A, \lambda, \iota)$ within the framework of the IKO correspondence, where as above we assume $A = E^2$. Let $g$ be the $g$-matrix associated with $\lambda$ and let $\iota : R \hookrightarrow \End(A) \cong M_2(\O)$ be a primitive orientation.
Write $R_0 = \Z[a]$ for some real generator $a$ and $R = R_0[b_1, b_2]$ for generators $b_i \in R$.\footnote{In many cases one can do with a single generator $b_i$: this is true if and only if $R$ is a free $R_0$-module. E.g., for this it suffices that $R_0$ is maximal and $h(K_0)=1$.}
Upon identifying $a, b_1, b_2$ with their images under the orientation, we can think of them as commuting elements of $M_2(\mathcal{O})$ satisfying $g^{-1}a^*g=a$ and $g^{-1}b_i^*g \in R$ for $i = 1, 2$.
We refer to the tuple $g, a, b_1, b_2 \in M_2(\O)$ as \defi{algebraic data} associated with $(A, \lambda, \iota)$.

\begin{theorem}\label{thm:algN}
Assuming the generalized Riemann hypothesis (GRH), there is a polynomial-time algorithm which upon input of
\begin{itemize}
    \item algebraic data $g,a,b_1, b_2$ of an oriented superspecial PPAS $(A, \lambda, \iota)$,
    \item an integral Shimura ideal $(\fraka,\mu)$ of $\Z[a, b_1, b_2]$ such that $p \nmid N(\fraka)$,\footnote{We included the assumption $p \nmid N(\fraka)$ for consistency with Theorem~\ref{thm:action}. But, in fact, our algebraic treatment works equally well in the inseparable case.}
\end{itemize}
returns algebraic data of the codomain of the corresponding $\iota(\mu)$-isogeny, equipped with the induced principal polarization and orientation.
\end{theorem}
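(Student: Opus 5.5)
We will work entirely on the algebraic side of the IKO correspondence. The overall route is: compute a matrix $\gamma$ with $\ker\vartheta(\gamma) = \cap_{x \in \fraka}\ker\iota(x)$, then read off the new $g$-matrix from the identity $g_1\mu = \gamma^\ast g_2\gamma$ of Section~\ref{subsec:IKO_intro_and_extend}, and finally transport the orientation by conjugation.

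\emph{Step 1: the image of $\fraka$ in $M_2(\O)$.} Write $\mu$ and a $\Z$-basis of $\fraka$ as polynomials in $a, b_1, b_2$. Evaluating these in $M_2(\O)$ gives matrices $x_1, \ldots, x_4$ and a matrix $\mu \in M_2(\O)$ fixed by the Rosati involution $b \mapsto g^{-1}b^\ast g$. The kernel $H$ of the Shimura isogeny corresponds to the left ideal
\[ I := M_2(\O)x_1 + \cdots + M_2(\O)x_4 \subset M_2(\O). \]
Since $M_2(\O)$ is the endomorphism ring of $E^2$, the equivalence between finite subgroups and left ideals (Deuring/Kani style for $E^g$) tells us that $H$ corresponds to this left ideal. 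The ring $M_2(\O)$ is hereditary and Morita-equivalent to $\O$, so every left ideal of it is principal: $I = M_2(\O)\gamma$ for some $\gamma$. We then have $\ker\vartheta(\gamma) = H$.

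\emph{Step 2: finding a generator $\gamma$ (main obstacle).} We need a polynomial-time procedure for computing this generator. We first compute a $\Z$-basis of $I$ via Hermite normal form; it has reduced norm $N(\fraka)$ up to units. Under Morita equivalence, $I$ corresponds to a right $\O$-module inside $\O^2$, isomorphic to $\O \oplus J$ for a right $\O$-ideal $J$ obtained via Steinitz-type reduction. We would then apply the GRH-based algorithm for finding a principal generator of a left $M_2(\O)$-ideal, as in KLPT${}^2$~\cite{DBLP:conf/crypto/CastryckDKLPT25}. An alternative is to use the fact that $\O\oplus J \cong \O \oplus \O$ holds because the stable class is trivial (all right $\O$-ideals are stably free in rank $2$). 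We can compute the isomorphism explicitly by finding an element of $J$ of norm coprime to a chosen auxiliary integer, which GRH guarantees exists among small elements, and then apply the usual $2\times2$ elementary-matrix trick. This explicit isomorphism yields $\gamma$. We expect this step to require the most care: keeping the sizes of the entries polynomial needs LLL reduction at each stage.

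\emph{Step 3: the output data.} We set $g_2 := \gamma^{-\ast}g\mu\gamma^{-1}$. By the discussion following Remark~\ref{rmk:secondcomponentmatters}, together with Proposition~\ref{prop:max_iso}, this is a valid $g$-matrix: maximal isotropy of $H$ for $e_{\iota(\mu)}$ holds by the proof of Theorem~\ref{thm:action}. Note that $\vartheta(\gamma)$ is an isogeny $A\to A$ with kernel $H$, so the pushed-forward orientation satisfies $\iota_2(x)\gamma = \gamma\,\iota(x)$, giving $\iota_2(x) = \gamma x\gamma^{-1}$. We therefore output $g_2$ together with $a' = \gamma a\gamma^{-1}$ and $b_i' = \gamma b_i\gamma^{-1}$.

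\emph{Step 4: checking the output.} These matrices lie in $M_2(\O)$ because the orientation is horizontal: $I x \subseteq I$ for $x \in R$, since $\fraka$ is an $R$-ideal. Hence $\gamma x \in M_2(\O)\gamma$, i.e.\ $\gamma x \gamma^{-1} \in M_2(\O)$. The Rosati compatibility with respect to $g_2$ follows from $g_2 = \gamma^{-\ast}g\mu\gamma^{-1}$ by direct substitution, using that $\mu$ commutes with $R$. All steps consist of linear algebra over $\Z$ on objects of polynomial size, apart from Step 2, which is polynomial under GRH.
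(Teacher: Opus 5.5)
Your proposal is correct and follows the paper's proof: generate the left ideal of $M_2(\O)$ by the image of $\fraka$, compute a principal generator $\gamma$ with an existing principal-ideal algorithm, and output $g'=\gamma^{-\ast}g\mu\gamma^{-1}$, $a'=\gamma a\gamma^{-1}$, $b_i'=\gamma b_i\gamma^{-1}$; your Step~4 replaces the verification the paper defers to Appendix~\ref{app:direct_action} with the cleaner observation that $Ix\subseteq I$ forces $\gamma x\gamma^{-1}\in M_2(\O)$. Two small fixes: principality of left $M_2(\O)$-ideals does not follow from hereditariness and Morita equivalence alone, but from cancellation for rank-$2$ $\O$-lattices (Eichler's condition holds for $M_2(B_{p,\infty})$) together with the triviality of the reduced-norm class group, as you hint in Step~2; and to get a statement conditional only on GRH you should invoke the heuristic-free algorithm of~\cite[\S4]{page2026conversion} rather than KLPT${}^2$, which is heuristic.
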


\begin{proof}
The endomorphisms that vanish on the kernel of $\fraka$ form a left ideal of $M_2(\mathcal{O})$ generated by the elements of $\fraka$. A generator $\gamma$ of this (principal) ideal can be found in polynomial time, using either~\cite{PIP} or~\cite[\S4]{page2026conversion}; the former algorithm is more efficient but the latter method is heuristic-free (modulo a dependency on GRH). As explained in the previous section, 
the $g$-matrix of the codomain is then $g' = \gamma^{-*}g\mu\gamma^{-1}$. 
From the requirement that $\gamma$ should represent an oriented isogeny, it is then clear that the push-forward orientation is
given by
$a' = \gamma a\gamma^{-1}$, $b_i' = \gamma b_i\gamma^{-1}$. All this data is straightforward to compute. \qed
\end{proof}

\begin{remark} \label{rmk:direct_action}
It is instructive to try and \emph{define} the Shimura class group action (on superspecial PPASs) via $g \mapsto \gamma^{-*}g\mu\gamma^{-1}$, $a \mapsto \gamma a \gamma^{-1}$, $b_i \mapsto \gamma b_i \gamma^{-1}$ and then give a direct matrix-arithmetic proof that this action is well-defined. The main tasks here are showing that $g', a', b_i' \in M_2(\O)$, that $a'$ is fixed under the Rosati involution induced by $g'$, and that $\Z[a', b'_1, b'_2]$ is stable under this Rosati involution. The details of such a direct approach can be found in Appendix~\ref{app:direct_action}. 
\end{remark}

All explicit formulae for isogeny evaluation (that we are aware of) restrict to $N$-isogenies for some $N \in \Z_{>0}$. In view of this, we have the following important consequence of Theorem~\ref{thm:algN}: if a $g$-matrix of $(A, \lambda)$ is known, then we can act with arbitrary Shimura ideal classes, even if they do not belong to $\frakC(R)_{\Z}$, in polynomial time. (Note: here we assume that $(A, \lambda)$ is given to us as a Jacobian with the canonical polarization, or as a product of elliptic curves with the product polarization.) We point out that knowing a $g$-matrix of $(A, \lambda)$ is polynomial-time equivalent 
to knowing the endomorphism ring of $A$, in the strong sense of~\cite[Def.\,2.14]{montessinos2026equivalent}; see~\cite[\S3]{montessinos2026equivalent} for details.

\begin{corollary}\label{cor:evaluate}
Let $R$ be a CM order in a quartic CM field and let $p$ be a prime number. There exists a heuristic polynomial-time algorithm for computing $[(\fraka, \mu)] \star (A, \lambda, \iota)$ for all $[(\fraka,\mu)] \in \frakC(R)$ and $(A, \lambda, \iota) \in \M_{\overline{\F}_p}(R)$, assuming that a $g$-matrix of $(A, \lambda)$ is known.
\end{corollary}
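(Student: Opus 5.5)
The plan is to reduce to Theorem~\ref{thm:algN}. The only hypothesis it needs beyond a $g$-matrix is an \emph{integral} Shimura ideal $(\fraka,\mu)$ of $\Z[a,b_1,b_2]$ with $p \nmid N(\fraka)$. So the work is to find such a representative of the class $[(\fraka,\mu)]$, and to produce the algebraic data from the given input.

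\textbf{Step 1 (algebraic data).} Write $(A,\lambda)\cong(E^2,\lambda)$ via Deligne's theorem, with known $g$-matrix $g$. Translate the given orientation $\iota$ into matrices in $M_2(\O)$. Knowing the $g$-matrix is polynomial-time equivalent to knowing $\End(A)$ in the strong sense of~\cite{montessinos2026equivalent}. This lets us express $\iota$ of generators $a$ of $R_0$ and $b_1,b_2$ of $R$ as elements of $M_2(\O)$. We take $R_0=\Z[a]$, $R=R_0[b_1,b_2]$ as in Section~\ref{sec:actingIKO}.

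\textbf{Step 2 (integral, prime-to-$p$ representative).} Theorem~\ref{thm:action} says we may assume $\fraka\subset R$ with norm coprime to $p$. I will make this effective.
\begin{itemize}
\item Multiply $(\fraka,\mu)$ by $(xR, x\bar x)\in P_R$, where $x\in K^\times$ clears denominators. This gives $\fraka\subset R$ and keeps $\mu\in K_0^+$.
\item If $p \mid N(\fraka)$, replace $\fraka$ by $\fraka\cdot \beta\fraka^{-1}$ for a suitable $\beta\in\fraka$. We want $\beta\fraka^{-1}$ to be an integral ideal prime to $p$, found for instance by sampling $\beta$ in $\fraka$ modulo $p\fraka$ (a standard strong-approximation / random sampling argument).
\item Then $(\fraka,\mu)\sim(\beta\fraka^{-1}\bar{\fraka}\cdots)$. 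Concretely, use $(\bar\beta\,\bar\fraka^{-1}, \ldots)$ and the relation $\fraka\bar\fraka=\mu R$, so that $(\fraka,\mu)\sim(\bar\beta\mu^{-1}\fraka,\ \beta\bar\beta\mu^{-1})$, which is integral after scaling.
\end{itemize}
The footnote to Theorem~\ref{thm:algN} notes that the inseparable case also works, so this step could even be skipped.

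Non-invertible or non-Gorenstein subtleties for non-maximal $R$ are handled because $\fraka$ is invertible by assumption. The heuristic aspect enters through the expected number of samples needed to find a good $\beta$.

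\textbf{Step 3 (apply the algorithm and convert back).} Run Theorem~\ref{thm:algN} to obtain $g'=\gamma^{-*}g\mu\gamma^{-1}$ and the conjugated orientation data. Convert $g'$ back to a geometric object: a Jacobian or a product of curves with a known principal polarization, together with the push-forward orientation. This conversion is again polynomial-time by the equivalence of~\cite{montessinos2026equivalent}, or via the KLPT$^2$ algorithm of~\cite{DBLP:conf/crypto/CastryckDKLPT25}. That algorithm translates the connecting matrix $\gamma$, i.e.\ the $\iota(\mu)$-isogeny, into a chain of $(2,2)$-isogenies that can be evaluated explicitly; the heuristics of KLPT$^2$ are what make the overall algorithm heuristic.

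\textbf{Main obstacle.} I expect the main difficulty to be this final conversion. The isogeny of Theorem~\ref{thm:action} is an $\iota(\mu)$-isogeny with $\mu\notin\Z$ in general, for which no explicit formulae exist. I would therefore rely on KLPT$^2$ to find an equivalent smooth $N$-isogeny path from $(A,\lambda)$ to the target. Its existence follows since $\gamma$ and the KLPT$^2$ output connect the same IKO matrices up to the equivalence $u^*g'u$.
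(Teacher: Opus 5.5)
Your proposal is correct and follows the paper's proof: obtain algebraic data $g,a,b_1,b_2$ from the known $g$-matrix, apply Theorem~\ref{thm:algN}, and realize the codomain via the KLPT$^2$-based IKO-to-surface conversion of~\cite{DBLP:conf/crypto/CastryckDKLPT25}, which is the source of the heuristics. Two small corrections: the paper uses~\cite[Thm.\,5.3]{montessinos2026equivalent} together with that conversion rather than as an alternative to it, to get the explicit isomorphisms along which the orientation data is transported in both directions; and your Step~2, which makes explicit a reduction the paper leaves to Theorem~\ref{thm:action} and the footnote to Theorem~\ref{thm:algN}, is right in its final form $(\fraka,\mu)\sim(\bar\beta\mu^{-1}\fraka,\beta\bar\beta\mu^{-1})$ with $\bar\beta\mu^{-1}\fraka=\bar\beta\bar\fraka^{-1}\subset R$, but the phrase ``replace $\fraka$ by $\fraka\cdot\beta\fraka^{-1}$'' is a slip, since that ideal is just $\beta R$.
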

\begin{proof}
We assume that the orientation $\iota$ is given in higher-dimensional representation as in Section~\ref{subsec:representation}, but the argument should carry over to any other valid way of representing $\iota$. 
From a $g$-matrix of $(A, \lambda)$ we can compute an explicit isomorphism $\varphi : (A, \lambda) \to (E^2, \lambda_0 \circ g)$ using~\cite[Thm.\,5.3]{montessinos2026equivalent}. 
This is then also an oriented isomorphism between $(A, \lambda, \iota)$ and $(E^2, \lambda_0 \circ g, \varphi_\ast \iota)$,
and the given orientation data are easily pushed through and interpolated to obtain algebraic data $g, a, b_1, b_2$. Algebraic data $g', a', b_1', b_2'$ for $[(\fraka, \mu)] \star (A, \lambda, \iota)$ is then obtained using Theorem~\ref{thm:algN}.
Using the 
IKO-to-surface conversion algorithm from~\cite[Thm.\,5.1]{DBLP:conf/crypto/CastryckDKLPT25} we can realize this codomain as a Jacobian (or a product of elliptic curves with the product polarization), where an explicit isomorphism $\varphi'$ can be found through another use of~\cite[Thm.\,5.3]{montessinos2026equivalent}.
The algebraic data $a', b_1', b_2'$ can then be converted into interpolation data and pushed through $\varphi'$ to obtain the desired output.
 \qed 
\end{proof}
\begin{remark}
Corollary \ref{cor:evaluate} is heuristic due to several calls to the powersmooth $\text{KLPT}^2$ algorithm from \cite{DBLP:conf/crypto/CastryckDKLPT25}, which relies on some heuristics. Note that the improved $\text{KLPT}^2$ algorithm from \cite{page2026conversion} can likely be made heuristic-free (perhaps under GRH; this would work with very similar ideas as in the case of 1-dimensional KLPT~\cite{weso22}). 
\end{remark}

\subsection{Constructing $g$-matrices with a given orientation} \label{sec:constructing_orientations}
An important application of the algebraic viewpoint is the construction of superspecial PPASs with a prescribed orientation. Indeed, on the geometry side there is no systematic way to find orientations other than ad-hoc constructions, as in Section~\ref{sec:superspecial} or Appendix~\ref{app:R2families}, or CM reductions, which only provide orientations by orders with a relatively small discriminant. 

As an entry point, we consider the next lemma: 
\begin{lemma}\label{lem: cent}
Let $g$ be a $g$-matrix and let $a\in M_2(\mathcal{O}) \setminus \Q \Id_2$ be a symmetric element with respect to Rosati. Then its centralizer $\mathrm{Cent}(a) \subset M_2(B_{p,\infty})$ is isomorphic to the quaternion algebra $C = \mathbb{Q}(a)\otimes B_{p,\infty}$ over $\Q(a)$, and  Rosati acts as quaternion conjugation on C. 
\end{lemma}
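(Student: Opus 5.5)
The plan is to work first in the semisimple algebra $M := M_2(B_{p,\infty})$, a central simple $\Q$-algebra of dimension $16$, and apply the double centralizer theorem to the subfield $\Q(a)$; afterwards we analyse how the Rosati involution $x \mapsto x^\dagger = g^{-1}x^*g$ restricts to the centralizer.

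\textbf{Step 1: $\Q(a)$ is a real quadratic field.} Since $a$ is Rosati-symmetric and the Rosati involution is positive, $\Q[a]$ is a commutative subalgebra on which $\dagger$ acts trivially. Positivity ($\tr(x x^\dagger) > 0$ for $x\neq 0$) rules out nilpotents in $\Q[a]$, so $\Q[a]$ is a product of totally real fields. Its dimension is at most $2$: by the usual argument, any maximal commutative subalgebra of $M$ has dimension at most $4$, and a commutative subfield fixed by a positive involution has dimension at most $2 = \dim A$ (the symmetric part of a CM/totally real subalgebra of $\End^0$ of a surface). I would also rule out $\Q[a] \cong \Q\times\Q$, i.e.\ $a$ having rational eigenvalues with nontrivial idempotents $e=e^\dagger$. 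The clean route here is to use that $\Q(a)$ appears in our application as $K_0$, but in general this is the delicate point. If $\Q[a]\cong\Q\times\Q$, then the centralizer is $B_{p,\infty}\times B_{p,\infty}$, which is not a quaternion algebra over a field. So the lemma implicitly needs $\Q(a)$ to be a field, and I would either show this from the hypotheses or add it as a running assumption, as in the intended setting where $a$ generates $R_0$. Granting that $L := \Q(a)$ is a real quadratic field, we proceed.

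\textbf{Step 2: identify the centralizer.} By the double centralizer theorem, $\Cent_M(L)$ is central simple over $L$ of dimension $16/2 \cdot 1/2 = 4$ over $L$, i.e.\ a quaternion algebra over $L$, and it is Brauer equivalent to $M\otimes_\Q L \sim B_{p,\infty}\otimes_\Q L$. Hence $\Cent(a)\cong L\otimes_\Q B_{p,\infty} = C$. Concretely, the natural map $L\otimes B_{p,\infty}\to \Cent(a)$, $x\otimes b\mapsto x\cdot(b\Id_2)$, is a homomorphism of $L$-algebras of dimension $4$. It is injective since the source is simple, so it is an isomorphism.

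\textbf{Step 3: Rosati on $C$.} Since $a^\dagger=a$, the involution $\dagger$ preserves $\Cent(a)$ and is $L$-linear there. It is a positive involution of the first kind on the quaternion algebra $C$ over $L$. Every involution of the first kind on a quaternion algebra is either canonical conjugation or of orthogonal type, the latter with a $3$-dimensional symmetric space. I would then show it is the canonical one by showing that the symmetric elements are exactly $L$. Suppose $x\in C\setminus L$ with $x^\dagger=x$. Then $L[x]$ is a commutative $\dagger$-fixed subalgebra of dimension $4$. By positivity (as in Step 1), it would be a product of totally real fields of total degree $4$ inside $\End^0(A)$ for an abelian surface, which is impossible since a commutative semisimple subalgebra fixed by a positive involution has dimension at most $\dim A=2$. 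This dimension bound is the main obstacle. To justify it, I would note that $\dagger$-symmetric commutative subalgebras act on $H_1$ or on the $\ell$-adic Tate module with self-adjoint operators for the positive-definite form $\tr(xy^\dagger)$. Alternatively, I would argue that totally real subalgebras of $\End^0(A)$ have degree dividing $\dim A$, by the standard result (Mumford, Ch.~21). Hence the symmetric part is $L$, so $x^\dagger = \tr_{C/L}(x)-x$ is quaternion conjugation.
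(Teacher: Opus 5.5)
Your argument is correct in substance, but it is organized differently from the paper's. The paper uses Skolem--Noether to conjugate $a$ to $\begin{pmatrix}0&s\\1&0\end{pmatrix}$ with $s\in\Z_{>0}$. There the centralizer is visibly $\{\begin{pmatrix}x&sy\\y&x\end{pmatrix} : x,y\in B_{p,\infty}\}\cong\Q(a)\otimes B_{p,\infty}$. You replace this explicit computation by the double centralizer theorem and a comparison of Brauer classes, which is cleaner but less explicit.

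For the involution, the paper shows that $c^\dagger c$ is central for every $c\in C$. It uses that $C\cong\Q(a)\otimes B_{p,\infty}$ is definite at both real places of $\Q(a)$, so its only totally real subfield is the center. You instead show that the symmetric space is $\Q(a)$, via an intrinsic bound ($\le\dim A=2$) on commutative Rosati-fixed subalgebras. This makes your Step 3 independent of Step 2.

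In this setting that bound has a short proof you could use instead of citing Mumford. By Lemma~\ref{lem:quaternion_cholesky}, the Rosati involution is conjugate to $\ast$. Every Hermitian $\begin{pmatrix}s&r\\\bar r&t\end{pmatrix}$ satisfies $x^2-(s+t)x+(st-\nrm(r))=0$, so every symmetric element has degree at most $2$ over $\Q$. On the other hand, a $4$-dimensional \'etale $\dagger$-fixed algebra $L[x]$ would contain a symmetric primitive element of degree $4$.

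Your caveat about $\Q[a]\cong\Q\times\Q$ is well taken. For $g=\Id_2$ and $a=\diag(1,0)$ the centralizer is $B_{p,\infty}\times B_{p,\infty}$, so the lemma really needs $\Q(a)$ to be a field. The paper's proof assumes this tacitly when it treats $\Q(a)$ as a quadratic field generated by a trace-zero element.

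Two side remarks in your write-up are wrong, although neither is needed.
\begin{itemize}
\item The ``concrete'' map $x\otimes b\mapsto x\cdot(b\Id_2)$ does not land in $\Cent(a)$. The matrix $b\Id_2$ commutes with $a=\begin{pmatrix}s&r\\\bar r&t\end{pmatrix}$ only if $br=rb$, i.e., essentially only when $a\in M_2(\Q)$. This is exactly why the paper first conjugates $a$ into $M_2(\Q)$. Either do the same and conjugate the diagonal copy of $B_{p,\infty}$ back, or drop the remark, since the Brauer argument already suffices.
\item It is not true that commutative subalgebras of $M_2(B_{p,\infty})$ have dimension at most $4$. For instance, $\Q\Id_2+B_{p,\infty}E_{12}$, with $E_{12}$ the elementary matrix, is commutative of dimension $5$. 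The bound holds only for semisimple subalgebras, which is all you actually use.
\end{itemize}
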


\noindent  This shows that one can obtain orientations as follows. Take a symmetric element $a\in M_2(\mathcal{O})$ and consider the intersection $M_2(\mathcal{O})\cap \mathrm{Cent}(a)$. This intersection is invariant under Rosati as both $M_2(\mathcal{O})$ and $\mathrm{Cent}(a)$ are. Then taking quadratic subrings of $M_2(\mathcal{O})\cap \mathrm{Cent}(a)$ will yield orientations. 
This is reminiscent of the 1-dimensional case where orientations are simply quadratic subrings of maximal orders. We note that intersections of the form $M_2(\O) \cap \mathrm{Cent}(a)$ were intensively studied in \cite{nicole2005superspecial} where it was shown that they are either maximal orders or Eichler orders of level $p$ in the quaternion algebra $\mathbb{Q}(a)\otimes B_{p,\infty}$ as soon as $p$ is unramified in $\mathbb{Q}(a)$ \cite[Thm.\,2.5.27]{nicole2005superspecial}. 

Our goal is an algorithm for creating superspecial PPASs that are primitively oriented by a prescribed CM order $R$.
For convenience we assume $R = \Z_K$. We expect that the method below can be tweaked so that it covers a much more general class of CM orders, but this would add a layer of technicality that lies beyond our current scope.
Note that primitivity comes for free if $R = \Z_K$, so we are merely concerned with finding a $K$-oriented superspecial PPAS $(A, \lambda, \iota)$ such that $\Z_K \subset \iota^{-1} \End(A)$.

Our approach is to generalize the 1-dimensional blueprint, where
one takes an element in $B_{p,\infty}$ of prescribed minimal polynomial and then puts that element into a maximal order (after which one finds a corresponding elliptic curve using the effective Deuring correspondence). Here the situation is somewhat more complex as one has to embed an element of $M_2(B_{p, \infty})$ into a maximal order of $M_2(B_{p, \infty})$ in a way that is compatible with the Rosati involution; this compatibility is automatic in the $1$-dimensional case. 
First we prove two useful lemmas. The first lemma can be thought of as a base-point-free orientation framework. Indeed, it is sometimes inconvenient (especially when trying to generalize 1-dimensional proofs) to work with $g$-matrices, and rather use the following equivalent framework. 
\begin{lemma}\label{lemma:bpf}
Let $\Lambda$ be a maximal order in $M_2(B_{p,\infty})$ that is stable under con- jugate-transpose. Then there exists an isomorphism of rings with involutions between $\Lambda$
with conjugate-transpose 
and $M_2(\mathcal{O})$ with some Rosati involution induced by a $g$-matrix. This isomorphism can be computed in polynomial time.  
\end{lemma}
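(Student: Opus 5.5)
The plan is to reduce to the classification of maximal orders in $M_2(B_{p,\infty})$ and then to Cholesky-type factorizations, as in Lemma~\ref{lem:quaternion_cholesky}. Every maximal order of $M_2(B_{p,\infty})$ is of the form $\Lambda = \End_{\O}(L)$ for some right $\O$-lattice $L \subset B_{p,\infty}^2$ (with $\O$ any fixed maximal order; changing $\O$ amounts to tensoring with a connecting ideal). Since the class number of the relevant genus is one, for rank $\geq 2$ (Eichler's theorem / strong approximation for $\GL_2$ over a definite quaternion algebra), the lattice $L$ is free as an $\O$-module. Hence there is $h \in \GL_2(B_{p,\infty})$ with
\[ \Lambda = h\, M_2(\O)\, h^{-1}. \]
Computationally, $h$ is found by computing a basis of $L = \Lambda e_{11} B_{p,\infty}^2$, or more simply by computing a generator of a principal (left) ideal of $M_2(\O)$. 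For this we would invoke the principal ideal algorithms~\cite{PIP} or~\cite[\S4]{page2026conversion}, as in the proof of Theorem~\ref{thm:algN}. After replacing $\Lambda$ by a conjugate, we may assume the connecting ideal $M_2(\O) h^{-1} \cap \ldots$ is integral.

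Next we use stability under conjugate-transpose. From $\Lambda^\ast = \Lambda$ we get $h^{-\ast} M_2(\O) h^\ast = h M_2(\O) h^{-1}$. Setting $g := h^\ast h$, this becomes $g^{-1} M_2(\O) g = M_2(\O)$, and similarly $g$ normalizes $M_2(\O)$ on the other side. So $g$ is a positive-definite Hermitian element of the normalizer of $M_2(\O)$. We then define
\[ \psi : M_2(\O) \to \Lambda, \quad x \mapsto h x h^{-1}. \]
A direct computation gives
\[ \psi(x)^\ast = h^{-\ast} x^\ast h^\ast = h\,(g^{-1} x^\ast g)\,h^{-1} = \psi(g^{-1} x^\ast g). \]
Thus $\psi$ intertwines conjugate-transpose on $\Lambda$ with the involution $x \mapsto g^{-1}x^\ast g$ on $M_2(\O)$, which is exactly the shape of a Rosati involution attached to the $g$-matrix $g$.

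The remaining, and main, obstacle is that $g$ need not be a valid $g$-matrix: we need $g \in M_2(\O)$ with reduced norm $1$, but a priori $g$ is only a positive Hermitian normalizer element. We have freedom to rescale $h$ by $h u c$ with $u \in \GL_2(\O)$ and $c \in \Q^\times$, which replaces $g$ by $c^2 u^\ast g u$. We also have freedom to multiply $h$ on the right by elements of the normalizer of $M_2(\O)$. The normalizer of $M_2(\O)$ modulo $\Q^\times \GL_2(\O)$ is generated by the class of $\diag(\pi,\pi)$-type elements, where $\pi$ has norm $p$ (the two-sided ideal of norm $p$). So the norm of $g$ is $p^k$ times a rational square. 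The plan is to show that $k$ is even, using positivity together with the fact that $\nrm(h^\ast h) = \nrm(h)^2$, so that $\nrm(g)$ is a square in $\Q$. Then, after right-multiplying $h$ by a suitable power of the normalizing element and scaling by $c$, we get $g \in \GL_2(\O)$ Hermitian positive definite, hence of norm $1$ (units of $M_2(\O)$ have reduced norm $1$). Finally, all steps are linear algebra over $\Q$ plus one principal-ideal computation, so the whole procedure runs in polynomial time.
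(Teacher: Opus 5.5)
Your set-up is right and matches the paper's argument: your $h$ is the paper's $\gamma$, and ``$g=h^\ast h$ normalizes $M_2(\O)$'' is equivalent to the paper's ``$\gamma\gamma^\ast$ normalizes $\Lambda$''. The gap is the final step, and it comes from a norm miscount. $M_2(B_{p,\infty})$ is a central simple algebra of degree $4$ over $\Q$. In it, a generator of the two-sided ideal $M_2(P)$ has reduced norm $p^2$, not $p$; here $P\subset\O$ is the prime above $p$, and an example is $\diag(\pi,\pi)$ when $P=\O\pi$. So every $n$ normalizing $M_2(\O)$ has $\nrm(n)=c^4p^{2k}$, which is always a square. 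Hence ``$\nrm(g)=\nrm(h)^2$ is a square'' says nothing about $k$. The correct criterion is that a normalizing $g$ lies in $\Q^\times\GL_2(\O)$ if and only if $v_p(\nrm g)\equiv 0\pmod 4$, i.e.\ if and only if $v_p(\nrm h)$ is even. Your proposed repair does not help either. Replacing $h$ by $hn$ turns $g$ into $n^\ast g n$ and shifts $v_p(\nrm h)$ by the even number $v_p(\nrm n)$, so the obstruction is untouched. Positivity, being an archimedean condition, cannot control this $p$-adic invariant. The paper instead takes $h$ with $p\nmid\nrm(h)$. Then $hh^\ast$ generates a two-sided $\Lambda$-ideal of norm prime to $p$, which must be $N\Lambda$ for a positive rational $N$, and $h^\ast h/N$ is a $g$-matrix.

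Be aware, though, that the parity of $v_p(\nrm h)$ is an invariant of $\Lambda$. So such an $h$ need not exist, and when it does not, the statement itself fails. The paper's sentence ``one can assume that the reduced norm of $\gamma$ is coprime to $p$'' tacitly adds this as a hypothesis. The hypothesis does hold in the application in Corollary~\ref{cor:orientation_finding}, where $\Lambda$ comes from Lemma~\ref{lem:conductor_lemma} via $\gamma\xi$ of norm prime to $p$. For a counterexample in general, let $p=3$, let $\O=\Z\langle 1,i,\frac{i+j}{2},\frac{1+k}{2}\rangle$ as in Lemma~\ref{lem:endring3mod4}, and set
\[ \eta=\begin{pmatrix} 3 & j+k \\ -j-k & 3 \end{pmatrix}. \]
This $\eta$ is positive-definite Hermitian, with all entries in $P$ and reduced norm $(9-6)^2=9$. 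Hence $M_2(\O)\eta=\eta M_2(\O)=M_2(P)$. Geometrically, $\lambda_0\circ\theta(\eta)$ is a polarization on $E^2$ with kernel $\ker(\pi_p\times\pi_p)$, not a principal one. Write $\eta=h^\ast h$ via Lemma~\ref{lem:quaternion_cholesky}. Then $\Lambda=hM_2(\O)h^{-1}$ is a maximal order with $\Lambda^\ast=\Lambda$, and $v_3(\nrm h)=1$. By Skolem--Noether, any isomorphism $(M_2(\O),\dagger_g)\to(\Lambda,{}^\ast)$ is $x\mapsto yxy^{-1}$, with $h^{-1}y$ normalizing $M_2(\O)$ and $y^\ast y=cg$ for some $c\in\Q^\times$. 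But $v_3(\nrm(y^\ast y))\equiv 2\pmod 4$, whereas $v_3(\nrm(cg))\equiv 0\pmod 4$, a contradiction. So your argument, like the paper's, only goes through under the extra hypothesis that $\Lambda=hM_2(\O)h^{-1}$ for some $h$ with $v_p(\nrm h)$ even.
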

\begin{proof}
First, we find $\gamma$ such that $M_2(\mathcal{O})=\gamma^{-1}\Lambda \gamma$. Such a $\gamma$ can be found in polynomial time using \cite{page2026conversion} or \cite{PIP} as it is equivalent to a principal ideal problem. One can assume that the reduced norm of $\gamma$ is coprime to $p$.
As $M_2(\mathcal{O})$ is closed under conjugate-transpose one has that $(\gamma^{-1}\Lambda\gamma)^*=\gamma^{-1}\Lambda\gamma$. This can be rearranged as $\gamma\gamma^*\Lambda=\Lambda\gamma\gamma^*$ \ which means that $\gamma\gamma^*$ generates a two-sided ideal of $\Lambda$, hence $(\gamma\gamma^*)=(N)$ for some integer $N$. This implies that $\gamma\gamma^*/N$ has norm $1$ and hence the same holds for $\gamma^* \gamma / N$. Call this latter matrix $g$ and note that it indeed concerns a $g$-matrix. Then it is easy to see that $(\Lambda,*)$ is isomorphic as a ring with involution to  $(M_2(\mathcal{O}),\dagger_g)$ where $\dagger_g$ denotes the Rosati involution induced by $g$. 
\qed
\end{proof}

\noindent The second lemma is a very generic fact from the algorithmic theory of quaternion algebras (and might already be in the literature but we could not find a reference). Let $C$ be a quaternion algebra over a field $K$, not of characteristic 2. It is a well-known result that $C$ is split by a quadratic extension $K(\sqrt{d})$,
i.e., $C\otimes_K K(\sqrt{d}) \cong M_2(K(\sqrt{d}))$,
if and only if $C$ contains $K(\sqrt{d})$ as a subfield. The next lemma shows that if one can find an explicit isomorphism between $C\otimes_K K(\sqrt{d})$ and $M_2(K(\sqrt{d}))$, then one can find a subfield of $C$ that is isomorphic to $K(\sqrt{d})$: 
\begin{lemma}\label{lemma:split}
Let $K$ be a number field. Let $C$ be a quaternion algebra over $K$ given by a quaternion basis over $K$ and let $d \in K \setminus K^2$. Then one can decide whether $K(\sqrt{d})$ embeds into $C$ in quantum polynomial time. Furthermore, if it embeds, one can find an explicit embedding in quantum polynomial time. 
\end{lemma}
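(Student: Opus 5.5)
The plan is to reduce both the decision and the construction to one algorithmic problem: computing an explicit splitting isomorphism of a quaternion algebra over the number field $L := K(\sqrt{d})$. Quantum polynomial time is needed only for the class group and unit computations inside that step.

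First, the decision part. The field $L$ embeds into $C$ if and only if $L$ splits $C$. By the local-global principle for quaternion algebras, this holds if and only if every place $v$ of $K$ at which $C$ ramifies fails to split in $L$. So we proceed as follows.
\begin{itemize}
\item Compute the ramification set of $C$. Write $C = (\alpha, \beta)_K$ from the given quaternion basis.
\begin{itemize}
\item The candidate finite places are those dividing $2\alpha\beta$.
\item Factoring the norms $N_{K/\Q}(\alpha), N_{K/\Q}(\beta)$ is done with Shor's algorithm.
\item For each candidate place we evaluate the local Hilbert symbol $(\alpha,\beta)_v$.
\item The real places are read off from signs.
\end{itemize}
\item For each ramified place, check whether it splits in $L$. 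This uses the splitting of the prime in $K(\sqrt d)$ (a Legendre-type symbol, or Hensel's lemma at places above $2$), together with the sign of $d$ at real places.
\end{itemize}
Everything apart from the factorisation is classical polynomial time.

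Second, the construction. Assume $L$ splits $C$. Form $C_L = C \otimes_K L$, given by the same basis, now viewed over $L$. Compute an explicit isomorphism $C_L \cong M_2(L)$, which is equivalent to finding a nonzero zero-divisor in $C_L$. To do this:
\begin{itemize}
\item Compute a maximal order of $C_L$; this needs factoring the discriminant, hence Shor.
\item Use the standard reduction of finding a zero-divisor to a principal ideal problem in $L$. This is the approach of Ivanyos--Rónyai--Schicho for explicit isomorphism problems over number fields; by Biasse--Song, principal ideal problems and $S$-unit computations are solvable in quantum polynomial time.
\end{itemize}

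Given an explicit isomorphism $\psi : C_L \to M_2(L)$, extract the subfield as follows. We seek $x \in C$, of trace zero, with $x^2 = d$.
\begin{itemize}
\item Let $\sigma$ be the nontrivial automorphism of $L/K$. It acts on $C_L$ through the second tensor factor, with fixed algebra $C$.
\item Transport $\sigma$ by $\psi$ to a semilinear automorphism of $M_2(L)$. By Skolem--Noether this has the form $X \mapsto P\,\sigma(X)\,P^{-1}$ for some $P \in \GL_2(L)$, where $\sigma$ acts entrywise. We find $P$ by linear algebra: compare $\psi(e_i)$ with $\sigma(\psi(e_i))$ on the images of the basis $e_i$ of $C$, which are fixed by the twisted action.
\item Since the twisted action is an involution, $P\sigma(P) = c$ is a scalar in $K^\times$.
\item Now take $x = \psi^{-1}(\sqrt d\cdot Q)$, where $Q$ is chosen so that $\sqrt d\, Q$ is fixed by the twisted action and squares to the scalar $d$. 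Equivalently, $Q$ is twisted-antifixed with $Q^2 = 1$.
\item More directly: the element $\psi^{-1}(\diag(\sqrt d, -\sqrt d))$ lies in $C_L$. Averaging it with its $\sigma$-image, after conjugating by a suitable element, produces an element of $C$ that generates a copy of $L$.
\end{itemize}

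The cleanest route to finding the right conjugating element is to solve a norm equation $N_{L/K}(y) = c$ with $y \in L$. This is again an $S$-unit / principal ideal computation, so it is quantum polynomial time. Its solvability is guaranteed by the embedding criterion established in the first part.

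I expect this last step to be the main obstacle: Galois descent from the split algebra $M_2(L)$ back to an explicit subfield of $C$. The first approach I would try is the norm-equation reformulation, namely solving $N_{L/K}(y) = c$ to normalise the cocycle $P$ to the identity. After that normalisation, the element $\psi^{-1}(\sqrt d\,\mathrm{diag}(1,-1))$ descends to $C$ directly.
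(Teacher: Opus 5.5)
\paragraph{Where the proposal breaks.} Your decision procedure and the computation of an explicit isomorphism $\psi\colon C\otimes_K L\to M_2(L)$ are fine and agree with the paper. The paper treats the decision part as folklore and also starts the construction from such a $\psi$. The gap is in the descent step, where you expected trouble, and as written it does not work. The norm equation $N_{L/K}(y)=c$ is in general \emph{not} solvable, and its solvability does not follow from the embedding criterion. Suppose $c=N_{L/K}(y)$. Then $y^{-1}P$ satisfies $(y^{-1}P)\,\sigma(y^{-1}P)=1$, so by Hilbert~90 for $\GL_2(L)$ it equals $Q^{-1}\sigma(Q)$ for some $Q\in\GL_2(L)$. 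The twisted action becomes $X\mapsto Q^{-1}\sigma(QXQ^{-1})Q$, whose fixed algebra is $Q^{-1}M_2(K)Q$, so $C\cong M_2(K)$. In other words, $c$ represents the class of $C$ in $K^\times/N_{L/K}(L^\times)$ (one has $C\cong(d,c)_K$). ``Normalising the cocycle to the identity'' is therefore possible only when $C$ is split, which is the uninteresting case. In the paper's application (Lemma~\ref{lem:orient_fields}), $C=\Cent(a)\cong\Q(a)\otimes B_{p,\infty}$ is ramified at both real places of $\Q(a)$. So it is a division algebra and your equation has no solution.

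Two further problems remain even in the split case. If $P=\Id_2$ can be arranged, $\sqrt d\,\diag(1,-1)$ is negated by entrywise $\sigma$, so $\psi^{-1}(\sqrt d\,\diag(1,-1))$ does not lie in $C$. Your alternative, ``averaging after a suitable conjugation'', is not specified enough to control the square of the result.

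\paragraph{What is needed instead.} You need a twisted-anti-fixed $Q$ with $Q^2=\Id_2$, and once $P$ is known this is plain linear algebra with no norm equation.
\begin{itemize}
\item Choose $v\in L^2$ such that $v$ and $v':=P\sigma(v)$ are linearly independent; one of $e_1,e_2,e_1+e_2,(1,\sqrt d)^T$ always works.
\item Let $E$ be the idempotent with image $Lv$ and kernel $Lv'$.
\item Then $P\sigma(E)P^{-1}$ is the idempotent with image $Lv'$ and kernel $L\,P\sigma(v')=L\,P\sigma(P)v=Lv$, so it equals $\Id_2-E$.
\item Hence $x=\psi^{-1}\bigl(\sqrt d\,(2E-\Id_2)\bigr)$ is fixed by $\id\otimes\sigma$, lies in $C\setminus K$, and satisfies $x^2=d$.
\end{itemize}

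The paper does the equivalent thing with a nilpotent instead of an idempotent, and so avoids $P$ altogether. It takes $0\neq s\in C\otimes_K L$ with $s^2=0$ and writes $s=u+\sqrt d\,w$ with $u,w$ pure quaternions in $C$. The two components of $s^2=0$ give $\nrm(u)=-d\,\nrm(w)$ and $uw+wu=0$. Hence $uw^{-1}$ has trace $0$ and norm $-d$, so it squares to $d$. In both versions the only quantum ingredients are factoring and computing $\psi$.
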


\begin{remark} \label{rmk:split_heuristic_free}
Let us emphasize that the quantum part is not necessary in practice, but it makes the lemma heuristic-free. Indeed, our proof of Lemma \ref{lemma:split} 
in Appendix~\ref{app:omitted_algebra_proofs} invokes a quantum algorithm from~\cite{kutas_montessinos} but this can be replaced
with a heuristic algorithm for solving quadratic forms.
\end{remark}

 We now proceed to finding $\Z_K$-oriented superspecial PPASs.  The following is a first step:

\begin{lemma}\label{lem:orient_fields}
Given a quartic CM field $K$, 
there exists a quantum polynomial-time algorithm that finds an embedding $\iota : K \hookrightarrow M_2(B_{p, \infty})$ such that $\iota(\bar \alpha) = \iota(\alpha)^\ast$ for all $\alpha \in K$, or decides that no such embedding exists.
\end{lemma}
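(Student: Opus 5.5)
We build the embedding in two stages, first the real subfield $K_0 = \Q(\sqrt{c})$ and then the imaginary generator, and then invoke Lemma~\ref{lem: cent} and Lemma~\ref{lemma:split}. Write $K = K_0(\sqrt{-\delta})$ with $\delta \in K_0^+$.

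\emph{First stage: the real subfield.} We need $a \in M_2(B_{p,\infty})$ with $a^\ast = a$ and $a^2 = c$. Over $M_2(\Q) \subset M_2(B_{p,\infty})$ such elements are easy to write down whenever $c$ is a sum of two rational squares, namely
\[ a = \begin{pmatrix} x & y \\ y & -x \end{pmatrix}, \qquad x^2+y^2=c. \]
In general we search for a Hermitian matrix
\[ a = \begin{pmatrix} x & r \\ \bar r & -x \end{pmatrix}, \qquad x \in \Q,\ r \in B_{p,\infty}, \]
which satisfies $a^2 = (x^2 + \nrm(r))\Id_2$. Since the reduced norm on $B_{p,\infty}$ is a positive definite quaternary form, $x^2 + \nrm(r)$ is a positive definite quinary form over $\Q$. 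It therefore represents every positive rational, in particular $c$. A solution is found in polynomial time by solving this quadratic form, and Simon/Castel-type algorithms do this for forms of dimension at least $5$. Note that trace-zero is forced up to shifting by rationals, so this parametrization loses nothing.

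\emph{Second stage: the imaginary generator.} By Lemma~\ref{lem: cent} (applied with the trivial $g$-matrix, so that Rosati is conjugate-transpose), $\Cent(a) \cong C := \Q(a)\otimes B_{p,\infty}$ is a quaternion algebra over $K_0$, and ${}^\ast$ restricts to quaternion conjugation on $C$. Any $b \in C$ satisfies $b^\ast = \bar b$ (quaternion conjugate). Hence an element $b \in C$ with $b^2 = -\delta$ automatically has $b^\ast = -b$, and $\iota(\sqrt{c}) = a$, $\iota(\sqrt{-\delta}) = b$ defines the required embedding with $\iota(\bar\alpha) = \iota(\alpha)^\ast$.

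The problem is thus reduced to deciding whether $K = K_0(\sqrt{-\delta})$ embeds into $C$, and finding such an embedding. For this we compute a quaternion basis of $C$ over $K_0$ explicitly as a subalgebra of $M_2(B_{p,\infty})$, which is linear algebra, and then apply Lemma~\ref{lemma:split} over the number field $K_0$. This is the step that needs quantum polynomial time. Alternatively one can check local conditions directly: $K$ embeds in $C$ iff no ramified place of $C$ splits in $K$. The infinite places are fine since $K$ is CM and $C$ is definite at both real places. So this amounts to checking the primes of $K_0$ above $p$, where $C$ ramifies iff the prime has odd local degree.

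\emph{Handling non-existence.} If Lemma~\ref{lemma:split} reports no embedding, we must conclude that no embedding exists at all. This is the main obstacle, because the choice of $a$ was not canonical. We argue that any valid $\iota$ satisfies $\iota(\sqrt c)^\ast = \iota(\sqrt c)$ and $\iota(\sqrt c)^2 = c$. Any two such elements are conjugate in $M_2(B_{p,\infty})$ by Skolem--Noether, with $M_2(B_{p,\infty})$ central simple over $\Q$ and $\Q(a) \cong K_0$. Hence their centralizers are isomorphic $K_0$-algebras, all isomorphic to $K_0 \otimes B_{p,\infty}$. Whether $K$ embeds in $C$ is therefore independent of the choice of $a$, and a negative answer is conclusive. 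The point needing care is that Skolem--Noether conjugation need not respect ${}^\ast$. However, we only use the isomorphism class of $C$ to decide existence, and we recover ${}^\ast$-compatibility from Lemma~\ref{lem: cent} for our specific $a$.
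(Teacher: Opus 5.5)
Your proposal is correct and follows the paper's route. You produce a conjugate-transpose-symmetric $a$ with $\Q(a)\cong K_0$ by solving a rational quadratic form of dimension at least $5$ with Castel's algorithm; you fix trace zero so that $a^2=c$, where the paper prescribes an arbitrary trace and norm. You then apply Lemma~\ref{lemma:split} to $C=\Cent(a)$ over $\Q(a)$. Two of your checks fill in points the paper leaves implicit: that a trace-zero $b\in\Cent(a)$ with $b^2=-\delta$ automatically satisfies $b^\ast=-b$ (via Lemma~\ref{lem: cent}), and that a negative answer is conclusive regardless of the choice of $a$ (via Skolem--Noether).
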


\begin{proof}
We first
find an element $a\in M_2(B_{p,\infty})$ such that $a$ is symmetric with respect to conjugate-transpose and $\Q(a) \cong K_0$. 
We will essentially show how one can generate an element of given trace and norm for symmetric matrices. A symmetric matrix has the following form: 
$$
\begin{pmatrix}
 s&r \\
 \overline{r} &t
\end{pmatrix}
$$
where $s,t \in \Q$ and $r\in B_{p,\infty}$. Let the trace of $a$ be $M$ and its norm be $N$. This gives us two equations: 
$s+t=M, ~st- \nrm(r)=N$. 
Such $s,t,r$ can be found by reducing to a quadratic form in 6 variables that always admits a solution and can be solved with Castel's algorithm \cite{castel2013solving} in polynomial time (this is a variation of an algorithm due to Simon that does not require factoring).
Now we find the CM field $K$ inside $\mathrm{Cent}(a)$, or decide that it cannot be found, using Lemma \ref{lemma:split}
with $C = \Cent(a)$ and $K = \Q(a)$. \qed
\end{proof}

\begin{remark}[continuation of Remark~\ref{rmk:split_heuristic_free}] Here too the word ``quantum" can be omitted at the cost of introducing some heuristics.    
\end{remark}

Our main technical tool for converting this into an algorithm for finding $\Z_K$-orientations is the following:

\begin{lemma} \label{lem:conductor_lemma}
  Let $K \subset M_2(B_{p, \infty})$ be a quartic CM field such that complex conjugation in $K$ amounts to conjugate-transpose in $M_2(B_{p, \infty})$ and assume that the conductor ideal in $\Z_K$ of the order $\Z_K \cap M_2(\O)$
  is principal and generated by a totally positive $\mu \in \Z_{K_0}$ that is coprime to $p$. Let $\xi \in M_2(B_{p, \infty})$ be such that $\xi \xi^\ast = \mu$. Let $\gamma \in M_2(B_{p, \infty})$ be such that $M_2(\O) \gamma$ equals the left fractional ideal $M_2(\O) \Z_K$ generated by $\Z_K$. Let $\Lambda = (\gamma \xi)^{-1} M_2(\O) \gamma \xi$. Then $\Lambda^\ast = \Lambda$ and
  \[ \iota : \Z_K \to \Lambda : \alpha \mapsto \xi^{-1} \alpha \xi \]
  is an injective ring homomorphism such that $\iota(\bar \alpha) = \iota(\alpha)^\ast$ for all $\alpha \in \Z_K$.
\end{lemma}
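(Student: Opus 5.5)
The plan is to check the three claims in turn: $\iota$ lands in $\Lambda$, $\iota$ respects the involutions, and $\Lambda^\ast=\Lambda$. The first two are formal; the third carries the content.

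\emph{$\iota$ lands in $\Lambda$ and is injective.} Write $I = M_2(\O)\gamma = M_2(\O)\Z_K$. Since $I\Z_K = I$, the right order $\gamma^{-1}M_2(\O)\gamma$ of $I$ contains $\Z_K$. Conjugating by $\xi$ shows that $\xi^{-1}\Z_K\xi \subset (\gamma\xi)^{-1}M_2(\O)\gamma\xi = \Lambda$. The map $\iota$ is conjugation by an invertible element, hence an injective ring homomorphism.

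\emph{Compatibility with the involutions.} We have $\iota(\bar\alpha) = \xi^{-1}\alpha^\ast\xi$ and $\iota(\alpha)^\ast = \xi^\ast\alpha^\ast\xi^{-\ast}$. These agree if and only if $\alpha^\ast\,\xi\xi^\ast = \xi\xi^\ast\,\alpha^\ast$, that is, $\alpha^\ast\mu = \mu\alpha^\ast$. This holds because $\alpha^\ast = \bar\alpha$ and $\mu$ both lie in the commutative field $K$.

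\emph{Self-adjointness of $\Lambda$.} Put $\delta = \gamma\xi$. Then $\Lambda^\ast = \delta^\ast M_2(\O)^\ast\delta^{-\ast} = \delta^\ast M_2(\O)\delta^{-\ast}$. So $\Lambda^\ast = \Lambda$ is equivalent to $x := \delta\delta^\ast = \gamma\mu\gamma^\ast$ normalizing $M_2(\O)$, i.e.\ $M_2(\O)x = xM_2(\O)$. I would proceed in two steps.

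\begin{enumerate}
\item \emph{Compute the two-sided lattice generated by $x$.} Applying ${}^\ast$ to $M_2(\O)\gamma = M_2(\O)\Z_K$ gives $\gamma^\ast M_2(\O) = \Z_K M_2(\O)$, using $\Z_K^\ast = \Z_K$. Hence
\[ M_2(\O)\,x\,M_2(\O) = M_2(\O)\Z_K\,\mu\,\Z_K M_2(\O) = M_2(\O)\,(\mu\Z_K)\,M_2(\O). \]
By the conductor hypothesis, $\mu\Z_K \subset \Z_K\cap M_2(\O) \subset M_2(\O)$, and $\mu\Z_K$ contains $\mu$. Therefore the right-hand side equals the two-sided ideal $M_2(\O)\mu M_2(\O)$. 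Two-sided ideals of the maximal order $M_2(\O)$ are products of an integer ideal with powers of the prime two-sided ideal above $p$. Since $\mu$ is coprime to $p$, this ideal is $NM_2(\O)$ for some $N\in\Z_{>0}$.
\item \emph{Upgrade to a one-sided statement.} From $M_2(\O)x \subset NM_2(\O)$, I want equality by comparing reduced norms, i.e.\ indices. We have $\mathrm{nrd}(x) = \mathrm{nrd}(\gamma)^2\,\mathrm{nrd}(\mu)$. Here $\mathrm{nrd}(\gamma)$ is the norm of $I = M_2(\O)\Z_K$, which should be governed by $[\Z_K : \Z_K\cap M_2(\O)]$, i.e.\ by the conductor $\mu\Z_K$ (this is where principality of the conductor is used). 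One must check that this index equals that of $NM_2(\O)$. Then $M_2(\O)x = NM_2(\O)$, so $x = Nu$ with $u\in\GL_2(\O)$. Similarly $xM_2(\O) = NM_2(\O)$, so $x$ normalizes $M_2(\O)$ and $\Lambda^\ast = \Lambda$.
\end{enumerate}

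\emph{Main obstacle.} Step 2, the norm bookkeeping, is where I expect the difficulty. One must relate $\mathrm{nrd}(\gamma)$ and $N$ to the conductor $\mu$ precisely. I would try to do this locally at each prime $\ell\neq p$, where $M_2(\O)_\ell\cong M_4(\Z_\ell)$ and all objects become lattices in $\Q_\ell^4$. Locally, $M_2(\O)\Z_K$ corresponds to the $\Z_K$-saturation of a lattice stable under $\Z_K\cap M_2(\O)$. There the conductor $\mu$ controls both indices, and the equality $M_2(\O)_\ell\,x = N M_2(\O)_\ell$ can be checked directly.
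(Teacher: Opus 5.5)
Your formal steps are correct and match the paper: $\iota$ lands in $\Lambda$, it is an injective ring homomorphism, and $\iota(\bar\alpha)=\iota(\alpha)^\ast$. So is your reduction of $\Lambda^\ast=\Lambda$ to the claim that $x=\gamma\mu\gamma^\ast$ normalizes $M_2(\O)$. The gap is that this normalizing property, which is the entire content of the lemma, is not established.

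Step~1 contains a non sequitur. From $\mu\in\mu\Z_K\subseteq M_2(\O)$ you only get $M_2(\O)\mu M_2(\O)\subseteq M_2(\O)\,\mu\Z_K\,M_2(\O)\subseteq M_2(\O)$, not equality, because $\Z_K\not\subseteq M_2(\O)$. The claimed equality can fail. The paper shows $\gamma\mu\gamma^\ast\in\GL_2(\O)$, so the middle term is always all of $M_2(\O)$, i.e.\ the correct value is $N=1$. By contrast, $M_2(\O)\mu M_2(\O)=NM_2(\O)$ with $N$ the largest integer such that $\mu/N\in M_2(\O)$, which equals $\ell$ if, say, $\mu=\ell$ is a rational prime. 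With that $N$, the norm comparison you plan in Step~2 would simply be false. What Step~1 legitimately yields is only $x\in M_2(\O)$.

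More importantly, Step~2 is not bookkeeping. You need $\nrm(\gamma)^2N_{K/\Q}(\mu)=1$, and nothing in the proposal connects $\nrm(\gamma)$ to $\mu$. Work locally at $\ell$ and put $R_\ell=\Z_{K,\ell}\cap M_2(\O_\ell)$, with local conductor $\frakc_\ell$.

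- The lattice $\gamma^{-1}\Z_\ell^4$ is the largest $\Z_{K,\ell}$-stable sublattice of $\Z_\ell^4$.
- If, for instance, $\Z_\ell^4$ is free of rank one over $R_\ell$, its index (the $\ell$-part of $\nrm(\gamma)^{-1}$) is $[R_\ell:\frakc_\ell]$.
- The required identity then becomes $[\Z_{K,\ell}:R_\ell]=[R_\ell:\frakc_\ell]$. This is a Gorenstein-type property that fails for general orders: for $R=\Z+2\Z_K\subset\Q(\zeta_5)$ one has $[\Z_K:R]=8$ but $[R:\frakc]=2$.

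So counting indices cannot close the argument. One must use that ${}^\ast$ is the adjoint of a perfect pairing on $\Z_\ell^4$ and restricts to complex conjugation on $K$. This is the missing idea, and it is how the paper proceeds:

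- $L=\gamma^{-1}\Z_\ell^4$ is a $\Z_{K,\ell}$-module, hence free: $L=\Z_{K,\ell}v$.
- For $\langle v,w\rangle=v^T\diag(H,H)w$ with $H=\left(\begin{smallmatrix}0&1\\-1&0\end{smallmatrix}\right)$, the dual of $L$ is $\gamma^\ast\Z_\ell^4$.
- Writing $\langle rv,sv\rangle=\Tr_{K/\Q}(\delta\bar r s)$, the same dual is also $\delta^{-1}\frakD^{-1}v=\nu_\ell^{-1}L$.
- Comparing the two descriptions gives $\gamma\nu_\ell\gamma^\ast\in\GL_2(\O_\ell)$.

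Only after this does the paper use principality of $\frakc=\mu\Z_K$, together with $\frakc=\{\alpha\in K:\alpha\gamma^\ast\in\gamma^{-1}M_2(\O)\}$. Its role is to replace the local generators $\nu_\ell$ by the single global $\mu$ and conclude $\nrm(\gamma\mu\gamma^\ast)=1$. It does not relate $\nrm(\gamma)$ to the conductor, contrary to what your Step~2 suggests.
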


\begin{corollary} \label{cor:orientation_finding}
    If we assume that $h^+(K_0)$ is polynomially bounded,\footnote{This is typically a mild assumption. By Lemma~\ref{lem:narrow_cl_number} we have $h^+(K_0) / h(K_0) \in \{1, 2, 4\}$. The Cohen--Lenstra heuristics~\cite{cohen_lenstra} predict that the odd part of $h(K_0)$ is extremely small, e.g., it is just $1$ for about $76\%$ of all real quadratic number fields. By an extension of the Cohen--Lenstra heuristics (see~\cite[\S1]{koymans_pagano} for a discussion) the even part of $h_0(K_0)$ comes almost entirely from $\Cl(K_0)[2]$, which by genus theory is roughly of size $2^t$ with $t$ the number of prime factors of $\disc K_0$. In view of the Erd\"os--Kac theorem this number is typically of size $\log \disc K_0$.} there exists
    a heuristic polynomial-time algorithm which upon input of a quartic CM field $K$ returns a primitively $\Z_K$-oriented 
    superspecial PPAS in characteristic $p$, or returns that
    no such surface exists. 
\end{corollary}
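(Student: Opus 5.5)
The plan is to chain the algebraic tools of this section and then pass to geometry with the IKO-to-surface conversion of~\cite[Thm.\,5.1]{DBLP:conf/crypto/CastryckDKLPT25}. The main steps are: embed $K$, force $\Z_K$ into a conjugate-transpose-stable maximal order using Lemma~\ref{lem:conductor_lemma}, convert that order into a $g$-matrix, and realize the result geometrically.

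\emph{Step 1: embedding $K$, or deciding that none exists.} First I run Lemma~\ref{lem:orient_fields}, with its quantum subroutine replaced by a heuristic quadratic-form solver as in Remark~\ref{rmk:split_heuristic_free}. This either produces an embedding $K \subset M_2(B_{p,\infty})$ under which complex conjugation becomes conjugate-transpose, or reports that no such embedding exists. In the latter case I output that no $\Z_K$-oriented superspecial PPAS exists. This is justified as follows. Suppose $(A,\lambda,\iota)$ is such a surface. Then $A \cong E^2$ with some $g$-matrix $g$, and Lemma~\ref{lem:quaternion_cholesky} gives $h$ with $hh^\ast = g$. Conjugating by $h$ turns the Rosati involution $b \mapsto g^{-1}b^\ast g$ into conjugate-transpose. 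This yields an embedding of the kind Lemma~\ref{lem:orient_fields} searches for, which is a contradiction.

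\emph{Step 2: meeting the hypotheses of Lemma~\ref{lem:conductor_lemma}.} Given the embedding, I compute the order $\Z_K \cap M_2(\O)$ by linear algebra over $\Z$, together with its conductor $\frakf \subset \Z_K$. Both $\Z_K$ and $M_2(\O)$ are stable under conjugate-transpose, so $\frakf = \bar\frakf$. I then aim to show that $\frakf$ comes from an ideal of $\Z_{K_0}$, possibly after discarding ramified contributions that can be absorbed.

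Lemma~\ref{lem:conductor_lemma} needs this ideal to be principal with a totally positive generator $\mu$ coprime to $p$. The obstruction is its class in $\Cl^+(\Z_{K_0})$. To get around it I would re-randomize the embedding, by conjugating with random elements of $\GL_2(B_{p,\infty})$ that preserve conjugate-transpose, or equivalently by varying the starting maximal order. I would then argue heuristically that the class of the conductor is roughly uniformly distributed. Under the assumption that $h^+(K_0)$ is polynomially bounded, an expected polynomial number of retries then gives a trivial class.

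I expect this to be the main obstacle. Heuristically the hard part is the equidistribution of the conductor class and the coprimality to $p$; I would control the latter by rejecting samples. Once the class is trivial, a totally positive generator $\mu$ is computed in $K_0$ by a standard principal ideal computation.

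\emph{Step 3: construction and conversion.} Next I find $\xi$ with $\xi\xi^\ast = \mu$. Lemma~\ref{lem:quaternion_cholesky} guarantees existence over $M_2(B_{p,\infty})$. Algorithmically, I write $\xi$ in the $2\times 2$ block form and solve the resulting quadratic equations over $\Q$ with Castel's algorithm, as in the proof of Lemma~\ref{lem:orient_fields}. I then compute $\gamma$ with $M_2(\O)\gamma = M_2(\O)\Z_K$ by a principal ideal computation~\cite{PIP,page2026conversion}. With these, Lemma~\ref{lem:conductor_lemma} yields the conjugate-transpose-stable maximal order $\Lambda$ together with the embedding $\iota : \Z_K \hookrightarrow \Lambda$ compatible with the involution.

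Lemma~\ref{lemma:bpf} converts $(\Lambda,{}^\ast)$ into $M_2(\O)$ with the Rosati involution of an explicit $g$-matrix, carrying $\iota$ along; this is the algebraic data. Finally, \cite[Thm.\,5.1]{DBLP:conf/crypto/CastryckDKLPT25} realizes this data as a Jacobian or a product of elliptic curves, and the orientation is pushed through as in Corollary~\ref{cor:evaluate}. Primitivity is automatic because $\Z_K$ is maximal. All steps are polynomial-time apart from the heuristics already flagged.
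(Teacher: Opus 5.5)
Your proposal is essentially the paper's own proof. Both rerun Lemma~\ref{lem:orient_fields} until the conductor of $\Z_K\cap M_2(\O)$ equals $\mu\Z_K$ for a totally positive $\mu\in\Z_{K_0}$ coprime to $p$, which heuristically takes about $h^+(K_0)$ tries; note that odd powers of primes ramified over $K_0$ cannot be absorbed into a real $\mu$ and simply force a resample. Both then apply Lemma~\ref{lem:conductor_lemma}, Cholesky via Castel's algorithm, Lemma~\ref{lemma:bpf} and the conversion from Corollary~\ref{cor:evaluate}.

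Your Cholesky argument, that the absence of a $*$-compatible embedding excludes any such surface, makes explicit what the paper leaves implicit, but the converse fails. For example, take $K$ cyclic quartic with $p$ inert: an embedding exists, since $K_0\otimes B_{p,\infty}$ is then ramified only at the infinite places. Yet Proposition~\ref{thm:GorenC4}, together with the lifting lemma used for Theorem~\ref{thm:transitivitymodp}, shows that no superspecial surface exists. So in your version, as in the paper's, the retry loop never succeeds in that case and needs a heuristic cutoff before it may declare non-existence.
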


\begin{proof}
    Repeatedly run Lemma~\ref{lem:orient_fields} until the conductor ideal $\frakc$ of $\Z_K \cap M_2(\O)$ is generated (as an ideal in $\Z_K$) by some totally positive $\mu \in \Z_{K_0}$ whose norm is not a multiple of $p$. We expect this to happen with good probability. Indeed, since $\bar \frakc = \frakc$ we expect
    that $\frakc = \frakc_0 \Z_K$ for some ideal $\frakc_0 \subset \Z_{K_0}$; the only exception here is where $\frakc$ has a factor $\frakp^e$ with $e$ odd and $\frakp$ a prime ideal that is ramified over $K_0$. 
    We then expect $\frakc_0$ to be generated by some totally positive $\mu$ with probability about $1 / h^+(K_0)$. Using the previous technical lemma we can find an embedding of $\Z_K$ inside a self-conjugate maximal order $\Lambda \subset M_2(B_{p, \infty})$ in a way that is compatible with Rosati. Note that all ingredients can be computed efficiently, including the Cholesky decomposition from Lemma~\ref{lem:quaternion_cholesky} which involves solving quadratic equations over $\Q$ in $5$ variables, which can be done efficiently using Castel's algorithm~\cite{castel2013solving}.
    Now invoking Lemma \ref{lemma:bpf} we get an orientation for some $g$-matrix in $M_2(\O)$. This can be converted into a $K$-oriented superspecial PPAS  as in the proof of Corollary~\ref{cor:evaluate}. \qed
\end{proof}

\subsection{Applications} \label{sec:algebra_appl}

In Appendix~\ref{app:oneend} we discuss applications of the previously described algebraic machinery to the ``one endomorphism problem". In dimension 1 it is known that knowing one non-scalar endomorphism provides significant extra information: 
\begin{itemize}
    \item Given one non-scalar endomorphism one can compute the entire endomorphism ring in sub-exponential time \cite{DBLP:journals/cic/MerdyW25}.
    \item Given an endomorphism of small degree (polylogarithmic in  $p$), one can compute the full endomorphism ring in polynomial time \cite{love2020supersingular}.
    \item Given an oracle for computing non-scalar endomorphisms, one can compute the endomorphism ring in polynomial time~\cite{oneend}.
\end{itemize}
We investigate the first two problems in dimension 2, where we observe that adjustments have to be made, as there are non-scalar (even polarized) endomorphisms that will likely not provide enough information for computing the full endomorphism ring: an example is \[ \begin{pmatrix}
    1& 1\\ 1&-1
\end{pmatrix}\]  on  $E^2$ for any supersingular elliptic curve $E$.  The proper analogue of a non-scalar endomorphism in dimension 2 is one whose minimal polynomial defines a quartic CM field over $\mathbb{Q}$ (a ``generic" endomorphism satisfies this). 

\section{Orbits of the Shimura class group action} \label{sec:orbits}

It is classical that if $g=1$ and $p = 0$ then the action $\star : \Cl(R) \times \M_k(R) \to \M_k(R)$ is ``essentially" transitive, although our algebraic treatment of orientations comes with the subtlety, emphasized by Onuki~\cite{onuki}, that there are in fact two orbits. One can jump back and forth between these orbits via $(E,  \iota) \mapsto (E, \iota \circ \sigma)$, with $\sigma : \alpha \mapsto \overline{\alpha}$ denoting complex conjugation on $R$. 
In positive characteristic, on the other hand, we know that if $p > 0$ and $p \mid [\Z_K : R]$ then $\M_k(R)$ is empty, while if $p \nmid [\Z_K : R]$ then:
\begin{itemize}
\item[\emph{(i)}] If $p$ splits in $K$ then all curves in $\M_k(R)$ are ordinary and one can jump between the two orbits by quadratic twisting. 
\item[\emph{(ii)}] If $p$ ramifies in $K$ then all curves in $\M_k(R)$ are supersingular; here the action becomes transitive.
\item[\emph{(iii)}] If $p$ is inert in $K$ then all curves in $\M_k(R)$ are supersingular, but in this case there are again two orbits. Instead of $(E,  \iota) \mapsto (E, \iota \circ \sigma)$, there is a neat alternative way to jump between the orbits due to Onuki: it can be assumed that all curves are defined over $\F_{p^2}$, and then one can jump back and forth using the $p$-power Frobenius map $(E,  \iota) \mapsto (E^{(p)}, \iota^{(p)})$.
\end{itemize}
This can be assembled from~\cite[\S13, Thm.\,5\,\&\,Thm.\,12]{lang}
and~\cite[Prop.\,3.3]{onuki}.

In higher dimension this picture is much more subtle.\footnote{E.g., from Remark~\ref{rmk:nonempty_orbit_with_conductor_p} we see that $\M_k(R)$ may be non-empty even when $p \mid [\Z_K : R]$.} The literature does not seem to contain the ingredients needed for a complete answer, which in any case lies beyond the scope of our paper.
But we can sketch the overall picture, highlighting the main ingredient: the \defi{CM type}. Behind the scenes, this notion is already responsible for the (typically) two orbits in the elliptic curve case. Much of what follows is standard to specialists:
our main goal is to illustrate the theory with examples that are relevant to the paper at hand.

\subsection{An obstruction to transitivity}
Our starting point is
the ring homomorphism
$\rho_{(A, \lambda, \iota)} : R \to \End_k(D(A)) : \alpha \mapsto \iota(\alpha)^\ast$, where $\iota(\alpha)^\ast$ denotes the pull-back map under $\iota(\alpha)$ on the $g$-dimensional $k$-vector space $D(A) = H^0(A, \Omega^1_A)$ of regular $1$-forms on $A$.
This homomorphism gives $D(A)$ the structure of an $R$-module, in the sense of representation theory.

\begin{lemma} \label{lem:CMtypeobstruction}
  If $(A, \lambda, \iota), (A', \lambda', \iota') \in \M_k(R)$ lie in the same orbit under the action of $\frakC(R)$ then $D(A) \cong D(A')$ as $R$-modules.
\end{lemma}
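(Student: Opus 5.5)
By Theorem~\ref{thm:action}, if $(A',\lambda',\iota') = [(\fraka,\mu)] \star (A,\lambda,\iota)$, then we may take $\fraka \subset R$ with norm coprime to $p$. In that case there is an oriented $\iota(\mu)$-isogeny $\varphi : A \to A'$ with kernel $\bigcap_{x\in\fraka}\ker\iota(x)$. Being oriented means $\varphi\circ\iota(\alpha) = \iota'(\alpha)\circ\varphi$ for all $\alpha\in R$; primitivity for the same order $R$ on both sides makes this intersection $R$ itself.

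The plan is to pull back regular $1$-forms along $\varphi$. This gives a $k$-linear map $\varphi^\ast : D(A') \to D(A)$. Applying contravariance of pull-back to the identity $\varphi\circ\iota(\alpha) = \iota'(\alpha)\circ\varphi$ gives
\[ \iota(\alpha)^\ast \circ \varphi^\ast = \varphi^\ast \circ \iota'(\alpha)^\ast, \]
so $\varphi^\ast$ is a homomorphism of $R$-modules. It remains to show that $\varphi^\ast$ is an isomorphism.

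This last step is the only real point. Since $\varphi$ is an isogeny whose degree is a power of $N(\fraka)$, it is separable when $p \nmid N(\fraka)$. A separable isogeny is étale, so the pull-back of differentials is injective on invariant differentials. Concretely, the induced map on tangent spaces at the origin is the differential of an étale morphism, hence an isomorphism. Since $D(A)$ is dual to $\mathrm{Lie}(A)$, the map $\varphi^\ast$ is an isomorphism, as both sides are $g$-dimensional. This is exactly where the assumption that $N(\fraka)$ is coprime to $p$ in Theorem~\ref{thm:action} is used.

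Finally, the module class must not depend on choices. Oriented isomorphisms trivially induce $R$-module isomorphisms on $D$ by the same argument. Hence the $R$-module isomorphism class of $D(A)$ is an invariant of the element of $\M_k(R)$ and is constant on orbits, which proves the lemma.
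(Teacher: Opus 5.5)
Your proposal is correct and follows essentially the same route as the paper. Both arguments pick a representative $(\fraka,\mu)$ with $p \nmid N(\fraka)$, observe that the resulting oriented isogeny is separable so that $\varphi^\ast : D(A') \to D(A)$ is a $k$-linear isomorphism, and use orientedness to obtain the commuting square that makes it $R$-linear. Your extra remark that oriented isomorphisms also induce $R$-module isomorphisms (elements of $\M_k(R)$ being isomorphism classes) makes explicit a point the paper leaves implicit.
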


\begin{proof}
  Every Shimura ideal class has a representative $(\fraka, \mu)$ such that $N(\fraka)$ is not a multiple of $p$.  Then the isogeny $\varphi_{\fraka} : A \to A'$ is separable, so the map $\varphi_{\fraka}^\ast : D(A') \to D(A)$ is an isomorphism of $k$-vector spaces. The diagram
\begin{center}
\begin{tikzcd}
D(A') \arrow[r, "\iota'(\alpha)^\ast"] \arrow[d, "\varphi_{\fraka}^\ast"'] 
& D(A') \arrow[d, "\varphi_{\fraka}^\ast"] \\
D(A) \arrow[r, "\iota(\alpha)^\ast"'] 
& D(A)
\end{tikzcd}
\end{center}
commutes for every $\alpha \in R$, because $\varphi_{\fraka}$ is oriented. This indeed amounts to saying that $\varphi^\ast_{\fraka}$ is an isomorphism of $R$-modules. \hfill $\qed$ 
\end{proof}

\begin{remark}
  The lemma generalizes to non-horizontal isogenies: for any two CM orders $R, R' \subset \Z_K$ such that $p \nmid [\Z_K : R][\Z_K : R']$ it holds that if $(A, \lambda, \iota) \in \M_k(R)$, $(A', \lambda', \iota') \in \M_k(R')$ are connected by a separable oriented isogeny, then $D(A), D(A')$ are isomorphic as $\Z_K$-modules (the assumption $p \nmid [\Z_K : R]$ naturally allows to extend the domain of $\rho_{(A, \lambda, \iota)}$ to $\Z_K$, and likewise for $R'$).
\end{remark}

\begin{example} \label{ex:decidingorbits}
  We start by shedding a different light on Onuki's motivating example from~\cite[p4]{onuki}. Let $R = \Z[i]$ be the ring of Gaussian integers, assume $p \neq 2$ and let $\i \in k$ be a square root of $-1$. Consider $E : y^2 = x^3 + x$ over $k$ along with the primitive $R$-orientations
  \[ \iota : i \mapsto \left( (x, y) \mapsto (-x, \i y) \right), \qquad \iota' : i \mapsto \left( (x, y) \mapsto (-x, -\i y) \right) \]
  on $E$. The space $D(E)$ is generated by $dx / y$, which under $\iota(i)$ pulls back to $d(-x)/\i y = (-1/\i)dx/y = \i dx/y$ while under $\iota'(i)$ it pulls back to $-\i dx / y$. Thus the representations map $i$ to the multiplication maps by $\i$ and $-\i$, respectively. Therefore $(E, \iota)$, $(E, \iota')$ must lie in different orbits. In general, this gives a very efficient method for testing whether, in case \emph{(iii)} above, two given oriented supersingular elliptic curves lie in the same orbit or not. In other words, the decisional version of the ``effective $R$-vectorization problem" from~\cite[Prob.\,2]{wesolowski_oriented} is very easy to solve.
\end{example}

\begin{example} \label{ex:example_orientation_revisited}
    Let $E_1, E_2$ be elliptic curves over $k$ and let $A = E_1 \times E_2$. Then $D(A) \cong D(E_1) \oplus D(E_2)$. So, upon picking a generator $\omega_i$ of $D(E_i)$ for $i = 1, 2$ we can view $(\omega_1, 0), (0, \omega_2)$ as a basis for $D(A)$. The elements of $\End(A)$ can be viewed as matrices 
    \[ \begin{pmatrix} \alpha_{11} & \alpha_{21} \\ \alpha_{12} & \alpha_{22} \end{pmatrix} , \qquad \alpha_{ij} \in \Hom(E_i, E_j). \]
    With respect to our basis of $D(A)$, the matrix of the pull-back operator along such a matrix $(\alpha_{ji})_{i,j}$ is given by $(c_{ij})_{i,j}$, where
    $c_{ij} \in k$ is the constant such that 
    $\alpha_{ij}^\ast \omega_j = c_{ij} \omega_i$.
    Let us have a look at what this implies for the two orientations $\iota, \iota'$ from Examples~\ref{ex:example_orientation} and~\ref{ex:secondorientation}. 
    
    Using that differentials pull back to zero under inseparable isogenies, i.e., $\pi_p^\ast \omega_i = 0$ for $i = 1, 2$, we find that $\rho_{(A, \lambda, \iota)}$ maps
    \[ \frac{1 + \sqrt{-m}}{2} \mapsto \begin{pmatrix} \frac{1}{2} & \i \\ \i & \frac{1}{2} \end{pmatrix}, \qquad \sqrt{d} \mapsto \begin{pmatrix} 0 & \frac{1}{2} \\ \frac{1}{2} & 0 \end{pmatrix}. \]
    When rewritten with respect to the basis $(\omega_1, \omega_2), (\omega_1, -\omega_2)$ this becomes
    \[ \frac{1 + \sqrt{-m}}{2} \mapsto \begin{pmatrix} \frac{1}{2} + \i & 0 \\ 0 & \frac{1}{2} -\i \end{pmatrix}, \qquad \sqrt{d} \mapsto \begin{pmatrix} \frac{1}{2} & 0 \\ 0 & -\frac{1}{2} \end{pmatrix}, \]      
    showing that $\rho_{(A, \lambda, \iota)}$ arises as a direct sum of two $1$-dimensional representations, mapping
    \[   x_1 + x_2\frac{1 + \sqrt{-m}}{2} + x_3 \sqrt{d} + x_4 \sqrt{d} \frac{1 + \sqrt{-m}}{2} \quad \mapsto \]
    \begin{equation} \label{eq:rho_for_iota}  x_1 +  (\frac{1}{2} \pm \i) x_2 \pm \frac{1}{2}x_3 + \frac{1}{2} (\i \pm \frac{1}{2}) x_4
    \end{equation}
    for all $x_i \in \Z$.
    A similar reasoning for $\rho_{(A, \lambda, \iota')}$
    gives 
    \[ \frac{1 + \sqrt{-m}}{2} \mapsto \begin{pmatrix} \frac{1}{2} + \i & 0 \\ 0 & \frac{1}{2} + \i \end{pmatrix}, \qquad \sqrt{d} \mapsto \begin{pmatrix} \frac{1}{2} & 0 \\ 0 & -\frac{1}{2} \end{pmatrix} \] 
    when expressing the images with respect to $(b \omega_1, (1/2-a)\omega_2), (b\omega_1, (-1/2-a)\omega_2)$. Thus we again have a direct sum of two $1$-dimensional representations, namely
    \begin{equation} \label{eq:rho_for_iotaprime}  x_1 +  (\frac{1}{2} + \i) x_2 \pm \frac{1}{2}x_3 \pm \frac{1}{2} (\frac{1}{2} + \i) x_4
    \end{equation}
    From~\eqref{eq:rho_for_iota} and~\eqref{eq:rho_for_iotaprime} we see that $\iota, \iota'$ give rise to non-isomorphic $R$-module structures on $D(A)$. So by Lemma~\ref{lem:CMtypeobstruction} we must be in different orbits under the $\frakC(R)$-action.
\end{example}

We emphasize that there is an important difference between the obstructions from Example~\ref{ex:decidingorbits} and Example~\ref{ex:example_orientation_revisited}. In the former case, precomposition of $\iota'$ with complex conjugation removes the obstruction. For the second example this is not true, nor is it possible to achieve this using another automorphism of $R$: the obstruction to transitivity is really more fundamental here. This is related to the notion of ``inequivalent" CM types discussed in the next section.

\begin{example} \label{ex:jacexample_revisited} Revisiting the $\Z[\zeta_5]$-orientation from Example~\ref{ex:jacexample}, we have $D(A) \cong H^0(C, \Omega^1_C) = \langle dx/y, xdx/y \rangle$. Through
\[ \iota(\zeta_5)^\ast \frac{dx}{y} = \frac{d (\xi_5 x)}{y} = \xi_5 \frac{dx}{y}, \qquad \iota(\zeta_5)^\ast x\frac{dx}{y} =  (\xi_5 x) \frac{ d (\xi_5 x)}{y} = \xi_5^2 x \frac{dx}{y} \]
we see that, here too, $\rho_{(A, \lambda, \iota)}$ decomposes into two $1$-dimensional representations, mapping $\zeta_5$ to multiplication-by-$\xi_5$ and multiplication-by-$\xi_5^2$, respectively.
\end{example}

\subsection{CM types in characteristic $p = 0$} \label{subsec:CMchar0}

In the above examples, the decomposability of $\rho_{(A, \lambda, \iota)}$ into $1$-dimensional subrepresentations $\rho_i : R \to k$, $i = 1, \ldots, g$, is not a coincidence. Moreover, these $1$-dimensional representations are not arbitrary. 
In characteristic $p = 0$ this is classical and captured by the notion of CM type. In this case the $\rho_i$ naturally extend their domains to all of $K$ and take values in some subfield $L \subset k$ that is isomorphic to a Galois closure of $K$. It can be argued~\cite{StrengThesis} that the resulting size-$g$ subset of $\Hom(K, L)$
is a CM type, in the following sense:

\begin{definition}
   Let $K$ be a CM field and let $L \subset \C$ be isomorphic to a Galois closure of $K$. A \emph{CM type} of $K$ is a subset of $\Hom(K, L)$ containing exactly one element from each pair of complex conjugate embeddings $\varphi, \bar \varphi : K \to L$.
\end{definition}

\noindent E.g., this pattern is clearly visible in Example~\ref{ex:jacexample_revisited}. 

Famously, it was proven by Shimura and Taniyama that for $p = 0$ and under mild assumptions on $R$, the CM type is the only obstruction to transitivity. 
One sufficient condition is that $R = \Z_K$,\footnote{The ``right" sufficient condition reads that $R$ is Gorenstein, which is implied, e.g., by the existence of an index-$2$ subfield $K' \subset K$ such that $R \cap K'$ is a maximal order~\cite{bass} (e.g., maximal real multiplication is a sufficient condition).} see~\cite[Thm.\,5.2]{StrengThesis}:

\begin{theorem}[Shimura--Taniyama, 1961] \label{thm:ST}
 Consider a CM field $K$ and let $(A, \lambda, \iota)$, $(A', \lambda', \iota') \in \M_k(\Z_K)$ have the same CM types; here $k$ denotes an algebraically closed field of characteristic $0$. There exists a unique $[(\fraka, \mu)] \in \frakC(\Z_K)$ 
 such that $(A', \lambda', \iota') = [(\fraka, \mu)] \star (A, \lambda, \iota)$.
\end{theorem}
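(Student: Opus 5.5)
The plan is to go through the analytic description of CM abelian varieties over $\C$; by the Lefschetz principle we may assume $k = \C$. Fix a CM type $\Phi$ and let $\Phi : K \to \C^g$ be the associated embedding. The first step is the classical structure theorem, which we take as the starting point. Any $(A, \lambda, \iota) \in \M_\C(\Z_K)$ with CM type $\Phi$ is isomorphic to $\C^g / \Phi(\frakb)$ for some fractional $\Z_K$-ideal $\frakb$. The lattice $H_1(A, \Z)$ is a rank-$1$ module over $\Z_K$, and it is invertible because $\Z_K$ is maximal (this is exactly where the hypothesis $R = \Z_K$ enters). Moreover, $\iota$ is identified with the action through $\Phi$, because the tangent-space representation is $\Phi$.

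Next we describe the polarization. Its Riemann form has the shape $E(x,y) = \Tr_{K/\Q}(\xi \bar x y)$, with $\xi \in K$ satisfying $\bar\xi = -\xi$ and $\Im \varphi(\xi) > 0$ for all $\varphi \in \Phi$. The form being principal means $\xi\,\frakb\bar\frakb = \frakd_K^{-1}$. So each object of type $\Phi$ corresponds to a pair $(\frakb, \xi)$. Two pairs give oriented-isomorphic objects if and only if $(\frakb', \xi') = (u\frakb, (u\bar u)^{-1}\xi)$ for some $u \in K^\times$. This holds because an oriented isomorphism must be $\Z_K$-linear, hence multiplication by some $u$.

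Now compare two objects $(\frakb, \xi)$ and $(\frakb', \xi')$. Put $\fraka = \frakb\frakb'^{-1}$ and $\mu = \xi'/\xi$. From the two principality conditions we get $\fraka\bar\fraka = \mu\Z_K$, and $\mu \in K_0$ since both $\xi, \xi'$ are purely imaginary. The positivity conditions for the same $\Phi$ make $\mu$ totally positive. Hence $(\fraka, \mu) \in I_{\Z_K}$. It remains to check that $[(\fraka, \mu)] \star (A, \lambda, \iota)$ is the object $(\fraka^{-1}\frakb, \mu\xi)$. For integral $\fraka$, the quotient by $A[\fraka] = \fraka^{-1}\frakb/\frakb$ is $\C^g/\Phi(\fraka^{-1}\frakb)$. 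The induced principal polarization, i.e.\ the one whose pull-back is $\lambda\circ\iota(\mu)$, has Riemann form given by $\mu\xi$. This is consistent with the description of the action in Theorem~\ref{thm:action}. Existence follows after matching inverse/conventions (possibly replacing $(\fraka,\mu)$ by $(\bar\fraka,\mu)$ according to the paper's normalization).

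Uniqueness follows from the freeness of the action in Theorem~\ref{thm:action}. Alternatively, from the isomorphism criterion: if $(\fraka,\mu)$ acts trivially, then $\fraka = u\Z_K$ and $\mu = u\bar u$, so the class lies in $P_{\Z_K}$.

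I expect the main obstacle to be the careful bookkeeping in the step showing that the quotient polarization is the one given by $\mu\xi$. This requires tracking the direction of the isogeny, the duality and the different $\frakd_K$, and the sign and positivity conventions. It also requires verifying that ``same CM type'' is exactly what forces total positivity of $\mu$ (rather than only $\mu \in K_0^\times$). I would do this by comparing $E_{\mu\xi}$ pulled back along the isogeny $\C^g/\Phi(\frakb) \to \C^g/\Phi(\fraka^{-1}\frakb)$, which is induced by the identity on $\C^g$.
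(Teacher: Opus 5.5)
Your proposal is correct and follows the standard argument behind the result the paper cites rather than proves (Streng's thesis, Thm.~5.2): objects of type $\Phi$ are classified by pairs $(\frakb,\xi)$ with $\xi\frakb\bar\frakb=\mathfrak{d}_K^{-1}$, taken up to $(u\frakb,(u\bar u)^{-1}\xi)$, and $\frakC(\Z_K)$ acts by $(\frakb,\xi)\mapsto(\fraka^{-1}\frakb,\mu\xi)$, freely and transitively. Your hedge about conventions is not needed: with $\fraka=\frakb\frakb'^{-1}$ and $\mu=\xi'/\xi$ the action lands exactly on $(\frakb',\xi')$, and passing to an integral representative $(u\fraka,u\bar u\mu)$ only replaces this by the isomorphic pair $(u^{-1}\frakb',u\bar u\xi')$.
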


The number of CM types of $K$ is easily seen to be $2^g$, so under the aforementioned mild assumptions on $R$ this is our best guess for the number of orbits in characteristic $0$. However we note that, very exceptionally, the number of orbits may be smaller because certain CM types may not occur, even when $R = \Z_K$.\footnote{This can only happen if the last map in~\eqref{eq:shimura_exact_sequence} is not surjective, see the proof of~\cite[Lem.\,19]{IONICA2020385}. In dimension $g = 2$ all such examples come from the $C_2 \times C_2$ case. } 
Another important difference with the elliptic curve case  is that it may be impossible to jump between orbits by precomposing the orientation with an automorphism of $K$. This comes from the existence of inequivalent CM types:
any automorphism $\tau$ of $K$ commutes with complex conjugation (this is because $K_0$ can be characterized as the maximal totally real subfield of $K$) and therefore, if $\Phi$ is a CM type, then so is $\Phi' = \Phi \circ \tau$, in which case $\Phi, \Phi'$ are said to be \defi{equivalent}.
In our main case of interest $g = 2$ we have from~\cite[Lem.\,3.4]{StrengThesis}:

\begin{lemma}[Inequivalent CM types for $g = 2$] \label{lem:inequivalentCM}
  Let $K$ be a CM field of degree $4$ with Galois closure $L$ and let $G = \Gal(L, \Q)$. Let $\Hom(K, L) = \{ \id, \bar \id, \varphi, \bar \varphi \}$. Write $\Phi = \{ \id, \varphi \}$ and $\Phi' = \{ \id, \bar \varphi \}$. Then:
  \begin{itemize}
      \item if $G \cong C_2 \times C_2$ or $G \cong D_4$ then $\Phi$ and $\Phi'$ are inequivalent,
      \item if $G \cong C_4$ then $\Phi, \Phi'$ are equivalent.
  \end{itemize}
  So  the number of inequivalent CM types in these cases is $2$, $2$, $1$, respectively. 
\end{lemma}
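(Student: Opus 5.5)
The plan is to work directly with the Galois group $G = \Gal(L,\Q)$ acting on $\Hom(K,L)$, translate equivalence of CM types into a question about the stabilizer of $K$, and then run through the three possible Galois groups of a quartic CM field.

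\emph{Step 1: reformulate equivalence.} Identify $\Hom(K,L)$ with the left cosets $G/H$, where $H = \Gal(L,K)$. An automorphism $\tau \in \Aut(K)$ acts on $\Hom(K,L)$ by precomposition, $\psi \mapsto \psi \circ \tau$. Complex conjugation $c$ is central in $G$ for a CM Galois closure, and $\tau$ commutes with $\bar{\ }$, so precomposition preserves the set of CM types. Hence $\Phi$ and $\Phi'$ are equivalent if and only if some $\tau \in \Aut(K)$ satisfies $\Phi \circ \tau = \Phi'$. The four CM types are $\{\id,\varphi\}$, $\{\id,\bar\varphi\}$ and their conjugates. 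So the whole question is whether $\Aut(K)$ is large enough to move $\{\id,\varphi\}$ to $\{\id,\bar\varphi\}$, or to its conjugate $\{\bar\id,\varphi\}$. Since $\Phi\circ\tau$ is a CM type, the alternative $\Phi \circ \tau = \{\bar\id,\varphi\}$ is not the same as $\Phi'$, so only the first target counts.

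\emph{Step 2: Galois cases.} If $K/\Q$ is Galois, i.e.\ $G \cong C_2\times C_2$ or $C_4$, then $\Aut(K) = G$ acts simply transitively on $\Hom(K,L) = G$.

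In the $C_4$ case, let $\sigma$ be a generator. Then $\sigma^2 = c$ (the unique involution), and we may take $\varphi = \sigma$, so $\bar\varphi = \sigma^3$. Precomposition gives $\{\id,\sigma\}\circ\sigma^3 = \{\sigma^3, \id\} = \Phi'$. So the types are equivalent.

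In the $C_2 \times C_2$ case, write $G = \{1, c, \sigma, c\sigma\}$ with $\varphi = \sigma$. The orbit of $\{1,\sigma\}$ under right multiplication is $\{1,\sigma\}$, $\{c, c\sigma\}$, $\{\sigma,1\}$ and $\{c\sigma, c\}$, which is just $\{\Phi, \bar\Phi\}$. It never contains $\{1, c\sigma\} = \Phi'$, so the types are inequivalent.

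\emph{Step 3: the $D_4$ case.} This is the only step needing care. Here $K$ is non-Galois, so $\Aut(K) = \{1, \bar{\ }\}$. To justify this, note that $\Aut(K) = N_G(H)/H$ with $H$ a non-normal subgroup of order $2$. Its normalizer is the Klein four-group containing it, so $\Aut(K)$ has order $2$, and complex conjugation supplies a nontrivial automorphism. Precomposition with $\bar{\ }$ sends $\Phi$ to $\bar\Phi \neq \Phi'$, so the types are inequivalent.

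Finally, counting gives the stated numbers. There are $4$ CM types in total. The orbits of precomposition have size $2$ in the $C_2\times C_2$ and $D_4$ cases, giving $2$ classes, and size $4$ in the $C_4$ case, giving $1$ class. The main obstacle I expect is justifying in the $D_4$ case that $\Aut(K)$ is exactly $\{1,\bar{\ }\}$, which is handled by the normalizer computation above.
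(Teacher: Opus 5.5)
Your proof is correct. The paper gives no argument of its own for this lemma; it only cites Streng's thesis (Lem.~3.4). Your computation of the $\mathrm{Aut}(K)$-orbits on the four CM types is the elementary Galois-theoretic check behind that reference: right multiplication on $G$ in the two Galois cases, and $\mathrm{Aut}(K)\cong N_G(H)/H=\{1,c\}$ in the $D_4$ case.

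The only blemish is the aside in Step~1 about the target $\{\bar{\id},\varphi\}$. It is harmless but unnecessary: since $c\in\mathrm{Aut}(K)$, one has $\Phi\circ\tau=\overline{\Phi'}$ if and only if $\Phi\circ(\tau\circ c)=\Phi'$, so every equivalence class is automatically closed under conjugation.
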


\subsection{Back to positive characteristic}

We now move to our main case of interest $k = \overline{\mathbb{F}}_p$. We expect that under mild assumptions on $R$, the obstruction to transitivity from Lemma~\ref{lem:CMtypeobstruction} should be the only one. Again we formulate this for maximal orders only:\footnote{Here too, it is likely that this assumption can be relaxed significantly, perhaps to $R$ Gorenstein and $p \nmid [\Z_K : R]$ but we did not find references supporting this.}

\begin{theorem} \label{thm:transitivitymodp}
  Let $k = \overline{\mathbb{F}}_p$ and let $K$ be a CM field. Let $(A, \lambda, \iota), (A', \lambda', \iota') \in \M_k(\Z_K)$ be such that $\rho_{(A, \lambda, \iota)} \cong \rho_{(A', \lambda', \iota' )}$. Then there exists a unique $[(\fraka, \mu)] \in \frakC(\Z_K)$ 
  such that $(A', \lambda', \iota') = [(\fraka, \mu)] \star (A, \lambda, \iota)$.
\end{theorem}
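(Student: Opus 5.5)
We plan to reduce the statement to the characteristic-zero result, Theorem~\ref{thm:ST}, through canonical lifting and reduction. Uniqueness is immediate from the freeness of the action in Theorem~\ref{thm:action}, so only existence needs work.

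\textbf{Step 1: lift to characteristic zero.} For each of $(A,\lambda,\iota)$ and $(A',\lambda',\iota')$ we construct a lift to characteristic zero. Concretely, we want a PPAV with $\Z_K$-orientation over the ring of integers $W$ of a finite extension of $W(\overline{\F}_p)[1/p]$ whose special fibre is the given object. Moreover, the CM type of the lift, read off on $D(\tilde A_{\overline{K}})$, should reduce to the given representation $\rho$.

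The existence of such CM lifts is a Serre--Tate / Grothendieck--Messing deformation statement. The deformation space of $(A,\lambda)$ together with its $\Z_K$-action is governed by the Lie algebra $\mathrm{Lie}(A)=D(A)^\vee$ with its $\Z_K\otimes k$-module structure. A lift with CM type $\Phi$ exists exactly when the Hodge filtration in the Dieudonné module can be lifted $\Z_K\otimes W$-equivariantly with eigencharacters $\Phi$. This is possible precisely when $\Phi$ reduces to $\rho$, since $\Z_K\otimes W$ is a product of DVRs and the filtration is determined by its characters. This step is the main obstacle. We would cite the lifting theorem for CM abelian varieties with given ``reduced CM type'' (Goren's work on superspecial reductions, or Conrad--Chai--Oort, \emph{CM liftings}) rather than reprove it.

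\textbf{Step 2: choose the lifts compatibly.} Since $\rho_{(A,\lambda,\iota)}\cong\rho_{(A',\lambda',\iota')}$, we can choose lifts $\tilde A,\tilde A'$ having the \emph{same} CM type $\Phi$. We do this by picking one $\Phi$ compatible with $\rho$ and lifting both objects with it. Here we fix a place of $L$ above $p$ so that the reduction of embeddings is well defined.

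\textbf{Step 3: apply Shimura--Taniyama and reduce.} Theorem~\ref{thm:ST} then gives $[(\fraka,\mu)]$ with $\tilde A'\cong[(\fraka,\mu)]\star\tilde A$ as oriented PPAVs. We take a representative with $\fraka\subset\Z_K$ and $N(\fraka)$ coprime to $p$, as allowed by Theorem~\ref{thm:action}. The isogeny with kernel $\cap_{x\in\fraka}\ker\iota(x)$ extends over $W$ by the Néron property and good reduction. Its reduction is the isogeny defining $[(\fraka,\mu)]\star(A,\lambda,\iota)$, because the kernel description commutes with reduction; this uses that $N(\fraka)$ is prime to $p$, so the finite étale group scheme reduces isomorphically. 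Likewise, the polarizations and orientations reduce compatibly, the latter because the reduction map on endomorphisms is an injective ring map respecting Rosati.

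\textbf{Step 4: transport the isomorphism.} The oriented isomorphism over characteristic zero, defined over $W$ after a finite extension, reduces to an oriented isomorphism $(A',\lambda',\iota')\cong[(\fraka,\mu)]\star(A,\lambda,\iota)$. This concludes existence. The delicate point to check carefully is in Step~1: that lifts with the \emph{prescribed} $\Phi$ exist for superspecial (non-ordinary) $A$, where $p$ may ramify in $K$ and several $\Phi$ may reduce to the same $\rho$. Any such $\Phi$ suffices, as long as the same one is used for both objects.
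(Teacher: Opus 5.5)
Your proposal is correct and follows essentially the same route as the paper. Both lift the two oriented PPAVs to characteristic zero with a common CM type $\Phi$, apply Theorem~\ref{thm:ST}, and reduce the connecting Shimura class back modulo $p$; the paper cites Goren--Lauter for the lift and phrases the admissibility of $\Phi$ through Kottwitz's determinant condition, which, like your ``$\Phi$ reduces to $\rho$'', depends only on the isomorphism class (indeed the semisimplification) of $\rho$, while your extra details on compatible reduction of kernel, polarization, orientation and isomorphism, and on uniqueness via freeness of the action, fill in steps the paper leaves implicit.
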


\begin{proof}
  It was shown in~\cite[Lem.\,4.4.1]{Goren_Lauter} that every $(A, \lambda, \iota) \in \M_k(\Z_K)$ admits a lift to characteristic $0$.
  As explained in their proof, the possible CM types of such a lift are controlled by \defi{Kottwitz' $\Phi$-determinant condition}: explicitly, fixing an integral basis $\alpha_1, \alpha_2, \ldots, \alpha_{2g}$ of $\Z_K$, for a CM type $\Phi$ one can consider the so-called Kottwitz polynomial
\[ f_\Phi \ = \ \prod_{\varphi \in \Phi} \left( x_1 \varphi(\alpha_1) + x_2 \varphi(\alpha_2) + \ldots + x_{2g} \varphi(\alpha_{2g}) \right) \ \in \Z_{K^\ast}[x_1, x_2, \ldots, x_{2g}]. \]
Here $K^\ast \subset L$ is the fixed field of the group of automorphisms $\tau \in \Gal(L, \Q)$ for which $\tau \circ \Phi = \Phi$ (this is known as the ``reflex field" of $K$ with respect to $\Phi$).
The $\Phi$-determinant condition then states that a lift with CM type $\Phi$ can be found if and only if there exists a prime $\frakp^\ast$ of $K^\ast$ above $p$ such that
$\det\left( \iota(x_1 \alpha_1 + x_2 \alpha_2 + \ldots + x_{2g} \alpha_{2g})^\ast  \right) \equiv f_\Phi \bmod \frakp^\ast$.
In particular, if $\rho_{(A, \lambda, \iota)} \cong \rho_{(A', \lambda', \iota' )}$ then it is possible to find lifts of $(A, \lambda, \iota)$ and $(A', \lambda', \iota')$ having the same CM type, and the connecting Shimura ideal class, whose existence is predicted by Theorem~\ref{thm:ST}, will also connect $(A, \lambda, \iota)$ and $(A', \lambda', \iota')$. \qed
\end{proof}

It is not hard to check that if $p$ is unramified then the $\Phi$-Kottwitz condition cannot be satisfied with respect to more than one CM type $\Phi$. In this case, at least under the assumption $R = \Z_K$, the number of orbits in characteristic $p$ equals the number of orbits in characteristic $0$, which is $2^g$ in most cases. But if $p$ ramifies then there may be multiple options for $\Phi$, meaning that the number of orbits decreases (i.e., several CM types ``coincide" mod $p$).

\begin{example}[CSIDH]
  Assume $g = 1$ and let $K = \Q(\sqrt{-p})$ for some prime number $p$. For simplicity we assume $p \equiv 1 \bmod 4$ so that $\Z_K = \Z[\sqrt{-p}]$ but this does not matter for our conclusions. The two possible Kottwitz polynomials are 
 $x_1 \pm \sqrt{-p} x_2 \in \Z_K[x_1,x_2] $
  (in both cases $K^\ast = K$). Modulo $\frakp^\ast = (\sqrt{-p})$ both polynomials collide to $x_1$, which equals
  $\det(\iota(x_1 + \sqrt{-p} x_2)^\ast) \in \F_p[x_1, x_2]$
  for the Frobenius orientation $\iota : \sqrt{-p} \mapsto \pi_p$ on any supersingular elliptic curve over $\F_p$.
  So in this case the CM types coincide, i.e., there is only one orbit, in accordance with case \emph{(ii)} at the beginning of this section.
\end{example}

\begin{example} \label{ex:example_orientation_revisited_revisited}
Revisiting Example~\ref{ex:example_orientation_revisited}, one easily checks that both $(A, \lambda, \iota)$ and $(A, \lambda, \iota')$ satisfy the Kottwitz condition for a unique CM type, namely $\{ \id, \varphi \}$ and $\{ \id, \bar \varphi \}$, respectively, where
\[ \varphi : \frac{1 + \sqrt{-m}}{2} \mapsto \frac{1 - \sqrt{-m}}{2}, \ \sqrt{d} \mapsto -\sqrt{d}  \]
(and these CM types are inequivalent). So here we expect $2^2 = 4$ orbits.
\end{example}

\begin{example}[R2-D2] \label{ex:R2D2orbits}
For the R2-D2 orientations described in Example~\ref{ex:R2D2}, 
one checks that
$\rho_{(A, \lambda, \iota)}$ maps
\[ \sqrt{d} \mapsto \begin{pmatrix} 0 & d/c_\tau \\ c_\tau & 0 \end{pmatrix}, \qquad \sqrt{-p} \mapsto 0 \]
when expressed with respect to the basis $(\omega, 0), (0, \omega')$, where $\omega, \omega'$ denote regular $1$-forms on $E, E^{(p)}$, respectively. One checks that  
\[ \det \left(\iota\left(x_1 + x_2\sqrt{d} + x_3\sqrt{-p} + x_4\sqrt{d}\sqrt{-p}\right)^\ast\right) = x_1^2 - dx_2^2. \]
This satisfies the Kottwitz condition with respect to all CM types of $\Q(\sqrt{d}, \sqrt{-p})$. So here we expect a unique orbit, coming from a $4$-to-$1$ collision (this is modulo the disclaimer that we have naively applied Theorem~\ref{thm:transitivitymodp} to the non-maximal order $\Z[\sqrt{d}, \sqrt{-p}]$). The resulting transitivity of the Shimura class group action in this setting plays an important role in the attack from~\cite{C2C2attack}.
\end{example}

\begin{remark} In general, for $C_2 \times C_2$ fields, one checks that all $4$ Kottwitz polynomials have the same reduction mod $p$ if and only if $p$ ramifies in both imaginary quadratic subfields. If $p$ ramifies in exactly one imaginary quadratic subfield then we are left with $3$ distinct reductions. As mentioned, in the unramified case the $4$ Kottwitz polynomials have distinct reductions.
\end{remark}

In summary, for a given CM order $R$ in a CM field $K$, a rough roadmap to understanding the orbits of $\frakC(R)$ on the set of primitively $R$-oriented superspecial PPAVs is as follows. Let us assume for simplicity that $R = \Z_K$. By Theorem~\ref{thm:ST} the number of orbits is at most the number $2^g$ of CM types of $K$. But the actual number of orbits is usually smaller:
\begin{itemize}
    \item We first recall that it may (exceptionally) happen that there are no primitively $\Z_K$-oriented PPAVs in characteristic $0$ with a certain given CM type $\Phi$. In view of the proof of Theorem~\ref{thm:transitivitymodp} this then also reduces the number of orbits over $\overline{\F}_p$. Note that this property is necessarily shared by the entire equivalence class of $\Phi$.
    \item The most important decrease comes from the fact that the abelian varieties in certain orbits may not be superspecial. E.g., recall from~\emph{(i)} that if $g = 1$ then we end up with ordinary elliptic curves as soon as $p$ splits in $K$. In higher dimension this is more subtle and a full characterization is lacking, but
    Appendix~\ref{app:reduction} contains the details for $g = 2$ under the assumption that $p$ does not ramify in $K$; this (slightly) extends results by Goren~\cite{Goren1997}.
    \item As we have just discussed, if $p$ ramifies in $K$ then several CM types may ``coincide" mod $p$. This is controlled by Kottwitz' determinant condition.
\end{itemize}
A detailed understanding of all this, as well as of the precise assumptions on $R$ under which Theorems~\ref{thm:ST} and~\ref{thm:transitivitymodp} remain true, is what prevents us from giving a complete orbit description as we did for $g = 1$ at the beginning of the section.

\begin{remark} \label{rmk:frob_conj_orbit}
  Any oriented superspecial PPAV $(A, \lambda, \iota)$ can be defined over $\F_{p^2}$, so we can consider its Frobenius conjugate 
  \[ (A, \lambda, \iota)^{(p)} = (A^{(p)}, \lambda^{(p)}, \iota^{(p)}).\] We believe that $(A, \lambda, \iota)^{(p)}$ always lies in the orbit of $(A, \lambda, \iota \circ \sigma)$, perhaps under mild assumptions on $R$. This would extend Onuki's observation for $g = 1$ and we believe that it should follow from Ekedahl's product criterion~\cite[Prop.\,2.4]{ekedahl-supersingular}; see~\cite[\S2]{Goren1997} for an accessible statement. But we have not analyzed this in detail. 
 \end{remark} 

\section{Security considerations} \label{sec:security}

We conclude with some security aspects
of cryptosystems built from Shimura class group actions. 
Most threats we consider are analogs of existing attack scenarios in the case $g = 1$, that we expect to carry over to higher dimension. But the necessary tools are often  insufficiently developed, due to limited attention and greater technical difficulty. The first 4 aspects have guided us in the parameter selection for the R2-D4 key exchange protocol from Appendix~\ref{app:R2D4_key_exch}:

 \subsubsection*{Endomorphism ring computation.} It is to be expected that the vectorization problem for Shimura class group actions on oriented superspecial PPAVs reduces to the problem of computing their endomorphism rings; this would be in complete analogy with the elliptic curve case~\cite{rational_from_irrational,wesolowski_oriented}. We note that the endomorphism ring computation problem has several interpretations (e.g., computing an explicit basis of $4g^2$ self-maps, determining its structure as an abstract order with involution, computing an IKO-style matrix, \ldots) but they are most likely all equivalent: see~\cite{montessinos2026equivalent} for the surface case, while in arbitrary dimension some aspects are treated in~\cite{page2026conversion}. 
 Up to our knowledge, the fastest known algorithms run in time 
 \[ p^{1/3 + o(1)} \text{ if $g = 1$}, \qquad   p^{4/5 + o(1)} \text{ if $g = 2$}, \qquad  (\log p)^{O(g^2)} p^{g-1} \text{ if $g > 2$}, \]
 see~\cite{wesolowskip13},~\cite{pham_houben} and~\cite[Thm.\,6.15]{mariacrs_endring}, respectively. In practice this attack scenario is largely overruled by
 Kuperberg's quantum attack, which we discuss next.

 \subsubsection{Generic attacks on the abelian hidden shift problem.} The vectorization problem for commutative group actions
   can be modeled as an instance of the ``abelian hidden shift problem"~\cite{childs_jao_soukharev}, for which several quantum sub-exponential time algorithms were devised by Kuperberg~\cite{kuperberg,kuperberg2}. These run in time $L_{\#G}(1/2)$ with $G$ the acting group; so in our case $\#G = \frakh(R)$.
   The concrete cost of Kuperberg's algorithm is not well-understood, even in the elliptic curve case; one of the  fuzzy factors is the cost of an oracle query~\cite{pegasis_quantum,sqale}, and this becomes even more intricate as the dimension grows. In Appendix~\ref{app:R2D4_key_exch} we work with $\log_2 \frakh(R) \approx 2048$, which is on par with CSIDH-4096, commonly believed to reach NIST-I security. In the $\mathbf{g}$-REGA setting, it is customary to sample the secret keys from a strict subset of the acting group, but classical meet-in-the-middle approaches allow to solve the vectorization problem in time $O(\sqrt{\# \mathcal{K}})$ where $\mathcal{K}$ denotes the key space, i.e., the set of expressions of the form~\eqref{eq:keys}. We fix this at $256$ bits for NIST-I.

\subsubsection{Using non-maximal orders.} In the elliptic curve case, the vectorization problem associated with a non-maximal order $R$ reduces to any overorder in which $R$ has a smooth index, by ``walking up the volcano". This lowers the size of the class group and therefore weakens the problem, see e.g.~\cite[\S6.3]{orientexpress}. In higher dimension this picture is more complicated, but it is certain that in many scenarios (e.g., if the order is Bass), such ascending walks are again possible; see~\cite{arpin-marseglia-springer-isogeny-graphs} and Appendix~\ref{app:bass} for more context. So it seems wise to work with maximal orders, or at least with orders $R$ for which $[\Z_K : R]$ is very small.\footnote{It may also be secure to work with CM orders $R$ whose index in $\Z_K$ is large, but then it should be free of small factors so that ascension is infeasible, as in SCALLOP~\cite{scallop}.}
This is also a protective measure against the active ``overorder attack" described below.

\subsubsection{What about ``imprimitive" CM types?} 
This discussion is specific to higher dimension. We mentioned that key exchange from the R2-D2 orientation discussed in Example~\ref{ex:R2D2} got broken in~\cite{C2C2attack}, 
in the sense that the vectorization problem for $\frakC(R)$ reduces to that for $\Cl(R_1), \Cl(R_2)$ with $R_1, R_2$ the two (largest possible) imaginary quadratic subrings of $R$. The attack relies on the colliding inequivalent CM types discussed in Example~\ref{ex:R2D2orbits} and does not obviously generalize to arbitrary $C_2 \times C_2$ orientations: this is a compelling open question.
More generally, one might wonder whether there is a general security issue with mod-$p$ reductions of oriented PPAVs $(A, \lambda, \iota)$  whose CM type $\Phi$ is imprimitive, meaning there is a CM subfield $K' \subset K$ and a CM type $\Phi'$
of $K'$ such that $\Phi$ consists of those embeddings whose restriction to $K'$ is an element of $\Phi'$. It is known that this is equivalent (in characteristic $0$) to $A$ being isogenous to a product of lower-dimensional abelian varieties~\cite[Thm.\,5.2(3)]{StrengThesis}. All CM types of $C_2 \times C_2$ fields are imprimitive, while they are always primitive in the cases $C_4, D_4$.

\subsubsection{Genus theory attacks.} In the elliptic curve case, the decisional Diffie--Hellman assumption for class group actions can often be broken using genus theory~\cite{genus_theory_attack,genus_oriented}. This attack strategy generalizes to higher dimension, as was illustrated in~\cite[\S6]{genus_theory_attack} and worked out in full generality in~\cite{C2C2attack}. It can be countered by making sure one acts with elements from $\frakC(R)^2$ (which is automatic if $\#\frakC(R)$ is odd).

\subsubsection{Backdoors and active attacks.} One realistic threat is that a malicious authority or another adversary could lure Alice or Bob into acting with their secret key on a dishonestly generated PPAV $(A, \lambda, \iota)$, primitively oriented only by an overorder $R'\supsetneq R$ rather than $R$; then in this case the key could be recovered along the lines of Remark~\ref{rmk:sanon_break}. 
This attack can be prevented by public key validation, i.e., by checking that $A$ is superspecial, which can be done efficiently thanks to recent work~\cite{supersingularitytesting}, and that $\iota$ is a primitive $R$-orientation.
If $R$ has a large index in $\Z_K$, however, the non-primitivity of $\iota$ might be hard to detect; see our corresponding discussion at the end of Section~\ref{sec:superspecial}. 
Thus, for this reason as well, it is safer to consider orientations by orders that are near-maximal. 

\bibliographystyle{abbrv}
\bibliography{references}

\appendix

\section{Background on the background} \label{app:background}

\subsection{Cryptographic group actions} \label{subsec:groupactions}

Let $X$ be a finite set and $G$ a finite commutative group in which one can compute efficiently. A \defi{cryptographic group action} $\star: G \times X \to X$ is an action of $G$ on $X$ meeting some hardness properties, most notably one-wayness: given $x_1, x_2 \in X$ it is (computationally) hard to retrieve $g 
\in G$, if it exists, such that $g \star x_1 = x_2$. This problem is often referred to as \defi{vectorization} in $G$. 
An \defi{effective group action} (EGA) is one where 
the group action can be evaluated efficiently at all inputs.
Only fairly recently, it was discovered how to build effective cryptographic group actions from isogenies between elliptic curves, see \cite{pegasis,qtpegasis,clapoti,klapoti}. 

For certain applications such as non-interactive key exchange it is enough to have a so-called \defi{restricted effective group action} (REGA). 
More explicitly, the action is called a \textbf{g}-REGA for some generating subset
$\textbf{g} = \{g_1, \ldots, g_n\}$ of $G$ if $n$ is polylogarithmic in $|G|$ and there exist efficient algorithms to compute $g_i \star x$, $g_i^{-1} \star x$  for all $g_i \in \mathbf{g}, x \in X$. Most isogeny-based group actions in the literature fall into this category,
see e.g.\ \cite{csidh,osidh,ChenuSmith,orientexpress,Houben,coral}. 
This will also be the case for the Shimura class group actions studied in this paper, where potential generalizations of~\cite{qtpegasis} to construct an EGA from this are left as future work. We refer to \cite{egarega} for more details on REGAs and EGAs and their range of applications.

\subsection{Polarized isogenies} \label{subsec:isogenies}

In the next few paragraphs we recall some basic facts and terminology needed for studying the Shimura class group action. All statements below are well-known to specialists and kept concise; we refer the reader to~\cite{StrengThesis,milne_CM_coursenotes,milneAV} and the references included below for more details. 

\subsubsection{Abelian varieties and isogenies.} Let $k$ be an algebraically closed field of characteristic $p$, where $p$ is either a prime or $0$. An \defi{abelian variety} over $k$ is a smooth projective algebraic variety, equipped with a base point $0 \in A$ and geometric morphisms $A \times A \to A : (P, Q) \mapsto P + Q$, $A \to A : P \mapsto -P$ turning $A$ into an algebraic group; elliptic curves are abelian varieties of dimension $1$. A \defi{homomorphism} $\varphi : A_1 \to A_2$ between abelian varieties $A_1, A_2$ is a morphism such that $\varphi(0) = 0$; this is automatically a group homomorphism at the level of points; if $\dim A_1 = \dim A_2$ and $\varphi$ is surjective then it is called an \defi{isogeny}. The set of \defi{endomorphisms} (= self-homomorphisms) of an abelian variety $A$ becomes a ring when equipped with point-wise addition and composition; it is denoted $\End(A)$. We identify $\Z$ with the subring of $\End(A)$ consisting of scalar multiplications and write $\End^0(A) = \End(A) \otimes_\Z \Q$ for the endomorphism algebra.
Assuming $p > 0$, an abelian variety $A / k$ of dimension $g$ is called \defi{superspecial} if it is isomorphic to $E^g$ for some supersingular elliptic curve $E / k$.
If $g \geq 2$ then the choice of $E$ does not matter: by a theorem of Deligne~\cite{goldstein_schacher} any two products of $g$ supersingular elliptic curves are isomorphic. 

\subsubsection{Principal polarizations.} A \defi{polarization} on an abelian variety $A$ is
the isogeny 
\[ \lambda : A\to A^{\vee} \cong \Pic^0(A) : P \mapsto [t_{-P}D - D] \]
attached to an ample divisor $D$ on $A$ (where $t_{-P}$ denotes point-wise translation by $-P$). The reader not familiar with the language of divisors can just think of a polarization as an isogeny to a certain companion abelian variety, called the dual abelian variety, and satisfying a certain ``positivity" condition; this is good enough for almost all that follows. If this isogeny is of degree $1$ then it is called a \defi{principal polarization} and we call $(A, \lambda)$ a principally polarized abelian variety, or PPAV in short (in the specific case of surfaces we will often write PPAS).
While elliptic curves have a unique, canonical, principal polarization, this is not true in general for abelian varieties. 
\begin{example}[Jacobians]
  Consider a smooth projective curve $C$ of genus $g \geq 1$ along with the Abel--Jacobi map $u : C \hookrightarrow \Jac(C) \cong \Pic^0(C) : P \mapsto [P - P_0]$, where $P_0 \in C$ is some fixed base point. The codimension $1$ subvariety of $\Jac(C)$ consisting of all sums of $g-1$ points in the image of $u$ can be seen as a divisor $D$ on $\Jac(C)$. The associated polarization $\lambda : \Jac(C) \to \Jac(C)^\vee$ is a principal polarization, known as the \defi{canonical principal polarization} on $\Jac(C)$.
\end{example}

\begin{example}[products] \label{ex:products}
    If $(A_1, \lambda_1), (A_2, \lambda_2)$ are PPAVs, with principal polarizations coming from divisors $D_1, D_2$, then the product abelian variety $A_1 \times A_2$ comes with the \defi{product polarization} $\lambda_1 \times \lambda_2$, attached to the divisor $D_1 \times A_2 + A_1 \times D_2$, which is again principal.
\end{example}

\subsubsection*{Rosati involution.}
With every homomorphism $\varphi : A_1 \to A_2$ comes a \defi{dual homomorphism} $\hat \varphi : A_2^\vee \to A_1^\vee$ (which, in the case of a surjective homomorphism, is obtained by pulling back divisors on $A_2$ under $\varphi$). If $\lambda_1, \lambda_2$ are principal polarizations on $A_1, A_2$ then this gives rise to an \defi{adjoint homomorphism}
\[ \varphi^\dagger =  \lambda_1^{-1} \circ \hat \varphi \circ \lambda_2 : A_2 \to A_1.  \]
When applied to endomorphisms of a PPAV $(A, \lambda)$, this defines an involution on $\End(A)$
called the \defi{Rosati involution}, which extends by linearity to the entire endomorphism algebra $\End^0(A)$. The aforementioned positivity condition on $\lambda$ is reflected in the fact that the quadratic form $\alpha \mapsto \Tr(\alpha \alpha^\dagger)$ on $\End^0(A)$, where $\Tr(x)$ denotes the trace of the multiplication-by-$x$ map, viewed as a linear map on the $\Q$-vector space $\End^0(A)$, is positive-definite. 

\begin{example} \label{ex:conjugate_transpose}
  Any homomorphism $\varphi : A_1 \times A_2 \to B_1 \times B_2$ between two products can be seen as a matrix
  \[  \begin{pmatrix} \varphi_{11} & \varphi_{21} \\ \varphi_{12} & \varphi_{22}  \end{pmatrix}  \]
  of homomorphisms $\varphi_{ij} : A_i \to B_j$. If $A_1, A_2, B_1, B_2$ come equipped with principal polarizations, then with respect to the respective product polarizations we have
  \begin{equation} \label{eq:rosati_is_conj_transp} \varphi^\dagger = \begin{pmatrix} \varphi_{11}^\dagger & \varphi_{12}^\dagger \\ \varphi_{21}^\dagger & \varphi_{22}^\dagger  \end{pmatrix},
  \end{equation}
  which can be thought of as a ``conjugate-transpose". This naturally generalizes to homomorphisms between products of arbitrarily many abelian varieties.
\end{example}

\subsubsection{Polarized isogenies.}
If $\varphi : A_1 \to A_2$ is an isogeny and $\lambda_2$ is a principal polarization on $A_2$ then $\hat{\varphi} \circ \lambda_2 \circ \varphi$ is a polarization on $A_1$, which cannot be principal in general, if only for degree reasons. According to a theorem of Mumford, if $\lambda_1$ is a principal polarization on $A_1$ then the polarizations on $A_1$ are exactly those isogenies of the form $\lambda_1 \circ \alpha$ with $\alpha$ a \defi{totally positive} endomorphism on $A_1$: by this we mean that $\alpha = \alpha^\dagger$ and the (necessarily real) eigenvalues of the multiplication-by-$\alpha$ map are positive.
Applying this to the foregoing, this means that for $\hat{\varphi} \circ \lambda_2 \circ \varphi$ to be a polarization, 
there must exist a totally positive $\alpha \in \End(A_1)$ such that the diagram
\begin{center}
\begin{tikzcd}[row sep=normal,column sep=normal]
    A_1 \arrow[rd, swap, "\lambda_1" ] & A_1 \arrow[l, swap, "\alpha"] \arrow[r, "\phi"]  & A_2 \arrow[d,"\lambda_2"] \\
    & A_1^\vee & A_2^\vee \arrow[l, "\widehat{\phi}"]
\end{tikzcd}
\end{center}
commutes. We then say that $\phi: (A_1, \lambda_1) \to (A_2, \lambda_2)$ is a \defi{polarized isogeny} with respect to $\alpha$, or an $\alpha$-isogeny for short. An important special case is $\alpha = N \in \Z_{>0}$. We note that the degree of an $N$-isogeny equals $N^g$; the integer $N$ itself is then called its \defi{reduced degree}, denoted $\degrd(\phi)$. An \defi{isomorphism} between PPAVs is a $1$-isogeny.
We state the next result for later use~\cite[Prop.\,13.8]{milneAV}:
\begin{proposition} \label{prop:max_iso}
  Let $(A_1, \lambda_1)$ be a PPAV, let $H \subset A_1$ be a finite subgroup whose order is not a multiple of $p$, and let $\alpha \in \End(A_1)$ be totally positive. Let $\varphi : A_1 \to A_2$ be a separable isogeny with kernel $H$. There exists a principal polarization $\lambda_2$ on $A_2$ such that $\varphi : (A_1, \lambda_1) \to (A_2, \lambda_2)$ is an $\alpha$-isogeny if and only if $H$ is maximal with the property that
  \[ e_\alpha(P, \lambda_1(Q)) = 1, \quad \forall P, Q \in H, \]
  where $e_\alpha$ denotes the $\alpha$-Weil pairing on $A_1$. Moreover $\lambda_2$ is unique in this case.
\end{proposition}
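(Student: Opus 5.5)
The plan is to reduce the statement to Mumford's descent criterion for line bundles along isogenies, applied to the polarization $\lambda := \lambda_1 \circ \alpha$ on $A_1$. By the theorem of Mumford recalled above, $\lambda$ is indeed a polarization, since $\alpha$ is totally positive. Because $\lambda_1$ is an isomorphism, $\ker \lambda = \ker \alpha$. The first step is to identify the commutator pairing $e^{\lambda}$ on $\ker\lambda$ with the pairing in the statement, namely $e^{\lambda}(P,Q) = e_\alpha(P, \lambda_1(Q))$. This is essentially a matter of unwinding the definition of the $\alpha$-Weil pairing as the pairing between $\ker\alpha$ and $\ker\hat\alpha$, so I take it as formal.

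The key step is the descent criterion. Since $\varphi$ is separable with kernel $H$, a polarization $\lambda_2$ on $A_2$ with $\hat\varphi \circ \lambda_2 \circ \varphi = \lambda$ exists if and only if $H \subset \ker\lambda$ and $H$ is isotropic for $e^{\lambda}$. In Mumford's language, a line bundle $L$ representing $\lambda$ descends along $A_1 \to A_1/H$ exactly when $H$ lifts to a level subgroup of the theta group $\mathcal{G}(L)$, and such a lift exists if and only if $H$ is isotropic for the commutator pairing; over algebraically closed $k$ and with $\#H$ prime to $p$ there is no further obstruction. The descended bundle is ample because $\varphi$ is finite, so $\lambda_2$ is a polarization. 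I expect this step to be the main obstacle if one wants a self-contained argument, because it needs theta groups. In the write-up I would cite~\cite[\S23]{mumford}-style descent, as Milne does.

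Principality then comes from a degree count. From $\hat\varphi \lambda_2 \varphi = \lambda_1\alpha$ we get $(\#H)^2 \deg\lambda_2 = \deg\alpha$, so $\lambda_2$ is principal if and only if $(\#H)^2 = \#\ker\alpha$. Now I decompose into primary parts. On the prime-to-$p$ part of $\ker\alpha$ the pairing $e^\lambda$ is nondegenerate, and there an isotropic subgroup satisfies $(\#H)^2 \le \#(\ker\alpha)_{p'}$, with equality exactly for maximal isotropic subgroups. If $p \mid \deg\alpha$, then no $H$ of prime-to-$p$ order can have $(\#H)^2 = \deg\alpha$. This degenerate case must be checked against the ``maximal'' condition in the statement: I would interpret maximality within $\ker\alpha$ and verify that it also forces $\#\ker\alpha$ to be prime to $p$, or else restrict to that case. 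This gives the equivalence.

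Uniqueness is straightforward. If $\lambda_2, \lambda_2'$ both work, then $\hat\varphi(\lambda_2 - \lambda_2')\varphi = 0$. Since $\varphi$ is surjective and $\hat\varphi$ is an isogeny, while $\Hom(A_2, A_2^\vee)$ is torsion-free, it follows that $\lambda_2 = \lambda_2'$.
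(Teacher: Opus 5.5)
Your argument is correct: the paper gives no proof of its own and simply cites Milne, and your route is the standard argument behind that reference. That route is Mumford's theta-group descent for $\lambda_1\circ\alpha$, using $e^{\lambda_1\alpha}(P,Q)=e_\alpha(P,\lambda_1(Q))$, then the degree count $(\#H)^2\deg\lambda_2=\deg\alpha$, and finally the fact that for a nondegenerate alternating pairing, maximal isotropic $\Leftrightarrow$ $H=H^\perp$. On your hedge for $p\mid\deg\alpha$: if maximality is read among subgroups of points, only your second option (restricting to $p\nmid\deg\alpha$) is viable, because then the ``if'' direction genuinely fails. For instance, take $A_1$ superspecial, $\alpha=[p]$ and $H=0$: $H$ is trivially maximal since $A_1[p]$ has no nonzero points, yet $\lambda_1\circ[p]$ is not principal. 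Adding $p\nmid\deg\alpha$ costs nothing in the proof of Theorem~\ref{thm:action}, where $\deg\iota_1(\mu)=N(\fraka)^2$.
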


\begin{example}[IKO matrices]\label{ex:IKO}
  Let $E$ be a supersingular elliptic curve over $k$ and assume that $p  > 0$; fix an isomorphism $\theta : \O \to \End(E)$ from a suitable maximal order $\O \subset B_{p, \infty}$ in the unique quaternion algebra ramified at $p$ and $\infty$. Consider the superspecial abelian surface $A = E^2$ equipped with the product polarization $\lambda_1$. In this case Mumford's theorem says that the principal polarizations on $A$ are precisely the maps of the form $\lambda_1 \circ \theta(g)$ where
  \[
    g = \begin{pmatrix} s & r \\ \bar{r} & t \end{pmatrix}, \qquad s, t \in \Z_{>0}, r \in \O, st - r \bar r = 1,
  \]
  and $\theta$ is applied entry-wise; the abbreviation IKO stands for Ibukiyama, Katsura and Oort, the authors of~\cite{ibukiyama1986supersingular} where this is detailed; see Section~\ref{sec:algebra}.
\end{example}

\subsection{Omitted proofs} \label{app:missing_backgroundproofs}

\begin{proof}[of Lemma~\ref{lem:different_mus}]
  Let $\epsilon_1, \ldots, \epsilon_{g-1}$ be a set of fundamental units of $R_0$, chosen such that they are all positive under some fixed real embedding. Since $(R, \epsilon_i^2) \in P_R$ for all $i$, it is clear that all Shimura classes for which $\fraka = R$ are among expressions of the form
  \begin{equation} \label{eq:candidates_for_R} 
   [ (R, \varepsilon_1^{e_1} \cdots \varepsilon_{g-1}^{e_{g-1}})], \ e_i \in \{0, 1\}.
  \end{equation}
  The lemma follows because $\fraka \bar{\fraka} = \mu_1 R = \mu_2 R$ iff $\mu_1^{-1} \mu_2 \in R_0^+$ is a unit. \qed
\end{proof}

\begin{remark}
Not all expressions of the form~\eqref{eq:candidates_for_R} are valid Shimura classes: this is the case only if $\varepsilon_1^{e_1} \cdots \varepsilon_{g-1}^{e_{g-1}}$ is totally positive. It may well happen that this is true only when $e_1 = e_2 = \ldots = e_{g-1} = 0$; in such cases $[(\fraka, \mu)]$ is always uniquely determined by $\fraka$ (however, the existence of $\mu$ still imposes a condition on $\fraka$). Moreover, even in the case of a totally positive unit $\varepsilon \neq 1$, the class $[(R, \varepsilon)]$ may still be trivial: if $\epsilon = \alpha \bar{\alpha}$ for some $\alpha \in R$, 
then 
$(R, \epsilon) = (\alpha R, \alpha \bar{\alpha}) \in P_R$.
\end{remark}

\begin{proof}[of Lemma~\ref{lem:Shimura_integer_norm_subgroup}]
  Consider a Shimura ideal $(\fraka, \mu)$ with $\fraka \subset R$ and let $n$ be an integer such that $\frac{n}{\mu} \fraka$ is integral. Then
  \[ [(\fraka, \mu)]^2 = [(\fraka^2, \mu^2)] = [ (\frac{n}{\mu} \fraka^2, \frac{n}{\mu} \frac{\bar n}{\bar \mu} \mu^2) ] = [ (\frac{n}{\mu} \fraka^2,  n^2) ], \]
  where we have used that $n = \bar n$, $\mu = \bar \mu$. \qed
\end{proof}

\begin{remark}\label{rem:dichtomy1}
    Based on computational evidence, we expect the dichotomy \[ [\frakC(R) : \frakC(R)_\Z] \in \{1, \#\frakC(R)[2]\}\] to hold for any CM order $R \subset K$. 
\end{remark}

\begin{proof}[of Theorem~\ref{thm:action}]
Although it is hard to pinpoint a reference not putting additional assumptions on $R$ and $k$, or not working in the language of complex tori~\cite{Shimura} or Deligne modules~\cite{pila}, the arguments are standard. We highlight the constructive aspects of the proof, because these indicate how to work with the action, and are more concise about the remaining details. 
Let $(A_1, \lambda_1, \iota_1) \in \M_k(R)$ and $[(\fraka, \mu)] \in \frakC(R)$. We show how to construct
\[ (A_2, \lambda_2, \iota_2) = [(\fraka, \mu)] \star (A_1, \lambda_1, \iota_1), \]
a priori as an element of $\M_k(K)$, but we will then be able to conclude that we always land in $\M_k(R)$. By replacing $\fraka$ with an equivalent fractional ideal if needed and scaling $\mu$ accordingly, we can assume that $\fraka$ is integral of norm not divisible by $p$. We then consider the finite subgroup
\[ H = \bigcap_{x \in \fraka} \ker \iota_1(x) \]
of $A_1$, which has order $N_{K / \Q}(\fraka)$. Let $\varphi : A_1 \to A_2 := A_1/H$ be the corresponding quotient isogeny (in the language of $\fraka$-multiplications, this amounts to acting with $\fraka$ on the unpolarized abelian variety $A_1$, see~\cite[\S7]{milne_CM_coursenotes}).
Writing $\mu = \sum_i x_i \bar{y_i}$ for $x_i, y_i \in \fraka$ 
and letting $z_i = x_i \bar{y_i} / \mu \in R$ so that $\sum_i z_i = 1$, for any $P, Q \in H$ we have
\begin{align*} e_{\iota(\mu)}(P, \lambda_1(Q)) & =
\prod_i e_{\iota(\mu)}(\iota(z_i)( P), \lambda_1(Q)) = \prod_i e_{\iota(x_i)\iota(y_i)^\dagger}(P, \lambda_1(Q)) \\ 
& = \prod_i e_{\iota(y_i)^\dagger}(\iota(x_i)(P), \lambda_1(Q)) = \prod_i e_{\iota(y_i)^\dagger}(0, \lambda_1(Q)) = 1
\end{align*}
where we have used some standard properties of the Weil pairing which, e.g., can be found in~\cite[\S3.1]{robert_pairing}, and the fact that $\lambda_1(Q)$ lies in the kernel of the dual of $\iota(y_i)^\dagger$. So by Proposition~\ref{prop:max_iso} there is a unique principal polarization $\lambda_2$ on $A_2$ turning $\varphi$ into an $\iota_1(\mu)$-isogeny. We then let $\iota_2 : K \to \End^0(A_2)$ be the push-forward orientation, i.e., 
\[ \iota_2 : \alpha \mapsto (\varphi \circ \iota_1(\alpha) \circ \varphi^\dagger)/(\varphi \circ \varphi^\dagger). \] 
From the multiplicativity of the unpolarized  action~\cite[Prop.\,7.28]{milne_CM_coursenotes} it is not hard to deduce that acting in this way with $(\fraka \frakb, \mu \nu)$ amounts to the same thing as first acting with $(\fraka, \mu)$, then with $(\frakb, \nu)$. Since it is straightforward that Shimura ideals of the form $(\mu R, \mu \bar \mu)$ with $\mu \in R$ act trivially, this defines an action of $\frakC(R)$ as wanted, and freeness is easily established. To see that this lands in $\M_k(R) \subset \M_k(K)$ it is enough to observe that every ideal class admits two representing ideals with coprime norms, see e.g.~\cite[Proof of Lem.\,27]{lauter_robert} for more details on this step.  \qed
\end{proof}

\section{Endomorphism rings of some well-known supersingular elliptic curves} \label{sec:endrings}

We include explicit descriptions of the endomorphism rings of certain ``extremal" supersingular elliptic curves: 
\begin{itemize}
    \item $y^2 = x^3 + x$ in characteristic $p \equiv 3 \bmod 4$, of $j$-invariant $1728$: being the cryptographers' default example of a supersingular elliptic curve, this is of course very well-known and included for completeness only,
    \item $y^2 = x^3 + 1$ in characteristic $p \equiv 5 \bmod 6$, of $j$-invariant $0$: an explicit description of its endomorphism ring can be found in~\cite[Prop.\,3.2]{mcmurdy} but this description is wrongly  attributed to $y^2 = x^3 - 1$, see Remark~\ref{rmk:5mod6},
    \item $y^2 = x^3 + 12x^2 + 18x$ in characteristic $p \equiv 5 \bmod 8$, which has $j$-invariant $8000$: we are unaware of prior work describing the endomorphism ring as explicitly as in Lemma~\ref{lem:endring5mod8} below; in~\cite[Lem.\,3]{KLPT} it was studied as an abstract maximal order in $B_{p, \infty}$  but, unfortunately, that statement contains a typo, see Remark~\ref{rmk:5mod8} below.
\end{itemize}
We do not cover other congruence classes of $p$, but we note that also the maximal orders from~\cite[Lem.\,4]{KLPT} for the cases $p \equiv 1 \bmod 4$ contain typos, as was pointed out in~\cite[Ftn.\,4]{PIP}.
Since~\cite{mcmurdy,KLPT} are frequently cited references and the typos have been copied into the existing literature, we hope that this appendix helps in cleaning up the landscape. 

\begin{lemma} \label{lem:endring3mod4}
    Let $p \equiv 3 \bmod 4$ be a prime number and consider the quaternion algebra
    \[ B_{p, \infty} = \left( \frac{-1, -p}{\Q} \right) \]
  ramified at $p$ and $\infty$,
  with the usual generators $1, i, j, k$ (i.e., we have $i^2 = -1$, $j^2 = -p$ and $k = ij = -ji$) over $\Q$. Fix a square root $\i \in \F_{p^2}$ of $-1$. The elliptic curve $E : y^2 = x^3 + x$ over $\overline{\F}_p$ with $j$-invariant $1728$ admits the endomorphism
  \[ \iota : (x, y) \mapsto (-x, \i y) \]
  satisfying $\iota^2 = [-1]$ and 
  \begin{equation} \label{eq:map3mod4} 1 \mapsto [1], \quad i \mapsto \iota, \quad j \mapsto \pi_p, \quad k \mapsto \iota \pi_p
  \end{equation}
    defines a ring isomorphism
  \begin{equation*} 
    \Z\left\langle 1, i, \frac{i + j}{2}, \frac{1 + k}{2} \right\rangle \to \End(E),
  \end{equation*}
  where $\pi_p$ denotes the $p$-th power Frobenius endomorphism on $E$.
\end{lemma}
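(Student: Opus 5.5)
The plan is to build a ring homomorphism from $B_{p,\infty}=\left(\frac{-1,-p}{\Q}\right)$ into $\End^0(E)$ from the quaternion relations, then show that the given order lands inside $\End(E)$ and that it is maximal.

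\emph{Step 1 (quaternion relations).} We have $\iota^2(x,y)=(x,\i^2y)=(x,-y)$, so $\iota^2=[-1]$. Since $p\equiv 3\bmod 4$, $E/\F_p$ is supersingular with $\#E(\F_p)=p+1$, so the Frobenius has trace $0$ and $\pi_p^2=[-p]$. Using $\i^p=-\i$, we get
\[ \pi_p\iota(x,y)=(-x^p,-\i y^p)=-\iota\pi_p(x,y). \]
Hence \eqref{eq:map3mod4} respects $i^2=-1$, $j^2=-p$ and $ij=-ji=k$. It therefore extends to a $\Q$-algebra homomorphism $B_{p,\infty}\to\End^0(E)$, which is injective because $B_{p,\infty}$ is a division algebra.

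\emph{Step 2 (integrality of the denominators).} It remains to show that $\iota+\pi_p$ and $1+\iota\pi_p$ are divisible by $2$ in $\End(E)$. Because $p$ is odd, $[2]$ is separable, so an endomorphism is divisible by $2$ if and only if it vanishes on $E[2]=\{0,(0,0),(\i,0),(-\i,0)\}$.
\begin{itemize}
\item Both $\iota$ and $\pi_p$ fix $(0,0)$ and swap $(\pm\i,0)$, so they agree on $E[2]$. Thus $(\iota+\pi_p)(P)=2\iota(P)=0$ for all $P\in E[2]$.
\item $\iota\pi_p$ acts as the identity on $E[2]$, so $(1+\iota\pi_p)(P)=2P=0$ for all $P\in E[2]$.
\end{itemize}
Consequently $\O:=\Z\langle 1,i,\frac{i+j}{2},\frac{1+k}{2}\rangle$ maps into $\End(E)$. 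One should also check that $\O$ is closed under multiplication; this follows from the standard multiplication table or from the discriminant computation in the next step.

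\emph{Step 3 (maximality).} Compute the reduced discriminant of $\O$ via the determinant of the trace form $\tr(x_a\bar x_b)$ on the basis $1,i,\frac{i+j}{2},\frac{1+k}{2}$. The aim is to show $\disc(\O)=p$, which equals the discriminant of a maximal order in $B_{p,\infty}$, so $\O$ is maximal. Since $\End(E)$ is an order in $B_{p,\infty}$ containing the image of $\O$, maximality forces equality, and the map is an isomorphism onto $\End(E)$.

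The main obstacle is Step 2, namely getting the action on $E[2]$ and the sign $\i^p=-\i$ right. The discriminant computation is routine, and $\pi_p^2=-p$ is standard for $p\equiv 3\bmod 4$ (including $p=3$).
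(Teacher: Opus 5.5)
Your proposal is correct and follows essentially the same route as the paper's proof. Both establish the quaternion relations from $\iota^2=[-1]$, $\pi_p^2=[-p]$ and $\pi_p\iota=-\iota\pi_p$, then show $\iota+\pi_p$ and $[1]+\iota\pi_p$ are divisible by $2$ because they vanish on $E[2]$ (with $[2]$ separable), and finish with a discriminant computation giving maximality; your explicit action of $\iota$ and $\pi_p$ on $E[2]$ just spells out what the paper calls immediate.
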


\begin{proof} As mentioned, this is standard, and easier than the lemmata below, so we will be concise. It is immediate that $\iota^2 = [-1]$ and, using $p \equiv 3 \bmod 4$, it is easy to check that $\iota \pi_p = - \pi_p \iota$. Thus the assignment~\eqref{eq:map3mod4} defines an isomorphism $B_{p, \infty} \to \End^0(E)$ at the level of algebras. To complete the proof, it suffices to observe that $(i + j)/2$, $(1 + k)/2$ map to endomorphisms on $E$, which in view of~\cite[Cor.\,III.4.11]{silverman} is equivalent to
\[ E[2] \subset \ker(\iota + \pi_p), \ker([1] + \iota \pi_p), \]
but that is immediate from $E[2] = \{ 0, (0, 0), (\i, 0), (-\i, 0)\}$. This is indeed enough, because an explicit calculation shows that $\Z\langle 1, i, (i + j)/2, (1 + k)/2\rangle$ is a subring of $B_{p, \infty}$ and a discriminant computation shows that it concerns a maximal order.
\end{proof}

\begin{lemma} \label{lem:endring5mod6}
    Let $p \equiv 5 \bmod 6$ be a prime number and consider the quaternion algebra
    \[  B_{p, \infty} = \left( \frac{-3, -p}{\Q} \right) \]
    ramified at $p$ and $\infty$, with the usual generators $1, i, j, k$ (i.e., we have $i^2 = -3$, $j^2 = -p$ and $k = ij = -ji$) over $\Q$. Fix a primitive $3$rd root of unity $z \in \F_{p^2}$. The elliptic curve $E : y^2 = x^3 + 1$ over $\overline{\F}_p$ with $j$-invariant $0$ admits the endomorphism
    \[ \omega : (x, y) \mapsto (zx, y) \]
    satisfying $\omega^2 + \omega + [1] = 0$. Writing $\Omega = [2]\omega + [1]$ and $\pi_p$ for the $p$-th power Frobenius endomorphism on $E$, the assignment
    \begin{equation*} 1 \mapsto [1], \quad i \mapsto \Omega, \quad j \mapsto \pi_p, \quad k \mapsto \Omega \pi_p
  \end{equation*}
    defines an isomorphism
  \begin{equation} \label{eq:order5mod6} 
    \Z\left\langle 1, \frac{-1 + i}{2}, j, \frac{3 + i + 3j + k}{6} \right\rangle \to \End(E)
  \end{equation} 
    of rings.
\end{lemma}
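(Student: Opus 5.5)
The proof follows the same template as Lemma~\ref{lem:endring3mod4}: first an isomorphism of algebras $B_{p,\infty} \to \End^0(E)$, then a check that the lattice in~\eqref{eq:order5mod6} is a maximal order whose generators map into $\End(E)$.

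\textbf{Step 1: the algebra map.} Since $z^3 = 1$, the map $\omega$ is an automorphism of $E$, and $z^2 + z + 1 = 0$ gives $\omega^2 + \omega + [1] = 0$. Hence $\Omega^2 = 4\omega^2 + 4\omega + [1] = [-3]$. Because $p \equiv 2 \bmod 3$, we have $z^p = z^2 = z^{-1}$. Therefore $\pi_p \omega = \omega^2 \pi_p$, since both sides send $(x,y)$ to $(z^p x^p, y^p)$. It follows that $\pi_p \Omega = (2\omega^2 + 1)\pi_p = (-2\omega - 1)\pi_p = -\Omega\pi_p$. Together with $\pi_p^2 = [-p]$ (which holds because $E$ is supersingular and $p > 3$), these relations show that the assignment $i \mapsto \Omega$, $j \mapsto \pi_p$ respects the defining relations of $(-3,-p)_\Q$. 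It therefore defines an injective homomorphism of $4$-dimensional algebras, which is an isomorphism $B_{p,\infty} \to \End^0(E)$. This also confirms that $(-3,-p)_\Q$ is ramified exactly at $p$ and $\infty$.

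\textbf{Step 2: the lattice is a maximal order.} Let $\Lambda = \Z\langle 1, (-1+i)/2, j, (3+i+3j+k)/6\rangle$. I will check by direct multiplication that $\Lambda$ is closed under products. It suffices to express the products of the generators in the basis, using $i^2 = -3$, $j^2 = -p$ and $p \equiv 5 \bmod 6$ to clear denominators. Next I will compute the reduced discriminant from the Gram determinant $\det(\mathrm{trd}(e_a \bar e_b))$. This should come out to $p^2$ (so $\mathrm{discrd} = p$), which means $\Lambda$ is maximal.

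\textbf{Step 3: the generators land in $\End(E)$.} Clearly $(-1+i)/2 \mapsto (\Omega - 1)/2 = \omega$, which is an endomorphism. The main obstacle is the last generator, which maps to $\psi/6$ with $\psi = 3 + \Omega + 3\pi_p + \Omega\pi_p$. By~\cite[Cor.\,III.4.11]{silverman} it suffices to show $E[6] \subset \ker\psi$. I would split this into $E[2] \subset \ker\psi$ and $E[3] \subset \ker\psi$.

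For $E[2]$: we have $\psi \equiv 1 + \Omega + \pi_p + \Omega\pi_p = (1+\Omega)(1+\pi_p) = 2(1+\omega)(1+\pi_p)$ modulo $2\End(E)$. This already kills $E[2]$.

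For $E[3]$: modulo $3$ we have $\psi \equiv \Omega(1 + \pi_p)$. Rather than working with this congruence, I would use that $\Omega^2 = -3$ means $\Omega$ is (up to sign) the $\sqrt{-3}$ endomorphism. Then $\psi = \Omega\bigl(1 + \pi_p - \Omega(1+\pi_p)\bigr)$, up to a check of the algebra. So it is enough to show that $(1-\Omega)(1+\pi_p)$ kills $\Omega(E[3])$, which is the kernel of the dual. Concretely, I would list $E[3]$ explicitly: $x = 0$ gives $(0, \pm 1)$, and $x^3 = -4$ gives the remaining $x$-coordinates. I would then verify $\psi(P) = 0$ pointwise using $z^p = z^2$, $1^p = 1$, and the fact that cube roots of $-4$ are permuted by $p$-th powering in the way dictated by $p \equiv 2 \bmod 3$.

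This pointwise $3$-torsion check is where sign conventions (the $y^2 = x^3+1$ versus $x^3-1$ issue of Remark~\ref{rmk:5mod6}) matter. I expect it to be the delicate step.
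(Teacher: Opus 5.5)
Your main line is the paper's proof. It uses the same algebra isomorphism, maximality by an explicit discriminant computation, the reduction to $E[6]\subset\ker\psi$ via \cite[Cor.\,III.4.11]{silverman}, and the observation $\psi\in 2\End(E)$ for $E[2]$. For $E[3]$ the paper carries out exactly the pointwise check you end up proposing, on $(0,\pm 1)$ and on $(z^e t,\pm\sqrt{-3})$, $e\in\{0,1,2\}$, with $t\in\F_p$, $t^3=-4$. That check also needs $\sqrt{-3}\notin\F_p$; this is what makes $\pi_p$ act as $-1$, $-\omega$, $-\omega^2$ on the three pairs with $x^3=-4$.

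One small repair in Step~1: $z^2+z+1=0$ does not by itself give $\omega^2+\omega+[1]=0$, because addition in $\End(E)$ is the group law, not addition of coordinates. Argue instead that $\omega^3=[1]\neq\omega$ and $\End(E)$ has no zero divisors. Alternatively, use the paper's observation that the horizontal line through $(zx_0,y_0)$ and $(z^2x_0,y_0)$ meets $E$ again in $(x_0,y_0)$.

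The shortcut you sketch via $\Omega$ has a sign error and fails as stated. Your identity $\psi=\Omega\bigl(1+\pi_p-\Omega(1+\pi_p)\bigr)=\Omega(1-\Omega)(1+\pi_p)$ is correct. But since $\Omega\pi_p=-\pi_p\Omega$, it rewrites as $\psi=(1-\Omega)(1-\pi_p)\Omega$. So the right criterion is that $(1-\Omega)(1-\pi_p)$ kills $\Omega(E[3])$. The operator $(1-\Omega)(1+\pi_p)$ you wrote acts as $[2]$ on the points $(0,\pm1)\in\Omega(E[3])$, so it does not kill them.

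With the sign fixed, the shortcut is a complete proof of the $E[3]$ part, and shorter than the paper's case analysis. Even more directly, your congruence modulo $3$ gives $\psi=\Omega(1+\pi_p)=(1-\pi_p)\Omega$ on $E[3]$. Moreover $\Omega(E[3])\subseteq\ker\hat\Omega=\ker\Omega=\{0,(0,1),(0,-1)\}$, for three reasons:
- $\hat\Omega=-\Omega$;
- $\omega$ fixes the points with $x=0$, so these lie in $E[3]$ and $\Omega=2\omega+[1]$ acts on them as $[3]=0$;
- $\deg\Omega=3$.

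These three points are $\F_p$-rational, so $1-\pi_p$ kills them, and the points with $x^3=-4$ never need to be written down. This also makes Remark~\ref{rmk:5mod6} transparent. For $y^2=x^3-1$ the same argument gives $\ker\Omega=\{0,(0,\pm\sqrt{-1})\}$, on which $\pi_p$ acts as $-1$ when $p\equiv 3\bmod 4$. Hence $\psi$ does not vanish on $E[3]$ there.
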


\begin{proof}
  It is easy to check that the (horizontal) line connecting two points of the form $(zx_0, y_0)$, $(z^2x_0, y_0)$ has $(x_0, y_0)$ as its third point of intersection with $E$. This implies that $\omega^2 + \omega + [1] = 0$, and as a special case we find that $(0, 1)$ and $(0, -1)$ are points of order $3$. 

  An easy calculation shows $\Omega^2 = [-3]$. Since  $\pi_p^2 = [-p]$ and $\pi_p \omega = \omega^2 \pi_p$ so that $\Omega \pi_p = -\pi_p \Omega$, the assignment from the statement of the lemma is seen to define an isomorphism between $B_{p, \infty}$ and $\End^0(E)$. Clearly $(-1 + i)/2 \mapsto \omega$, so it suffices to show that 
  \begin{equation} \label{eq:divby6} 
    [3] + \Omega + [3]\pi_p + \Omega \pi_p = [4] + [2]\omega + [4] \pi_p + [2] \omega \pi_p 
  \end{equation}
  annihilates $E[6]$: then the left-hand side of~\eqref{eq:order5mod6} lands inside $\End(E)$, and this left-hand side is a maximal order, as can be checked explicitly.
  
  It obviously annihilates $E[2]$, so it suffices to concentrate on $E[3]$.
  An analysis of the $3$-division polynomial $3x(x^3 + 4)$ of $E$ shows that, besides $(0, \pm 1)$, the points of order $3$ are
  \[ (t, \pm \sqrt{-3}), \ (z t, \pm \sqrt{-3}), \ (z^2 t, \pm \sqrt{-3}) \]
  with $t = (-4)^{-(p-2)/3} \in \F_p$. We now do a case-by-case analysis of how $\pi_p$ acts on these points, keeping in mind that they are of order $3$:
  \begin{itemize}
      \item on $(0, \pm 1)$ it acts as the identity, so expression~\eqref{eq:divby6} acts as $\omega - [1]$ which clearly kills these points,
      \item on $(t, \pm \sqrt{-3})$ it acts by negation, easily seen to make expression~\eqref{eq:divby6} vanish,
      \item on $(zt, \pm \sqrt{-3})$ it acts as $-\omega$, again making expression~\eqref{eq:divby6} vanish after reduction modulo $[3]$,
      \item similarly, on $(z^2t, \pm \sqrt{-3})$ it acts as $-\omega^2$, which similarly makes expression~\eqref{eq:divby6} vanish modulo $[3]$.
  \end{itemize}
This concludes the proof.
\end{proof}

\begin{remark} \label{rmk:5mod6}
In contrast with what is claimed in~\cite[Prop.\,3.2]{mcmurdy} the statement fails for $E : y^2 = x^3 - 1$ as soon as $p \equiv 3 \bmod 4$. In this case $3$ is a square in $\F_p$ and one finds that the points $(t, \pm \sqrt{3}) \in E[3]$ with $t = 4^{-(p-2)/3} \in \F_p$ are fixed by Frobenius. Expression~\eqref{eq:divby6} acts on these points as $\omega - [1]$ which fails to annihilate them.
\end{remark}

\begin{lemma} \label{lem:endring5mod8}
  Let $p \equiv 5 \bmod 8$ be a prime number and consider the quaternion algebra
  \[ B_{p, \infty} = \left( \frac{-2, -p}{\Q} \right) \]
  ramified at $p$ and $\infty$,
  with the usual generators $1, i, j, k$ (i.e., we have $i^2 = -2$, $j^2 = -p$ and $k = ij = -ji$) over $\Q$. Fix a square root $z \in \F_{p^2}$ of $-2$. The elliptic curve $E : y^2 = x^3 + 12x^2 + 18x$ over $\overline{\F}_p$ with $j$-invariant $8000$ admits the endomorphism
  \[ \theta : (x, y) \mapsto \left( \frac{-y^2}{2x^2}, \frac{z}{4} y (1 - \frac{18}{x^2}) \right) \]
  satisfying $\theta^2 = [-2]$ and
  \begin{equation} \label{eq:iso5mod8} 
    1 \mapsto [1], \quad i \mapsto \theta, \quad j \mapsto \pi_p, \quad k \mapsto \theta \pi_p
  \end{equation}
  defines a ring isomorphism
  \begin{equation} \label{eq:ring5mod8} 
    \Z\left\langle 1, i, \frac{1 + i + j}{2}, \frac{i + 2j \pm k}{4} \right\rangle \to \End(E)
  \end{equation}
  where the sign $\pm$ should be read as $+$ if $p \equiv 5, 13 \bmod 48$ and as $-$ if $p \equiv 29, 37 \bmod 48$, and $\pi_p$ denotes the $p$-th power Frobenius endomorphism.
\end{lemma}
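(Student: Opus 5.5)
The plan follows the template of the proofs of Lemmas~\ref{lem:endring3mod4} and~\ref{lem:endring5mod6}, in three steps.

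\textbf{Step 1: $\theta$ is an endomorphism with $\theta^2 = [-2]$.} The point $(0,0)$ is $2$-torsion, and $E$ has CM by $\Z[\sqrt{-2}]$ since $j = 8000$. First I would check that $\theta$ maps $E$ to $E$ and fixes $0$, so that it is a homomorphism. Concretely, $\theta$ is the composition of the $2$-isogeny with kernel $\langle (0,0)\rangle$, given by V\'elu's formula $x \mapsto y^2/x^2$ (up to scaling), with an isomorphism from the codomain back onto $E$ that involves $z$. Then I would compute $\theta\circ\theta$ directly on coordinates. The $x$-coordinate should collapse to that of $[2]$, and the sign is fixed by $z^2 = -2$, giving $\theta^2 = [-2]$.

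\textbf{Step 2: the assignment~\eqref{eq:iso5mod8} is an algebra isomorphism.} Since $p \equiv 5 \bmod 8$, we have $\left(\frac{-2}{p}\right) = -1$, so $z \notin \F_p$ and $z^p = -z$. The coefficients of $E$ and of the rational functions defining $\theta$, other than $z$, lie in $\F_p$. Hence $\pi_p\theta = \theta'\pi_p$, where $\theta'$ is $\theta$ with $z$ replaced by $-z$, that is $\theta' = -\theta$. This gives $\theta\pi_p = -\pi_p\theta$, and together with $\pi_p^2 = [-p]$ the map is a homomorphism $B_{p,\infty}\to \End^0(E)$, hence an isomorphism by dimension count.

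\textbf{Step 3: integrality and maximality.} A direct computation shows that the displayed $\Z$-module is closed under multiplication and has reduced discriminant $p$, so it is a maximal order. It therefore remains to show that the images of $(1+i+j)/2$ and $(i+2j\pm k)/4$ are endomorphisms. By~\cite[Cor.\,III.4.11]{silverman}, this amounts to checking $E[2]\subset\ker([1]+\theta+\pi_p)$ and $E[4]\subset\ker(\theta+[2]\pi_p\pm\theta\pi_p)$.

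The $2$-torsion check is easy. The $2$-torsion points are $(0,0)$ and $(x_\pm,0)$ with $x_\pm = -6\pm 3\sqrt2$. Here $\sqrt2\notin\F_p$, so Frobenius swaps $(x_+,0)$ and $(x_-,0)$, and one tracks $\theta$ on each point.

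The main obstacle is the $4$-torsion. I would note that $\theta$ kills $(0,0)$ and acts on $E[4]\cong(\Z/4)^2$ as a matrix $T$ with $T^2=-2$. I would write $\pi_p$ on $E[4]$ as a matrix $F$ with $F^2 = -p = -5 \equiv 3 \bmod 4$ and $TF = -FT$. The sign condition mod $48$ enters here: it encodes $p \bmod 16$ together with $p \bmod 3$. The dependence on $p \bmod 16$ decides whether certain $4$-torsion points, with coordinates involving $\sqrt[4]{2}$ or $\sqrt{-1}$, are $\F_p$-rational. The mod-$3$ part should be harmless and is likely only an artifact of how the classes were listed.

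To settle it, I would pick explicit $4$-torsion points. These come from halving $(0,0)$, i.e.\ solving $x^2 = 18$ on the doubling relation, which gives $x = \pm 3\sqrt2$. I would compute the action of $\pi_p$ on them via Euler's criterion for $\sqrt2$ and the relevant square roots. Then I verify that exactly one choice of sign annihilates $E[4]$. This also explains the typo in~\cite[Lem.\,3]{KLPT}, since the wrong sign fails on one of these points.
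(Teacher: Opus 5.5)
Your Steps 1 and 2, the maximality claim, the reduction via \cite[Cor.\,III.4.11]{silverman} to $E[2]\subset\ker([1]+\theta+\pi_p)$ and $E[4]\subset\ker(\theta+[2]\pi_p\pm\theta\pi_p)$, and the $E[2]$ check all coincide with the paper's proof. The gap is the $E[4]$ step, which is the only place where the sign is decided, and your plan for it would fail as stated.

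The points you propose to test, the halves of $(0,0)$ with $x=\pm3\sqrt2$, cannot distinguish the two signs. The two candidate endomorphisms differ by $[2]\theta\pi_p$. For every $P$ with $2P\in\langle(0,0)\rangle$ one has $[2]\theta\pi_p(P)=\theta\pi_p(2P)=0$, since $\pi_p$ fixes $(0,0)$ and $\theta$ kills it. Indeed both signs annihilate these points. Write the candidate as $[2]\pi_p+(1\mp\pi_p)\theta$. The first term kills them, and $\theta$ sends them into $\{T_0,T_1\}$ with $T_{0,1}=(-6\pm3\sqrt2,0)$, after which $1\mp\pi_p$ gives $T_0+T_1=(0,0)$. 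Together with $E[2]$, these points only generate an index-$2$ subgroup of $E[4]$.

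Your abstract relations $T^2=-2$, $F^2\equiv-1$, $TF=-FT$ cannot settle the matter either. The substitution $F\mapsto-F$ preserves them and swaps the two signs modulo $4$. The sign is only visible on a half $H$ of $T_0$ (or $T_1$). There $\theta(H)=(-3\sqrt2,y_0)$ is a half of $(0,0)$ with $y_0^2=108(2-\sqrt2)$. The addition law gives the $x$-coordinate of $(1\mp\pi_p)\theta(H)$ as $-6\pm3\bigl(\tfrac3p\bigr)(2-\sqrt2)^{(p+1)/2}$. You must show this equals $-6-3\sqrt2$, the $x$-coordinate of $T_1=[2]\pi_p(H)$.

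This also shows that your expectations about where the arithmetic enters are wrong. The dependence on $p\bmod 3$ is not an artifact. The statement itself assigns opposite signs to $p\equiv5$ and $p\equiv37\pmod{48}$ (and to $13$ and $29$), which agree modulo $16$. The factor $\bigl(\tfrac3p\bigr)$ enters as $108^{(p-1)/2}$ when you compute $y_0^{p+1}=(y_0^2)^{(p+1)/2}$.

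The dependence on $p\bmod 16$ does not come from $\F_p$-rationality of $4$-torsion points. No point of order $4$ is even defined over $\F_{p^2}$, since $\pi_{p^2}=[-p]$ acts as $-1$ on $E[4]$. It comes instead from the congruence $(2+\sqrt2)^{(p+1)/2}\equiv\sqrt2\pmod p$, which for $p$ inert in $\Q(\sqrt2)$ holds exactly when $p\equiv3,13\pmod{16}$. The paper proves this by writing $2+\sqrt2=\zeta^{-1}(1+\zeta)^2$ with $\zeta$ a primitive $8$th root of unity. Combining the two characters gives precisely the mod-$48$ rule.

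Finally, the error in \cite[Lem.\,3]{KLPT} is not a wrong sign. It is the third basis element, written there as $(1+j+k)/2$ instead of $(1+i+j)/2$.
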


\begin{proof}
  In the proof all occurrences of $\pm$ should be read as $+$ if $p \equiv 5, 13 \bmod 48$ and as $-$ if $p \equiv 29, 37 \bmod 48$, and conversely for $\mp$. Throughout we fix a square root $\i \in \F_p$ of $-1$.
  
  It is easy to check that $\theta$ is a well-defined endomorphism of degree $2$ satisfying $\theta^2 = [-2]$.
  It is also easy to see that $\pi_p^2 = [-p]$ and $\theta \pi_p = -\pi_p \theta$, showing that~\eqref{eq:iso5mod8} defines an isomorphism between $B_{p, \infty}$ and $\End^0(E)$. It thus suffices to show that 
  \[ E[2] \subset \ker([1] + \theta + \pi_p), \qquad E[4] \subset \ker(\theta + [2]\pi_p \pm \theta \pi_p), \] 
  because this implies that the left-hand side of~\eqref{eq:ring5mod8} indeed lands inside $\End(E)$, and this left-hand side is a maximal order, regardless of the sign chosen, as can be checked via an explicit discriminant calculation. 

  We do this in an artisanal way.
  First, write $O = (0, 0)$, $T_0 = (-6 + 3\i z, 0)$, $T_1 = (-6 -3\i z, 0)$
  for the points of order $2$ on $E$. Using
  \[ \theta(O) = 0, \quad \theta(T_0) = \theta(T_1) = O \]
  and
  \[ \pi_p(O) = O, \quad \pi_p(T_\ell) = T_{1-\ell} \text{ for $\ell=0, 1$} \]
  one indeed checks that $[1] + \theta + \pi_p$ annihilates $E[2]$ as wanted. 
  
  The remainder of the proof is a bit more tedious. By explicit calculation, the points of order $4$ are seen to be groupable as follows:
  \begin{itemize}
      \item the halves of $O$, which have $x$-coordinates $3 \i z$, $-3 \i z$,
      \item the halves of $T_0$, with $x$-coordinates 
      \[ -6 + 3\i z + 6\sqrt{1 - \i z}, \ -6 + 3\i z - 6\sqrt{1 - \i z}, \]
      \item the halves of $T_1$, with $x$-coordinates 
      \[ -6 - 3\i z + 6\sqrt{1 + \i z}, \ -6 - 3\i z - 6\sqrt{1 + \i z}. \]
  \end{itemize}
  These $x$-coordinates all live in $\F_{p^2}$, i.e., $1 + \i z$ and $1 - \i z$ are squares in $\F_{p^2}$. This follows, e.g., from $\pi_{p^2} = [-p]$, implying that for any point $P \in E[4]$ it holds that $\pi_{p^2}(P) = [-p] P = - P$ where we have used $p \equiv 1 \bmod 4$. For the same reason none of the corresponding $y$-coordinates is defined over $\F_{p^2}$, i.e., the points of order $4$ all have field of definition $\F_{p^4}$.

  It is immediate to see how $[2] \pi_p = \pi_p [2]$ acts on these halves: the respective results are $O$, $T_1$, $T_0$, independent of which half was chosen. Thus it suffices to show that
  \[ \theta (1 \pm \pi_p) = (1 \mp \pi_p) \theta \]
  also maps the halves of $O$ to $O$, the halves of $T_0$ to $T_1$ and the halves of $T_1$ to $T_0$, with the sign depending on $p \bmod 48$ as in the statement of the lemma.

  Let us first concentrate on the halves of $O$. Since $O \in \ker \theta$ and since $\theta^2 = [-2]$, these halves must be mapped inside $\{T_0, T_1\}$ under $\theta$. But this means that $(1 \pm \pi_p) \theta$ maps every half of $O$ to $T_0 + T_1 = O$ as wanted.
  
  Now consider a half of $T_\ell$ for $\ell \in \{0, 1\}$. Since $\theta(T_\ell) = O$ this half must be mapped to a half of $O$ under $\theta$.  
  It follows that $(1 \mp \pi_p)\theta$ maps the half of $T_\ell$ to a sum of two halves of $O$ with different $x$-coordinates. It is clear that this is indeed a point of order $2$ different from $O$, hence an element of $\{ T_0, T_1 \}$, but we need to show that it concerns $T_{1-\ell}$. 
  
  We concentrate on a half $H$ of $T_0$ (the other case is analogous). An explicit calculation shows that 
  \[ \theta(H) = (-3\i z, y_0) \]
  for some $y$-coordinate $y_0 \in \F_{p^4}$. Then
  \[ \pi_p \theta(H) = (3\i z, y_0^p) \]
  and the usual addition formula shows that the $x$-coordinate of $(1 \mp \pi_p) \theta(H)$ is given by
  \[ \left( \frac{\mp y_0^p - y_0}{6 \i z} \right)^2 - 12 = \frac{1}{72}(y_0^p \pm y_0)^2 - 12 =  \frac{1}{72}( (y_0^2)^p \pm 2 y_0^{p+1} + y_0^2) -12. \]
  Since $y_0^2 = (-3\i z)^3 + 12(-3\i z)^2 + 18(-3\i z) = 108 (2 - \i z )$ this equals
  \[ -6 \pm 3 \cdot 108^{(p-1)/2}  (2 - \i z)^{(p+1)/2} = -6 \pm 3 \left(\frac{3}{p} \right) (2 - \i z)^{(p+1)/2},  \]
  with $(\frac{\cdot}{p} )$ the Legendre symbol. So upon setting this equal to the $x$-coordinate of $T_1$, it suffices to show
  \[ \pm \left( \frac{3}{p} \right) (2 - \i z)^{(p+1)/2} =  -\i z. \]
  Note, as a sanity check, that both sides square to $2$, but we have to argue that we have the same square root of $2$ on both sides. For this we rely on the congruence
  \[ (2 + \sqrt{2})^{(p+1)/2} \equiv \sqrt{2} \bmod p \]
  in $\Z[\sqrt{2}]$, which holds if and only if $p \equiv 3, 13 \bmod 16$ assuming $p$ is inert: to prove this, the main trick is to rewrite $2 + \sqrt{2} = \zeta^{-1}(1 + \zeta)^2$ for a primitive $8$th root of unity $\zeta$. So using $p \equiv 5 \bmod 8$ we find that the equality holds with $+$ if and only if $p \equiv 5, 13 \bmod 48$ and with $-$ if and only if $p \equiv 29, 37 \bmod 48$, as wanted.
\end{proof}

\begin{remark} \label{rmk:5mod8}
  The two orders in~\eqref{eq:ring5mod8}  are isomorphic via $j \mapsto -j$, so also if $p \equiv 29, 37 \bmod 48$ then it is possible to give an isomorphism from the maximal order with a $+$ sign, but the isomorphism changes. 
  These orders are essentially taken from~\cite[Lem.\,3]{KLPT} but, there, the third basis element reads $(1 + j + k)/2$ rather than $(1 + i + j)/2$. This is an error: for the sign $-$ the resulting order is not maximal but of index $3$ (for the sign $+$ it is).
\end{remark}

\section{R2: three families of $\F_p$-rational orientations} \label{app:R2families}

In this section we study families of oriented PPASs defined over $\F_p$, where the orientations are such that the $p$-th power Frobenius endomorphism  is among the orienting endomorphisms; in this context, we will denote this Frobenius endomorphism by $\Pi_p$ to avoid confusion with $p$-th power Frobenius maps between elliptic curves, for which we keep writing $\pi_p$. We have already encountered R2-D2 in Example~\ref{ex:R2D2}. The other orientations are called R2-C4 and R2-D4. Besides paying tribute to a series of astromech droids, these names make reference to the Galois group (of the Galois closure) of the underlying CM field $K$, which is either isomorphic to $D_2 \cong C_2 \times C_2$, $C_4$ or $D_4$ (indeed, $A_4$ and $S_4$ are impossible in the CM case). The R2 stands for ``rational and $2$-dimensional".\footnote{We invite the reader to complete our series with a key exchange protocol for which the name C-3PO would be well-suited.} Our main motivation for including $\Pi_p$ in the ring of orienting endomorphisms is that its representation comes ``for free", i.e., we do not need to carry around corresponding interpolation data; see Section~\ref{subsec:representation}.
This comes with important gains, 
similar to why, in the elliptic curve case, CSIDH stands out in terms of efficiency. 

For each of the  families, the starting surface is the Weil restriction
\[ A = W_{\F_{p^2} / \F_p}(E) \cong_{\bar \F_p} E \times E^{(p)} \]
of a well-chosen supersingular elliptic curve $E / \F_{p^2}$, equipped with the product polarization $\lambda$. Recall: the $p$-th power Frobenius endomorphism on $A$ is given by
\[ \Pi_p = \begin{pmatrix} 0 & \pi_p \\ \pi_p & 0 \end{pmatrix} \]
with $\pi_p : E \to E^{(p)}$ the Frobenius isogeny. 
In R2-D2 we use a curve $E$ for which $\pi_{p^2} = -p$, from which it follows that $\Pi_p^2 = -p$. In particular this requires that our orienting order $R$ contains $\sqrt{-p}$. For this reason, 
for R2-C4 and R2-D4 one must proceed differently: indeed, $C_4$-fields and $D_4$-fields that are CM cannot contain any imaginary quadratic subfields. Instead we choose $E / \F_{p^2}$ such that $\pi_{p^2} = p$, so that $\Pi_p^2 = p$.
To our knowledge, the use of PPAS over $\F_p$ on which the $p$-th power Frobenius endomorphism does not act as $\sqrt{-p}$ has not been considered in a cryptographic context before, so these constructions may be found interesting for independent reasons. 

\begin{remark} \label{rmk:props_of_Fp_orientations}
Since $\Pi_p$ commutes with all other orienting endomorphisms, all oriented PPAS in the orbit of $(A, \lambda, \iota)$ under the action of $\frakC(R)$ can again be assumed to be defined over $\F_p$.
For use in Appendix~\ref{app:compress}, let us also note that the assumptions $\pi_{p^2} = \pm p$ give rise, respectively, to isomorphisms
  \[ A(\F_p) \cong E(\F_{p^2}) \cong \left( \frac{\Z}{(p \mp 1)\Z} \right)^2 \hspace{-0.1cm}, \  A(\F_{p^2}) \cong E(\F_{p^2}) \times E(\F_{p^2}) \cong  \left( \frac{\Z}{(p \mp 1)\Z} \right)^4  \]
  of abelian groups.
\end{remark}

More precisely, we consider the following method for generating Frobenius-oriented PPASs over $\F_p$, which produces all possible Galois groups.
As we have mentioned, we must start from a supersingular elliptic curve $E / \F_{p^2}$ on which $\pi_p^2 = p$, with known endomorphism ring, and let $(A, \lambda)$ be its Weil restriction. 
Our strategy is to let
\[ K = \Q \left( \sqrt{a + b\sqrt{p}} \right) \] 
for $a, b \in \Z$ such that $a + |b| \sqrt{p} < 0$ (so that it concerns a quartic CM field) that will be determined later. We
look for an orientation of the form
\[ \iota : \sqrt{p} \mapsto \Pi_p = \begin{pmatrix} 0 & \pi_p \\ \pi_p & 0 \end{pmatrix},  \sqrt{a + b\sqrt{p}} \mapsto \begin{pmatrix} \alpha & \beta \pi_p \\ \pi_p \beta & \frac{1}{p}\pi_p \alpha \pi_p \end{pmatrix} \]
for suitable 
\[ \alpha \in \End(E), \ \beta \in \frac{1}{p} \Hom(E^{(p)}, E) \pi_p  \supsetneq \End(E) \] of trace $\tr(\alpha) = \tr(\beta) = 0$.
An easy calculation shows that 
the right-most matrix commutes with $\Pi_p$ and is negated when taking the conjugate-transpose. Let us compute
\begin{align*} 
 \begin{pmatrix} \alpha & \beta \pi_p \\ \pi_p \beta &  \frac{1}{p}\pi_p \alpha \pi_p \end{pmatrix}^2 & =  \begin{pmatrix} p \beta^2  + \alpha^2 & 0 \\ 0 & \pi_p \beta^2 \pi_p + \frac{1}{p}\pi_p \alpha^2 \pi_p \end{pmatrix} + \begin{pmatrix} 0 & ( \alpha \beta + \beta \alpha) \pi_p  \\ \pi_p ( \alpha \beta + \beta \alpha)  & 0 \end{pmatrix}  \\ 
& = (-p \nrm(\beta) -\nrm(\alpha)) \iota(1) +  \tr(\alpha \beta) \iota(\sqrt{p})
\end{align*}
where we have used that $\tr(\beta) = \tr(\alpha) = 0$.  Thus for $a = - p \nrm(\beta) - \nrm(\alpha)$ and $b = \tr(\alpha \beta)$ the map $\iota : K \to \End^0(A)$ is well-defined.
Our key tool to control the Galois group of $K$ is the following lemma:

\begin{lemma} \label{lem:conradlemma}
  Under the above assumptions on $a, b$, the Galois group (of the Galois closure) of $K$ is isomorphic to
  \begin{enumerate}
      \item $D_2 \cong C_2 \times C_2$ if $a^2 - b^2p$ is a square,
      \item $C_4$ if $a^2 - b^2p$ is not a square but $p(a^2 - pb^2)$ is,
      \item $D_4$ if neither $a^2 - pb^2$ nor $p(a^2 - pb^2)$ is a square.
  \end{enumerate}
In particular, if $p \equiv 3 \bmod 4$ then $K$ is either a $C_2 \times C_2$-field or a $D_4$-field.
\end{lemma}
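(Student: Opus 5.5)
This is the classical criterion for biquadratic fields $K = \Q(\sqrt{a + b\sqrt{c}})$, specialized to $c = p$; see e.g.\ the treatment in~\cite{conrad_cubicquartic}. I would reprove it directly.

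\emph{Setup.} Put $\theta = \sqrt{a + b\sqrt{p}}$, which generates $K$; note $b \neq 0$ because $K$ is quartic. The minimal polynomial of $\theta$ is $f(x) = x^4 - 2ax^2 + (a^2 - b^2 p)$, with roots $\pm\theta, \pm\theta'$, where $\theta' = \sqrt{a - b\sqrt{p}}$. Then $\theta\theta' = \pm\sqrt{a^2 - b^2p}$. Write $L = K(\theta')$ for the Galois closure of $K$ and $G = \Gal(L/\Q)$. Since $\deg f = 4$, $G$ embeds in $D_4$ as the stabilizer of the pairing $\{\pm\theta\},\{\pm\theta'\}$, so $G \in \{C_2\times C_2, C_4, D_4\}$. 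The case distinction is then decided by whether $[L:K] = 1$ or $2$, and, when $L = K$, by whether $G$ is cyclic.

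\emph{Case $a^2 - b^2p$ a square.} Then $\theta' = \pm\sqrt{a^2-b^2p}/\theta \in K$, so $K$ is Galois. Its automorphisms are $\theta \mapsto \pm\theta, \pm\theta'$. The map $\sigma : \theta\mapsto\theta'$ sends $\theta' = r/\theta$ (with $r \in \Q$) to $r/\theta' = \theta$, so $\sigma^2 = \id$. All elements therefore have order at most $2$, giving $G \cong C_2\times C_2$.

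\emph{Case $a^2 - b^2p$ not a square.} Then $\theta\theta' \notin \Q$. The three quadratic subextensions of $L$ containing $\Q(\sqrt p)$-data are $\Q(\sqrt p)$, $\Q(\sqrt{a^2-b^2p})$ and $\Q(\sqrt{p(a^2-b^2p)})$. The cubic resolvent of $f$ has roots $2a$ and $a\pm\sqrt{a^2-b^2p}$, so its discriminant equals $\disc f$ up to squares, namely $p(a^2-b^2p)$ up to squares.

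If $p(a^2-b^2p)$ is a square, then $\sqrt{a^2-b^2p} \in \Q(\sqrt p) \subset K$, so $\theta' \in K$ and $K$ is Galois. Now $\sigma:\theta\mapsto\theta'$ sends $\sqrt p = (\theta^2-a)/b$ to $(\theta'^2-a)/b = -\sqrt p$. Writing $\theta\theta' = s\sqrt p$ with $s \in \Q$, we get $\sigma(\theta') = \sigma(s\sqrt p/\theta) = -s\sqrt p/\theta' = -\theta$. Hence $\sigma$ has order $4$ and $G \cong C_4$.

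If neither quantity is a square, then $\sqrt{a^2-b^2p} \notin \Q(\sqrt p)$. Since $\Q(\sqrt p)$ is the unique quadratic subfield of $K$ (as $\theta\notin\Q(\sqrt p)$, and any second quadratic subfield would make $K$ biquadratic and Galois, handled above), we get $\sqrt{a^2-b^2p}\notin K$, hence $\theta'\notin K$. So $[L:\Q] = 8$ and $G \cong D_4$.

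The main care point is the claim that $\Q(\sqrt p)$ is the only quadratic subfield of $K$. If a subtler argument is needed, I would instead use the standard fact that a quartic field is Galois if and only if $G$ has order $4$, with $G$ read off from the cubic resolvent: $C_2 \times C_2$ exactly when the resolvent splits completely over $\Q$, which happens iff $a^2-b^2p$ is a square.

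\emph{Final claim.} If $p \equiv 3 \bmod 4$, then $C_4$ requires $p(a^2-b^2p) = t^2$. This forces $p \mid a$; writing $a = pa'$ gives $p(pa'^2 - b^2) = (t/p)^2 \cdot p$, i.e.\ $pa'^2 - b^2 = u^2$. So $b^2 + u^2 \equiv 0 \bmod p$. Because $-1$ is a non-residue mod $p$, this forces $p \mid b$ and $p \mid u$, and infinite descent (with a $p$-adic valuation count) gives a contradiction unless $b = 0$, which is excluded. Hence only $C_2\times C_2$ and $D_4$ occur.
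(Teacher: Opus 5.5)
Your proof is correct in substance, but it is more self-contained than the paper's. The paper does not reprove the classification. It cites \cite[Lem.\,4.7]{conrad_cubicquartic}: for the minimal polynomial $x^4+cx^2+d$ one has $c=-2a$, $d=a^2-b^2p$ and $c^2-4d=4b^2p$, so the criteria ``$d$ is a square'' and ``$d(c^2-4d)$ is a square'' become the two conditions in the lemma. The paper then proves only the final claim, by the same infinite descent as yours. The difference is that it argues mod $4$: from $pa'^2=b^2+k^2$ with $p\equiv 3 \bmod 4$, all of $a',b,k$ must be even, and one divides by $4$. You argue mod $p$, using that $-1$ is a non-residue; both work. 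Your route gives explicit automorphisms, such as the order-$4$ generator $\sigma$ with $\sigma(\sqrt p)=-\sqrt p$. The cost is that the $D_4$ case needs the converse of your first case, and in the citation that converse comes from the resolvent-cubic machinery.

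Two points need tidying.

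First, your discriminant remark is wrong. One has $\disc f=256b^4p^2(a^2-b^2p)$, matching \eqref{eq:discD4}, which is $a^2-b^2p$ up to squares, not $p(a^2-b^2p)$. The standard resolvent roots are $-2a,\pm 2\sqrt{a^2-b^2p}$, and even your stated roots give $a^2-b^2p$ up to squares. You never use this remark, so simply delete it.

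Second, ``handled above'' in the $D_4$ case is circular as written. Your first case shows that $a^2-b^2p$ being a square implies $K$ is biquadratic. What you need is the converse. Your resolvent fallback does supply it, via the standard criterion that $G=C_2\times C_2$ exactly when the resolvent splits over $\Q$. There is also a direct two-line argument. Suppose $K$ contains a quadratic field $\Q(\sqrt m)\neq\Q(\sqrt p)$; then $K=\Q(\sqrt p)(\sqrt m)$. Write $\theta=u+v\sqrt m$ with $u,v\in\Q(\sqrt p)$. The condition $\theta^2\in\Q(\sqrt p)$ forces $uv=0$, and $v\neq 0$ because $\theta\notin\Q(\sqrt p)$. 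Hence $a+b\sqrt p=mv^2$, and taking norms to $\Q$ gives $a^2-b^2p=(m\,N(v))^2$, a rational square. This closes the gap cleanly.
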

\begin{proof} The classification can be found in~\cite[Lem.\,4.7]{conrad_cubicquartic}. As for the last claim, the classification shows that $\Gal(K , \Q) \cong C_4$ if and only if $a^2 - b^2p = pk^2$ for some $k \in \Z$. In particular $a = pa'$ for some integer $a'$ such that $p a'^2 = b^2 + k^2$. Since $p \equiv 3 \bmod 4$ this is only possible if $a', b, k$ are all even. But then we can take out a factor $4$ and the argument repeats, leading to a contradiction by infinite descent. \qed
\end{proof}

In the concrete orientations below, the curve $E$ will be constructed as a twist of some well-known supersingular elliptic curve $E_0 : y^2 = f(x)$ defined over $\F_p$:
\[ E : v y^2 = f(x), \qquad v \in \F_{p^2} \text{ non-square}.\]
Note that $\pi_{p^2} = -p$ on $E_0$ (assuming $p > 3$) and therefore we indeed have $\pi_{p^2} = p$ on $E$.
The twisting isomorphisms
\[ \tau : E \to E_0 : (x, y) \mapsto (x, u y), \qquad \tau^{(p)} : E^{(p)} \to E_0 : (x, y) \mapsto (x, u^p y),  \]
where $u \in \F_{p^4} \setminus \F_{p^2}$ satisfies $u^2 = v$, combine (component-wise) into an isomorphism of PPAVs
\[ T : W_{\F_{p^2} / \F_p}(E) \to E_0^2 \] 
(for the product polarizations). Using $\pi_p \tau = \tau^{(p)} \pi_p$ and $\pi_p \tau^{(p)} = - \tau \pi_p$ one checks
\[ T \circ \Pi_p \circ T^{-1} = \begin{pmatrix} 0 & -\pi_p \\
\pi_p & 0 \end{pmatrix}, \quad   T \circ \begin{pmatrix}  \alpha & \beta \pi_p \\ \pi_p \beta & \frac{1}{p}\pi_p \alpha \pi_p \end{pmatrix}   \circ T^{-1} = \begin{pmatrix} \tilde{\alpha} & -\tilde{\beta} \pi_p \\ \pi_p \tilde{\beta} & -\frac{1}{p}\pi_p \tilde{\alpha} \pi_p \end{pmatrix},  \]
where the (quasi-)endomorphisms $\tilde{\alpha} = \tau \alpha \tau^{-1}$ and $\tilde{\beta} = \tau \beta \tau^{-1}$ denote the ``lollipops" of $\alpha, \beta$.
Thus, one can also think of $\iota$ as an orientation of $(E_0^2, \lambda_0)$, where for any
\[ \tilde{\alpha} \in \End(E_0), \quad \tilde{\beta} \in  \End(E_0)  \frac{1}{\pi_p} = \frac{1}{\pi_p} \End(E_0)  \]
with trace $0$, we obtain an orientation by 
\begin{equation} \label{eq:orientwithZ}
\Z[\sqrt{a + b\sqrt{p}}]
\end{equation} with $a = -p \nrm(\tilde{\beta}) - \nrm(\tilde{\alpha})$ and $b = \tr(\tilde{\alpha} \tilde{\beta})$; the advantage of this viewpoint is that it allows us to work directly in $\End(E_0)$, but it becomes  less apparent that the orienting endomorphisms on the Weil restriction are $\F_p$-rational. 

\begin{remark}
  This viewpoint also makes for a clearer link with the machinery from Section~\ref{sec:constructing_orientations}: applying Lemma~\ref{lem: cent} to $g = \Id_2$ and the symmetric element
  \[ P = \begin{pmatrix} 0 & -\pi_p \\ \pi_p & 0 \end{pmatrix} \in M_2(\End(E_0)), \]
  we are looking for quadratic subrings of $\Q(P) \otimes_{\Q} \End(E_0)$ of the form~\eqref{eq:orientwithZ}.
\end{remark}

\begin{example}[R2-D4] \label{ex:R2D4}
In view of Lemma~\ref{lem:conradlemma}, let us assume $p \equiv 3 \bmod 4$. We start from $E_0 : y^2 = x^3 + x$,
whose endomorphism ring is of course very well known, see Lemma~\ref{lem:endring3mod4}: it is isomorphic to the maximal order
\[ 
  \O_0 = \Z \langle 1, i, \frac{i + j}{2}, \frac{1 + k}{2} \rangle \subset B_{p, \infty} = \left( \frac{-1, -p}{\Q} \right) 
\]
where an isomorphism $\O_0 \to \End(E_0)$ maps $i$ to $(x,y) \mapsto (-x, \i y)$ and $j$ to $p$-th power Frobenius $\pi_p$. 

We proceed as above and look for suitable trace-$0$ elements $\alpha \in \O_0$, $\beta \in  \O_0 \frac{1}{j}$, where we have dropped the $\tilde{\, \cdot \,}$'s to lighten notation.
A $\Z$-basis of the trace-$0$ sublattice of $\O_0$ is given by $i$, $\frac{i + j}{2}$, $k$, so a general trace-$0$ element is
\[
  \alpha = a_1 i + a_2 \frac{i + j}{2} + a_3 k.
\]
For $\beta$, a direct computation of the reduced trace on $\frac{1}{j}\O_0$ shows that $\frac{1}{j}$, $\frac{i}{j}$, $\frac{1 + k}{2j}$ form a $\Z$-basis of its trace-$0$ sublattice, so
\[
  \beta = \frac{1}{j} \Bigl( b_1 + b_2 i + b_3 \frac{1 + k}{2} \Bigr)
\]
for integers $a_1, a_2, a_3, b_1, b_2, b_3$. This yields
\begin{align*}
  a & = -a_1^2 - a_1 a_2 - \frac{p + 1}{4} a_2^2 - p a_3^2 - b_1^2 - b_1 b_3 - \frac{p + 1}{4} b_3^2 - b_2^2, \\
  b & = a_2 b_1 + (a_1 + a_2) b_3 - 2 a_3 b_2.
\end{align*}
For each choice of $a_i, b_i$ such that $a^2 - b^2p$ is a non-square (generically the case) we obtain an orientation of type $D_4$ by Lemma~\ref{lem:conradlemma}. The condition $a + |b| \sqrt{p} < 0$ is automatically satisfied because every quartic number field contained in $M_2(B_{p, \infty})$ is necessarily totally imaginary.

As for the primitivity, 
an easy calculation shows
  \begin{equation} \label{eq:discD4} \disc \Z [\sqrt{ a + b\sqrt{p}}] = 256b^4p^2(a^2 - b^2p). 
  \end{equation}
  From~\cite[Thm.\,1 and Tbl.\,B]{huard_spearman_williams} we see that for $p \equiv 3 \bmod 4$ this equals $\disc K$ as soon as $b = \pm 1$ and $a^2 - p$ is square-free.
  So in these cases the order is maximal and our orientation is automatically primitive. A near-maximal example where $|b| = 4$ will be studied in Appendix~\ref{app:key_exch}.

\end{example}

\begin{example}[R2-C4] \label{ex:R2C4}
  We follow the same recipe to realize $C_4$. 
  Lemma~\ref{lem:conradlemma} tells us that we must move away from characteristics $p \equiv 3 \bmod 4$. We therefore start from the elliptic curve $E_0 : y^2 = x^3 + 12x^2 + 18x$ over $\F_{p^2}$, where $p \equiv 5 \bmod 8$. We refer to Lemma~\ref{lem:endring5mod8} for an explicit description of its endomorphism ring, which  is isomorphic to the maximal order
  \[ \Z\left\langle 1, i, \frac{1 + i + j}{2}, \frac{i + 2j \pm k}{4} \right\rangle \subset B_{p, \infty} = \left( \frac{-2, -p}{\mathbb{Q}} \right), \]
  with the sign depending on whether $p \equiv 5, 13 \bmod 48$ or $p \equiv 29, 37 \bmod 48$. For concreteness, we proceed under the assumption that $p \equiv 5, 13 \bmod 48$, corresponding to the $+$-sign. The trace-$0$ condition imposes
  \begin{align*}
    \alpha & =  a_1 i + a_2(i + j) + a_3 \frac{i + 2j + k}{4},  \\
    \beta & = \frac{1}{j} \left( b_1 + b_2 i + b_3 \frac{1 + i + j}{2} - b_3 \frac{i + 2j + k}{4}  \right)
  \end{align*}
  for integers $a_1, a_2, a_3, b_1, b_2, b_3$, so we end up with expressions
\begin{align*}
  a & =  - 2a_1^2 -(p+2)a_2^2 - 4a_1a_2  - a_1a_3 - (p+1)a_2a_3  \\ & \qquad \qquad \qquad \qquad \qquad \qquad - \frac{3p + 1}{8}a_3^2 - b_1^2 - 2b_2^2 - b_1b_3 - b_2b_3 - \frac{p + 3}{8} b_3^2, \\
  b & = - a_1b_3 + 2a_2b_1 + a_3b_1 - a_3b_2.
\end{align*}
Again this will generically result in orientations of type $D_4$. To end up in the $C_4$ case, we write $p = x^2 + y^2$ as a sum of two squares (possible because $p \equiv 1 \bmod 4$) and look for a non-trivial zero of the quadratic form
\[ x a - p b \in \Z[a_1, a_2, a_3, b_1, b_2, b_3]. \]
Then $0 = (xa - pb)(xa + pb) = (p - y^2)a^2 - p^2b^2 = p(a^2 - pb^2) - a^2y^2$ so that $p(a^2 - pb^2)$ is a square as wanted.

Controlling primitivity for R2-C4 is harder than for R2-D4. Indeed, by construction the factor $a^2 - pb^2$ is a square away from factors $p$, and following~\cite[Thm.\,1 and Tbl.\,C]{huard_spearman_williams} this makes our order non-maximal, typically. An example family of solutions is where $p = 1 + 4m^2$ for some integer $m \equiv 5 \bmod 6$ (so that $p \equiv 5 \bmod 8$ and $p \not \equiv 29, 37 \bmod 48$): we can take
\begin{multline*}
   a_1 = -m+1, a_2 = - 1, a_3 = 2, b_1 = -1, b_2 = m, b_3 = 2 \quad \Rightarrow \quad a = -2p, b = -2 
\end{multline*}
yielding $a^2 - pb^2 = 16 p m^2$. So $\disc K$ is smaller than~\eqref{eq:discD4} by a factor $m^2$, at least.
\end{example}

\begin{remark} \label{rmk:nonempty_orbit_with_conductor_p}
Many of the examples we found satisfy $p \mid a, b$ and consequently $p^3 \mid a^2 - b^2p$ (since $p(a^2 - b^2p)$ has to be a square), in which case one checks that
\[ \gamma = \frac{\sqrt{a + b\sqrt{p}}}{\sqrt{p}}  \]
is integral. If we want our orientation to extend to $\Z[\gamma] \subset \Z_K$ then
\[ \iota(\sqrt{a + b\sqrt{p}}) = \begin{pmatrix} \alpha & \beta \pi_p \\ \pi_p \beta & \frac{1}{p}\pi_p \alpha \pi_p \end{pmatrix} = \begin{pmatrix} 0 & \pi_p \\ \pi_p & 0 \end{pmatrix} \begin{pmatrix} \beta & \frac{1}{p} \alpha \pi_p \\ \frac{1}{p} \pi_p \alpha & \frac{1}{p} \pi_p \beta \pi_p \end{pmatrix} \]
should factor through $\Pi_p$, i.e., the
right-most factor should really be an element of $\End(A)$.
However, this is not always the case. The bottleneck is the bottom-right entry, which is not always free from denominators; recall that we sample $\beta$ from $(1/p)\Hom(E^{(p)}, E) \pi_p$. E.g., again for $p = 1 + 4m^2$ we can consider
\begin{multline*}
   a_1 = -2m^2 - m + 1, a_2 = -2m - 1, a_3 = 4m + 2, \\ b_1 = 2m - 1, b_2 = -2m^2 + 3m, b_3 = -4m + 2
\end{multline*}
(leading to  $a = -2p^2, b = -2p$) and one checks that the coefficient at $k$ of $j\beta j$ equals $b_2 + b_3 / 4 \equiv 2m + 1 \not \equiv 0 \bmod p$. 
So this example shows that $\M_k(R)$ can be non-empty even if $p \mid [\Z_K : R]$: this cannot occur in dimension $g = 1$. 
\end{remark}

\section{Proofs from Section~\ref{sec:evaluating_the_action} (except for Lemma~\ref{lem:almost_iso})}\label{app:proofevalsection}

\subsection{Proof from Section \ref{subsec:genset}}
 \begin{proof}[of Lemma~\ref{lem:possible_splittings}]
   If $K = \Frac(R)$ is Galois (cases $C_2 \times C_2, C_4$) then 
   $\ell R$ necessarily factors into prime ideals of equal degree $4/r$ for $r \in \{1, 2, 4\}$ and the analysis is easy. In the $D_4$-case there is an exception to this:
   it may happen that $\ell R = \frakp_1 \frakp_2 \frakp$ for degree-$1$ primes $\frakp_1, \frakp_2$ and a degree-$2$ prime $\frakp$. But then necessarily $\frakp$ is self-conjugate and this does not lead to a suitable factorization of $\ell R$.
   \qed
 \end{proof}

\subsection{Proofs from Section \ref{sec:Kani_with_CM}}

\begin{proof}[of Theorem~\ref{thm:embedding}]
The product polarization on $A_i^u$ gives that
\[ \iota_{A_i}(\alpha)^\dg = (\iota_{i}(\alpha_{k,j})^\dg)_{1 \leq k, j \leq u} = (\iota_{i}(\bar \alpha_{k,j}))_{1 \leq j, k \leq u} = \iota_{A_i}(\alpha^{*}) \] 
for any $\alpha \in M_u(R)$.
Moreover, $\iota_{A_i}(\alpha)^\dg \circ \iota_{A_i}(\alpha) = \iota_{A_i}(n \Id_u) $ because $\iota_{A_i}: M_u(R) \to \End(A_i^u)$ is an injective ring homomorphism and $\alpha^{*}\alpha = n \Id_u \in M_u(R)$.
Then, $\Phi^{\dg}= \left( \begin{smallmatrix} \varphi^{\dg}\Id_u & -\alpha_{A_1}^{\dg} \\ \alpha_{A_2} & \varphi\, \Id_u \end{smallmatrix} \right)$. 
Using
$\alpha^{*}\alpha=\alpha\alpha^{*}=n\Id_u$,
$\varphi^{\dg} \circ \varphi=\iota_1(d)$, $\varphi \circ \varphi^{\dg}=\iota_2(d)$, the four
blocks of $\Phi^{\dg}\Phi$ are
\[
\begin{aligned}
(1,1)&: \varphi^{\dg}\varphi\,\Id_u+\alpha_{A_1}^{\dg}\alpha_{A_1}=\iota_1(d+n)\Id_u= \iota_1(N) \Id_u,\\
(1,2)&: \varphi^{\dg}\alpha_{A_2}^{\dg}-\alpha_{A_1}^{\dg}\varphi^{\dg}\Id_u
     =0,\\
(2,1)&: \alpha_{A_2}\varphi\,\Id_u-\varphi\,\alpha_{A_1}\Id_u
     =0, \\
(2,2)&: \alpha_{A_2}\alpha_{A_2}^{\dg}+\varphi\varphi^{\dg}\Id_u=\iota_2(n+d)\Id_u=\iota_2(N) \Id_u,
\end{aligned}
\]
where in $(1,2)$ and $(2,1)$ we used that $\varphi$ is oriented, hence $\varphi^{\dg} \circ \iota_2(\alpha)^\dg = \iota_1(\alpha)^\dg \circ \varphi^{\dg}$ and $\varphi \circ \iota_1(\alpha)=\iota_2(\alpha) \circ \varphi$. 
Since $\iota_i(N)=[N]$, it follows that $\Phi^{\dg}\Phi=[N]$ and $\Phi$ is an $N$-isogeny of PPAVs of dimension $2ug$,
of degree $N^{2ug}$.

(i) $\Phi(P,0)=(\varphi\,\Id_u(P),\,-\alpha_{A_1}(P))$ for $P\in A_1^u$; taking
$P=(P_1,0,\dots,0)$ the first coordinate of the image is $\varphi(P_1)$.

(ii) For $P\in A_1^u[N]$,
\begin{multline*}
  \Phi\bigl(-\alpha_{A_1}^{\dg}(P),\,\varphi\,\Id_u (P)\bigr)
  = \\ \bigl(-\varphi\,\alpha_{A_1}^{\dg}(P)+\alpha_{A_2}^{\dg}\varphi (P),\;
         \alpha_{A_1}\alpha_{A_1}^{\dg}(P)+\varphi^{\dg}\varphi (P)\bigr)
  =\bigl(0,\,[N]P\bigr)=0 .
\end{multline*}
The map $P\mapsto(-\alpha_{A_1}^{\dg}(P),\varphi\,\Id_u (P))$ is injective on $A_1^u[N]$: indeed, if both
coordinates vanish, then $P\in A_1^u[nR_0] \cap A_1^u[dR_0]$;
coprimality of $n R_0$ and $d R_0$ then implies that $xn+yd=1$ for suitable $x,y \in R_0$, hence $P=0$. Since $N$ is coprime to the base field characteristic and
$\#A_1^u[N]=N^{2ug}=\deg\Phi$, the image coincides with $\ker\Phi$. 
For the second equality, since 
$nR_0 + NR_0 = nR_0 + d R_0 =R_0$, then $nR_0$ and $NR_0$ are coprime, hence $\alpha_{A_1}$ acts as an automorphism on $A_1^u[N]$: indeed, $A_1[N]$ is a finite group and if $P \in A_1^u[N]$ and $\alpha_{A_1}(P) = 0$, then $P \in A_1^u[nR_0] \cap A_1^u[NR_0]$, and again $P = 0$ by coprimality of $n R_0$ and $N R_0$. 
Then, substituting $P\mapsto\alpha_{A_1}(P)$ and using 
$-\iota_1(n) \Id_u =\iota_1(d) \Id_u$ on $A_1^u[N]$, and $\varphi \circ \iota_1(\alpha)=\iota_2(\alpha) \circ \varphi$, we obtain the stated subgroup. \qed
\end{proof}

\begin{remark}[continuation of Remark~\ref{rmk:meet-in-the-middle}] \label{rmk:app_meet-in-the-middle}
We have that $\Phi=\Psi_2\circ\Psi_1$ with $\Psi_1,\Psi_2$ both $\sqrt N$-isogenies and
$\ker\Psi_1=\sqrt N\ker\Phi$, hence
\[
  \ker\Psi_1=\bigl\{\bigl(-\alpha_{A_1}^{\dg}(P),\ \varphi\,\Id_u(P)\bigr)
  \mid P\in A_1^{u}[\sqrt N]\bigr\}.
\]
The Rosati dual
$\Phi^{\dg}$
is again of the shape of the theorem, up to sign flips in the $\alpha_{A_i}$'s, so the same argument yields
\[
\begin{aligned}
  \ker\Psi_2^{\dg} &=\bigl\{\bigl(\alpha_{A_2}^{\dg}(Q),\, \varphi^{\dg}\Id_u(Q)\bigr) \mid Q\in A_2^{u}[\sqrt N]\bigr\} \\  &=\bigl\{\bigl(\alpha_{A_2}^{\dg}\varphi\,\Id_u(P),\ \iota_1(d)\Id_u(P)\bigr) \mid P\in A_1^{u}[\sqrt N]\bigr\}.
\end{aligned}
\]
One computes
$\Psi_1$ and $\Psi_2^{\dg}$ from their kernels and glues them along an isomorphism between
their codomains; the details of this gluing in the middle are technical, but are discussed in~\cite[App.\,F]{sqisignhd}, for instance. 
\end{remark}

\begin{proof}[of Proposition~\ref{prop:auxiliary-alpha}]
Since $\alpha^{*}=\left(\begin{smallmatrix}X^{*}&-Y^{*}\\Y&X\end{smallmatrix}\right)$, we have
\[
  \alpha^{*}\alpha=\begin{pmatrix}X^{*}X+Y^{*}Y & (YX-XY)^{*}\\[2pt]
  YX-XY & XX^{*}+YY^{*}\end{pmatrix}.
\]
Under the hypotheses of (a) we have $X^{*}X = XX^{*}= x\Id_m$ and $Y^{*}Y = YY^{*}=y\Id_m$, 
so both diagonal blocks equal $(x+y)\Id_m$ and the off-diagonal blocks vanish. 
The conclusion for $\alpha\alpha^{*}$ follows similarly.

\noindent The hypotheses of (b) are exactly the conditions under which $\alpha^*\alpha$ equals $n\,\Id_{2m}$. 

\noindent For (c), the claim follows by induction
on $k$. The case $i=0$ is proven by $\bar \theta_0 \theta_0 = \theta_0 \bar \theta_0 = N_{K/K_0}(\theta_0)$. For $i \geq 1$, a scalar matrix $\theta \Id_m$ satisfies $(\theta \Id_m)^{*}(\theta\Id_m)= (\theta\Id_m)(\theta \Id_m)^{*} =N_{K/K_0}(\theta)\,\Id_m$ and commutes with every matrix, so (a) applies at each step
with $X=\alpha_{i-1}$ and $Y=\theta_i\,\Id_{2^{i-1}}$, adding the term $N_{K/K_0}(\theta_i)$ to $n$.

\noindent For (d), the claim again follows by induction and similarly as for (c), since the case $i=0$ holds by the hypotheses on $\alpha_0$ and the hypotheses of (a) apply at every step $i \geq 1$ with $X = \alpha_{i-1}$, $x = n_1 + \sum_{j=1}^{i-1} N_{K/K_0}(\theta_j')$, and $Y = \theta_i'\Id_{2^{k+i-1}}$. The last statement follows taking $\alpha_0$ as the matrix from \cite[Lemma 4.2]{robert-note}.
\qed
\end{proof}

\section{Proof of Lemma~\ref{lem:almost_iso}} \label{app:almostiso}

This appendix is devoted to a detailed proof of Lemma \ref{lem:almost_iso}. We also prove an easy corollary. Let us first recall the statement of the Lemma.
\AlmostIso*

\begin{corollary} \label{cor:kerbound}
    Assume that $K$ is a $C_2 \times C_2$ CM field with $\Z_K^\times$ containing no roots of unity other than $\pm 1$. Let $R$ be an order in $K$ and $R_i = R \cap K_i$ for $i = 1, 2$. Then,
    \[ |\ker \kappa_R| \leq \frac{C_1 C_2}{2q(K)} \prod_{\ell \ \text{prime}} e(\ell) \]
    where $C_i = f_i \prod_{p \ \text{prime},\ p|f_i} \left( 1 - \left(\tfrac{d_{K_i}}{p}\right)\tfrac{1}{p} \right)$, with $f_i = [\Z_{K_i} : R_i]$ the conductor of $R_i$, and $d_{K_i}$ the discriminant of $K_i$, for $i = 1,2$.
\end{corollary}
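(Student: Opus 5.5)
The plan is to reduce the bound for a general order $R$ to the maximal-order bound of Lemma~\ref{lem:almost_iso}. We compare $\kappa_R$ with $\kappa_{\Z_K}$ through the natural maps $\Cl(R_i) \to \Cl(\Z_{K_i})$ and $\frakC(R) \to \frakC(\Z_K)$, where the latter is $[(\fraka,\mu)] \mapsto [(\fraka \Z_K, \mu)]$. First I check that these maps are well defined: extending ideals sends $P_R$ into $P_{\Z_K}$ and preserves the relation $\fraka\bar\fraka = \mu R$. Next I check that the square
\[ \pi \circ \kappa_R = \kappa_{\Z_K} \circ (\pi_1 \times \pi_2) \]
commutes, where $\pi_i : \Cl(R_i) \to \Cl(\Z_{K_i})$ and $\pi : \frakC(R) \to \frakC(\Z_K)$ are the extension maps. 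For this, note that $(\fraka_1 R)(\fraka_2 R)\Z_K = (\fraka_1\Z_{K_1})(\fraka_2 \Z_{K_2})\Z_K$, and that the norm of an invertible $R_i$-ideal equals the norm of its extension to $\Z_{K_i}$, so the second components agree.

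From the commutative square we get
\[ \ker \kappa_R \subseteq (\pi_1\times\pi_2)^{-1}\bigl(\ker \kappa_{\Z_K}\bigr). \]
Hence
\[ |\ker\kappa_R| \le |\ker \pi_1|\,|\ker\pi_2|\,|\ker\kappa_{\Z_K}|. \]
The last factor is bounded by Lemma~\ref{lem:almost_iso}, which gives $\frac{1}{2q(K)}\prod_\ell e(\ell)$. The hypothesis on roots of unity carries over, since it only concerns $K$.

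It remains to show $|\ker\pi_i| \le C_i$. This follows from the standard exact sequence for an imaginary quadratic order of conductor $f_i$:
\[ 1 \to \Z_{K_i}^\times/R_i^\times \to (\Z_{K_i}/f_i\Z_{K_i})^\times/(\Z/f_i\Z)^\times \to \Cl(R_i) \to \Cl(\Z_{K_i}) \to 1. \]
The middle group has order $f_i\prod_{p\mid f_i}\bigl(1-\bigl(\tfrac{d_{K_i}}{p}\bigr)\tfrac1p\bigr) = C_i$. Therefore $|\ker \pi_i| = C_i/[\Z_{K_i}^\times:R_i^\times] \le C_i$. Combining the three estimates gives the stated bound.

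The main obstacle I expect is the bookkeeping in the commutativity check. Specifically, I must verify that $\fraka_i R$ is an invertible $R$-ideal, which holds because $\fraka_i$ is an invertible $R_i$-ideal. I must also verify that the chosen $\mu$-components coincide exactly in the Shimura class group rather than merely up to a unit, since different $\mu$ can give different classes by Lemma~\ref{lem:different_mus}. Because $\kappa$ always uses the integer norms $N_{K_i/\Q}(\fraka_i)$, and these are preserved under extension to $\Z_{K_i}$ for invertible ideals, the $\mu$-components should agree on the nose.
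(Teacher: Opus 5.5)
Your proposal is correct and follows the paper's proof. Both arguments extend ideals to send $\ker\kappa_R$ into $\ker\kappa_{\Z_K}$, bound the fibres by the kernels of $\Cl(R_i)\to\Cl(\Z_{K_i})$, and then invoke Lemma~\ref{lem:almost_iso}. The only cosmetic difference is that you read off $|\ker\pi_i| = C_i/[\Z_{K_i}^\times:R_i^\times]\le C_i$ directly from the exact sequence, whereas the paper quotes the class number ratio $h(R_i)/h(\Z_{K_i})$ from Cox, implicitly using surjectivity of $\pi_i$ and the roots-of-unity hypothesis to make the unit index equal to $1$.
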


\begin{proof}
If $([\fraka_1],[\fraka_2]) \in \Cl(R_1) \times \Cl(R_2)$ is in the kernel of $\kappa_R$, then \[ ([\fraka_1\Z_{K_1}], [\fraka_2\Z_{K_2}]) \in \Cl(\Z_{K_1}) \times \Cl(\Z_{K_2}) \] is in the kernel of $\kappa_{\Z_K}$. This yields a well-defined group homomorphism $\psi: \ker{\kappa_R} \to \ker{\kappa_{\Z_K}}$ whose kernel consists of the pairs in $\ker\kappa_R$ of classes of invertible ideals of $R_1$, $R_2$ that become principal when extended to $\Z_{K_1}$, $\Z_{K_2}$, respectively.
Hence, 
\[ |\ker \psi| \leq \frac{h(R_1)}{h(\Z_{K_1})}\cdot \frac{h(R_2)}{h(\Z_{K_2})} \] 
where $h$ denotes the class number, and 
\[ \frac{h(R_i)}{h(\Z_{K_i})} = f_i \prod_{\substack{p \text{ prime} \\ p \mid f_i}}\left( 1- \left( \frac{d_{K_i}}{p}\right)\frac{1}{p}\right)\]
using~\cite[Theorem 7.24]{cox2013primes}, and remembering the assumption that the only roots of unity in $\mathbb{Z}_K$ (and hence in $\mathbb{Z}_{K_i}$ and $R_i$) are $\pm 1$.
Together with Lemma \ref{lem:almost_iso}, the inequality $|\ker\kappa_R| \leq |\ker{\psi}| |\ker{\kappa_{\Z_K}}|$ concludes the proof. 
\qed
\end{proof}

To prove the lemma we will make use of the following results from \cite{lemmermeyer_kuroda}.

\begin{lemma}[{\cite[1.3]{lemmermeyer_kuroda}}] \label{lem:j_jhat}
    Let $K$ be a $C_2 \times C_2$ extension of $\Q$ and let $K_0, K_1, K_2$ be the three intermediate quadratic fields. The map
    \[ j: \Cl(\Z_{K_0}) \times \Cl(\Z_{K_1}) \times \Cl(\Z_{K_2}) \to \Cl(K) : ([\fraka_0], [\fraka_1], [\fraka_2]) \mapsto [\fraka_0\Z_K \fraka_1\Z_K \fraka_2 \Z_K] \]
    is a group homomorphism.
    Taking the restriction map 
    $\widehat{j} :\widehat{C} \to \Cl(\Z_K)$ where 
    \begin{multline*} 
    \widehat{C} = \{ ([\fraka_0], [\fraka_1], [\fraka_2]) \in \Cl(\Z_{K_0}) \times \Cl(\Z_{K_1}) \times \Cl(\Z_{K_2}) \mid \\ N_{K_0/\Q}(\fraka_0) N_{K_1/\Q}(\fraka_1) N_{K_2/\Q}(\fraka_2) = 1 \} 
    \end{multline*}
    gives a surjection $\coker{\widehat{j}} \twoheadrightarrow \coker{j}$ and 
    \[ |\coker{\widehat{j}}| = \frac{2^{d-2}  \prod_{\ell \text{ prime}} e(\ell) }{[\Z^\times : H]}, \]
    where $d$ is the number of infinite primes that ramify in $K/\Q$, 
    $e(\ell)$ is the ramification index of a prime $\ell$ in the extension $K/\Q$, 
    and $H \leq \Z^\times$ is the subgroup of integer units that are in the image of the norm map $N_{K/\Q}$. 
\end{lemma}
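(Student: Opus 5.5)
The first two assertions are formal, and the plan is to dispatch them first. Extension of ideals $\fraka_i \mapsto \fraka_i\Z_K$ is multiplicative and sends principal ideals to principal ideals, so each factor $\Cl(\Z_{K_i}) \to \Cl(\Z_K)$ is a well-defined homomorphism. Since $\Cl(\Z_K)$ is abelian, their product $j$ is a homomorphism. Next, $\widehat{C}$ is a subgroup, because the norm condition is multiplicative, and $\widehat{j}=j|_{\widehat C}$. Hence $\im \widehat j \subseteq \im j$, and the identity on $\Cl(\Z_K)$ induces the surjection $\Cl(\Z_K)/\im\widehat j \twoheadrightarrow \Cl(\Z_K)/\im j$.

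The real content is the order formula for $\coker\widehat j$, which is a Kuroda-type class number relation. I would follow the cohomological route. Let $G=\Gal(K/\Q)=\{1,\sigma,\tau,\sigma\tau\}$, where $K_0,K_1,K_2$ are the fixed fields of the three order-$2$ subgroups. The key identity is that for any fractional ideal $\frakA$ of $K$,
\[ N_{K/K_0}(\frakA)\,N_{K/K_1}(\frakA)\,N_{K/K_2}(\frakA) = \frakA^2\cdot N_{K/\Q}(\frakA)\Z_K . \]
This shows that $\Cl(\Z_K)^2$ is, up to the image of the rational norm, contained in $\im \widehat j$, so $\coker \widehat j$ is an elementary abelian $2$-group. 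To count it, I would write down the exact sequences
\[ 1\to \Z_K^\times \to K^\times \to I_K \to \Cl(\Z_K)\to 1 \]
and their analogues for the $K_i$. I would then compare the subgroup of $I_K$ generated by ideals extended from the $K_i$ subject to the norm condition with the full group $I_K$.

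On the ideal side, the quotient of $I_K$ by the extended ideals is governed by ramification. At a prime $\ell$ with ramification index $e(\ell)$, the local ideal group of $K$ modulo ideals coming from the quadratic subfields has order $e(\ell)$. When $e(\ell)=4$ this is because no proper subfield contains the totally ramified prime to the first power, up to the $\Q$-part. This produces the factor $\prod_\ell e(\ell)$. The norm condition cuts out a further quotient by $\Z^\times$-type data. On the principal side, one must compute the index of $\{(\alpha_0,\alpha_1,\alpha_2)\}$ versus $K^\times$ modulo units, through the cohomology $\hat H^0(G,\cdot)$ and $H^1(G,\cdot)$ of $K^\times$ and $\Z_K^\times$. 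This is exactly where the archimedean contribution $2^{d}$ enters: it comes from the signs at the $d$ ramified infinite places, via Hasse's norm theorem for the local-global comparison. The denominator $[\Z^\times:H]$ enters as the failure of $-1$ to be a norm from $K$.

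The main obstacle is this unit and sign bookkeeping. One has to show that the various unit indices, such as Hasse's unit index $[\Z_K^\times : \Z_{K_0}^\times\Z_{K_1}^\times\Z_{K_2}^\times]$, cancel exactly against the Herbrand quotient of $\Z_K^\times$, leaving precisely $2^{d-2}/[\Z^\times:H]$. I would try first to use the Herbrand quotient $h(G,\Z_K^\times)=\prod_{v\mid\infty}\#G_v/\#G=2^{d}/4$, which gives $2^{d-2}$. I would then compute $\hat H^0(G,\Z^\times)=\Z^\times/H$ via norms, getting the correction factor, and verify that no other terms survive by a snake-lemma chase. If this becomes unwieldy, the fallback is simply to cite Lemmermeyer's computation, as the paper does.
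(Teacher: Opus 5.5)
Your treatment of the formal assertions is correct. The map $j$ is a product of the extension homomorphisms $\Cl(\Z_{K_i})\to\Cl(\Z_K)$, $\widehat C$ is a subgroup, and $\im\widehat j\subseteq\im j$ yields the surjection of cokernels. Your identity $N_{K/K_0}(\frakA)N_{K/K_1}(\frakA)N_{K/K_2}(\frakA)=\frakA^2N_{K/\Q}(\frakA)\Z_K$ also correctly gives $\Cl(\Z_K)^2\subseteq\im j$. For the order formula the paper gives no argument at all; it simply quotes Lemmermeyer [1.3], so your fallback is exactly the paper's route.

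The self-contained derivation you sketch, however, rests on a false step. Fix a prime $\ell$ and consider the group of ideals of $K$ above $\ell$. The index in it of the subgroup generated by ideals extended from $K_0,K_1,K_2$ is not $e(\ell)$. Checking the five possible splitting types $(e,f,g)$, it is $2$ when $f(\ell)=1$ and $1$ when $f(\ell)=2$. Three cases show the mismatch:
\begin{itemize}
\item At a completely split $\ell$, every extended prime is a product of two distinct primes of $K$, so a single $\frakP\mid\ell$ is never reached: the index is $2$, although $e(\ell)=1$.
\item At a totally ramified $\ell$, every extended prime is $\frakP^2$: the index is $2$, not $4$.
\item If $e(\ell)=f(\ell)=2$, then $\frakP$ itself is the extension of the prime of a subfield in which $\ell$ ramifies: the index is $1$, not $2$.
\end{itemize}
Since infinitely many primes split completely, the ideal group of $K$ modulo extended ideals is an infinite elementary abelian $2$-group. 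So no finite factor $\prod_\ell e(\ell)$ can be split off on the ``ideal side''; the principal ideals have to be used globally. The group whose local factors really are $e(\ell)$ is the group of $G$-invariant ideals modulo ideals from $\Q$, which is not the object you set up.

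The unit and sign bookkeeping is also misapplied.
\begin{itemize}
\item A Herbrand quotient $h(G,\Z_K^\times)$ with the formula $\prod_{v\mid\infty}\#G_v/\#G$ is only available for cyclic $G$, and $G\cong C_2\times C_2$ is not cyclic.
\item Hasse's norm theorem does not hold for biquadratic $K/\Q$ in general: local norms everywhere need not be global norms.
\item Since $G$ acts trivially on $\Z^\times$, one has $\widehat{H}^0(G,\Z^\times)=\Z^\times/(\Z^\times)^4=\{\pm1\}$, independently of $H$. The group $\Z^\times/H$ is instead the image of $\Z^\times$ in $\widehat{H}^0(G,K^\times)=\Q^\times/N_{K/\Q}(K^\times)$.
\end{itemize}
As it stands, the order formula is justified only by the citation.

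If you want an independent argument in the CM case used in the paper, note that there $d=1$ and $[\Z^\times:H]=2$. The right-hand side then reads $\prod_\ell e(\ell)/4=[K^{\mathrm{gen}}:K]$, the degree over $K$ of the genus field $K^{\mathrm{gen}}$ of $K/\Q$. One inequality is easy. An extended ideal $\fraka_i\Z_K$ has Artin symbol $(N_{K_i/\Q}(\fraka_i),K^{\mathrm{gen}}/\Q)^2=1$, because $\Gal(K^{\mathrm{gen}}/\Q)$ has exponent $2$. Hence $\coker j\twoheadrightarrow\Gal(K^{\mathrm{gen}}/K)$. What remains is a principal-genus-type converse, not a local ideal count.
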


\begin{lemma}[{\cite[1.5]{lemmermeyer_kuroda}}] \label{lem:class_number}
    Let $K$ be a CM $C_2 \times C_2$ field, with quadratic subfields $K_0, K_1, K_2$. Then
    \[ h(\Z_K) = \frac{1}{2} q(K) h(\Z_{K_0}) h(\Z_{K_1}) h(\Z_{K_2}) \]
    where $h$ denotes the class number and $q(K) = [\Z_K^\times : \Z_{K_0}^\times \Z_{K_1}^\times \Z_{K_2}^\times]$ is the unit index.
\end{lemma}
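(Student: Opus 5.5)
The plan is to derive the identity from the analytic class number formula, via the factorization of the Dedekind zeta function of a biquadratic field. Since $\Gal(K/\Q) \cong C_2 \times C_2$, its nontrivial characters $\chi_0, \chi_1, \chi_2$ are exactly the quadratic characters attached to $K_0, K_1, K_2$. This gives
\[ \zeta_K(s) = \zeta(s)\, L(s,\chi_0)\, L(s,\chi_1)\, L(s,\chi_2), \]
and also $\zeta_{K_i}(s) = \zeta(s) L(s,\chi_i)$ for $i = 0, 1, 2$. The conductor--discriminant formula moreover gives $|d_K| = d_{K_0}\,|d_{K_1}|\,|d_{K_2}|$.

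Next I would compare residues at $s=1$. The residue of $\zeta_K$ equals $\prod_i L(1,\chi_i)$, and each factor $L(1,\chi_i)$ is the residue of $\zeta_{K_i}$. I would then insert the class number formulas with signatures $(0,2)$ for $K$, $(2,0)$ for $K_0$ and $(0,1)$ for $K_1, K_2$. The factors $\pi$, $2$ and the square roots of discriminants cancel by the discriminant identity, which should leave
\[ \frac{h(\Z_K)\, R_K}{w_K} = \frac{2\, h(\Z_{K_0})\, h(\Z_{K_1})\, h(\Z_{K_2})\, R_{K_0}}{w_1 w_2}, \]
where $w$ denotes the number of roots of unity and $R$ the regulator.

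The remaining step, which I expect to be the main obstacle, is to express $R_K/R_{K_0}$ and the roots-of-unity factors in terms of $q(K)$ with all normalizations correct. Let $Q = [\Z_K^\times : \mu_K \Z_{K_0}^\times]$ be the Hasse unit index. Both unit ranks equal $1$, and the absolute value attached to a complex place carries an exponent $2$. From this I would derive $R_K = 2R_{K_0}/Q$.

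For the unit index, I would use that $\Z_{K_1}^\times = \mu_1$ and $\Z_{K_2}^\times = \mu_2$ are torsion and that $\mu_1 \cap \mu_2 = \{\pm 1\}$. This gives
\[ q(K) = [\Z_K^\times : \Z_{K_0}^\times \mu_1 \mu_2] = Q\,[\mu_K : \mu_1\mu_2] = \frac{2Q\, w_K}{w_1 w_2}. \]
The middle equality needs care: I would have to check that $\Z_{K_0}^\times \cap \mu_K = \{\pm1\}$, so that the index splits multiplicatively.

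Substituting both expressions into the residue identity should give $h(\Z_K) = \frac{1}{2} q(K)\, h(\Z_{K_0}) h(\Z_{K_1}) h(\Z_{K_2})$. Any factor-$2$ error in the regulator normalization would show up directly in the final constant, so I would sanity-check the formula on $K = \Q(\sqrt{2}, \sqrt{-1})$. There both sides should equal $1$, with $w_K = 8$, $w_1 = 4$, $w_2 = 2$, and $q(K) = 2$.
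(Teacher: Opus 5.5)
Your proof is correct. The paper gives no argument for this lemma; it simply quotes it from Lemmermeyer. The steps you marked as needing care all go through.

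\begin{itemize}
\item With the normalisation $\mathrm{Res}_{s=1}\zeta_F(s) = 2^{r_1}(2\pi)^{r_2}h_F R_F/(w_F\sqrt{|d_F|})$, the residue comparison gives exactly $h(\Z_K)R_K/w_K = 2h(\Z_{K_0})h(\Z_{K_1})h(\Z_{K_2})R_{K_0}/(w_1w_2)$.
\item The relation $R_K = 2R_{K_0}/Q$ is right. The factor $2$ comes from the squared absolute value at a complex place.
\item For the index, $\mu_1\mu_2\subseteq\mu_K$ and $\Z_{K_0}^\times\cap\mu_K=\{\pm1\}$, since $K_0$ is real. The modular law then gives $[\Z_{K_0}^\times\mu_K:\Z_{K_0}^\times\mu_1\mu_2]=[\mu_K:\mu_1\mu_2]$. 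This equals $2w_K/(w_1w_2)$ because $\mu_1\cap\mu_2\subseteq K_1\cap K_2=\Q$.
\item Your test case checks out, since $Q=1$ for the prime-power cyclotomic field $\Q(\zeta_8)$.
\end{itemize}

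Compared with the bare citation, your route is self-contained and yields the explicit identity $q(K)=2Qw_K/(w_1w_2)$. Under the hypothesis $\mu_K=\{\pm1\}$ imposed in Lemma~\ref{lem:almost_iso} and Lemma~\ref{lem:shimura_order}, this reduces to $q(K)=Q=[\Z_K^\times:\Z_{K_0}^\times]\in\{1,2\}$. These are precisely the facts the paper uses, with only a pointer to the literature, in the proofs of those two lemmas.

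The citation, on the other hand, places the formula inside Kuroda's class number formula, which also covers, for example, totally real biquadratic fields (with constant $\tfrac14$). Your regulator step would not transfer to that case as written. It uses that $K$ and $K_0$ have the same unit rank, which is special to the CM situation.
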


Moreover, we will need the following formulas for the narrow class number $h^+$ and the order of the Shimura class group.

\begin{lemma} \label{lem:narrow_cl_number}
    Let $L$ be a number field, then 
    \[ h^+(\Z_{L}) = \frac{2^r}{[\Z_{L}^\times : (\Z_{L}^\times)^+]} h(\Z_{L}),\]
where $r$ is the number of real embeddings $\sigma_i:L \hookrightarrow \C$, $i \in \{ 1, \ldots r\}$.
\end{lemma}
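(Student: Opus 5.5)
The plan is to compare the narrow class group and the ordinary class group through the subgroup of principal ideals. Let $I$ be the group of fractional ideals of $\Z_L$, $P$ the subgroup of principal fractional ideals, and $P^+ \subset P$ the subgroup of principal ideals admitting a totally positive generator. Since $\Cl^+(\Z_L) = I/P^+$ and $\Cl(\Z_L) = I/P$, the surjection $I/P^+ \twoheadrightarrow I/P$ has kernel $P/P^+$. Hence $h^+(\Z_L) = [P : P^+]\, h(\Z_L)$, and it suffices to show
\[ [P : P^+] = \frac{2^r}{[\Z_L^\times : (\Z_L^\times)^+]}. \]

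To compute this index we will use the signature map
\[ \sgn : L^\times \to \{\pm 1\}^r, \qquad x \mapsto \bigl(\sgn \sigma_1(x), \ldots, \sgn \sigma_r(x)\bigr). \]
Its kernel is the group $L^{\times,+}$ of totally positive elements. It is surjective by weak approximation: the real embeddings induce inequivalent archimedean absolute values, so $L$ is dense in $\prod_i \R$, and we can pick $x$ close to any prescribed vector of $\pm 1$'s. Therefore $L^\times / L^{\times,+} \cong \{\pm1\}^r$ has order $2^r$.

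Next consider the surjection $L^\times \to P \to P/P^+$ given by $x \mapsto x\Z_L$. It factors through $L^\times/L^{\times,+}$, because a totally positive $x$ gives $x\Z_L \in P^+$. We then identify the kernel of the induced surjection $L^\times/L^{\times,+} \twoheadrightarrow P/P^+$. If $x\Z_L = y\Z_L$ with $y$ totally positive, then $x = uy$ for some unit $u \in \Z_L^\times$, so the class of $x$ equals the class of $u$. Conversely, units map to the trivial class. So the kernel is the image of $\Z_L^\times$ in $L^\times/L^{\times,+}$, which is isomorphic to $\Z_L^\times/(\Z_L^\times \cap L^{\times,+}) = \Z_L^\times/(\Z_L^\times)^+$.

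This yields the exact sequence
\[ 1 \to \Z_L^\times/(\Z_L^\times)^+ \to L^\times/L^{\times,+} \to P/P^+ \to 1, \]
which gives $[P:P^+] = 2^r/[\Z_L^\times : (\Z_L^\times)^+]$ and proves the lemma. No step here is a real obstacle. The only nontrivial input is the surjectivity of the sign map, which follows from weak approximation, or more concretely from the fact that $L = \Q(\theta)$ with $\theta$ having distinct real conjugates, so polynomial interpolation in $\theta$ realizes any sign pattern.
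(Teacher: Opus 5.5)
Your proof is correct and is essentially the paper's argument. The paper writes down the single exact sequence $1 \to \Z_L^\times/(\Z_L^\times)^+ \to L^\times/(L^\times)^+ \to \Cl^+(\Z_L) \to \Cl(\Z_L) \to 1$, which is your two short exact sequences spliced together through $P/P^+$, and then concludes from the surjectivity of the sign map with kernel $(L^\times)^+$; you additionally justify that surjectivity via weak approximation, which the paper uses without proof.
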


\begin{proof}
    There is an exact sequence
   \[ 1 \to \frac{\Z_{L}^\times}{(\Z_{L}^\times)^+}  \xrightarrow{u~\mapsto~u} \frac{L^\times}{(L^\times)^+}  \xrightarrow{u~\mapsto~[u\Z_{L}]} \Cl^+(\Z_{L})  \xrightarrow{[\fraka]~\mapsto~[\fraka]} \Cl(\Z_{L}) \to 1. \]
    The formula follows from surjectivity of the sign map $\sgn : L^\times \to \{\pm1\}^r$ sending elements $x \in L$ to $( \, \sgn(\sigma_1(x)), \, \ldots \, , \, \sgn(\sigma_r(x)) \, ) \in \{\pm 1\}^r$, with kernel $(L^\times)^+$. \qed
\end{proof}

\begin{lemma} \label{lem:shimura_order}
    Let $K$ be a CM $C_2 \times C_2$ field. If $\Z_K^\times$ contains no roots of unity other than $\pm 1$, then  
    \[ |\frakC(\Z_K)| = \frac{1}{2} h(\Z_{K_1}) h(\Z_{K_2}) |\coker N_{K/K_0}| \]
    where $N_{K/K_0}: \Cl(\Z_K) \to \Cl^+(\Z_{K_0})$ is the ideal class norm map.
\end{lemma}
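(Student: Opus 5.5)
We read off $|\frakC(\Z_K)|$ from the exact sequence~\eqref{eq:shimura_exact_sequence} with $R = \Z_K$. Exactness gives
\[ |\frakC(\Z_K)| = \left[(\Z_{K_0}^\times)^+ : N_{K/K_0}(\Z_K^\times)\right] \cdot |\ker N_{K/K_0}|, \qquad |\ker N_{K/K_0}| = \frac{h(\Z_K)\,|\coker N_{K/K_0}|}{h^+(\Z_{K_0})}, \]
where $N_{K/K_0}$ denotes the ideal class norm map $\Cl(\Z_K) \to \Cl^+(\Z_{K_0})$. It then remains to rewrite $h(\Z_K)$ via Lemma~\ref{lem:class_number}, to rewrite $h^+(\Z_{K_0})$ via Lemma~\ref{lem:narrow_cl_number} (with $r = 2$), and to compute the unit index explicitly.

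For the units, the hypothesis that the only roots of unity in $\Z_K$ are $\pm 1$ implies that $\Z_{K_1}^\times = \Z_{K_2}^\times = \{\pm 1\}$. Hence $q(K) = [\Z_K^\times : \Z_{K_0}^\times]$, consistent with Lemma~\ref{lem:almost_iso}, and $q(K) \in \{1,2\}$. Let $\varepsilon$ be a fundamental unit of $K_0$. Since $\Z_K^\times/\{\pm1\}$ is infinite cyclic, we have $\Z_K^\times = \pm\langle\eta\rangle$ for some unit $\eta$ with $\eta^{q(K)} = \pm\varepsilon$ up to sign. For $u \in \Z_K^\times$ we have $N_{K/K_0}(u) = u\bar u$. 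If $q(K)=1$ then $N_{K/K_0}(\Z_K^\times) = \langle \varepsilon^2\rangle$. If $q(K)=2$ then $\eta/\bar\eta$ is a unit of absolute value $1$ under all embeddings, hence a root of unity, hence $\pm1$. The case $\eta = \bar\eta$ is impossible since $\eta \notin K_0$, so $\bar\eta = -\eta$, giving $N(\eta) = -\eta^2 = \mp\varepsilon$. Since norms are totally positive, $N(\eta)$ equals whichever of $\pm\varepsilon$ is totally positive, and therefore $N_{K/K_0}(\Z_K^\times) = \langle N(\eta)\rangle$ with index $1$ in $(\Z_{K_0}^\times)^+$.

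Next we compute $(\Z_{K_0}^\times)^+$. If $N(\varepsilon) = -1$ then $(\Z_{K_0}^\times)^+ = \langle\varepsilon^2\rangle$ and $[\Z_{K_0}^\times : (\Z_{K_0}^\times)^+] = 4$. If $N(\varepsilon) = 1$, we may assume $\varepsilon$ is totally positive after a sign change; then $(\Z_{K_0}^\times)^+ = \langle\varepsilon\rangle$ and the index is $2$. Writing $s := [\Z_{K_0}^\times : (\Z_{K_0}^\times)^+] \in \{2,4\}$, the unit index $U = [(\Z_{K_0}^\times)^+ : N_{K/K_0}(\Z_K^\times)]$ becomes: $U = 2$ if $q = 1$ and $s = 2$; $U = 1$ if $q = 1$ and $s = 4$; and $U = 1$ if $q = 2$. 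The combination $q = 2$, $s = 4$ cannot occur, since $q=2$ forces a totally positive unit $\pm\varepsilon$, which means $s = 2$. In all admissible cases one checks $U = 4/(q(K)\, s)$.

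Combining, and using $h^+(\Z_{K_0}) = 4h(\Z_{K_0})/s$ from Lemma~\ref{lem:narrow_cl_number},
\[ |\frakC(\Z_K)| = \frac{4}{q s}\cdot\frac{\tfrac12 q\, h(\Z_{K_0})h(\Z_{K_1})h(\Z_{K_2})\,|\coker N_{K/K_0}|}{4h(\Z_{K_0})/s} = \tfrac12 h(\Z_{K_1})h(\Z_{K_2})\,|\coker N_{K/K_0}|. \]
The main obstacle is the unit bookkeeping: identifying $N_{K/K_0}(\Z_K^\times)$ correctly when $q(K)=2$, and checking that the case $q=2$, $s=4$ is excluded.
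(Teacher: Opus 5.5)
Your proof is correct and follows the same route as the paper: you read off $|\frakC(\Z_K)|$ from the exact sequence~\eqref{eq:shimura_exact_sequence}, substitute Lemma~\ref{lem:class_number} and Lemma~\ref{lem:narrow_cl_number}, and reduce everything to a unit-index identity. The paper packages that identity as $[\Z_K^\times : N_{K/K_0}(\Z_K^\times)] = 4$ and only says it is ``not difficult to show''; your case analysis establishing $q(K)\cdot s\cdot U = 4$ is exactly the verification of that claim.
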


\begin{proof}
    From the exact sequence in Section \ref{sec:shimura_cl_group}, it follows that 
    \[ |\frakC(\Z_K)| = [(\Z_{K_0}^\times)^+ : N_{K/K_0}(\Z_K^\times)] \frac{h(\Z_K)}{|\im{N_{K/K_0}}|}. \]
    Using Lemma \ref{lem:class_number}, and Lemma \ref{lem:narrow_cl_number} applied to $K_0$, we can rewrite this as 
    \[  \frac{1}{8}\frac{h^+(\Z_{K_0})}{|\im{N_{K/K_0}}|} h(\Z_{K_1}) h(\Z_{K_2}) q(K) [\Z_{K_0}^\times : (\Z_{K_0}^\times)^+][(\Z_{K_0}^\times)^+ : N_{K/K_0}(\Z_K^\times)] .\]
    With the assumption that $\Z_K^\times$ contains no roots of unity other than $\pm 1$, then $q(K) = [\Z_K^\times : \Z_{K_0}^\times \Z_{K_1}^\times \Z_{K_2}^\times] = [\Z_K^\times : \Z_{K_0}^\times]$, yielding
    \[ |\frakC(\Z_K)| = \frac{1}{8} |\coker{N_{K/K_0}}| h(\Z_{K_1}) h(\Z_{K_2}) [\Z_{K}^\times : N_{K/K_0}(\Z_K^\times)] .\]
    Again under the assumption that $\Z_K^\times$ contains no roots of unity other than $\pm 1$, it is not difficult to show that $[\Z_{K}^\times : N_{K/K_0}(\Z_K^\times)] = 4$, which ends the proof. \qed
\end{proof}

We are now ready to prove Lemma \ref{lem:almost_iso}.

\begin{proof}
    First observe that if $([\fraka_1], [\fraka_2]) \in \Cl(\Z_{K_1}) \times \Cl(\Z_{K_2})$ is in the kernel of $\kappa_{\Z_K}$, then $([1], [\fraka_1], [\fraka_2]) \in \Cl(\Z_{K_0}) \times \Cl(\Z_{K_1}) \times \Cl(\Z_{K_2})$ is in the kernel of the map $j$ from Lemma \ref{lem:j_jhat}, namely there is an injection $\ker{\kappa_{\Z_K}} \hookrightarrow \ker{j}$, from which we have
    \[ |\ker{\kappa_{\Z_K}}| \leq |\ker{j}| = \frac{h(\Z_{K_0}) h(\Z_{K_1}) h(\Z_{K_2}) }{h(\Z_K)}|\coker{j}| .\]
    Using Lemma \ref{lem:class_number}, then 
    \[ |\ker{j}| = \frac{2}{q(K)}|\coker{j}|,\]
    where one can see that $q(K) \in \{1,2\}$, following e.g. \cite[Proposition 5.12]{milneANT}. Hence, 
    \[ |\ker{j}| \leq \frac{2}{q(K)}|\coker{\widehat{j}}| \leq \frac{1}{2q(K)} \prod_{\ell \text{ prime}} e(\ell) \]
    using Lemma \ref{lem:j_jhat} for the second inequality, with $d=1$ (as the real infinite prime of $\Q$ ramifies in $K$) and $[\Z^\times : H] = 2$ (as $-1$ is not a norm in the totally imaginary field $K$).
    This proves the bound on $|\ker{\kappa_{\Z_K}}|$. 

    On the other hand, 
    \[ |\coker{\kappa_{\Z_K}}| = \frac{|\frakC(\Z_K)|}{h(\Z_{K_1}) h(\Z_{K_2})} |\ker{\kappa_{\Z_K}}| = \frac{1}{2}|\coker{N_{K/K_0}}||\ker{\kappa_{\Z_K}}|  \]
    by Lemma \ref{lem:shimura_order}, which proves the bound on $|\coker{\kappa_{\Z_K}}|$. \qed
\end{proof} 

\begin{example}  \label{ex:shimuraC2C2}
    We consider the R2-D2 orientation introduced in Example \ref{ex:R2D2} and
    used in Section~\ref{app:R2D2_keyexchange} below.
    Here $K = \Q(\sqrt{d}, \sqrt{-p})$ for a prime $p > 3$ and a square-free integer $d$ coprime to $p$. Then $K_0 = \Q(\sqrt{d})$, and say $K_1 = \Q(\sqrt{-p})$, $K_2 = \Q(\sqrt{-dp})$. Assume that $d=q_1 q_2$, where $q_1, q_2 >3$ are prime. Assume moreover $p \equiv d \equiv 3 \bmod 4$. 
    Then $\disc{\Z_K} = 2^4 p^2 q_1^2 q_2^2$ by~\cite[Thm.\,3]{Williams_1970}. 
    There are then exactly four ramified primes in $K/\Q$, namely $2, p, q_1, q_2$, and since each of them is ramified in exactly two quadratic subfields, we have $e(2) = e(p) = e(q_1) = e(q_2) = 2$.
    From Lemma \ref{lem:almost_iso} it follows that $|\ker \kappa_{\Z_K}| \leq 8 / q(K)$. 
    Since the extension $K/K_0$ is ramified at a prime ideal of $\Q(\sqrt{d})$ above $p$, the norm map $N_{K/K_0}$ is surjective on $\Cl^+(\Z_{K_0})$, hence $|\coker{\kappa_{\Z_K}}| \leq 4 / q(K)$. 
\end{example}

\section{Key exchange from R2} \label{app:key_exch}

We now delve into the details of generating concrete parameters for a Diffie--Hellman style key exchange protocol built from Shimura class group actions, along the usual diagram depicted in Figure~\ref{fig:key-exchange-diagram_general}.
\begin{figure}[h]
\centering
\begin{tikzcd}[column sep=15pt, row sep=small]
(A, \lambda, \iota) \arrow[rrrrrr, teal, "{\frakg_1^{a_1} \frakg_2^{a_2} \cdots \frakg_r^{a_r}}"{teal}]
\arrow[dddd, teal, "{\frakg_1^{b_1} \frakg_2^{b_2} \cdots \frakg_r^{b_r}}"{teal, description}]
&  &  &  &  &  &
(A_{\mathcal{A}}, \lambda_{\mathcal{A}}, \iota_{\mathcal{A}}) \arrow[dddd, teal, "{\frakg_1^{b_1} \frakg_2^{b_2} \cdots \frakg_r^{b_r}}"{teal, description}]
\\
&  &  &  &  &  &
\\
&  &  &  &  &  &
\\
&  &  &  &  &  &
\\
(A_{\mathcal{B}}, \lambda_{\mathcal{B}}, \iota_{\mathcal{B}}) \arrow[rrrrrr, teal, "{\frakg_1^{a_1} \frakg_2^{a_2} \cdots \frakg_r^{a_r}}"'{teal}]
&  &  &  &  &  &
(A_{\mathcal{AB}}, \lambda_{\mathcal{AB}}, \iota_{\mathcal{AB}})
\end{tikzcd}
\caption{Diffie--Hellman key exchange based on Shimura class group actions, for a set of ``generators" $\mathbf{g} = \{ \frakg_1, \ldots, \frakg_r \} \subset \frakC(R)$ and exponents sampled from $\{-B, \ldots, B\}^r$.}
\label{fig:key-exchange-diagram_general}
\end{figure}
We emphasize that the construction below was done for proof-of-concept purposes only: we leave considerable room for optimization, and the security remains to be assessed more thoroughly. For the appealing reasons mentioned in Section~\ref{sec:key_exch_over_Fp}, we concentrate on the R2 orientations from Appendix~\ref{app:R2families} (see also Example~\ref{ex:R2D2}), where the entire orbit is defined over $\F_p$. 
The ``best" family to work with seems to be R2-D4. Indeed, in R2-C4 the parameters are quite constrained, hindering parameter generation, while R2-D2 has security issues~\cite{C2C2attack}. 

\subsection{Key exchange from R2-D4} \label{app:R2D4_key_exch}

Recall that R2-D4 is a (not necessarily primitive) orientation by $R = \Z[\gamma]$ with
\[ \gamma = \sqrt{a + b\sqrt{p}}, \qquad a + |b|\sqrt{p} < 0. \]
Note that
\[ \bar \gamma \gamma = d := -a - b\sqrt{p} \in R_0^+ \]
so in order to represent $\iota(\gamma)$, as explained in Section~\ref{subsec:representation} we can stay in dimension $2$ as soon as we can write $N - d = \alpha^2$ for some $\alpha = s + t\sqrt{p} \in \Z[\sqrt{p}]$. This spells out as:
\begin{itemize}
    \item $2st = b$,
    \item $s^2 + pt^2 = N + a$,
\end{itemize}
showing that we cannot let $b = \pm 1$, which is a necessary condition for $\Z[\gamma]$ to be maximal. We choose $b = 4$, and then we can assume that $s = 2$, $t = 1$ so that $a = p + 4 - N$.\footnote{The reason we skip $b = 2$ is that in this case $a = p + 1 - N$. Since we will choose $p$ such that $p-1$ and $N$ share a large prime power $M$, implying $a^2 - b^2p \equiv (2 - N)^2 - 4 \equiv 0 \bmod M$, it follows that $\disc K$ plummets in view of~\cite[Thm.\,1 and Tbl.\,B]{huard_spearman_williams}.}

\subsubsection{Enforcing complete splitting of small primes.} For an efficient evaluation of the Shimura class group action in the $\mathbf{g}$-REGA model, we want that $A[\ell]$ is ``easily accessible", for every small odd prime $\ell$ involved in the construction of $\mathbf{g}$. In view of Remark~\ref{rmk:props_of_Fp_orientations} the most natural sufficient condition is $\ell \mid p - 1$, so that $A[\ell] \subset A(\F_{p^2})$. Then $\ell$ splits completely in $R$ if and only if $a \pm 4$ are both non-zero quadratic residues modulo $\ell$. Interestingly, since
\[ a = p + 4 - N \equiv 5 - N \bmod \ell, \]
this condition does not explicitly depend on $a$ and/or $p$. So the idea is to collect small prime numbers $\ell_1, \ell_2, \ldots, \ell_r$ modulo which both $9 - N$ and $1-N$ are non-zero quadratic residues; for a fixed value of $N$ we expect this to hold for a proportion $\frac{1}{4}$ of the primes (within the wiggle room one typically has when choosing $N$, one can give preference to the value giving rise to the smallest $\ell_i$). We then choose
\[ p = M c \ell_1 \ell_2 \cdots \ell_r + 1 \]
with $M$ a large factor of $N$ and $c$ a small cofactor ensuring that $p$ is a prime number. Here, more precisely, we take $N = M^2$ so that $A[\sqrt{N}] \subset A(\F_{p^2})$ for the purpose of meet-in-the-middle. 

\subsubsection{Realization as an instance of R2-D4.}
For an incorporation into the R2-D4 family we need $|a| > p/4$ and consequently $N = p + 4 - a > 5p/4$. This comes about as follows.
Recall from Example~\ref{ex:R2D4} that
our task is to find integers $a_1, a_2, a_3, b_1, b_2, b_3$ such that
\begin{align*}
  a & = -a_1^2 - a_1 a_2 - \frac{p + 1}{4} a_2^2 - p a_3^2 - b_1^2 - b_1 b_3 - \frac{p + 1}{4} b_3^2 - b_2^2, \\
  4 = b & = a_2 b_1 + (a_1 + a_2) b_3 - 2 a_3 b_2.
\end{align*}
These formulae can be rewritten as
    \begin{align*}
  -4a & = A^2 + B^2 + 4 b_2^2 + p(a_2^2 + 4a_3^2 + b_3^2), \\
8 & = A b_3 + B a_2 - 4 a_3 b_2,
\end{align*}
where $A = 2a_1 + a_2$ and $B = 2b_1 + b_3$. So the bound $|4a| > p$ is immediate. Typically this is not  enough:
we want some extra leeway in order for the following solving strategy to succeed: iterate over positive integers $n = 1, 2, 3, \ldots$ and over all vectors $(a_2, 2a_3, b_3) \in \Z \times 2\Z \times \Z$ of squared-Euclidean length $n$ until we can find a corresponding vector $(A, B, 2b_2)$ of squared-Euclidean length $-4a - np$ in the affine plane 
\[ H : b_3 X + a_2 Y - 2a_3 Z = 8, \]
where moreover we want that $A \equiv a_2 \bmod 2$ and $B \equiv b_3 \bmod 2$, so that  $a_1 = (A - a_2)/2$ and $b_1 = (B - b_3)/2$ are integers.

This can be turned into an efficient algorithm, but for generating the concrete parameters below we have used a simplified search by restricting to $a_3 = 0$ and $\gcd(a_2, b_3) = 1$; this is good enough for our proof-of-concept purposes, although it increases the expected number of $n$'s to be tried and therefore requires a somewhat larger gap between $|4a|$ and $p$.
To proceed, we use the extended Euclidean algorithm to compute $\lambda, \mu \in \Z$ such that $1 = \lambda b_3 + \mu a_2$. Then for any $x, b_2$ the tuple $(8\lambda + x a_2, 8\mu - x b_3, 2b_2)$ lies in $H$. Expressing that it has squared-Euclidean length $-4a-np$ amounts to
\[ \left(nx + 8(\lambda a_2 - \mu b_3) \right)^2 + 4n b_2^2 = -4na - n^2p - 64. \]
This asks to represent the right-hand side by the positive definite binary quadratic form $X^2 + 4nY^2$. If this right-hand side is prime, then we expect it to be representable with probability $1 / 2h(\Z[\sqrt{-4n}]) \approx 1/\sqrt{n}$ by~\cite[Thm.\,9.12]{cox2013primes} and in that case a corresponding representation can be found efficiently using Cornacchia's algorithm. Three remarks are in order:
\begin{itemize}
  \item We model $-4na - n^2p - 64$ as a random integer of  size polynomial in $p$, so that we expect success after a number of tries that is polylogarithmic in $p$. However if $3 \mid a$ and $p \equiv 2 \bmod 3$ then using $3 \nmid n$ (indeed, $n$ is a sum of coprime squares) one sees that $-4na - n^2p - 64$ is always divisible by $3$, so in these cases the modeling is clearly flawed.\footnote{Other deviating features are that if $n$ is even then $-4na - n^2p - 64 \equiv 0 \bmod 4$, while if $n$ is odd then $-4na - n^2p - 64 \equiv 1 \bmod 4$. We did not observe a noticeable effect on the runtime of our method.} But in our parameters below we have $3 \nmid a$ and $p \equiv 1 \bmod 3$ and this is not a concern.
  \item If this integer can indeed be represented as $c^2 + 4nd^2$ for certain $c, d \in \Z$, then there is no guarantee that $c = nx + 8(\lambda a_2 - \mu b_3)$ for some $x \in \Z$. This may seem a strong additional assumption but luckily it is not: one checks that $(\lambda a_2 - \mu b_3)^2 \equiv -1 \bmod n$ and therefore $c^2 \equiv (8(\lambda a_2 - \mu b_3))^2 \bmod n$. E.g., if $n$ is prime then this implies $c \equiv \pm 8(\lambda a_2 - \mu b_3) \bmod n$ so by changing the sign of $c$ if needed we can ensure that $c$ is  of the form $nx + 8(\lambda a_2 - \mu b_3)$. If $n$ is not prime then this may fail but in any case, in view of the Erd\"os--Kac theorem, the inverse probability of success is in the order of $\log n$ rather than in the order of $n$ as one might expect at first sight.
  \item If $n$ is even then $-4na - n^2p - 64 \equiv 0 \bmod 4$ so the primality assumption forces $n$ to be odd. It is then easy to check that the requirements $A \equiv a_2 \bmod 2$ and $B \equiv b_3 \bmod 2$ are automatic.
\end{itemize}

\subsubsection{Artifact of $p \equiv 3 \bmod 4$: $N$ cannot be a power of $2$.} Ideally, we would like $N$ to be a large power of $2$, but this conflicts with $M \mid p - 1$ given the R2-D4 assumption $p \equiv 3 \bmod 4$. It should be clear to the reader that this assumption is not crucial: e.g., Example~\ref{ex:R2C4} where $p \equiv 5 \bmod 8$ also produces instances of type $D_4$. Switching to $p \equiv 1 \bmod 4$ will be more technical (by lack of a uniform description of $B_{p, \infty}$) but should be perfectly doable. We will by-pass this by letting $M = 3^f$, $N = 3^{2f}$ for some exponent $f$.

\subsubsection{Group size.} 
Recall from~\eqref{eq:discD4} that the discriminant of $\Z[\gamma]$ equals $2^{16} p^2 (a^2 - 16p)$. But $\Z[\gamma]$ is not a maximal order; in fact it has index $b^2 = 16$ in the order $\Z[\gamma, \sqrt{p}]$, by which our PPASs are still naturally oriented; consequently $\Z[\gamma, \sqrt{p}]$ has discriminant $2^8p^2(a^2 - 16p)$. This order is still not maximal, but almost: using that $a = p + 4 - 3^{2f}$ and $p \equiv 1 \bmod 3^f$ for some large $f$, one verifies
\[ a^2 - 16p \equiv 4 \bmod 8, \qquad a^2 - 16p \equiv 9 \bmod 27, \]
and then from~\cite[Thm.\,1 and Tbl.\,B]{huard_spearman_williams}
it follows that the discriminant of the maximal order is $2^6 p^2 (a^2 - 16p)/9$, if we assume that $(a^2 - 16p)/36$ is square-free (very likely true for the parameters below).
We believe that the orientation by $\Z[\gamma, \sqrt{p}]$ is typically primitive, but recall from Remark~\ref{rmk:sanon_break} that it is not crucial to know this for sure, as long as the security is based on the maximal order in view of potential hidden stabilizer attacks, as well as to avoid potential ``walking up the volcano" attacks (see Section~\ref{sec:security}). In any case the relevant discriminant is of order $p^2 a^2$ and,
since we expect $h^+(\Z[\sqrt{p}])$ to be very small (see the footnote to Corollary~\ref{cor:orientation_finding}), in view of the Brauer--Siegel theorem it is fair to estimate the size of the Shimura class group as $p|a|$.

\begin{remark}
In particular this is unbounded: by letting $|a|$ grow we can reach arbitrarily large group sizes while working exclusively with varieties defined over $\F_p$; this is impossible in dimension $g = 1$. This being said, as soon as $|a| \gtrapprox p^2$ we necessarily have $N \gtrapprox p^2$ and $p-1$ can no longer host $M = \sqrt{N}$ as a factor, which creates trouble for representing the orientation $\iota$ using meet-in-the-middle.
One can circumvent this by using smaller values of $M$ and publishing auxiliary PPASs as part of the orientation data. 
E.g., one can
let 
$N = M^3$ be a cube and choose parameters such that $M \mid p - 1$. To represent $\iota(\gamma)$, note that our $N$-isogeny $\iota(\gamma) + \alpha$ necessarily factors as a composition $\kappa_3 \circ \kappa_2 \circ \kappa_1$ of $M$-isogenies:
\[ (A, \lambda) \stackrel{\kappa_1}{\longrightarrow} (B, \lambda_B) \stackrel{\kappa_2}{\longrightarrow} (C, \lambda_C) \stackrel{\hat \kappa_3}{\longleftarrow} (A, \lambda). \]
If we provide interpolation data on $A[M]$ then this is enough for recovering the outer isogenies in this diagram. We can then append $\ker \kappa_2 \subset B[M]$ to our description of $(A, \lambda, \iota)$ as a PPAS. When evaluating the action, this whole diagram can be pushed through. This is somewhat reminiscent of the technique used in~\cite{Houben}, but a full generalization of the method from~\cite{Houben} is left for future research.
\end{remark}

\subsubsection*{Concrete parameters.} We have generated parameters aiming for a Shimura class group size of roughly $2048$ bits, which should be compared with CSIDH-$4096$, aiming for NIST-I security. 
We choose $|a| \approx N \approx p$ to maximize the room available in $p-1$ for packing small primes $\ell$ (but this also maximizes the field size, so other choices can be made here). We must in fact take $N$ slightly larger than $p$ to create leeway for the generation of corresponding R2-D4 parameters. Within our range of interest, it turns out that $N = 3^{2f}$ with $f = 333$ admits a more-than-average number of small primes $\ell$ modulo which $9-N$ and $1-N$ are non-zero squares. Concretely we can use the following $56$ primes $\ell_i$
\begin{multline*}
    23, 43, 53, 61, 67, 97, 101, 103, 173, 197, 211, 227, 233, 257, 277, 283, \\ 293, 
 307, 313, 359, 373, 383, 397, 419, 433, 443, 457, 491, 521, 563, 577, \\ 601, 613, 
 673, 683, 691, 733, 743, 769, 787, 809, 811, 859, 883, 887,  911,  \\ 977,  983, 1069, 
 1151, 1163, 1187, 1193, 1237, 1249, 1259,
\end{multline*}
let $p = 2 \cdot 3^{f + 2} \cdot 7 \cdot \ell_1 \cdots \ell_{56} + 1$, and then set $a = p + 4 - N$ as required (observe the extra factor $3^2$ which is convenient for gluing in the middle). The respective bit lengths of $p$ and $|a|, N$ are $1021$ and $1056$, making for a Shimura class group estimate of $2077$ bits. Using trial division we checked that $(a^2 - 16p)/36$ contains no squares of primes smaller than $2^{40}$, so this discriminant factor is most likely square-free, but we could not check this rigorously.\footnote{One way to get around this is by randomizing the cofactor $c$, used to construct $p$, until $a^2 - 16p$ is easy to factor (i.e., a prime number times a smooth cofactor).} Concrete R2-D4 parameters can be found with $a_2 = 1758$, $a_3 = 0$, $b_3 = 565$ so that $n = a_2^2 + b_3^2 = 3409789$, which is prime. The reader can then verify that also $-4na - n^2p - 64$ is prime and representable by the form $X^2 + 4nY^2$, eventually yielding
\begin{alignat*}{2}
a_1 &= {} & &72447886291797740848322932962051593473734748284190344 \\
    &     & &65405345610109441454835644829643664359635439561189660 \\
    &     & &15384207541540878628641645549368269105214510733042954 \\[4pt]
b_1 &= {} & -&23283876993666509430775004052081427936666742195999741 \\
    &     & &02931752144318449614324311336034511014331071303795311 \\
    &     & &71042137236046983177009402579859540412085437180984222 \\[4pt]
b_2 &= {} & &23122390115850551190499797644857286144606472585743281 \\
    &     & &92024567734772318935855806010161778248613700364778186 \\
    &     & &274082288575193881983650751959251785841361677700367
\end{alignat*}
for the remaining R2-D4 parameters. These integers are roughly of size $3^f$.

\subsubsection{Isogeny computations involved.} Recall the count from~\eqref{eq:keys}: to reach a key space size having roughly $256$ bits, we want
    \[  (2B + 1)^{2 \cdot 56} \approx 2^{256}, \]
showing that $B = 2$ suffices. This means that each arrow in Figure~\ref{fig:key-exchange-diagram_general} amounts to at most $2 \cdot 56 \cdot 2 = 224$ actions by Shimura ideals of the form $(\fraka, \ell_i)$. Each such action involves the computation of $2$ isogenies of reduced degree $3^f$ that ``meet in the middle", plus some linear algebra to determine the kernel, followed by an isogeny of reduced degree $\ell_i$. All these isogenies are between PPAVs of dimension $2$.

\subsection{Key exchange from R2-D2} \label{app:R2D2_keyexchange}

We mention once again that the vectorization problem from the R2-D2 orientation, as it was introduced in Example~\ref{ex:R2D2}, can be reduced to vectorization problems for the ideal-class groups of the underlying imaginary quadratic rings $\Z[\sqrt{-p}]$ and $\Z[\sqrt{-dp}]$ and therefore does not reach the security level one would hope~\cite{C2C2attack}. We nevertheless include a quick, high-level discussion. On one hand, we do this for the sake of context for~\cite{C2C2attack}. On the other hand, it allows us to highlight an interesting connection between R2-D2 orientations and the ``higher-degree" supersingular group actions studied by Chenu--Smith~\cite{ChenuSmith},  used in Orient Express~\cite{orientexpress}. These concern actions by $\Cl(\Z[\sqrt{-dp}])$ on the set of $\Z[\sqrt{-dp}]$-oriented elliptic curves $(E, \iota)$.

\subsubsection{Generators.} Following Section~\ref{subsec:genset}
the generating set $\mathbf{g}$ is built from the group homomorphism $\kappa_R : \Cl(\Z[\sqrt{-dp}]) \times \Cl(\Z[\sqrt{-p}]) \to \frakC(R)$.
That is, we pick a set of ``easy" ideal classes 
\[ [\frakm_1 ], [\frakm_2], \ldots, [\frakm_s] \in \Cl(\Z[\sqrt{-dp}]), \qquad [\frakl_1 ], [\frakl_2], \ldots, [\frakl_t] \in \Cl(\Z[\sqrt{-p}])  \]
and let $\mathbf{g}$ be the collection of images of $([\frakm_i], 1)$, $i = 1, \ldots, s$ and $(1, [\frakl_i])$, $i = 1, \ldots, t$. Abusing language, we will say that we act with $[\frakm_i]$ when in fact acting with $\kappa_R([\frakm_i], 1)$, and likewise for $[\frakl_i]$. 
The ideals $\frakl_i$ are chosen as in CSIDH. That is, we set things up such that $\ell_1 \ell_2 \cdots \ell_t \mid p + 1$ for distinct small odd prime numbers $\ell_i$ and we let $\frakl_i = (\ell_i, \sqrt{-p} - 1)$. Then $\bar \frakl_i = (\ell_i, \sqrt{-p} + 1)$ is a representative of $[\frakl_i]^{-1}$. For the ideals $\frakm_i$, we  consider distinct small odd prime numbers $m_i$ such that $m_1 m_2 \cdots m_s \mid p + 1$ and such that for all $i = 1, 2, \ldots, s$ there exists $a_i \in \Z$ such that $d \equiv a_i^2 \bmod m_i$. We then let $\frakm_i = (m_i, \sqrt{-dp} - a_i)$ so that $[\frakm_i]^{-1}$ is represented by $\bar \frakm_i = (m_i, \sqrt{-dp} + a_i)$. The sets $\{ \ell_1,  \ldots, \ell_t \}$ and $\{ m_1, \ldots, m_s \}$ need not be disjoint.

\begin{remark}
  Recall from Lemma~\ref{lem:almost_iso} that $\kappa_R$ is ``almost" an isomorphism. See Example~\ref{ex:shimuraC2C2} for a specialization of this lemma to R2-D2.
\end{remark}

\subsubsection{Parameters.} The data needed for setting up R2-D2 are in accordance with those needed for the ``square-free" case of Orient Express~\cite{orientexpress}. There too, one looks for a supersingular elliptic curve $E / \mathbb{F}_{p^2}$ on which $\pi_p^2 = -p$, admitting a degree-$d$ isogeny
\[\tau : E \to E^{(p)} \] such that $\hat \tau \circ \pi_p = \pi_p \circ \tau$, and such that many small odd primes $\ell$ divide $p+1$ and split in $\Z[\sqrt{-dp}]$. Therefore, we can recycle the parameters from~\cite[\S5]{orientexpress}.

\subsubsection{Acting on the starting surface.}
Interestingly, the higher-degree supersingular group action can be ``embedded" in a Shimura class group action.
Indeed, given an elliptic curve $(E, \iota)$ that is oriented by $\Z[\sqrt{-dp}]$, it is not hard to show that $\iota(\sqrt{-dp})$ factors as
   \[ E \stackrel{\tau}{\longrightarrow} E^{(p)} \stackrel{\pi_p}{\longrightarrow} E \]
   for a degree-$d$ isogeny $\tau$ satisfying $\hat \tau \circ \pi_p = \pi_p \circ \tau$~\cite[Lem.\,1]{orientexpress}, so we can build an $R$-oriented PPAS from this exactly as in R2-D2. 
   In view of the Weil restriction involved, let us denote this surface as $W(E, \iota) \in \M_{\overline{\F}_p}(R)$. It is then easy to verify the rule
   \[ \kappa_R([\frakm], 1) \star W(E, \iota) = W([\frakm] \star (E, \iota)), \]
   whence the said embedding.
   This means that during public-key generation, acting with the $[\frakm_i]$'s can be done at the cost of an Orient Express action, as claimed. 
   In particular, this evaluation only uses isogenies between elliptic curves over $\F_{p^2}$ (although Orient Express uses $2$-isogenies between abelian surfaces for interpolating $\tau$). 
   Subsequently acting with the $[\frakl_i]$'s is similar to CSIDH and also very efficient, since the orientation representation comes for free.

\subsubsection{Acting on a public key.}
   But acting with the $[\frakm_i]$'s on a public key, which is not necessarily a Weil restriction, requires us to switch to the machinery from Section~\ref{subsec:representation}. The parameters in Orient Express are such that $2^{2a} - d = \alpha^2$ for certain integers $a, \alpha$. This means that the $d$-isogeny $\iota(\sqrt{d})$ can be represented in dimension $2ug = 2 \cdot 1 \cdot 2 = 4$ using Theorem~\ref{thm:embedding}. Unfortunately, here the dimension-halving trick from Section~\ref{subsec:representation} does not seem to work. The asymmetry between both types of actions is summarized in Figure~\ref{fig:key-exchange-diagram}.

\begin{figure}[h]
\centering
\begin{tikzcd}[column sep=15pt, row sep=small]
A_0 \arrow[rrr, blue, "{\frakm_\Ac \subset \mathbb{Z}[\sqrt{-dp}]}"] 
\arrow[dd, blue, "{\frakm_\Bc \subset \mathbb{Z}[\sqrt{-dp}]}" description] 
&  &  & 
A_{\mathcal{A}}^{dp} \arrow[rrr, teal, "{\frakl_\Ac \subset \mathbb{Z}[\sqrt{-p}]}"]   
&  &  & 
A_{\mathcal{A}} \arrow[dd, orange, "{\frakm_\Bc \subset \mathbb{Z}[\sqrt{-dp}]}" description]
\\
&  &  & 
&  &  &                                         
\\
A_{\mathcal{B}}^{dp} \arrow[dd, teal, "{\frakl_\Bc \subset \mathbb{Z}[\sqrt{-p}]}" description]            
&  &  &
&  &  & 
A_{\mathcal{BA}}^{dp} \arrow[dd, teal, "{\frakl_\Bc \subset \mathbb{Z}[\sqrt{-p}]}" description] 
\\
&  &  & 
&  &  &                                         
\\
A_{\mathcal{B}} \arrow[rrr, orange, "{\frakm_\Ac \subset \mathbb{Z}[\sqrt{-dp}]}"']        
&  &  & 
A_{\mathcal{AB}}^{dp} \arrow[rrr, teal, "{\frakl_\Ac \subset \mathbb{Z}[\sqrt{-p}]}"'] 
&  &  & 
A_{\mathcal{AB}}                           
\end{tikzcd}
\caption{An illustration of the R2-D2 key exchange. 
Blue arrows represent actions computed via 1-dimensional isogenies, with a 2D representation of the orientation; green arrows represent actions computed via 2-dimensional isogenies; orange arrows represent actions computed via 2-dimensional isogenies, with a 4D representation of the orientation. Polarizations and orientations are omitted to lighten the notation.}
\label{fig:key-exchange-diagram}
\end{figure}

\section{Compressing interpolation data for R2} \label{app:compress}

In this appendix we discuss the problem of compressing the interpolation data $P, \iota(\gamma)(P)$, with $P$ iterating over a basis of $A[\sqrt{N}]$, in the context of the R2 orientations. To ease notation, let us write $M = \sqrt{N}$. We also assume $M \mid p \pm 1$ where the sign depends on whether we are working in the R2-D2 setting ($+$) or the R2-D4, R2-C4 settings ($-$). In view of Remark~\ref{rmk:props_of_Fp_orientations} this ensures that all interpolation points are defined over $\F_{p^2}$.

Since $\iota(\gamma)$ is defined over $\F_p$ (indeed, it commutes with $\Pi_p$), it respects eigenvectors of Frobenius. In particular it maps the $1$-eigenspace of Frobenius
\[ A[M](\F_p) \cong \left( \frac{\Z}{M\Z} \right)^2 \]
into itself, and likewise for the $-1$-eigenspace (which has the same group structure). To use this for compression purposes, let us first recall that a ``symplectic basis" of $A[M]$ is a basis $P_1, P_2, Q_1, Q_2$ such that $e_M(P_i, Q_j)$ equals $\zeta_M$ if $i = j$ and $1$ if $i \neq j$, where $\zeta_M \in \F_{p^2}$
denotes some fixed primitive $M$th root of unity, whereas $e_M(P_i, P_j) = e_M(Q_i, Q_j) = 1$ for all $i, j$.
Assuming $M$ is odd (the even case is slightly more technical; note that $M$ is odd in our protocol from Appendix~\ref{app:R2D4_key_exch}), we make the following case distinction:
\begin{itemize}
    \item If $M \mid p + 1$ as in R2-D2 then it is possible to choose a symplectic basis of $A[M]$ such that $\langle P_1, P_2 \rangle$ is the $1$-eigenspace and $\langle Q_1, Q_2 \rangle$ is the $-1$-eigenspace.
    \item If $M \mid p - 1$ as in R2-C4 and R2-D4 then one can choose it such that $\langle P_1, Q_1 \rangle$ is the $1$-eigenspace and $\langle P_2, Q_2 \rangle$ is the $-1$-eigenspace.
\end{itemize}
The second case where $\Pi^2 = p$ follows from~\cite[Lem.\,3.17 and Lem.\,5.1]{supersingularitytesting}. The first case where $\Pi^2 = -p$ is easier and well-known; e.g., it follows along the same lines as~\cite[Lem.\,3.17]{supersingularitytesting}. Henceforth, we assume that $P_1, P_2, Q_1, Q_2$ is a deterministically generated basis of the above kind. 

It now suffices to describe $\iota(\gamma)(P_i)$, $i = 1, 2$ by means of $4$ coefficients $s, t, u, v \in \Z /M\Z$. E.g., in the case of R2-D2, these indicate that
\[ \iota(\gamma)(P_1) = sP_1 + tP_2, \qquad \iota(\gamma)(P_2) = uP_1 + vP_2. \]
To expand the representation: we know that
\[ \iota(\gamma)(Q_1) = xQ_1 + yQ_2, \qquad \iota(\gamma)(Q_2) = zQ_1 + wQ_2 \]
for $x, y, z, w \in \Z / M \Z$ that can be determined through Weil pairing and discrete logarithm computations (in practice $M$ is smooth so this can be done efficiently). Concretely we find
\[  sx + ty = \log_{\zeta_M} e_M( \iota(\gamma) (P_1), \iota(\gamma)(Q_1)) = \log_{\zeta_M} e_M( P_1, \iota(d)(Q_1)),  \]
where we have used that $\iota(\gamma)^\dagger \iota(\gamma) = \iota(\bar \gamma \gamma) = \iota(d)$ using the notation from Section~\ref{subsec:representation}, and similarly obtain $ux + vy$ from which we can extract $x, y$. We similarly find $z, w$ and conclude.

\section{Proofs from Section~\ref{sec:algebra}} \label{app:algebraproofs}

\subsection{The Shimura class group action via matrix arithmetic} \label{app:direct_action}

Let $g, a, b_1, b_2 \in M_2(\O)$ be algebraic data associated with an oriented superspecial PPAS, as in Section~\ref{sec:algebra}.
Let $(\fraka, \mu)$ be a Shimura ideal of $R = \Z[a, b_1, b_2]$. For $\gamma$ a generator of the left $M_2(\O)$-ideal generated by $\fraka$, we consider
\[ g' := \gamma^{-\ast} g \mu \gamma^{-1}, \quad a' := \gamma a \gamma^{-1}, \quad b_i' := \gamma b_i \gamma^{-1}, i = 1, 2 \]
as in the proof of Theorem~\ref{thm:algN}. We now give a direct argumentation, essentially using matrix arithmetic only, for why this is a well-defined group action.

\subsubsection{Why $g'$ is a valid $g$-matrix.} In Remark~\ref{rmk:verifygmatrix} we already explained why $g'$ is positive-definite and self-conjugate of norm $1$. So in order to prove that it concerns a valid $g$-matrix, we need to show that $g'\in M_2(\mathcal{O})$. This claim is equivalent to $g^{-1}g'\in M_2(\mathcal{O})$ as $g$ is an invertible element in $M_2(\mathcal{O})$. This can be rewritten as 
$\gamma^{-\dagger}\mu\in M_2(\mathcal{O})\gamma$.

The ideal $\fraka$ can be generated by $r\in R$ and $\mu$ with the property that $r^{\dagger}r=r_0\mu$ where the norm of $r_0$ is coprime to the norm of $\fraka$. Since $\gamma$ is the generator of the extended ideal one has that $s\gamma=r$ for some $s\in M_2(\mathcal{O})$ which can be rewritten as $\gamma=s^{-1}r$. 
 Note that $\gamma^{-\dagger}\mu\in M_2(\mathcal{O})$ by the Shimura property of the ideal. So we only need to show that $\gamma^{-\dagger}\mu$ vanishes on the kernel of $\fraka$ as then by definition it belongs to the left ideal generated by $\gamma$. One has 
$$\gamma^{-\dagger}\mu=s^{\dagger}r^{-\dagger}\mu=s^{\dagger}r\mu^{-1}r_0^{-1}\mu=s^{\dagger}rr_0^{-1}=s^{\dagger}r_0^{-1}r.$$
Here we used the identity $r^{\dagger}r=r_0\mu$ as $r^{-\dagger}=r\mu^{-1}r_0^{-1}$. 
This shows that indeed the above element vanishes on the ideal kernel by the properties of $r_0$.

\subsubsection{Why $g', a', b_1', b_2'$ are valid algebraic data.} Since conjugation by any invertible element is an automorphism of $M_2(B_{p, \infty})$, one has that  $\gamma a\gamma^{-1}$, $\gamma b_1\gamma^{-1}$, $\gamma b_2\gamma^{-1}$ generate a CM order isomorphic to $R = \Z[a,b_1, b_2]$
but we also have to show that Rosati acts as conjugation on them and we also have to show that both lie in $M_2(\mathcal{O})$. We will do the calculation for $a$ but the calculation for the $b_i$'s is identical.

We need that
$(\gamma^{-*}g\mu\gamma^{-1})^{-1}(\gamma a\gamma^{-1})^*(\gamma^{-*}g\mu\gamma^{-1})=\gamma a \gamma^{-1}$. The left-hand side can be rewritten as
 $$(\gamma \mu^{-1} g^{-1}\gamma^*)(\gamma^{-*} a^*\gamma^{*})(\gamma^{-*}g\mu\gamma^{-1})=\gamma\mu^{-1}(g^{-1}a^*g)\mu\gamma^{-1}.$$
Finally, $\gamma \mu^{-1}(g^{-1}a^*g)\mu\gamma^{-1}=\gamma a\gamma^{-1}$ as $a$ was symmetric with respect to Rosati and $a$ and $\mu$ commute.

What remains is that $\gamma a\gamma^{-1}$ and $\gamma b_i\gamma^{-1}$ are indeed in $M_2(\mathcal{O})$. We show it again only for $a$ as the other cases are very similar. The condition $\gamma a\gamma^{-1}\in M_2(\mathcal{O})$ is equivalent to $\gamma a\in M_2(\mathcal{O})\gamma$ which is the left ideal generated by $\gamma$. The left ideal generated by $\gamma$ consists of exactly those endomorphisms that vanish on the kernel of $\fraka$, so we need to show that $\gamma a$ vanishes on the kernel of $\fraka$. Now $\gamma=s^{-1}r$, so $\gamma a=s^{-1}ra=s^{-1}ar$. Now since the  reduced norm of $s$ is coprime to the norm of $\fraka$ and $r$ is in $\fraka$, $\gamma a$ will indeed vanish on the kernel of $\fraka$ (the proof carries over to $b$, as we have only used the fact that $a$ and $r$ commute). 

\subsection{Omitted proofs} \label{app:omitted_algebra_proofs}  

\begin{proof}[of Lemma~\ref{lem:quaternion_cholesky}]
  Writing
  \[ g = \begin{pmatrix} s & r \\ \bar{r} & t \end{pmatrix} \]
  with $s, t \in \Q_{>0}$ and $r \in B_{p, \infty}$, we can decompose this matrix as
  \[ g = \gamma \begin{pmatrix} s & 0 \\ 0 & t' \end{pmatrix} \gamma^\ast, \qquad \gamma = \begin{pmatrix} 1 & 0 \\ s^{-1} \bar{r} & 1 \end{pmatrix}, \]
  with $t' = t - s^{-1} \nrm(r)$. We have $t' > 0$ by positive-definiteness. By the Hasse--Schilling theorem~\cite[Thm.\,14.7.4]{voight2021quaternion} we can write $s = \sigma \bar{\sigma}$ and $t' = \tau \bar{\tau}$ for certain quaternions $\sigma, \tau$. It follows that
  \[ h = \gamma \begin{pmatrix} \sigma & 0 \\ 0 & \tau \end{pmatrix} \]
  is of the requested kind. \qed
\end{proof}

\begin{proof}[of Lemma~\ref{lem: cent}]
First we compute what the centralizer of $a$ is. Since the centralizer is a purely algebraic object, it is enough to prove it for matrices of the form 
\begin{equation} \label{eq:enough} \begin{pmatrix}
    0&s\\
    1& 0
\end{pmatrix} 
\end{equation}
where $s\in \mathbb{Z}_{>0}$.
Indeed, quadratic fields can be generated by trace-$0$ elements and all pairwise isomorphic quadratic subfields of $M_2(B_{p,\infty})$ are conjugate to each other by the Noether-Skolem theorem. The centralizer of~\eqref{eq:enough} will be the set of matrices of the form
\[ \begin{pmatrix}
    x&sy\\
    y& x
\end{pmatrix} \]
where $x,y\in B_{p,\infty}$. This naturally decomposes as the tensor product of diagonal matrices (isomorphic to $B_{p,\infty}$) and $\mathbb{Q}(a)$. 

Then we observe that if $b\in M_2(B_{p,\infty})$ commutes with $a$, then so does $b^{\dagger}$ (its Rosati involution). Indeed, if $ab=ba$, then $b^{\dagger}a^{\dagger}=a^{\dagger}b^{\dagger}$ and then $b^{\dagger}a=ab^{\dagger}$ as $a^{\dagger}=a$. This shows that Rosati induces an involution on $C=\mathbb{Q}(a)\otimes B_{p,\infty}$ but we also need a stronger statement. Observe that for any element $c\in C$, the element $c^{\dagger}c$ is symmetric with respect to Rosati. Due to the positivity of the Rosati involution, $c^{\dagger}c$ will generate a totally real quadratic subfield of $C$. However, since $C$ is isomorphic to $\mathbb{Q}(a)\otimes B_{p,\infty}$, the only such subfield is the center. Thus Rosati defines a standard involution on $C$ (i.e., for every $c\in C$, the element $c^{\dagger}c$ lies in the center \cite[Rmk.\,3.2.5]{voight2021quaternion}) and for quaternion algebras there exists a unique such involution which is quaternion conjugation. \qed
\end{proof}

\begin{proof}[of Lemma~\ref{lemma:split}]
The decision version is folklore (e.g., it follows from \cite[\S6]{ivanyos1996algorithms}), hence we focus on the embedding problem. First, we compute an explicit isomorphism between $C\otimes_K K(\sqrt{d})$ and $M_2(K(\sqrt{d}))$ in quantum polynomial time~\cite{kutas_montessinos}. Using this isomorphism we can find an element $s\in C\otimes_K K(\sqrt{d}) $ for which $s^2=0$. Now if we write 
$$s=(s_1+r_1\sqrt{d})+(s_2+r_2\sqrt{d})i+(s_3+r_3\sqrt{d})j+(s_4+r_4\sqrt{d})k$$
then we get that $s_1=r_1=0$ as its trace is 0 and if we square it and look at the rational part we get
$$\nrm(s_2i+s_3j+s_4k)=-d\nrm(r_2i+r_3j+r_4k)$$
which implies that the norm of $(s_2i+s_3j+s_4k)/(r_2i+r_3j+r_4k)$ equals $-d$. Now we only have to show that the trace of this element is 0. The trace of this element is 
\[ -2\frac{s_2r_2i^2+s_3r_3j^2+s_4r_4k^2} {\nrm(r_2i+r_3j+r_4k)} \] which is 0 as the numerator equals the coefficient of $\sqrt{d}$ in $s^2$. This proves our lemma. \qed
\end{proof}

\begin{proof}[of Lemma~\ref{lem:conductor_lemma}]
  We first note that $\xi, \gamma$ indeed exist, respectively because of Lemma~\ref{lem:quaternion_cholesky} and the fact that $M_2(\O)$ is a principal ideal ring.
  Next we claim that the conductor ideal can be characterized as
  \[ \frakc = \{ \, \alpha \in K \, | \, \alpha \gamma^\ast \in \gamma^{-1} M_2(\O) \}. \]
  To see this: it is clear that $\gamma^{-1} M_2(\O)$ can be characterized as the set of
  matrices $x \in M_2(B_{p, \infty})$ such that $\Z_K x \subset M_2(\O)$. Let $\alpha \in K$. Using that $\gamma^\ast M_2(\O) = (M_2(\O) \gamma)^\ast = (M_2(\O) \Z_K)^\ast = \Z_K M_2(\O)$ along with the commutativity of $K$ we see that
  $\Z_K \alpha \gamma^\ast M_2(\O) = \alpha \Z_K^2 M_2(\O) = \alpha \Z_K M_2(\O)$,
  whence the claim.

  As a consequence we have $\kappa := \gamma \mu \gamma^\ast \in M_2(\O)$. Below we show that in fact $\kappa \in \GL_2(\O)$. Assuming this, the lemma follows fairly easily. Indeed, let $\Lambda' = \gamma^{-1} M_2(\O) \gamma$ be the right order of $M_2(\O) \gamma = M_2(\O) \Z_K$ and observe using $\Z_K^2 = \Z_K$ that this right order contains $\Z_K$. Then its conjugate $\Lambda$ contains $\xi^{-1} \Z_K \xi$
  and the computation
  \[ \Lambda^\ast = \xi^\ast \gamma^\ast M_2(\O) \gamma^{-\ast} \xi^{-\ast} = \xi^{-1} \gamma^{-1} \kappa M_2(\O) \kappa^{-1} \gamma \xi = \xi^{-1} \gamma^{-1}  M_2(\O)  \gamma \xi = \Lambda \]
  shows that it is closed under conjugate-transpose. The fact that $\iota(\bar \alpha) = \iota(\alpha)^\ast$ for all $\alpha \in \Z_K$ follows from $(\xi^{-1} \alpha \xi)^\ast = \xi^\ast \alpha^\ast \xi^{-\ast} = \xi^{-1} \alpha^\ast \xi$ for all $\alpha \in K$, where in turn the last equality follows from the fact that $\xi \xi^\ast = \mu$ commutes with $\alpha$.

  Proving that $\kappa \in \GL_2(\O)$ amounts to showing $\nrm(\gamma \mu \gamma^\ast) =  1$. 
  To establish this, it suffices to produce for every prime number $\ell \mid \nrm(\mu)$ 
  some element $\mu_\ell \in \frakc$ such that $\ell \nmid \nrm(\gamma \mu_\ell \gamma^\ast)$. It is indeed allowed to skip the primes  $\ell \nmid \nrm(\mu)$ (which include $p$ by assumption) because in those cases $\Z_K \cap M_2(\O)$ is locally maximal at $\ell$, implying that also $\nrm(\gamma)$ must be an $\ell$-adic unit, and therefore $\ell \nmid \nrm(\gamma \mu \gamma^\ast)$. 
  
  Instead of finding $\mu_\ell$ directly, we will produce an element $\nu_\ell \in K_\ell$ 
  such that $\gamma \nu_\ell \gamma^\ast \in \GL_2(\O_\ell)$, where $K_\ell = K \otimes_{\Z} \Z_\ell, \O_\ell = \O \otimes_{\Z} \Z_\ell$.
  This is good enough indeed, because if we then pick $\nu_\ell' \in K$ with the same $\frakl$-adic valuation as $\nu_\ell$ for every non-zero prime ideal $\frakl \, \mid \, \ell \Z_K$ (which is always possible by the Chinese remainder theorem), then also $\gamma \nu_\ell' \gamma^\ast \in \GL_2(\O_\ell)$.
   It then suffices to let $\mu_\ell = n\nu'_\ell$ for a suitable integer $n$; e.g., we can consider any positive integer $m$ such that $m \nu_\ell'$ is an integral multiple of $\mu$ and choose $n$ such that $n \equiv 1 \bmod \ell$ and $n \equiv m$ modulo all prime divisors of $m$ different from $\ell$.

  Recall that $\Z_K$ is contained in the right order of $M_2(\O)\gamma$, which implies that $\gamma^{-1} M_2(\O)$ is closed under left-multiplication with elements from $\Z_K$.
  It is then also true that $\gamma^{-1} M_2(\O_\ell)$ is closed under left-multiplication by $\Z_{K, \ell} = \Z_K \otimes_{\Z} \Z_{\ell}$. 
  Fix an isomorphism $B_{p, \infty} \otimes_{\Z} \Z_\ell \cong M_2(\Q_\ell)$, which through component-wise application also yields an isomorphism $M_2(B_{p, \infty} \otimes_{\Z} \Z_\ell) \cong M_4(\Q_\ell)$. We can then consider the $\Z_\ell$-lattice
  \begin{equation} \label{eq:def_latticeL}
     L = \gamma^{-1} L_0, \qquad L_0 = \Z_\ell^4
  \end{equation}
  by viewing the elements of $L_0$ as column vectors. Observe that this is again closed under left-multiplication by $\Z_{K, \ell}$. This gives $L$ the structure of a $\Z_{K, \ell}$-module. A crucial observation is that this module is free of rank $1$. To see why, consider the module
  \[ \gamma^{-1} \Q_\ell^4 = \bigoplus_{\frakl \, | \, \ell \Z_K} M_{\frakl} \qquad \text{over} \qquad K_\ell \cong \prod_{\substack{\text{non-zero prime} \\ \text{ideals } \frakl \, \mid \, \ell \Z_K}} K_{\frakl} \]
  where $K_{\frakl}$ denotes the  completion of $K$ at $\frakl$ and the decomposition on the left is obtained by acting on the whole module with subsets of $K_\ell$ of the form $\{ 0 \} \cdots \times K_{\frakl} \times \cdots \times \{ 0 \}$. Each $M_\frakl$ is a vector space over $K_\frakl$ and, since  both $K_\ell$ and $\gamma^{-1} \Q_\ell^4$ have dimension $4$ as vector spaces over $\Q_\ell$, we have $\dim_{K_\frakl} M_\frakl = 1$ for all $\frakl$, i.e., $\gamma^{-1} \Q_\ell^4$ is a free $K_\ell$-module of rank $1$. Switching to the ``integral side", namely the module
\[ L = \gamma^{-1} \Z_\ell^4 = \bigoplus_{\frakl \, | \, \ell \Z_K} L_{\frakl} \qquad \text{over} \qquad \Z_{K, \ell} \cong \prod_{ \frakl \, \mid \, \ell \Z_K} \Z_{K, \frakl}, \]
and using that the $\Z_{K, \frakl}$'s are PIDs, we can deduce the same desired statement: $L$ is a free rank-$1$ module over $\Z_{K, \ell}$.
In other words
  \begin{equation} \label{eq:lattice_as_module}  L = \Z_{K, \ell} v 
  \end{equation}
for some $v \in \Q_\ell^4$.

  We now consider a $\Q_\ell$-bilinear form $\langle \cdot, \cdot \rangle$ on $\Q_\ell^4$ such that $\langle \mu v, w \rangle = \langle v, \mu^\ast w \rangle$ for all $v, w \in \Q_\ell^4$ and all $\mu \in M_2(B_{p, \infty})$. While such a bilinear form exists for general reasons~\cite[\S1]{knus1998book}, let us make this explicit by noting that
  \[ \bar{a} = H^{-1} a^T H, \qquad H = \begin{pmatrix} 0 & 1 \\ -1 & 0 \end{pmatrix}, \]
  independently of the chosen isomorphism:
 this merely uses that $\bar{a} = \tr(a) - a$. We can then let $\langle v, w \rangle = v^T \diag(H, H) w$.
  Then using $\langle \gamma^\ast v, \gamma^{-1} w \rangle = \langle v, \gamma \gamma^{-1} w \rangle = \langle v, w \rangle$ for all $v, w \in \Q_\ell^4$
  we find that the dual lattice of $L$ with respect to this form is given by
  \begin{equation} \label{eq:Lstar1} 
   L^\ast = \{ \, v \in \Q_\ell^4 \, | \, \langle v, L \rangle \subset \Z_\ell \, \} = \gamma^\ast L_0.
  \end{equation}
  But on the other hand, through~\eqref{eq:lattice_as_module} the form also induces a bilinear map
  \[ K_\ell \times K_\ell \to \Q_\ell : (r, s) \mapsto  \langle rv, sv \rangle = \langle v, \bar{r} s v \rangle.  \]
  Note that the right-hand side is a $\Q_\ell$-linear expression in $\bar r s$. It is a standard property of the trace function that there must exist some $\delta \in K_\ell^\times$ such that 
  \[ \langle v, \bar{r} s v \rangle = \Tr_{K / \Q}(\delta \bar{r} s), \]
  where we have extended the domain of $\Tr_{K/\Q}$ to $K_\ell$ by $\Z_\ell$-linearity.
  Indeed, this only requires that the trace pairing remains non-degenerate on $K_\ell$, which is immediate.
  Thus from this perspective we can also write
  \begin{equation} \label{eq:Lstar2} L^\ast = \delta^{-1} \mathfrak{D}^{-1} v 
  \end{equation}
  with $\mathfrak{D}$ the ideal of $\Z_{K, \ell}$ generated by the different ideal of $K$. Now $\Z_{K, \ell}$ is a principal ideal ring, again because $\Z_{K, \ell}$ is a product of PIDs, so we can rewrite this as $L^\ast = \Z_{K, \ell} \nu_\ell^{-1} v$ for some generator $\nu_\ell$ of $\delta \mathfrak{D}$. As the notation suggests, this is the element we are after. Indeed, comparing expressions~\eqref{eq:def_latticeL},~\eqref{eq:lattice_as_module},~\eqref{eq:Lstar1} and~\eqref{eq:Lstar2} we see that $\gamma^{-1} L_0 = L = \nu_\ell L^\ast = \nu_\ell \gamma^\ast L_0$
  from which it follows that 
  $\gamma \nu_\ell \gamma^\ast \in \GL_2(\O_\ell)$, as wanted.
  \qed
\end{proof}

\section{On the one endomorphism problem in dimension $2$} \label{app:oneend}

The machinery from Section~\ref{sec:algebra} allows us to prove $2$-dimensional analogues of the first two statements in Section~\ref{sec:algebra_appl}, which we recall here: 
\begin{itemize}
    \item Given one non-scalar endomorphism on a supersingular elliptic curve, one can compute the rest of the endomorphism ring in subexponential time \cite{DBLP:journals/cic/MerdyW25}.
    \item Given an endomorphism of small degree (polylogarithmic in  $p$) on a supersingular elliptic curve in characteristic $p$, one can compute the full endomorphism ring in polynomial time \cite{love2020supersingular}.
\end{itemize}
Our result strongly relies on Corollary~\ref{cor:orientation_finding}, which for technical reasons was restricted to cases where $R = \Z_K$ is a maximal order; for this reason we make the same restriction here.

\begin{theorem}\label{thm:hashingprecise}
Let $(A, \lambda, \iota)$ be a superspecial PPAS oriented by $\Z_K$ for a quartic CM field $K$ and assume that $h^+(K_0)$ is polynomially bounded. 
There exists a heuristic sub-exponential quantum algorithm that computes the endomorphism ring of $A$. If the discriminant of $K$ is polynomial in the input size, then the endomorphism ring of $A$ can be computed heuristically in classical polynomial time. 
\end{theorem}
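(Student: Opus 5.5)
We will mimic the one-dimensional strategy of \cite{DBLP:journals/cic/MerdyW25} and \cite{love2020supersingular}: use the given orientation to navigate within the orbit of the Shimura class group action, until we reach a $\Z_K$-oriented PPAS whose endomorphism ring we already know. Then we transport that knowledge back along an explicit oriented isogeny.

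The first step produces a reference point. By Corollary~\ref{cor:orientation_finding}, which applies because $h^+(K_0)$ is polynomially bounded, we compute a primitively $\Z_K$-oriented superspecial PPAS $(A_0,\lambda_0,\iota_0)$ with known algebraic data $g_0, a_0, b_{1,0}, b_{2,0}$, i.e.\ with known endomorphism ring. Since $A$ is superspecial and $\Z_K$-oriented, the same method can be rerun to sample such reference surfaces, possibly precomposing $\iota_0$ with an automorphism of $K$ or with complex conjugation. The aim is to hit the same orbit as $(A,\lambda,\iota)$. By Theorem~\ref{thm:transitivitymodp}, this amounts to matching the $\Z_K$-module structure on $D(A)$, i.e.\ the Kottwitz data, of which there are at most $2^2$ possibilities. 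Using Theorem~\ref{thm:algN} (or Corollary~\ref{cor:evaluate}) we can act with arbitrary classes on $(A_0,\lambda_0,\iota_0)$. Randomizing this way, we expect to land in the orbit of $(A,\lambda,\iota)$ after $O(1)$ tries, checked by comparing $\rho$ as in Lemma~\ref{lem:CMtypeobstruction}. I expect this orbit-matching to be the main obstacle: the reference surfaces from Corollary~\ref{cor:orientation_finding} give no a priori control over their CM type. I would first try twisting by $\Aut(K)$ and by $p$-power Frobenius conjugation (Remark~\ref{rmk:frob_conj_orbit}), falling back on fresh random runs otherwise.

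The second step is vectorization. We must find $[(\fraka,\mu)]\in\frakC(\Z_K)$ with $[(\fraka,\mu)]\star(A_0,\lambda_0,\iota_0)=(A,\lambda,\iota)$. Quantumly, this is an abelian hidden shift problem for a free action, solved in subexponential time by Kuperberg's algorithm~\cite{kuperberg}. We evaluate the action on $(A,\lambda,\iota)$ with smooth Shimura ideals from the generating set of Section~\ref{subsec:genset}, using the given representation of $\iota$, and translate arbitrary classes into such smooth products via the usual subexponential lattice reduction in the relation lattice. If $\disc K$ is polynomial, then $\frakh(\Z_K)$ is polynomial by Brauer--Siegel, so a classical exhaustive search over $\frakC(\Z_K)$ suffices. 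Each element is evaluated on $(A_0,\ldots)$ via Theorem~\ref{thm:algN} and compared with $A$, e.g.\ via Igusa invariants together with the orientation data.

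Finally, from $[(\fraka,\mu)]$, represented by small prime Shimura ideals, we obtain an explicit oriented isogeny chain $A_0\to A$ whose kernels we can compute. On the algebraic side, Theorem~\ref{thm:algN} yields the $g$-matrix $g'$ of the codomain. This is, up to an isomorphism computed via~\cite[Thm.\,5.3]{montessinos2026equivalent}, exactly $(A,\lambda)$. A $g$-matrix is equivalent to knowing $\End(A)$ in the strong sense of~\cite{montessinos2026equivalent}, which concludes. The heuristics enter through Corollary~\ref{cor:orientation_finding} and KLPT$^2$.
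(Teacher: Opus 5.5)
Your proposal is correct and follows essentially the same route as the paper. You build a reference $\Z_K$-oriented surface with known $g$-matrix via Corollary~\ref{cor:orientation_finding}, solve vectorization with Kuperberg (or by exhaustive search over $\frakC(\Z_K)$ when $\disc K$ is polynomial), evaluate the resulting class algebraically via Theorem~\ref{thm:algN} to get a $g$-matrix of $A$, and read off $\End(A)$ from it.

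Your $\rho$-comparison (Lemma~\ref{lem:CMtypeobstruction} together with Theorem~\ref{thm:transitivitymodp}) is a cheaper orbit test than the paper's ``run Kuperberg and retry on failure''. One slip: acting with Shimura classes on the reference surface never changes its orbit. Orbit-switching comes only from precomposing with automorphisms of $K$ and from fresh runs of Corollary~\ref{cor:orientation_finding}, which relies on the heuristic, stated explicitly in the paper, that these runs reach every equivalence class of CM types.
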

\begin{proof}
As a first step we compute  a $\Z_K$-oriented superspecial PPAV $(A', \lambda', \iota')$ along with corresponding algebraic data $g, a, b_1, b_2$ using Corollary~\ref{cor:orientation_finding}, which is where the assumption on $h^+(K_0)$ comes into play.
Then one can solve vectorization using Kuperberg's algorithm between $(A, \lambda, \iota)$ and $(A', \lambda', \iota')$. This has no solution if these oriented PPASs lie in different orbits. In that case one can retry with $\iota'$ replaced with $\iota' \circ \tau$ for some automorphism $\tau$ of $K$ (which by Remark~\ref{rmk:frob_conj_orbit} amounts to taking the Frobenius conjugate in the case of complex conjugation), but if $(A, \lambda, \iota)$ and $(A', \lambda', \iota')$ come from inequivalent CM types then this brings no solace. In those cases we retry with a new $(A', \lambda', \iota')$ generated by Corollary~\ref{cor:orientation_finding}: it is a heuristic assumption that this reaches all CM types up to equivalence. Note that this step is never needed for $C_4$-fields, because here all CM types are equivalent by Lemma~\ref{lem:inequivalentCM}. In any case, after $O(1)$ tries we find $[(\fraka, \mu)] \in \frakC(\Z_K)$ such that $(A, \lambda, \iota) = [(\fraka, \mu)] \star (A', \lambda', \iota')$. Evaluating this action algebraically as described in Section~\ref{sec:actingIKO}, we end up with a $g$-matrix for $A$, from which its endomorphism ring can be extracted using~\cite{montessinos2026equivalent}.
If the size of the Shimura class group is polynomial in the input size, then instead of running Kuperberg one can simply compute this group and apply the action of every group element to $(A', \lambda', \iota')$ until $(A, \lambda, \iota)$ is reached. \qed
\end{proof}

\begin{remark}
The reader might wonder why we have a specific statement for small discriminant CM orientations. The reason is that if one obtains an oriented superspecial PPAS by using CM reduction, then one immediately knows an orientation by a relatively small CM order. The above theorem then shows that in this case one can compute the full endomorphism ring in polynomial time, generalizing a result of Love and Boneh \cite{love2020supersingular} to dimension $2$.
\end{remark}

\section{The Gorenstein and Bass conditions} \label{app:bass}

There are two standard ring-theoretic conditions on a CM order $R \subset \Z_K$ that enter the Shimura class group action: Gorenstein and Bass. Gorenstein is the (mild) hypothesis under which the characteristic-$0$ Shimura--Taniyama transitivity theorem is expected to hold (see the footnote to Theorem~\ref{thm:ST} in Section~\ref{subsec:CMchar0}). Bass is the hypothesis under which the ascending-walk picture mentioned in Section~\ref{sec:security} is available~\cite{arpin-marseglia-springer-isogeny-graphs}. Bass implies Gorenstein, but not conversely. This appendix records both notions, two sufficient conditions for Bass, and the index bound on the integer-polarized subgroup $\frakC(R)_\ZZ \subseteq \frakC(R)$.

\begin{definition}
\label{def:gorenstein}
An order $S$ in a number field is \defi{Gorenstein} if it has Cohen--Macaulay type $1$ at every maximal ideal $\frakm \subset S$, that is, $\dim_{S/\frakm}\bigl((S : \frakm) / S\bigr) = 1$ for every $\frakm$. Equivalently, every fractional $S$-ideal $\fraka$ with $(\fraka : \fraka) = S$ is invertible.
\end{definition}

The equivalence between the Cohen--Macaulay type condition and the invertibility of every proper fractional ideal is described in Marseglia~\cite[Proposition~3.4]{Marseglia2024cmtype}, where the two formulations go back to Bass~\cite{bass}.

If $R$ is Gorenstein, every proper fractional $R$-ideal is invertible; this is the setting in which $\frakC(R)$ acts on proper-ideal classes and in which Theorem~\ref{thm:ST} describes orbits. 

If $R$ is not Gorenstein, then there exist proper fractional $R$-ideals $\fraka$ with $(\fraka : \fraka) = R$ that are not invertible. In this case, the invertible-ideal action of $\frakC(R)$ does not cover these additional proper classes. Further, lifting non-transitivity on proper classes to a statement on PPAVs requires in addition that the extra classes admit a principal polarization of the prescribed CM type; the bimodule framework of~\cite{arpin-marseglia-springer-isogeny-graphs} treats the non-Gorenstein case.

Lemma~\ref{lem:Shimura_integer_norm_subgroup} shows that for any CM order $R$, every square in $\frakC(R)$ is integer-polarized, so $[\frakC(R) : \frakC(R)_\ZZ]$ divides $\#\frakC(R)[2]$. The Bass hypothesis gives this bound a graph-theoretic meaning on the crater and underlies the volcano picture used in Section~\ref{sec:security}.

\begin{definition} \label{def:bass-order}
An order $R$ in a number field $L$ is called \defi{Bass} if every over-order of $R$ is Gorenstein. Equivalently, every fractional $R$-ideal is generated as an $R$-module by at most two elements.
\end{definition}

\begin{remark}
    The equivalence between the two formulations is treated in~\cite[Prop.\,4.6]{Marseglia2024cmtype}. This covers the classical results of Bass~\cite{bass} and Drozd--Kirichenko--Roiter \cite{DKR67}. Two elementary sufficient conditions place $R$ in the Bass-class:
\begin{itemize}
    \item \emph{Index-$2$ subfield.} If $R$ contains the maximal order of a subfield $K' \subset K$ of index $2$, then $R$ is Bass. Maximal real multiplication ($R \supset \Z_{K_0}$) is the main case we use, see~\cite[\S2]{arpin-marseglia-springer-isogeny-graphs} and~\cite[Prop.\,4.6]{Marseglia2024cmtype}.
    \item \emph{Fourth-power-free discriminant.} If $\disc(R)$ is fourth-power-free in $\Z$, then $R$ is Bass, see \cite[Thm.\,3.6]{Greither1982}.
\end{itemize}
 
\end{remark}

We give an example of a non-Bass CM order.
\begin{example}
\label{ex:non-bass-cm-order}
Let $K = \Q(\zeta_5)$, a quartic CM field with $K_0 = \Q(\sqrt{5})$. Consider the two CM orders of conductor $2 \Z_K$
\[
    R = \Z + 2 \Z_K,\  R' = \Z_{K_0} + 2 \Z_K.
\]

Complex conjugation sends $\zeta_5$ to $\zeta_5^{-1} = \zeta_5^4$, so $\Z_K = \Z[\zeta_5]$ is stable under conjugation. The real subfield $K_0 = \Q(\sqrt 5)$ is pointwise fixed. Hence both $R$ and $R'$ are closed under complex conjugation. The prime $2$ is inert in $\Z_K$, so $\Z_K / 2 \Z_K \cong \F_{16}$, $R / 2 \Z_K \cong \F_2$ and $R' / 2 \Z_K \cong \F_4$. 
\end{example}

Nakayama's lemma (see e.g. \cite{Eisenbud1995}) gives \[ \mu_R(\Z_K) = \dim_{\F_2}(\Z_K / 2 \Z_K) = 4 > 2,\ \mu_{R'}(\Z_K) = \dim_{\F_4}(\Z_K / 2 \Z_K) = 2. \] By Definition~\ref{def:bass-order}, $R$ is not Bass and $R'$ is Bass. We write  \[ 2\Z_K = R\cdot 2 + R\cdot 2\zeta_5 + R\cdot 2\zeta_5^2 + R\cdot 2\zeta_5^3\] and explain why all four generators are necessary to represent $2\Z_K$. Write \[\Z_K = \langle 1,\zeta_5,\zeta_5^2,\zeta_5^3\rangle_{\Z}, \text{ and } 2\Z_K = \langle 2,2\zeta_5,2\zeta_5^2,2\zeta_5^3\rangle_{\Z}.\] Using $R=\Z+2\Z_K$, we obtain 
\begin{align*}
   R\cdot 2+R\cdot 2\zeta_5+R\cdot 2\zeta_5^2+R\cdot 2\zeta_5^3 &= \sum_{i=0}^{3}(\Z+2\Z_K)\cdot 2\zeta_5^i \\
   &= 2\sum_{i=0}^{3}\Z\zeta_5^i
  +4\sum_{i=0}^{3}\Z_K\zeta_5^i\\
&=2\Z_K+4\Z_K\\
&=2\Z_K.
\end{align*}
Here $\sum_{i=0}^{3}\Z\zeta_5^i=\Z_K$, and $\Z_K\zeta_5^i=\Z_K$ because each $\zeta_5^i$ is a unit of $\Z_K$. Let $\frakm=2\Z_K$, the maximal ideal of $R$ above $2$. Then \[(2\Z_K)/\frakm(2\Z_K)
    = (2\Z_K)/(4\Z_K)
    \xrightarrow{\ \sim\ } \Z_K/(2\Z_K)
    \cong \F_{16}, \text{where } 2x\longmapsto x. \]

Since $R/\frakm\cong\F_2$, the field $\F_{16}$ has dimension $4$ as an $\F_2$-vector space. By Nakayama's lemma, the minimal number of generators of $(2\Z_K)_{\frakm}$ over the local ring $R_{\frakm}$ equals this dimension $\mu_{R_{\frakm}}\bigl((2\Z_K)_{\frakm}\bigr) = \dim_{\F_2}\frac{2\Z_K}{4\Z_K} = 4.$ Therefore the four displayed generators are necessary. Thus the fractional $R$-ideal $2\Z_K$ is not generated by at most two elements, and Definition~\ref{def:bass-order} shows that $R$ is not Bass. For $R'$, we have $\Z_{K_0}\subset R'$ and $[K:K_0]=2$, so the index-$2$ subfield criterion shows that $R'$ is Bass. The equality $\mu_{R'}(\Z_K)=2$ follows from $\dim_{\F_4}(\F_{16})=2$. \newline 

The structure of $\frakC(R)$ is given by the relative-norm presentation of equation~\eqref{eq:shimura_exact_sequence} in Section~\ref{sec:shimura_cl_group}. Hasse's norm theorem for the quadratic extension $K / K_0$ in \cite[Cor.\,VI.4.5]{Neukirch1999} describes the elements of $K_0^\times / N_{K/K_0}(K^\times)$ in terms of local Hasse symbols at the places of $K_0$ ramifying in $K/K_0$. This is the input that controls the integer-polarized classes on a elements level. For $R = \Z_K$, Streng~\cite[Lem.\,I.8.4, Thm.\,III.2.2]{StrengThesis} shows that the cokernel of the type norm map into $\frakC(\Z_K)$ has exponent dividing $2$; the analogous result in dimension $g = 3$ is~\cite{sijsling-dina}, where the exponent is $2$ or $4$ depending on the Galois type.

In the integer-polarized version we expect a sharper statement, for which we have computational evidence in the maximal-order case.

\begin{conjecture}
\label{conj:integer_polarization_dichotomy}
Let $K$ be a CM field and let $R \subseteq \Z_K$ be a CM order. Then
\[
    [\frakC(R) : \frakC(R)_\ZZ] \;\in\; \{1,\ \#\frakC(R)[2]\}.
\]
\end{conjecture}

\begin{remark}
\label{rmk:dichotomy_cases}
Our computational evidence is for maximal orders across the tested Galois types for $g \in \{2, 3\}$. Computations for non-maximal orders and higher dimensions are in progress and already provide further evidence for our claim.
\end{remark}

\section{Superspecial reduction} \label{app:reduction}

\subsubsection{Reduction.} 

We recall the superspecial reduction results Goren established for CM abelian surfaces in the primitive
quartic case~\cite{Goren1997}.  Goren did not consider the biquadratic $C_{2}\times C_{2}$ case because his setup only allowed for so-called ``primitive" CM types (see Section~\ref{sec:security} for a definition). His argument, however, carries over almost verbatim and gives Proposition~\ref{prop:reductionC2C2} below, which we record as a new result even though its proof is analogous to Goren's. As in~\cite{Goren1997}, we restrict throughout this appendix to $R = \Z_K$ and to primes $p$ unramified in $K$; the ramified case is left for future work (see the end of this appendix).

\begin{proposition} \label{thm:GorenC4}
Let $K$ be a cyclic quartic CM field, let $p$ be a prime number not ramifying in $K$, and let $(A, \lambda, \iota) \in \M_k(\Z_K)$ with $k = \overline{\Q}$. Let $L \subset k$ be a number field over which $A$ is defined. Let $\bar \frakp \subset \Z_L$ be a prime ideal above $p$. Assuming the reduction $A \bmod \bar \frakp$ is good, it is superspecial if and only if $p$ neither splits completely nor is inert in $K$.
\end{proposition}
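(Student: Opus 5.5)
We plan to follow Goren's strategy: read off the $p$-adic Newton polygon of the reduction from the CM type via the Shimura--Taniyama formula, then invoke the criterion that a $p$-rank-$0$ abelian surface with slopes all $1/2$ is superspecial exactly when its $a$-number equals $2$. Fix the CM type $\Phi$ of $(A,\lambda,\iota)$ (Section~\ref{subsec:CMchar0}). Since $K$ is cyclic, $K$ is Galois with $\Gal(K/\Q)=\langle\sigma\rangle\cong C_4$, complex conjugation is $\sigma^2$, and by Lemma~\ref{lem:inequivalentCM} we may assume $\Phi=\{\id,\sigma\}$. Since $K$ is Galois, the reflex field is $K$ itself. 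After enlarging $L$ we may assume that the reduction $\bar A$ is defined over a finite field $\F_q$ with Frobenius $\pi\in\Z_K$. The Shimura--Taniyama formula then gives $\pi\Z_K=\prod_{\tau}\tau(\frakP)^{\ldots}$, and more concretely, for each prime $\frakq\mid p$ of $K$,
\[ \frac{v_\frakq(\pi)}{v_\frakq(q)} \;=\; \frac{\#\{\phi\in\Phi : \phi^{-1}(\frakq)\ \text{lies under a fixed prime of } L\}}{\#\{\text{embeddings in the decomposition group orbit}\}}.\]
That is, the slopes are computed by intersecting $\Phi$ with the cosets of the decomposition group $D_\frakq$.

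We then split into cases according to the unramified decomposition type of $p$ in $K$, namely $D_\frakq\in\{1,\langle\sigma^2\rangle,\langle\sigma\rangle\}$. If $p$ splits completely, then $D=1$ and every slope lies in $\{0,1\}$, so $\bar A$ is ordinary and hence not superspecial. If $p$ is inert, then $D=\langle\sigma\rangle$ is all of $G$, so the slope is $\#\Phi/4=1/2$ and $\bar A$ is supersingular. For the ``split into two primes'' case, $D=\langle\sigma^2\rangle$, whose cosets are $\{\id,\sigma^2\}$ and $\{\sigma,\sigma^3\}$. Each of these meets $\Phi$ in exactly one element, so again all slopes equal $1/2$ and $\bar A$ is supersingular. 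This matches the statement, except that in the inert case supersingularity must still be shown to fail to upgrade to superspeciality, while in the two-primes case it must upgrade.

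To settle superspeciality, we compute the $\Z_K\otimes\F_p$-module structure of $D(\bar A)=H^0(\bar A,\Omega^1)$, that is, the $a$-number. This follows Goren's Dieudonné-module argument, and it is also the same kind of invariant that appears in Lemma~\ref{lem:CMtypeobstruction} and in the Kottwitz condition from the proof of Theorem~\ref{thm:transitivitymodp}. The Dieudonné module $\mathbb D$ of $\bar A[p]$ is free of rank $1$ over $\Z_K\otimes\F_p\cong\prod_{\frakq\mid p}\F_{p^{f}}$, and it decomposes into eigenspaces $\mathbb D_\tau$ indexed by embeddings $\tau:\Z_K\to\overline{\F}_p$. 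Frobenius $F$ maps $\mathbb D_\tau$ to $\mathbb D_{\mathrm{Frob}\circ\tau}$, and $\ker F$ is the sum of the $\mathbb D_\tau$ with $\tau$ in the reduction of $\Phi$. The $a$-number equals $\dim\ker F\cap\ker V$, so it counts the $\tau\in\Phi$ for which $\mathrm{Frob}^{-1}\circ\tau\notin\Phi$.

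In the inert case, Frobenius acts as the $4$-cycle $\sigma$, and $\Phi=\{\id,\sigma\}$ is a pair of consecutive elements. Hence only one $\tau\in\Phi$ satisfies the condition, giving $a=1$, and $\bar A$ is not superspecial. In the two-primes case, Frobenius acts as $\sigma^2$, which swaps $\id$ with $\sigma^2$ and $\sigma$ with $\sigma^3$. Both elements of $\Phi$ therefore map outside $\Phi$, giving $a=2$, and $\bar A$ is superspecial by Oort's theorem.

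The main obstacle is this $a$-number step. We must justify carefully that the $\Phi$-grading of $\mathbb D$ is the reduction of the CM type, using the unramifiedness of $p$ so that the embeddings reduce injectively, and we must pin down the direction of the Frobenius twist. We would first transcribe Goren's computation directly. Because $K$ is Galois, the reflex-field bookkeeping becomes trivial, and the whole computation reduces to the combinatorics of $\Phi$ inside $C_4$ described above.
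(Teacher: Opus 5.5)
Your argument is correct. The paper gives no argument of its own here: it simply cites Goren's Theorem~1. What you have written is therefore a self-contained reconstruction.

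Its heart is the count $a(\bar A)=2-\#(\Phi\cap \mathrm{Frob}_p\Phi)$. This comes from the $\Z_K$-grading of $\mathbb D(\bar A[p])$, with $\ker F=H^0(\bar A,\Omega^1)$ graded by the reduction of $\Phi$. That count is exactly Ekedahl's product criterion $\mathrm{Frob}_p\bar\Phi=\Phi$ (equivalently $\Phi\cap\mathrm{Frob}_p\Phi=\emptyset$), which the paper invokes for the biquadratic analogue, Proposition~\ref{prop:reductionC2C2}.

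Two simplifications are available.
\begin{itemize}
\item The Newton-polygon step is redundant. $a=2$ already gives superspeciality by Oort, and $a<2$ already excludes it. The three cases $\mathrm{Frob}_p=\id$, $\mathrm{Frob}_p=\sigma^{\pm 1}$ and $\mathrm{Frob}_p=\sigma^2$ give $a=0,1,2$ respectively, so the $a$-number alone settles everything.
\item The direction of the Frobenius twist, which you flag as an obstacle, is immaterial. Applying $\mathrm{Frob}_p$ shows $\#(\Phi\cap\mathrm{Frob}_p\Phi)=\#(\Phi\cap\mathrm{Frob}_p^{-1}\Phi)$. Likewise, switching between covariant and contravariant Dieudonné conventions only replaces $\Phi$ by its complement, which yields the same count.
\end{itemize}

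Your displayed Shimura--Taniyama formula is garbled. The numerator should be $\#(\Phi\cap H_\frakq)$, where $H_\frakq$ is the set of embeddings inducing $\frakq$. However, the slope computation you then carry out via cosets of $D_\frakq$ is the standard, correct one.

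What your route buys over the citation is that it makes visible that the cyclic case is governed by the same criterion as the $C_2\times C_2$ case treated in the paper.
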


\begin{proof}
    See~\cite[Thm.\,1]{Goren1997}. \qed
\end{proof}

\noindent In other words, in the $C_4$ case we have superspecial reduction if and only if $p \Z_K = \frakp_1 \frakp_2$ for two degree-$2$ prime ideals $\frakp_i$ of $K$.

In the cases $C_{2} \times C_{2}$ and $D_4$, the factorization pattern of $p$ in $K$ alone does not distinguish superspecial from non-superspecial and the CM type $\Phi$ comes into play. It should be clear, however, that the reduction type can only depend on $\Phi$ up to equivalence.

\begin{proposition} \label{prop:reductionC2C2}
We inherit the assumptions from Proposition~\ref{thm:GorenC4} but now assume that $K$ is biquadratic. We can assume, by precomposing $\iota$ with complex conjugation if needed, that the CM type of $(A, \lambda, \iota)$ is of the form $\Phi = \{ \id, \tau \}$ for some $\tau \in \Gal(K , \Q)$. Let $K_1$ be the fixed field of the subgroup $\Phi$. Assuming the reduction $A \bmod \bar \frakp$ is good, it is superspecial if and only if $p$ is inert in $K_1$.
\end{proposition}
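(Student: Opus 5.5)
Because the CM type $\Phi=\{\id,\tau\}$ is imprimitive in the biquadratic case, I will reduce to elliptic curves. The CM type is induced from the imaginary quadratic field $K_1=K^{\Phi}$: here $\Phi$ consists of the two embeddings extending a fixed embedding of $K_1$, so $\Phi$ is the lift of a CM type $\Phi_1$ of $K_1$. For this to make sense, $\tau$ must be one of the two non-trivial automorphisms other than complex conjugation; then $K_1$ is one of the two imaginary quadratic subfields. By~\cite[Thm.\,5.2(3)]{StrengThesis}, $A$ is then isogenous over $\overline{\Q}$ to $E_1\times E_1$, where $E_1$ has CM by (an order in) $K_1$. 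Moreover, the $K$-action on $H^0(A,\Omega^1)$ is via $\Phi$, so it restricts on $K_1$ to the scalar character $\Phi_1$.

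The key steps are as follows.

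\emph{Step 1.} Enlarge $L$ so that the isogeny $A\to E_1^2$ is defined over $L$ and $E_1$ has good reduction at $\bar\frakp$. Good reduction of $A$ then forces good reduction of the isogenous variety, by N\'eron--Ogg--Shafarevich.

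\emph{Step 2.} Since the reduction $\bar A$ is isogenous to $\bar E_1^2$, it is supersingular if and only if $\bar E_1$ is. By Deuring's criterion, and since $p$ is unramified in $K_1$, this happens if and only if $p$ is inert in $K_1$. This settles the direction ``$\bar A$ superspecial $\Rightarrow$ $p$ inert in $K_1$''.

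\emph{Step 3.} For the converse, supersingularity of $\bar A$ must be upgraded to superspeciality. This is the main obstacle, since in dimension $2$ supersingular does not imply superspecial. Following Goren, I would use the criterion that $\bar A$ is superspecial if and only if Frobenius $F$, or equivalently Verschiebung $V$, kills $H^1_{dR}(\bar A)$ up to the relevant filtration; concretely, the $a$-number must equal $2$. The $\Z_K\otimes\overline{\F}_p$-module structure on $H^1_{dR}(\bar A)$ splits into eigenspaces indexed by embeddings $\Z_K\to\overline{\F}_p$, and the Hodge filtration $H^0(\Omega^1)$ is the sum of the eigenspaces indexed by $\Phi$ reduced mod $p$; this uses that $p$ is unramified, so the reductions of the four embeddings are distinct. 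Frobenius is $\sigma$-semilinear and permutes the eigenspaces by $\varphi\mapsto\mathrm{Frob}_p\circ\varphi$. Since $p$ is inert in $K_1$, Frobenius acts on embeddings by an element of the decomposition group restricting non-trivially to $K_1$, i.e.\ complex conjugation or $\tau$-composed-with-conjugation, both of which map $\Phi$ to its complement $\bar\Phi$. Hence $F$ carries the $\Phi$-eigenspaces into the $\bar\Phi$-eigenspaces and vice versa.

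\emph{Step 4.} Combining the facts $\ker F=\im V=H^0(\Omega^1)$ (the $\Phi$-part) with the fact that $F$ maps the $\bar\Phi$-part into the $\Phi$-part, I would conclude that $\im F=\ker V$ equals the $\Phi$-part, and hence that $F(H^1_{dR})=V(H^1_{dR})$. By the Oda/Goren criterion this equality means the $a$-number is $2$, so $\bar A$ is superspecial. Conversely, if $p$ splits in $K_1$, Step 2 already shows that $\bar A$ is not even supersingular. The delicate point is the case where the decomposition group is all of $\Gal(K/\Q)$, which cannot occur because $p$ is unramified and the group $C_2\times C_2$ is not cyclic. I must therefore check that when $p$ is inert in $K_1$, the Frobenius element is one of the two elements that act non-trivially on $K_1$. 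Both such elements swap $\Phi$ and $\bar\Phi$, so the argument applies uniformly; this is where Goren's proof carries over verbatim.
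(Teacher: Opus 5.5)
Your proposal is correct, but it takes a partly different route from the paper. The paper's proof is two lines: it cites Ekedahl's product criterion (as recalled in Goren, \S2), which says that $A \bmod \bar\frakp$ is superspecial if and only if $\sigma\bar\Phi^{\mathrm{ext}} = \Phi^{\mathrm{ext}}$. It then restricts the Frobenius element to $K$ and uses that $\Phi=\{\id,\tau\}$ is a subgroup, turning the criterion into $\sigma^{\mathrm{res}}\notin\Phi$, i.e.\ $p$ inert in $K_1$.

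Your Steps 3--4 amount to proving that criterion by hand in this special case. You identify $H^0(\bar A,\Omega^1)=\ker F=\im V$ with the $\Phi$-eigenspaces, observe that $F$ moves the $\varphi$-eigenspace to the $\sigma^{\mathrm{res}}\varphi$-eigenspace, and use that $a(\bar A)=\dim(\im F\cap\im V)$, so $\im F=(\sigma^{\mathrm{res}}\bar\Phi)$-part. This computation already gives both directions. If $\sigma^{\mathrm{res}}\in\Phi$, then $\im F$ is the $\bar\Phi$-part, so $a=0$ and $\bar A$ is ordinary. Hence your Steps 1--2 (imprimitivity via Streng's Thm.\,5.2(3), N\'eron--Ogg--Shafarevich, and Deuring's criterion for $E_1$) are logically redundant. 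They do, however, give a pleasant classical explanation of the supersingular-versus-ordinary dichotomy through $A\sim E_1^2$.

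What your route buys is a self-contained and transparent reason why supersingularity upgrades to superspeciality here: the Frobenius element swaps $\Phi$ and $\bar\Phi$. What it costs is reliance on Oda's identification of $H^1_{dR}$ with $M/pM$ and on Oort's characterization of superspecial as $a=g$. It also relies on a fact you use tacitly: $H^1_{dR}(\bar A)$ is free of rank one over $\Z_K\otimes\overline{\F}_p$, so each of the four eigenspaces is one-dimensional. Both the identification of the Hodge filtration with exactly the $\Phi$-part and your dimension count in Step 4 depend on this, so you should state and justify it. One way is to note that $A[\frakp^\infty]$ has height $f_{\frakp}$ for each $\frakp\mid p$ (since $p$ is unramified), and that $F$ permutes the eigenspaces within each $\frakp$-block cyclically.
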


\begin{proof} 
We can assume that $L$ contains $K$ and is Galois over $\Q$. Ekedahl's product criterion~\cite[Prop.\,2.4]{ekedahl-supersingular}, as discussed in~\cite[\S2]{Goren1997}, says that $A \bmod \bar \frakp$ is superspecial if and only if
\[ \sigma \bar{\Phi}^{\textrm{ext}} = \Phi^{\textrm{ext}} \]
where $\sigma \in \Gal(L, \Q)$ denotes a Frobenius element of $\bar \frakp$ (i.e., an automorphism such that $\sigma(\bar \frakp) = \bar \frakp$ which reduces to the $p$-th power Frobenius automorphism of $\Z_L/\bar \frakp$ over $\F_p$) and $\Phi^{\textrm{ext}}$ denotes the set of elements of $\Gal(L, \Q)$ whose restriction to $K$ is contained in $\Phi$ (and likewise for $\bar{\Phi}^{\textrm{ext}}$). Now let $\sigma^\text{res}$ be the Frobenius element of $\bar \frakp \cap K$ (unique since $p$ does not ramify). This equals the restriction of $\sigma$ to $K$, so Ekedahl's criterion becomes: $A \bmod \bar \frakp$ is superspecial if and only if $\sigma^\text{res} \notin \{ \id, \tau\}$. The result follows.
\qed
\end{proof}

\noindent In other words, in the $C_2 \times C_2$ case we have superspecial reduction if and only if $p \Z_K = \frakp_1 \frakp_2$ for two degree-$2$ prime ideals $\frakp_i$ of $K$ and the splitting is complete in $K_0$ or in the imaginary quadratic subfield different from $K_1$. As a sanity check, consider our recurring orientations by
\[ \Z \left[\frac{1 + \sqrt{-p-4}}{2}, \sqrt{\frac{p+1}{4}} \right] \]
in characteristic $p \equiv 3 \bmod 4$
from Examples~\ref{ex:example_orientation} and~\ref{ex:secondorientation}, shown to come from two inequivalent CM types in Examples~\ref{ex:example_orientation_revisited} and~\ref{ex:example_orientation_revisited_revisited}. By Proposition~\ref{prop:reductionC2C2} combined with the lifting statement from~\cite[Lem.\,4.4.1]{Goren_Lauter} (which was used in our proof of Theorem~\ref{thm:transitivitymodp}),
this must mean that $p$ splits in $K_0$ and is inert in the two imaginary quadratic subfields $K_1, K_2$. This is indeed easily checked.

\begin{proposition}
\label{prop:reductionD4}
Let $K$ be a
quartic CM field with Galois closure $M$ such that $\Gal(M, \Q) = \langle x, y \, | \, x^{2}, y^{4}, (xy)^{2} \rangle \cong D_4$ and that $K = M^{\langle x \rangle}$. Let $\Phi = \{\id, y\}$ be a fixed CM type on~$K$.  Under the remaining assumptions of Proposition~\ref{thm:GorenC4}, where we can assume $M \subset L$ w.l.o.g., and with $p$ unramified in~$M$, the reduction $A \bmod \bar\frakp$ is superspecial if and only if the 
Frobenius element of $\bar\frakp \cap M$ equals $y^2$ or $xy$.
\end{proposition}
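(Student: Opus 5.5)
We argue exactly as in the proof of Proposition~\ref{prop:reductionC2C2}, via Ekedahl's product criterion~\cite[Prop.\,2.4]{ekedahl-supersingular} in the form recalled in~\cite[\S2]{Goren1997}. With $\sigma \in \Gal(L, \Q)$ a Frobenius element of $\bar\frakp$, the reduction $A \bmod \bar\frakp$ is superspecial if and only if $\sigma \bar{\Phi}^{\mathrm{ext}} = \Phi^{\mathrm{ext}}$. Here $\Phi^{\mathrm{ext}}$ is the set of elements of $\Gal(L,\Q)$ whose restriction to $K$ lies in $\Phi$. Since $\Phi^{\mathrm{ext}}$ is a union of cosets $g\Gal(L,K)$ and $\Gal(L,K) \supset \Gal(L,M)$, the condition descends to $M$. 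We therefore replace $\sigma$ by its restriction $s \in \Gal(M,\Q) = D_4$. This restriction is a Frobenius element of $\bar\frakp \cap M$, well defined up to the choice of prime because $p$ is unramified in $M$. The sets $\Phi^{\mathrm{ext}}, \bar\Phi^{\mathrm{ext}}$ become subsets of $D_4$.

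The next step is to make these subsets explicit. Embeddings $K \to M$ correspond to left cosets $g\langle x\rangle$, since $K = M^{\langle x\rangle}$. So $\Phi^{\mathrm{ext}} = \langle x\rangle \cup y\langle x\rangle = \{1, x, y, yx\}$. Complex conjugation on $M$ is the central element $y^2$: it has order $2$, is central because $M$ is CM, and is not $x$ because $x$ fixes $K$. Hence $\bar\Phi^{\mathrm{ext}} = y^2\Phi^{\mathrm{ext}} = \{y^2, y^2x, y^3, y^3x\}$, and $D_4 = \Phi^{\mathrm{ext}} \sqcup \bar\Phi^{\mathrm{ext}}$. We should double-check that the CM-type condition holds, namely that $y$ and $\bar{\id}$ give different embeddings. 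This amounts to $y \notin y^2\langle x\rangle$, which is immediate.

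It then remains to solve $s\,\bar\Phi^{\mathrm{ext}} = \Phi^{\mathrm{ext}}$ for the eight elements $s \in D_4$. Equivalently, we need $s y^2 \Phi^{\mathrm{ext}} = \Phi^{\mathrm{ext}}$. Since $1 \in \Phi^{\mathrm{ext}}$, this forces $t := sy^2 \in \Phi^{\mathrm{ext}}$, and so we only need to test which $t \in \{1,x,y,yx\}$ satisfy $t\Phi^{\mathrm{ext}} = \Phi^{\mathrm{ext}}$. We use the relations $xyx = y^{-1}$, i.e.\ $xy = y^3x$ and $yx = xy^3$.
\begin{itemize}
\item $t = 1$: this works trivially.
\item $t = x$: we get $x\cdot y = y^3x \notin \Phi^{\mathrm{ext}}$, so this fails.
\item $t = y$: we get $y\cdot y = y^2 \notin \Phi^{\mathrm{ext}}$, so this fails.
\item $t = yx$: we get $yx\{1,x,y,yx\} = \{yx, y, yxy, yxyx\}$. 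Here $yxy = y\cdot y^3x = x$ and $yxyx = x\cdot x = 1$, so the result is exactly $\Phi^{\mathrm{ext}}$ and this works.
\end{itemize}
Thus $t \in \{1, yx\}$, and so $s = ty^2 \in \{y^2, yxy^2\}$. Finally $yxy^2 = y\cdot y^2x = y^3x = xy$. This gives $s \in \{y^2, xy\}$, as claimed.

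The main potential obstacle is convention bookkeeping. One point is whether Ekedahl's criterion uses left multiplication $\sigma\bar\Phi^{\mathrm{ext}}$ or right multiplication, or uses $\sigma^{-1}$. The other is whether embeddings correspond to left or right cosets of $\langle x\rangle$. A different convention could swap $xy$ with $yx$ (a conjugate reflection). I will fix the conventions to match the proof of Proposition~\ref{prop:reductionC2C2} and~\cite{Goren1997}: $\Phi^{\mathrm{ext}}$ consists of automorphisms restricting to elements of $\Phi$, with left multiplication by $\sigma$. As a sanity check, the answer does not change under $s \mapsto s^{-1}$, since $(xy)^{-1} = xy$. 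This suggests the statement is robust to the inverse-Frobenius ambiguity.
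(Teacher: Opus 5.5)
Your proposal is correct and is essentially the paper's argument: the paper cites Goren's Theorem~2 (cases (3) and (6)) and remarks that together they are equivalent to Ekedahl's product criterion. You carry out that Ekedahl computation explicitly in $D_4$, using the same left-multiplication convention as the paper's $C_2 \times C_2$ proof, and correctly arrive at $\sigma|_M \in \{y^2, xy\}$.
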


\begin{proof} This is~\cite[Thm.\,2]{Goren1997} in cases~(3) and~(6). The two cases together are again seen to be equivalent to the product criterion of Ekedahl. \qed \end{proof}

\medskip
This closes the unramified cases for dimension $2$ (assuming $R = \Z_K$).  The ramified
case, where $p \mid \disc(K/\Q)$, is not covered by~\cite[Thm.\,1 and Thm.\,2]{Goren1997}.  In forthcoming work we treat this case by replacing
the unramified product criterion with a Lie-type Dieudonn\'e module
calculation.  The Shimura--Taniyama formula gives the Newton polygon,
and the Lie type gives the $a$-number.  Superspeciality is then the
condition that all slopes are $1/2$ and that $a=g$.

\end{document}